\newtheorem{thm}{Theorem}
\newtheorem{lem}{Lemma}[section]
\newtheorem{pro}[lem]{Proposition}
\newtheorem{rem}[lem]{Remark}
\newcommand{\dis}{\displaystyle}
\newcommand{\R}{{\Bbb R}}
\newcommand{\N}{{\Bbb N}}
\newcommand{\pa}{\partial}
\newcommand{\Ker}{\operatorname{Ker}}
\title{\Large\sf Dynamics near the ground state for the Sobolev critical Fujita type heat equation in 6D}
\author{Junichi Harada\\[4mm]
{\normalsize \it Dedicated to the memory of Professor Marek Fila}}
\date{}
\begin{document}
\maketitle
\thispagestyle{empty}
\large

 \begin{abstract}
 This paper investigates the asymptotic behavior of solutions to
 $u_t=\Delta u+|u|^{p-1}u$ in the Sobolev critical case.
 Our main result is a classification of the dynamics near the ground states in the six dimensional case.
 It is shown that
 if the initial data $u_0\in H^1(\R^6)$ satisfies $\|u_0-{\sf Q}\|_{\dot H^1(\R^6)}\ll1$,
 then the solution falls into one of the following three scenarios:
 \begin{enumerate}[1)]
 \item It is globally defined and converge to one of the ground states as $t\to\infty$.
 \item It is globally defined and converge to $0$ in $\dot H^1(\R^6)$ as $t\to\infty$.
 \item It exhibits finite time blowup with a type I rate.
 \end{enumerate}
 This paper extends the classification result in the case $n\geq7$, previously obtained by Collot-Merle-Rapha\"el,
 to the borderline case $n=6$.
 \end{abstract}

 \noindent
 {\normalsize{\bf Keyword}: Fujita type heat equation; Sobolev critical case; dynamics near the ground states}
 \tableofcontents



 \section{Introduction}
 \label{sec_1}
 We consider the Fujita type heat equation.
 \begin{equation}
 \label{equation_1.1}
 \begin{cases}
 u_t = \Delta u+|u|^{p-1}u
 &
 \text{for } x\in\R^n,\ t\in(0,T),\\
 u|_{t=0}=u_0(x)
 &
 \text{for }
 x\in\R^n
 \end{cases}
 \end{equation}
 with the Sobolev critical case:
 \[
 p=\tfrac{n+2}{n-2}
 \quad\
 (n\geq3).
 \]
 For any initial data $u_0\in\dot H^1(\R^n)$,
 there exists a maximal existence time $T\in(0,\infty]$,
 and
 a unique solution $u(x,t)$ of \eqref{equation_1.1} satisfying
 \[
 u(t)\in C([0,T);\dot H^1(\R^n))\cap C((0,T);L^\infty(\R^n)).
 \]
 When $T$ is finite (which is equivalent to $\limsup_{t\to T}\|u(t)\|_\infty=\infty$),
 we say that the solution $u(x,t)$ blows up in finite time $T$.
 Our interest lies in investigating the global dynamics of solutions to \eqref{equation_1.1}.
 It is known that these dynamics  are highly intricate in the Sobolev critical case.
 In particular,
 these dynamics strongly depend on the dimension $n$.
 To begin with, we first focus on the higher dimensional case.
 Let ${\sf Q}(x)$ denote the Aubin-Talenti solution given by
 \begin{align}
 \label{EQUATION_1.2}
 {\sf Q}(x)
 =
 \left( 1+\frac{|x|^2}{n(n-2)} \right)^{-\frac{n-2}{2}}.
 \end{align}
 Notably,
 the following classification theorem for $n\geq7$ was established by Collot, Merle and Rapha\"el in \cite{Collot-Merle-Raphael}.
 \begin{thm}[Theorem 1.1 in {\rm\cite{Collot-Merle-Raphael} p.\,217 - 218}]
 \label{THEOREM_1}
 Let $n\geq7$ and $p=\frac{n+2}{n-2}$.
 There exists a positive constant $\eta$ such that
 if the initial data $u_0(x)\in\dot H^1(\R^n)$ satisfies
 \begin{align}
 \label{EQUATION_1.3}
 \|u_0(x)-{\sf Q}(x)\|_{\dot H^1(\R^n)}<\eta,
 \end{align}
 then the corresponding solution
 \[
 u(x,t)\in
 C([0,T);\dot H^1(\R^n))
 \cap
 C((0,T);\dot H^2(\R^n))
 \]
 to \eqref{equation_1.1} follows one of the three regimes.
 \begin{enumerate}[\rm 1.]
 \item
 {\rm(Soliton)}
 The solution is global and asymptotically attracted by a solitary wave.
 Namely
 there exist $\lambda_\infty>0$ and $z_\infty\in\R^n$ such that
 \begin{align*}
 \lim_{t\to\infty}
 \left\|
 u(x,t)
 -
 {\sf Q}\left(\frac{x-z_\infty}{\lambda_\infty}\right)
 \quad\
 \right\|_{\dot H_x^1(\R^n)}
 =0.
 \end{align*}
 Moreover,
 $|\lambda_\infty-1|+|z_\infty|\to0$
 as $\eta\to0$.
 \item
 {\rm(Dissipation)}
 The solution is global and dissipates
 \begin{align*}
 \lim_{t\to\infty}
 (\|u(t)\|_{\dot H^1(\R^n)}+\|u(t)\|_{L^\infty(\R^n)})
 =0.
 \end{align*}
 \item
 {\rm(Type I blowup)}
 The solution blows up in finite time $T>0$ in the ODE type I selfsimilar blowup regime near the singularity
 \begin{align*}
 \lim_{t\to T}
 (T-t)^\frac{1}{p-1}
 \|u(t)\|_{L^\infty(\R^n)}
 =
 (p-1)^{-\frac{1}{p-1}}.
 \end{align*}
 \end{enumerate}
 \end{thm}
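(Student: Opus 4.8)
The natural framework is nonlinear modulation theory around the (one-dimensionally unstable) soliton family. First I would renormalise: introduce a rescaled time $s$ with $ds/dt=1/\lambda^2$ and write
\[
u(x,t)=\frac{1}{\lambda(t)^{(n-2)/2}}\,({\sf Q}+\varepsilon)\!\left(\frac{x-z(t)}{\lambda(t)},\,s\right),
\]
so that $\varepsilon$ solves $\varepsilon_s=-L\varepsilon+\frac{\lambda_s}{\lambda}\Lambda({\sf Q}+\varepsilon)+\frac{z_s}{\lambda}\cdot\nabla({\sf Q}+\varepsilon)+N(\varepsilon)$, where $L=-\Delta-p{\sf Q}^{p-1}$ is the linearised operator, $\Lambda=\frac{n-2}{2}+y\cdot\nabla$ is the $\dot H^1$-critical scaling generator (so $L\Lambda{\sf Q}=0$), and $N(\varepsilon)$ is the nonlinear remainder, of higher order in $\varepsilon$. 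The modulation parameters $\lambda(s),z(s)$ are fixed by the implicit function theorem so that $\varepsilon(s)$ remains orthogonal to $\Lambda{\sf Q}$ and to $\partial_1{\sf Q},\dots,\partial_n{\sf Q}$, removing the kernel directions of $L$. Recall the spectral picture valid for $n\ge5$: $L$ has exactly one negative eigenvalue $-e_0<0$ with a positive, radial, exponentially decaying eigenfunction $\Phi_0$; its kernel is $\operatorname{span}\{\Lambda{\sf Q},\partial_1{\sf Q},\dots,\partial_n{\sf Q}\}$, all of which lie in $L^2$ precisely because $n\ge5$; and on the orthogonal complement of $\operatorname{span}\{\Phi_0,\Lambda{\sf Q},\partial_j{\sf Q}\}$ one has coercivity $\langle L\varepsilon^\perp,\varepsilon^\perp\rangle\gtrsim\|\varepsilon^\perp\|_{\dot H^1}^2$. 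Decompose $\varepsilon=a\,\Phi_0+\varepsilon^\perp$; projecting the equation onto $\Phi_0$ gives $a_s=e_0a+(\text{terms quadratic in }\varepsilon,\ \text{plus modulation}\times\text{error})$, so $a$ is the single unstable mode.

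The heart of the matter is a \emph{trapped-tube bootstrap}. On the maximal interval on which $u(t)$ stays in a small tubular neighbourhood of the soliton family, I would propagate the smallness of $\varepsilon^\perp$ via a Lyapunov/monotonicity argument: differentiating a suitably corrected, spatially localised version of the quadratic form $\langle L\varepsilon^\perp,\varepsilon^\perp\rangle$ along the flow and using the coercivity together with the smallness of the modulation speeds $\lambda_s/\lambda,\,z_s/\lambda$ yields a dissipative inequality $\frac{d}{ds}\mathcal N(\varepsilon^\perp)\le -c\,\|\varepsilon^\perp\|_{\dot H^1}^2+C\,a^2+(\text{higher order})$ with $\mathcal N\simeq\|\varepsilon^\perp\|_{\dot H^1}^2$. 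Combined with the monotone decay of the energy $E(u(t))$ (one has $\frac{d}{dt}E(u)=-\|u_t\|_{L^2}^2$), this closes a bootstrap showing that inside the tube $\|\varepsilon^\perp\|_{\dot H^1}\ll|a|$ and the modulation parameters vary slowly, so that, up to controlled errors, the dynamics reduces to the scalar unstable ODE $a_s=e_0a+\text{l.o.t.}$; in particular $|a(s)|$ is essentially monotone.

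The trichotomy is then read off from this reduced system together with the variational structure. Fix a small admissible $\varepsilon^\perp(0)$ and use $a(0)\in(-\delta,\delta)$ as a shooting parameter. Inside the tube one computes $E(u)-E({\sf Q})=-\tfrac{e_0}{2}\|\Phi_0\|_{L^2}^2\,a^2+\tfrac12\langle L\varepsilon^\perp,\varepsilon^\perp\rangle+O(\|\varepsilon\|^3)$, which is negative since $\|\varepsilon^\perp\|\ll|a|$, while $\|\nabla u\|_{L^2}^2-\|\nabla{\sf Q}\|_{L^2}^2$ has the sign of $a$ to leading order; hence at any exit time $E(u)<E({\sf Q})$ strictly, with $\|\nabla u\|_{L^2}>\|\nabla{\sf Q}\|_{L^2}$ if $a>0$ and $\|\nabla u\|_{L^2}<\|\nabla{\sf Q}\|_{L^2}$ if $a<0$. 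Three alternatives result. (i) The solution never exits: then by a topological (intermediate-value) argument the shooting parameter can be chosen so that $a(s)\to0$ --- the sets of $a(0)$ leading to exit with $a>0$, respectively $a<0$, are open and nonempty, so their complement is nonempty --- and the monotonicity then forces $\varepsilon^\perp(s)\to0$ and $\int(|\lambda_s/\lambda|^2+|z_s/\lambda|^2)\,ds<\infty$, whence $\lambda(s)\to\lambda_\infty>0$, $z(s)\to z_\infty$; this is the Soliton regime, with $|\lambda_\infty-1|+|z_\infty|\to0$ as $\eta\to0$ since everything is close to ${\sf Q}$. (ii) The solution exits with $a>0$, i.e.\ into $\{\|\nabla u\|_{L^2}>\|\nabla{\sf Q}\|_{L^2},\,E(u)<E({\sf Q})\}$; by the variational obstruction for the energy-critical heat flow this region is forward invariant and forces finite-time blowup, and one then upgrades to a type I rate by comparison with the spatially homogeneous ODE $\dot w=w^p$ --- equivalently, by proving that the parabolically rescaled solution converges to the constant self-similar profile $(p-1)^{-1/(p-1)}$ --- which yields $(T-t)^{1/(p-1)}\|u(t)\|_{L^\infty}\to(p-1)^{-1/(p-1)}$. (iii) The solution exits with $a<0$, i.e.\ into $\{\|\nabla u\|_{L^2}<\|\nabla{\sf Q}\|_{L^2},\,E(u)<E({\sf Q})\}$, which is forward invariant, yielding a global solution strictly below the ground state threshold; by the global existence and dissipation theory below the threshold, $\|u(t)\|_{\dot H^1}+\|u(t)\|_{L^\infty}\to0$.

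I expect the principal obstacle to be the trapped-tube bootstrap of the second step: designing the corrected Lyapunov functional so as to produce a genuine dissipative inequality for $\varepsilon^\perp$ with no uncontrolled tail or boundary contributions. The delicacy is twofold. First, $L$ carries a nontrivial kernel (the scaling field $\Lambda{\sf Q}$), which must be dealt with in tandem with the modulation equations. Second, ${\sf Q}$, $\Lambda{\sf Q}$ and $\varepsilon$ decay only polynomially in $y$, so spatial localisation interacts nontrivially with these slow tails; it is exactly the margin in this polynomial decay that makes the scheme run smoothly for $n\ge7$ and only borderline for $n=6$. A secondary difficulty is the sharp identification of the blowup constant in case (ii): once the solution leaves the soliton tube one must show that, in self-similar variables, it is attracted to the flat profile --- that is, invoke a suitable form of the stability of ODE-type blowup for the semilinear heat equation.
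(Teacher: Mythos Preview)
This theorem is not proved in the present paper: it is quoted as Theorem~1.1 of Collot--Merle--Rapha\"el \cite{Collot-Merle-Raphael} and serves only as background for the paper's own result (Theorem~\ref{THEOREM_2}), which extends the classification to $n=6$. There is therefore no proof here to compare against directly. That said, the paper's argument for $n=6$ (Sections~\ref{section_3}--\ref{section_4}) follows the Collot--Merle--Rapha\"el template closely, and your outline is broadly faithful to that strategy: the modulation decomposition \eqref{EQUATION_e3.1}, the energy-dissipation bound $\int(a^2+\|H_y\epsilon\|_2^2)\,ds\lesssim\eta^2$ (Lemma~\ref{LEMMA_3.1}), the Lyapunov inequality $\frac{d}{ds}(-H_y\epsilon,\epsilon)_2\le-\tfrac12\|H_y\epsilon\|_2^2+Ca^4$ (Lemma~\ref{LEMMA_3.10}), and the four-case split (c1)--(c4) according to whether the solution exits the tube and whether $T<\infty$, all match your sketch in spirit.

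Two points deserve correction. First, your treatment of the trapped case is confused: you invoke a shooting/intermediate-value argument to \emph{choose} $a(0)$ so that $a(s)\to0$. But the theorem is a classification, not a construction. What must be shown is that \emph{if} the solution stays in the tube forever, \emph{then} it necessarily converges to a soliton. The paper's argument for this (Proposition~\ref{PROPOSITION_4.1}, case~(c3)) is that energy dissipation forces $\int(a^2+\|\nabla_y\epsilon\|_2^2)\,ds<\infty$; one then picks a subsequence along which both vanish, and the differential inequalities for $a$ and $(-H_y\epsilon,\epsilon)_2$ propagate this to full convergence. No shooting enters.

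Second, for the blowup case your ``comparison with the spatially homogeneous ODE'' is too vague to be a proof. The actual mechanism (Section~\ref{section_4.4} here, and likewise in \cite{Collot-Merle-Raphael}) is a quantitative comparison with the explicit ancient solution ${\sf Q}^+$ of Lemma~\ref{LEMMA_2.9}: after the solution exits the inner tube with $a>0$, one shows that the rescaled solution $\hat u$ tracks ${\sf Q}^+$ with an $L^\infty$-small error $J$ all the way up to a time $\hat\sigma_1$ at which ${\sf Q}^+$ is already well inside the basin of attraction of nondegenerate ODE blowup, and then one invokes a separate stability theorem for that regime (\cite{Collot-Merle-Raphael_ODE} or \cite{Harada_ODE}). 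Your final sentence gestures at this, but the intermediate object ${\sf Q}^+$ --- not the flat profile --- is what makes the argument close.
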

 Let $\mathcal{M}$ be a set of all positive solutions of $-\Delta u=u^p$ on $\R^n$,
 which is written as
 \begin{align}
 \label{EQUATION_1.4}
 \mathcal{M}
 =
 \{\tfrac{1}{\lambda^\frac{n-2}{2}}{\sf Q}(\tfrac{x-\sigma}{\lambda})\in \dot H^1(\R^n);\lambda>0,\ \sigma\in\R^n\}.
 \end{align}
 In the case $p=\frac{n+2}{n-2}$,
 the space $\dot H^1(\R^n)$ is the scaling critical Sobolev space.
 Therefore
 Theorem \ref{THEOREM_1} remains true
 if the assumption \eqref{EQUATION_1.3} is replaced by
 \begin{align*}
 \inf_{v\in\mathcal{M}}\|u_0-v\|_{\dot H^1(\R^n)}
 <
 \eta.
 \end{align*}
 As indicated in Theorem \ref{THEOREM_1},
 the dynamics in the higher dimensional case are relatively simple.
 In contrast,
 the dynamics in lower dimensional cases are considerably more complex.
 \\[2mm]
 \noindent
 {\bf Type II blowup in the case $n=3,4,5,6$}

 In lower dimensions,
 solutions exhibiting various peculiar behaviors have been observed.
 In fact,
 for the case $3\leq n\leq 6$,
 there exist finite time blowup solutions satisfying
 \begin{align*}
 \lim_{t\to T}
 (T-t)^\frac{1}{p-1}
 \|u(t)\|_\infty
 =
 \infty
 \quad\
 \text{(type II)}.
 \end{align*}
 We say that the blowup is of type I if $\limsup_{t\to T}(T-t)^\frac{1}{p-1}\|u(t)\|_\infty<\infty$,
 and type II if it is not of type I.
 In the pioneering work \cite{Filippas-Herrero-Velazquez},
 Filippas-Herrero-Vel\'azquez
 predicted the existence of type II blowup solutions of \eqref{equation_1.1} when $p=\frac{n+2}{n-2}$,
 and gave the following list of blowup rates of their solutions.
 \begin{align}
 \label{EQUATION_1.5}
 \|u(t)\|_\infty
 &\sim
 \begin{cases}
 (T-t)^{-k} & n=3,
 \\
 (T-t)^{-k}|\log(T-t)|^\frac{2k}{2k-1} & n=4,
 \\
 (T-t)^{-k} & n=5,
 \\
 (T-t)^{-\frac{1}{4}}|\log(T-t)|^{-\frac{15}{8}} & n=6
 \end{cases}
 \qquad
 (k\in\N).
 \end{align}
 It is known that  the list \eqref{EQUATION_1.5} contains minor errors,
 which are pointed out in \cite{Harada_dim5}.
 The first rigorous proof for the existence of type II blowup solutions was given by Schweyer \cite{Schweyer}
 in dimension $n=4$.
 He constructed a type II blowup solution with $k=1$ based on a robust energy method.
 After his work,
 del Pino-Musso-Wei
 developed a new method, called the inner-outer gluing method
 to deal with type II blowup solutions for the critical parabolic equations
 (see \cite{Cortazar-del Pino-Musso}, \cite{Davila-del Pino-Wei}).
 Employing the gluing method, 
 del Pino, Musso, Wei, Zhang and Zhou succeeded in constructing most of the solutions listed in \eqref{EQUATION_1.5}.
 \begin{itemize}
 \item $n=3$ ($k\in\N$) \ by del Pino-Musso-Wei-Zhang-Zhou in \cite{delPino-Musso-Wei-Qidi-Yifu_dim3},
 \item $n=4$ ($k=1$) \ by del Pino-Musso-Wei-Zhou in \cite{delPino-Musso-Wei-Yifu_dim4}
 \item $n=5$ ($k=1$) \ by del Pino-Musso-Wei in \cite{delPino-Musso-Wei_dim5}
 \end{itemize}
 Motivated by \cite{delPino-Musso-Wei_dim5},
 the author constructed solutions for several cases not addressed above, using the gluing method.
 \begin{itemize}
 \item $n=5$ ($k\geq2$) \ by Harada in \cite{Harada_dim5}
 \item $n=6$ \ by Harada in \cite{Harada_dim6}
 \end{itemize}
 The final remaining case was settled by Li, Sun and Wang.
 \begin{itemize}
 \item $n=4$ ($k\geq2$) \ by Li-Sun-Wang in \cite{Li-Sun-Wang}
 \end{itemize}
 Furthermore
 the precise asymptotic behaviors of all the blowup solutions listed in \eqref{EQUATION_1.5}
 were derived in \cite{Filippas-Herrero-Velazquez},
 and these solutions are given by
 \begin{align}
 \label{EQUATION_1.6}
 u(x,t)
 =
 \frac{1}{\lambda(t)^{\frac{n-2}{2}}}
 {\sf Q}\left(\frac{x}{\lambda(t)}\right)
 +
 \text{g}(x,t)
 \quad\
 \text{with }
 \end{align}
 \begin{enumerate}[({g}1)]
 \item 
 $\dis\lim_{t\to T}\lambda(t)^\frac{n-2}{2}\|g(x,t)\|_{L_x^\infty(\R^n)}=0$ \
 for $n=\{3,4,5,6\}$,
 and
 \item
 define the energy by
 \begin{align}
 \label{equation_1.7}
 E[u]
 =
 \frac{1}{2}
 \int_{\R^n}
 |\nabla u(x)|^2
 dx
 -
 \frac{1}{p+1}
 \int_{\R^n}
 |u|^{p+1}
 dx.
 \end{align}
 Then
 we have
 \begin{align}
 \label{equation_1.8}
 \lim_{t\to T}E[u(t)]=
 \begin{cases}
 E[{\sf Q}]+e_\infty & 
 \text{for } n=3,4,5,\\
 -\infty
 & \text{for } n=6,
 \end{cases}
 \end{align}
 where $e_\infty$ is a certain constant satisfying $|e_\infty|\ll E[{\sf Q}]$.
 \end{enumerate}
 As shown in \eqref{equation_1.8},
 the behavior of the solution in $n=6$ is quite different from other cases $n=3$, $4$ and $5$
 among type II blowup solutions listed in \eqref{EQUATION_1.5}.
 \\[2mm]
 \noindent
 {\bf Long time behavior}

 In \cite{Fila-King},
 Fila and King studied large time behavior of solutions to \eqref{equation_1.1} with $p=\frac{n+2}{n-2}$.
 For any nonnegative, smooth function $\phi(x)$ with $\phi(x)\not\equiv0$,
 consider initial data of the form
 \begin{align}
 \label{equation_1.9}
 u_0(x)
 =
 \alpha
 \phi(x),
 \end{align}
 where $\alpha$ is a positive parameter.
 Define
 \begin{align*}
 \alpha_\phi^*
 =
 \sup
 \{
 \alpha>0;\
 &\text{the solution } u(x,t) \text{ of \eqref{equation_1.1}  starting from \eqref{equation_1.9}}
 \\
 &
 \text{is globally defined}
 \}.
 \end{align*}
 They predicted that
 if $\phi(x)\geq0$ satisfies
 \begin{align}
 \label{equation_1.10}
 \lim_{|x|\to\infty}
 |x|^\gamma
 \phi(x)
 =
 A
 \quad\
 \text{for some }
 A>0,
 \end{align}
 then
 the solution $u(x,t)$ starting from $u_0(x)=\alpha_\phi^*\phi(x)$
 is globally defined,
 and
 asymptotically behaves like
 \begin{align*}
 \lim_{t\to\infty}
 \frac{\|u(t)\|_\infty}{\varphi(t;n,\gamma)}
 =
 C
 \quad\
 \text{for some }
 C>0,
 \end{align*}
 where $\varphi(t;n,\gamma)$ represents the decay and growth rates of the solution,
 as shown in Table \ref{Table}.
 \begin{table}[h]
 \centering
 \renewcommand{\arraystretch}{1.5}
 \begin{tabular}{p{2.5em}|p{6.5em}|p{7.5em}|p{4em}|c}
 & \centering $\frac{1}{2}<\gamma<2$ & \centering $\gamma=2$ & \centering $\gamma>2$ &
 \\ \hline
 \centering $n=3$ & \centering $\varphi=t^\frac{\gamma-1}{2}$ & \centering $\varphi=t^{-\frac{1}{2}}(\log t)^{-1}$
 & \centering $\varphi=t^{-\frac{1}{2}}$ &
 \\ \hline
 \end{tabular}
 \\[4mm]
 \begin{tabular}{p{2.5em}|p{6.5em}|p{7.5em}|p{4em}|c}
 &  \centering $1<\gamma<2$ &  \centering $\gamma=2$ & \centering $\gamma>2$ &
 \\ \hline
 \centering $n=4$ &  \centering $\varphi=t^{-\frac{2-\gamma}{2}}\log t$ & \centering $\varphi=1$ &
 \centering $\varphi=\log t$ &
 \\ \hline
 \end{tabular}
 \\[4mm]
 \begin{tabular}{p{2.5em}|p{6.5em}|p{7.5em}|p{4em}|c}
 & \centering $\frac{3}{2}<\gamma<2$ & \centering $\gamma=2$ & \centering $\gamma>2$ &
 \\ \hline
 \centering $n=5$ & \centering $\varphi=t^{-\frac{3(2-\gamma)}{2}}$ & \centering $\varphi=(\log t)^{-3}$
 & \centering $\varphi=1$ &
 \\ \hline
 \end{tabular}
 \\[4mm]
 \begin{tabular}{p{2.5em}|p{6.5em}|p{7.5em}|p{4em}|c}
 & \centering $\gamma>\frac{n-2}{2}$ & & & \\ \hline
 \centering $n\geq6$ & \centering $\varphi=1$ & & & \\ \hline
 \end{tabular}
 \caption{Fila-King conjecture \cite{Fila-King}}
 \label{Table}
 \end{table}

 \noindent
 Most of cases in Table \ref{Table} have been affirmatively solved by
 Del Pino, Musso, Wei, Li, Zhang and Zhou.
 They constructed solutions which behave as described in Table \ref{Table}
 using the gluing method.
 \begin{itemize}
 \item $n=3$ ($\gamma>1$) \ by del Pino-Musso-Wei in \cite{delPino-Musso-Wei_dim3_long}
 \item $n=4$ ($\gamma>2$) \ by Wei-Zhang-Zhou in \cite{Wei-Qidi-Yifu}
 \item $n=4$ ($1<\gamma\leq2$) \ by Wei-Zhou in \cite{Wei-Yifu}
 \item $n=5$ ($\gamma>\frac{3}{2}$) \ by Li-Wei-Zhang-Zhou in \cite{Li-Wei-Qidi-Yifu}
 \item $n=6$ ($\gamma>2$) \ by Wei-Zhou in \cite{Wei-Yifu}
 \end{itemize}
 In \cite{Wei-Yifu},
 Wei-Zhou did not include the full proofs for the remaining cases $n=3$ with $\frac{1}{2}<\gamma\leq1$ and
 $n\geq7$ with $\gamma>\frac{n-2}{2}$, but they provided remarks on these cases.
 All solutions described in Table \ref{Table} satisfy
 \begin{align*}
 \lim_{t\to\infty}
 t^{\frac{1}{p-1}}\|u(t)\|_\infty
 =
 \infty.
 \end{align*}
 Therefore
 these solutions are not selfsimilar.
 The asymptotic forms of these solutions are described by the rescaling of ${\sf Q}(x)$,
 as in the case of type II blowup,
 and are expressed as \eqref{EQUATION_1.6} with
 \begin{enumerate}[(g1)]
 \setcounter{enumi}{2}
 \item
 $\dis\lim_{t\to\infty}\lambda(t)^\frac{n-2}{2}\|g(x,t)\|_{L_x^\infty(\R^n)}=0$ and
 \item
 $\dis
 \lim_{t\to\infty}E[u(t)]
 =
 E[{\sf Q}]$.
 \end{enumerate}
 {\bf Classification in the case $n=6$}

 The type II blowup described in \eqref{EQUATION_1.5} and the long time behavior presented in Table \ref{Table}
 provide the counterexamples to Theorem \ref{THEOREM_1} for $n=3$, $4$ and $5$.
 Therefore
 the same type of classification result as in Theorem \ref{THEOREM_1} cannot be expected in these dimensions.
 However,
 the case $n=6$ is particularly delicate.
 According to the asymptotic formula \eqref{equation_1.8},
 the solution in \eqref{EQUATION_1.5} for $n=6$
 seems unlikely to satisfy the assumption \eqref{EQUATION_1.3} in Theorem \ref{THEOREM_1}.
 In other words, the solution for $n=6$ might not be a counterexample.
 Indeed,
 our main result presented below shows that the same classification as in Theorem \ref{THEOREM_1} still holds for the case $n=6$.
 \begin{thm}
 \label{THEOREM_2}
 Let $n=6$ and $p=\frac{n+2}{n-2}$.
 There exists a positive constant $\eta$ such that
 if the initial data $u_0(x)\in H^1(\R^n)$ satisfies
 \[
 \|u_0(x)-{\sf Q}(x)\|_{\dot H^1(\R^n)}<\eta,
 \]
 then the corresponding solution $u(x,t)\in C([0,T);H^1(\R^n))$
 to \eqref{equation_1.1} follows one of the three regimes.
 \begin{enumerate}[\rm 1)]
 \item
 There exist $\lambda_\infty>0$ and $z_\infty\in\R^n$ such that
 \begin{align*}
 \lim_{t\to\infty}
 \left\|
 u(x,t)
 -
 {\sf Q}\left(\frac{x-z_\infty}{\lambda_\infty}\right)
 \right\|_{\dot H_x^1(\R^n)}
 =0.
 \end{align*}
 \item
 The solution is global and satisfies
 \begin{align*}
 \lim_{t\to\infty}
 (\|u(t)\|_{\dot H^1(\R^n)}+\|u(t)\|_{L^\infty(\R^n)})
 =0.
 \end{align*}
 \item
 The solution blows up in finite time $T>0$
 and satisfies
 \begin{align*}
 \sup_{t\in(0,T)}
 (T-t)^\frac{1}{p-1}
 \|u(t)\|_{L^\infty(\R^n)}
 <
 \infty.
 \end{align*}
 \end{enumerate}
 \end{thm}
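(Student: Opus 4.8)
The plan is to adapt the Collot--Merle--Rapha\"el modulation/energy framework to $n=6$, accounting for the fact that in six dimensions the nonlinearity $|u|^{p-1}u = u^{7/3}$ is only $C^{2}$ (in fact $C^{2,1/3}$), so several of the high-order expansions used for $n\geq 7$ must be replaced by arguments that survive with limited regularity. First I would set up the geometric decomposition
\begin{align*}
u(x,t)
=
\frac{1}{\lambda(t)^{(n-2)/2}}
\bigl({\sf Q}+\varepsilon\bigr)\!\left(\frac{x-z(t)}{\lambda(t)},s\right),
\qquad
\frac{ds}{dt}=\frac{1}{\lambda(t)^2},
\end{align*}
imposing orthogonality conditions on $\varepsilon$ against the kernel of the linearized operator $L=-\Delta-p{\sf Q}^{p-1}$ and its generalized kernel (the scaling direction $\Lambda{\sf Q}$ and the translations $\partial_{x_i}{\sf Q}$), together with the associated modulation equations for $\lambda_s/\lambda$ and $z_s$. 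The key structural fact I would exploit is the coercivity of $L$ on the orthogonal complement in $\dot H^1$, combined with the subcoercive correction coming from the dissipative term; the energy $E[u]$ and the $L^2$-type localized norms provide the Lyapunov functionals.

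The heart of the argument is a bootstrap/trapping scheme on a shrinking tube around $\mathcal{M}$. I would introduce the scalar quantity $b(s)$ measuring $-\lambda_s/\lambda$ (equivalently, the projection of the dynamics onto the instability direction), derive the reduced ODE system $b_s \approx c_0 b^2 + (\text{error})$, $(\lambda_s/\lambda) \approx -b$, and show a dichotomy depending on the sign of $b$: if $b$ stays positive the solution contracts ($\lambda\to 0$, a type I blow-up regime, regime 3); if $b$ becomes negative the solution is ejected from the tube, and one must then show (via a variational/concentration-compactness argument à la Kenig--Merle, using that $u_0$ is close to the ground state and hence $E[u]\approx E[{\sf Q}]$ with subcritical or slightly supercritical mass) that it either relaxes to a rescaled soliton (regime 1) or scatters to $0$ in $\dot H^1$ and $L^\infty$ (regime 2); and the threshold case $b\equiv 0$, handled by a topological (Brouwer-type) shooting argument, yields the soliton regime as well. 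Throughout, the $\varepsilon$-equation must be controlled in a norm strong enough to close the modulation estimates but weak enough to tolerate the $C^{2,1/3}$ nonlinearity — this is where the six-dimensional analysis genuinely differs from $n\geq 7$.

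The main obstacle I expect is precisely the borderline regularity/decay at $n=6$: the ground state ${\sf Q}$ decays like $|x|^{-(n-2)}=|x|^{-4}$, so ${\sf Q}\in L^2(\R^6)$ only barely (it is, with room to spare, but $\Lambda{\sf Q}$ and the slowly-decaying modes of $L$ sit right at the Hardy/Sobolev threshold), and the logarithmic corrections visible in \eqref{EQUATION_1.5} for $n=6$ signal that the naive energy estimates produce borderline-divergent integrals. Concretely, the error terms in the modulation equations and in the Lyapunov monotonicity will carry $\log(1/\lambda)$ or $\log s$ factors that must be absorbed by a carefully weighted norm (most likely an $H^1$-level norm with a $\lambda$-dependent or logarithmically-corrected weight, which is why the theorem is stated with $u_0\in H^1(\R^6)$ rather than merely $\dot H^1$). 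Handling these logarithmic losses — choosing the correct approximate profile including its first logarithmic correction, and showing the bootstrap constants still close — is the technical core; once that is in place, the trichotomy and the topological argument follow the $n\geq 7$ template of \cite{Collot-Merle-Raphael} with only notational changes.
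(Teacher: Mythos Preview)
Your proposal contains a computational error and, more importantly, misidentifies the key mechanism. For $n=6$ one has $p=\frac{n+2}{n-2}=2$, not $7/3$; the nonlinearity is $|u|u$, and its limited smoothness plays no role. The genuine obstruction is that $\Lambda_y{\sf Q}\notin L^{2n/(n+2)}(\R^6)$, so the estimate $|(\epsilon,\Lambda_y{\sf Q})_2|\lesssim\|\epsilon\|_{2n/(n-2)}$ used in \cite{Collot-Merle-Raphael} to control $\lambda$ fails logarithmically. Your proposed remedy --- refining the profile by a logarithmic correction and closing in a log-weighted norm --- is not what works here and would not close: the divergence sits in a linear pairing, not in a nonlinear error.

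The paper's actual fix is algebraic and quite different. First, an elementary integration by parts (Lemma~\ref{LEMMA_3.3}) gives $(\Lambda_y\epsilon,\Lambda_y{\sf Q})_2=\tfrac{n-6}{2}(\epsilon,\Lambda_y{\sf Q})_2+(\epsilon,R)_2$ with $R$ rapidly decaying; at $n=6$ the dangerous term \emph{vanishes identically}, so the modulation equation for $\log\lambda$ (Lemma~\ref{LEMMA_3.4}) closes without loss. Second, to bound $(\epsilon,\Lambda_y{\sf Q})_2$ itself one derives a differential inequality for $\|\epsilon\|_{L^2}$ (Lemmas~\ref{LEMMA_3.5}--\ref{LEMMA_3.6}), which is why $u_0\in H^1$ rather than $\dot H^1$ is assumed --- not for weighted norms, but simply so that $\epsilon\in L^2$. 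These two ingredients give uniform two-sided bounds on $\lambda$ whenever the solution stays in the tube, which immediately rules out case (c1) and forces (c3) to converge to a soliton. The exit scenarios (c2), (c4) are then handled not by Kenig--Merle concentration-compactness or a Brouwer argument, but by reading off the sign of the unstable-mode amplitude $a(T_2)$: if $a(T_2)<0$ one invokes Ishiwata's variational result to get dissipation; if $a(T_2)>0$ one compares with the explicit ancient solution ${\sf Q}^+$ of Lemma~\ref{LEMMA_2.9} and applies ODE-blowup stability.
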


 \begin{rem}
 In this theorem,
 a stronger condition on the initial data is imposed compared to Theorem {\rm\ref{THEOREM_1}},
 namely $u_0\in L^2(\R^n)$.
 Nevertheless,
 the smallness of $\|u_0(x)-{\sf Q}(x)\|_{L^2(\R^n)}$ is not required.
 \end{rem}
 \noindent
 {\bf Strategy and difficulties for the case $n=6$}

 The strategy of our proof is based on the method presented in {\rm\cite{Collot-Merle-Raphael}}.
 Let $\mathcal{M}$ be the set give in \eqref{EQUATION_1.4},
 and define
 \begin{align*}
 \text{dist}_{\dot H^1(\R^n)}
 (u,\mathcal M)
 =
 \inf_{v\in\mathcal M}
 \|u-v\|_{\dot H^1(\R^n)}
 \end{align*}
 and
 \begin{align*}
 \mathcal{U}_{\eta}
 =
 \{u(x)\in\dot H^1(\R^n);\text{\rm dist}_{\dot H^1(\R^n)}(u(x),\mathcal{M})<\eta\}.
 \end{align*}
 We fix parameters $\eta$ and $\bar\eta$ such that $0<\eta\ll\bar\eta\ll1$,
 and assume that
 \begin{align*}
 u_0\in\mathcal{U}_\eta.
 \end{align*}
 Let $T$ be the maximal existence time of $u(x,t)$.
 There are four possible cases for the dynamics of solution $u(x,t)$.
 \vspace{2mm}
 \begin{enumerate}[(c1)]
 \item
 \label{(c1)}
 $T<\infty$ and $\text{dist}_{\dot H^1(\R^n)}(u(t),\mathcal{M})<\bar\eta$ for $t\in(0,T)$,

 \item
 \label{(c2)}
 $T<\infty$ and there exists $T_1\in(0,T)$ such that $\text{dist}_{\dot H^1(\R^n)}(u(T_1),\mathcal{M})=\bar\eta$,
 
 \item
 \label{(c3)}
 $T=\infty$
 and $\text{dist}_{\dot H^1(\R^n)}(u(t),\mathcal{M})<\bar\eta$ for $t\in(0,\infty)$,

 \item
 \label{(c4)}
 $T=\infty$
 and there exists $T_1\in(0,T)$ such that $\text{dist}_{\dot H^1(\R^n)}(u(T_1),\mathcal{M})=\bar\eta$.
 \end{enumerate}
 \vspace{2mm}
 The asymptotic behavior of the solution in both cases,
 (c2) and (c4),
 can be fully determined in the same way as in \cite{Collot-Merle-Raphael},
 with only minor modifications.
 As a result, (c2) corresponds to 3) in Theorem \ref{THEOREM_2},
 while (c4) corresponds to 2) in Theorem \ref{THEOREM_2}.
 The main contribution of our work lies in the analysis of (c1) and (c3).
 Let $\mathcal{Y}(y)$ denote the eigenfunction corresponding to the unstable eigenvalue of the linearized operator
 $H_y=\Delta_y+p{\sf Q}(y)^{p-1}$ on $\R^n$
 (see Proposition \ref{PROPOSITION_2.4} on p.\pageref{PROPOSITION_2.4} for details).
 In both cases (c1) and (c3),
 a solution $u(x,t)$ can be decomposed as
 \begin{align*}
 u(x,t)
 =
 \lambda(t)^{-\frac{n-2}{2}}
 \{
 {\sf Q}(y)
 +
 a(t)
 \mathcal{Y}(y)
 +
 \epsilon(y,t)
 \}
 \end{align*}
 with
 $y=\frac{x-z(t)}{\lambda(t)}$
 for $t\in(0,T)$,
 where $\epsilon(y,s)$ represents a remainder satisfying certain orthogonal conditions.
 To determine the dynamics of the solution,
 we analyze the coupled system of differential equations for
 $\lambda(t)$,
 $a(t)$,
 $z(t)$ and
 $\epsilon(y,t)$.
 In this step,
 it is necessary to handle the term $(\epsilon,\Lambda_y{\sf Q})_2$ to derive the bound for $\lambda(t)$.
 In \cite{Collot-Merle-Raphael}, for the case $n\geq7$,
 it can be treated as
 \begin{align*}
 |(\epsilon,\Lambda_y{\sf Q})_2|
 <
 \|\epsilon\|_\frac{2n}{n-2}
 \|\Lambda_y{\sf Q}\|_\frac{2n}{n+2}.
 \end{align*}
 Here
 they used the fact that
 $\|\Lambda_y{\sf Q}\|_\frac{2n}{n+2}<\infty$ if $n\geq7$.
 The author believes this part to be essential rather than merely technical.
 To overcome this difficulty,
 we additionally introduce a differential equation for $\|\epsilon(t)\|_2$.
 As a result,
 the estimates for $\lambda(t)$, $a(t)$, $z(t)$ and $\epsilon(y,t)$
 form a closed system, allowing us to obtain the desired bounds (see Section \ref{section_3}).
 Here,
 we impose the additional assumption that $u_0\in L^2(\R^n)$.
 Finally,
 it is shown that (c3) corresponds to 1) in Theorem \ref{THEOREM_2},
 while (c1) does not occur.
 \ \\[2mm]
 \noindent
 {\bf Bubbling tower in the case $n\geq6$}

 Del pino, Musso and Wei in \cite{delPino-Musso-Wei_bubble} found a new type of global solutions in the case $n\geq6$ such that
 \begin{align}
 \label{equation_1.11}
 \lim_{t\to\infty}
 \|u(t)\|_\infty=\infty.
 \end{align}
 These solutions look like a tower of bubbles.
 \begin{align}
 \label{equation_1.12}
 u(x,t)
 =
 \sum_{j=1}^k
 (-1)^{j-1}
 \lambda_j(t)^{-\frac{n-2}{2}}
 {\sf Q}\left( \frac{x}{\lambda_j(t)} \right)
 +
 g(x,t).
 \end{align}
 They proved in \cite{delPino-Musso-Wei_bubble} that,
 for any $n\geq6$ and $k\geq2$,
 there exists a radially symmetric solution $u(x,t)$ of \eqref{equation_1.1}
 that possesses a profile of the form \eqref{equation_1.12} with the following properties
 (p1) - (p5).
 \begin{enumerate}[(p1)]
 \item Estimate \eqref{equation_1.11} holds,
 \item ${\dis\lim_{t\to\infty}}\frac{\lambda_j(t)}{\lambda_{j+1}(t)}=0$ \ for
 $1\leq j\leq k-1$ \
 and $\dis\lim_{t\to\infty}\lambda_1(t)=1$,
 \item $\dis\lim_{t\to\infty}\lambda_k(t)^\frac{n-2}{2}\|g(x,t)\|_{L_x^\infty(\R^n)}=0$,
 \item $\dis\lim_{t\to\infty}\|g(x,t)\|_{\dot H^1(\R^n)}=0$,
 \item $\dis\lim_{t\to\infty}E[u(t)]=kE[{\sf Q}]$,
 where $E[u]$ is the energy functional defined in \eqref{equation_1.7}.
 \end{enumerate}
 In the case $n\geq6$,
 a single bubble solution
 corresponding to the case $k=1$ with \eqref{equation_1.11}
 is ruled out by Theorem \ref{THEOREM_1} and Theorem \ref{THEOREM_2}.
 Very recently,
 Kim and Merle
 obtained a complete classification of $\dot H^1(\R^n)$ bounded
 radial solutions to \eqref{equation_1.1} in $n\geq7$.
 They proved in \cite{Kim-Merle} that,
 when $n\geq7$,
 any radial solution $u(x,t)$ of \eqref{equation_1.1} satisfing
 \begin{align}
 \label{equation_1.13}
 \dis\sup_{t\in(0,T)}\|u(t)\|_{\dot H^1(\R^n)}<\infty,
 \end{align}
 where $T\in(0,\infty]$ denotes the maximal existence time of $u(x,t)$,
 is globally defined and asymptotically behaves like one of the solutions
 obtained in \cite{delPino-Musso-Wei_bubble} as $t\to\infty$.

 \section{Preliminary}
 \label{SECTION_2}
 \subsection{Notations}
 \label{SECTION_2.1}
 Throughout this paper,
 we write
 \begin{itemize}
 \item 
 $\R_+=(0,\infty)$,
 \item
 $p=\tfrac{n+2}{n-2}$,
 \item
 $f(u)=|u|^{p-1}u$,
 \item
 $\chi(\xi)\in C^\infty(\R)$ stands for a standard cut off function satisfying
 \begin{align}
 \label{EQUATION_2.1}
 \chi(\xi)=
 \begin{cases}
 1 & \text{if } |\xi|<1,\\
 0 & \text{if } |\xi|>2.
 \end{cases}
 \end{align}
 \end{itemize}
 Let ${\sf Q}(x)$ be the positive solution to $-\Delta_x{\sf Q}={\sf Q}^p$ given by
 \[
 {\sf Q}(x)
 =
 (1+\tfrac{|x|^2}{n(n-2)})^{-\frac{n-2}{2}}.
 \]

 \subsection{Sobolev spaces}
 \label{SECTION_2.2}
 We recall definition of standard Sobolev spaces.
 \begin{align*}
 \dot H^1(\R^n)
 &=
 \{u\in L^\frac{2n}{n-2}(\R^n);D_xu\in L^2(\R^n)\}
 \quad
 (n\geq3),
 \\
 \dot H^2(\R^n)
 &=
 \{u\in L^\frac{2n}{n-4}(\R^n);D_xu\in L^\frac{2n}{n-2}(\R^n), D_x^2u\in L^2(\R^n)\}
 \quad
 (n\geq5),
 \\
 \dot H^3(\R^n)
 &=
 \{u\in L^\frac{2n}{n-6}(\R^n);D_xu\in L^\frac{2n}{n-4}(\R^n), D_x^2u\in L^\frac{2n}{n-2}(\R^n),
 \\
 &\qquad
 D_x^3u\in L^2(\R^n)\}
 \quad
 (n\geq7).
 \end{align*}
 We first collect the classical Hardy inequalities.
 \begin{lem}[Hardy inequalities, Theorem 13 in \cite{Davies-Hinz} p.\,521]
 \label{LEMMA_2.1}
 There exist three constants $C_1,C_2$ and $C_3>0$ depending only on $n$ such that
 \begin{align*}
 \int_{\R^n}
 \frac{u(x)^2}{|x|^2}
 dx
 &<
 C_1
 \|\nabla u\|_{L^2(\R^n)}^2
 \quad
 \text{ \rm for }
 u\in \dot H^1(\R^n)
 \quad
 (n\geq3),
 \\
 \int_{\R^n}
 \frac{u(x)^2}{|x|^4}
 dx
 &<
 C_2
 \|\Delta u\|_{L^2(\R^n)}^2
 \quad
 \text{ \rm for }
 u\in \dot H^2(\R^n)
 \quad
 (n\geq5),
 \\
 \int_{\R^n}
 \frac{u(x)^2}{|x|^6}
 dx
 &<
 C_3
 \|\nabla \Delta u\|_{L^2(\R^n)}^2
 \quad
 \text{ \rm for }
 u\in \dot H^3(\R^n)
 \quad
 (n\geq7).
 \end{align*}
 \end{lem}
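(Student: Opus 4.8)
Although the lemma is quoted from \cite{Davies-Hinz}, it admits a short self-contained derivation, and the plan is to prove the three estimates in order, bootstrapping each one from the previous. First I would reduce, by density, to $u\in C_c^\infty(\R^n\setminus\{0\})$: a standard truncation at infinity handles the tail, and multiplying by $1-\chi(x/\varepsilon)$ and letting $\varepsilon\to0$ removes a neighbourhood of the origin, because the resulting error terms carry a factor $\varepsilon^{n-2}$ (resp.\ $\varepsilon^{n-4}$, $\varepsilon^{n-6}$) that vanishes exactly in the stated ranges $n\geq3$ (resp.\ $n\geq5$, $n\geq7$). For such $u$ all the weighted integrals below are finite, so the ``divide by $(\cdot)^{1/2}$'' manipulations are legitimate; the general case then follows by Fatou on the left and continuity of the right-hand norms.

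\textbf{Step 1 (first inequality, $n\geq3$).} From the pointwise identity $\nabla\cdot(x|x|^{-2})=(n-2)|x|^{-2}$, one integration by parts and the Cauchy--Schwarz inequality give
\[
\int_{\R^n}\frac{u^2}{|x|^2}\,dx
=-\frac{2}{n-2}\int_{\R^n}u\,\nabla u\cdot\frac{x}{|x|^2}\,dx
\leq\frac{2}{n-2}\Bigl(\int_{\R^n}\frac{u^2}{|x|^2}\,dx\Bigr)^{1/2}\|\nabla u\|_{L^2(\R^n)},
\]
which yields the first inequality with $C_1=4/(n-2)^2$.

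\textbf{Step 2 (second inequality, $n\geq5$).} Applying Step 1 to each $\partial_j u$ and summing gives $\int|\nabla u|^2|x|^{-2}\,dx\leq C_1\|D^2u\|_{L^2}^2$, and on $\R^n$ one has $\|D^2u\|_{L^2}^2=\|\Delta u\|_{L^2}^2$ by integration by parts. It then remains to pass from $\int|\nabla u|^2|x|^{-2}$ to $\int u^2|x|^{-4}$: using $\nabla\cdot(x|x|^{-4})=(n-4)|x|^{-4}$, whose coefficient is positive precisely for $n\geq5$, and repeating the integration-by-parts/Cauchy--Schwarz argument,
\[
\int_{\R^n}\frac{u^2}{|x|^4}\,dx
\leq\frac{4}{(n-4)^2}\int_{\R^n}\frac{|\nabla u|^2}{|x|^2}\,dx
\leq\frac{4C_1}{(n-4)^2}\,\|\Delta u\|_{L^2}^2=:C_2\|\Delta u\|_{L^2}^2.
\]

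\textbf{Step 3 (third inequality, $n\geq7$).} One more level: since $\nabla\cdot(x|x|^{-6})=(n-6)|x|^{-6}$ has positive coefficient for $n\geq7$, the same argument gives $\int u^2|x|^{-6}\,dx\leq\frac{4}{(n-6)^2}\int|\nabla u|^2|x|^{-4}\,dx$, while applying Step 2 to each $\partial_j u$ and summing gives $\int|\nabla u|^2|x|^{-4}\,dx\leq C_2\sum_j\|\partial_j\Delta u\|_{L^2}^2=C_2\|\nabla\Delta u\|_{L^2}^2$. Chaining the two bounds proves the third inequality with $C_3=4C_2/(n-6)^2$.

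\textbf{Where the work is.} The algebra is routine (each step is a two-line divergence identity plus Cauchy--Schwarz); the only point requiring genuine care is the approximation step, i.e.\ verifying that $C_c^\infty(\R^n\setminus\{0\})$ is dense in the relevant homogeneous Sobolev space in each dimension range — this is exactly where the thresholds $n\geq3,\,5,\,7$ enter — and that all the intermediate weighted integrals are a priori finite on this class, so that every division by $(\cdot)^{1/2}$ is justified before one takes limits.
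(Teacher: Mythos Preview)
Your argument is correct: the divergence identities $\nabla\cdot(x|x|^{-2k})=(n-2k)|x|^{-2k}$ together with Cauchy--Schwarz give the sharp constants at each level, the bootstrap from one level to the next via $\int|\nabla u|^2|x|^{-2(k-1)}$ is clean, and the use of $\|D^2u\|_{L^2}=\|\Delta u\|_{L^2}$ on $\R^n$ (and its iterate $\Delta\partial_j=\partial_j\Delta$) is exactly what is needed to land on the right-hand norms $\|\Delta u\|_2$ and $\|\nabla\Delta u\|_2$. The density reduction to $C_c^\infty(\R^n\setminus\{0\})$ is also the right way to justify the manipulations, and your remark that the cutoff near the origin produces errors of order $\varepsilon^{n-2k}$ correctly identifies the dimensional thresholds.

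The paper, however, does not prove this lemma at all: it is stated with a direct citation to Theorem~13 of Davies--Hinz \cite{Davies-Hinz} and used as a black box. So there is nothing to compare against; your write-up simply supplies a short self-contained proof where the paper is content to quote the literature.
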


 \subsection{Local solvability of solutions to $u_t=\Delta u+|u|^{p-1}u$ for $p=\frac{n+2}{n-2}$}
 \label{SECTION_2.3}
 We recall the solvability results for $u_t=\Delta u+|u|^{p-1}u$ establish in \cite{Weissler,Brezis-Cazenave}.
 Since our setting differs slightly from theirs,
 for completeness, we give a brief proof.
 \begin{pro}[Local solvability of $u_t=\Delta u+|u|^{p-1}u$]
 \label{PROPOSITION_2.2}
 Let $n\geq3$ and $p=\frac{n+2}{n-2}$,
 and let $X$ denote either $\dot H^1(\R^n)$ or $H^1(\R^n)$.
 For any $u_0\in X$,
 there exist $T=T(u_0)>0$ and a unique solution $u(x,t)$ of \eqref{equation_1.1} satisfying {\rm (e1) - (e3)}.
 \begin{enumerate}[\rm(e1)]
 \item $u(t)\in C([0,T);X)$,
 \item $u(x,t)\in C^{2,1}(\R^n\times(0,T))$,
 \item $u(x,t)\in L^\infty(\R^n\times(T_1,T_2))$ { \rm for any} $0<T_1<T_2<T$.
 \end{enumerate}
 Furthermore
 let
 \begin{align*}
 \Theta
 =
 \{(0,\alpha);|\alpha|\leq3\}
 \cup
 \{(1,\alpha);|\alpha|\leq2\}
 \cup
 \{(2,0)\}.
 \end{align*}
 The above solution $u(x,t)$ additionally satisfies {\rm(e4) - (e7)}.
 \begin{enumerate}[\rm(e1)]
 \setcounter{enumi}{3}
 \item
 $\pa_t^kD_x^\alpha u(x,t)\in C(\R^n\times(0,T))$
 { \rm  for}
 $(k,\alpha)\in\Theta$,
 \item
 $\pa_t^kD_x^\alpha u(x,t)\in L^\infty(\R^n\times(T_1,T_2))$
 { \rm for any} $0<T_1<T_2<T$
 {\rm  and}
 $(k,\alpha)\in\Theta$,
 \quad
 \item $u_t(t)\in C((0,T);H^1(\R^n))$,
 \item
 $D_x^{\alpha}u(t)\in C((0,T);L^2(\R^n))$
 { \rm for}
 $2\leq|\alpha|\leq3$.
 \end{enumerate}
 \end{pro}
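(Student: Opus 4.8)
The plan is to prove Proposition~\ref{PROPOSITION_2.2} by a standard fixed point argument in a suitable Strichartz-type space adapted to the energy-critical exponent, followed by a parabolic bootstrap to upgrade regularity. First I would set up the integral formulation $u(t)=e^{t\Delta}u_0+\int_0^t e^{(t-s)\Delta}f(u(s))\,ds$ and work in the Banach space $E_T=C([0,T];X)\cap L^q((0,T);L^r(\R^n))$ for the admissible pair with $r=\frac{2n}{n-2}$ (so that $f(u)\in L^{q'}L^{r'}$ by H\"older, using $p=\frac{n+2}{n-2}$), estimating the Duhamel term via the smoothing properties of the heat semigroup: $\|e^{t\Delta}\|_{L^{r'}\to L^r}\lesssim t^{-\frac{n}{2}(1/r'-1/r)}$ together with the analogous gradient estimate on $\dot H^1$. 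The nonlinearity is locally Lipschitz from $L^r$ to $L^{r'}$ on bounded sets because $|f(u)-f(v)|\lesssim(|u|^{p-1}+|v|^{p-1})|u-v|$ and $p-1=\frac{4}{n-2}$; a contraction on a small ball of $E_T$ for $T=T(\|u_0\|_X)$ small then yields existence and uniqueness satisfying (e1). When $X=H^1$ the $L^2$ part is carried along with no smallness needed, matching the Remark.

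Next I would establish the smoothing/regularity statements (e2)--(e7). The key point is that for $t>0$ the solution lies in $L^\infty_{\mathrm{loc}}$: one shows $u\in L^\infty((\tau,T_2)\times\R^n)$ for every $\tau>0$ by iterating the Duhamel formula and using $e^{t\Delta}:L^r\to L^\infty$ gains, or by a De Giorgi--Nash--Moser/$L^p$-bootstrap argument since $f(u)=|u|^{p-1}u$ and $u\in L^q_tL^r_x$; this gives (e3). Once $u\in L^\infty_{\mathrm{loc}}$, the equation becomes a semilinear heat equation with bounded, H\"older-continuous right-hand side, and classical parabolic Schauder theory (interior estimates applied on $\R^n\times(\tau,T_2)$ after freezing) yields $u\in C^{2,1}$, hence (e2), and the full list of derivatives $\partial_t^k D_x^\alpha u$ for $(k,\alpha)\in\Theta$ by differentiating the equation and bootstrapping: each spatial derivative of $f(u)$ involves lower-order derivatives times powers of $u$, all controlled on compact time intervals, giving (e4)--(e5). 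For (e6)--(e7) one differentiates the Duhamel formula in $x$ and uses that $D_x f(u)=p|u|^{p-1}D_x u$ together with $u\in L^\infty_{\mathrm{loc}}$, $D_x u\in C((0,T);L^2)$, propagating $H^1$ regularity of $u_t$ (which solves the linearized equation) and the $L^2$ bounds on $D_x^\alpha u$ for $|\alpha|\le 3$ via repeated parabolic smoothing from positive times.

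The main obstacle I expect is the borderline nature of the critical exponent in the contraction estimate: unlike the subcritical case there is no room to absorb a positive power of $T$ purely from H\"older in time on the Strichartz norm, so the contraction must be set up carefully --- either by exploiting continuity of $t\mapsto u(t)$ in $L^r$ near $t=0$ to make $\|u\|_{L^qL^r}$ on $(0,T)$ small (which is legitimate since $u_0\in X\hookrightarrow L^r$ and $e^{t\Delta}u_0\to u_0$ in $L^r$), or by closing the argument in a mixed norm where the $L^q_t$ exponent is strictly admissible so that a genuine $T^\theta$ factor appears. I would follow the first route, as in \cite{Weissler,Brezis-Cazenave}, carefully checking the small-data-in-time smallness is uniform for $u_0$ ranging over a compact set in $X$ so that $T=T(u_0)$ depends only on a neighborhood. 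A secondary subtlety is the passage from $L^r$ solvability to pointwise $C^{2,1}$ regularity on all of $\R^n\times(0,T)$ rather than just locally in space; here the spatial translation invariance of the equation and the fact that the interior Schauder constants are uniform in the center of the parabolic cylinder remove the difficulty, so (e2)--(e7) follow once (e3) is in hand.
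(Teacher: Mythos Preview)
Your proposal is essentially correct and follows the same two-stage strategy as the paper: a contraction argument for (e1)--(e3), then a parabolic bootstrap for (e4)--(e7). The one concrete difference is the choice of contraction space. You take $r=\tfrac{2n}{n-2}$ and rely on a mixed $L^q_tL^r_x$ norm, extracting smallness from $\|e^{t\Delta}u_0\|_{L^q((0,T);L^r)}\to 0$ as $T\to 0$. The paper instead follows \cite{Weissler,Brezis-Cazenave} more literally: it fixes $r\in\bigl(\tfrac{2n}{n-2},\tfrac{2n}{n-2}\cdot\tfrac{n+2}{n-2}\bigr)$ strictly above the critical Lebesgue exponent and works in the time-weighted space with norm $\sup_{t\in(0,T]}t^\alpha\|u(t)\|_{L^r}$, $\alpha=\tfrac{n}{2}(\tfrac{n-2}{2n}-\tfrac{1}{r})>0$, together with the condition $\lim_{t\to 0}t^\alpha\|u(t)\|_{L^r}=0$. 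The smallness then comes from this limit for the linear evolution. This buys a slightly cleaner nonlinear estimate (the time integral $\int_0^t(t-s)^{-n(p-1)/(2r)}s^{-\alpha p}ds$ is convergent and scale-invariant) and feeds directly into the $L^\infty$ bootstrap for (e3), since $r>\tfrac{2n}{n-2}$ already gives subcritical integrability. Your route works too but requires one extra iteration to reach $L^\infty$.

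For the regularity upgrade, your plan matches the paper's: it sets $w=\partial_{x_j}u$, observes that $w$ solves the linear equation $w_t=\Delta w+f'(u)w$ with $f'(u)$ locally bounded and H\"older (once (e3) is known), applies standard parabolic estimates to get $w\in C([t_1,t_2];H^1)\cap C^{2,1}$, and iterates. This is exactly your differentiate-and-bootstrap outline.
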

 \begin{proof}
 The existence part can be verified from standard contraction mapping arguments.
 Let $X$ denote either $\dot H^1(\R^n)$ or $H^1(\R^n)$.
 Fix $r\in(\frac{2n}{n-2},\frac{2n}{n-2}\frac{n+2}{n-2})$ and
 set $\alpha=\frac{n}{2}(\frac{n-2}{2n}-\frac{1}{r})>0$.
 For each $(T,M,\delta)\in(\R_+)^3$,
 we set
 \begin{align*}
 K_T(M,\delta)
 =
 \{&
 u(t)\in C([0,T];X)\cap C((0,T];L^r(\R^n));
 \\
 &
 \sup_{t\in[0,T]}\|u(t)\|_{X}\leq M,\
 \lim_{t\to0}t^\alpha\|u(t)\|_{L^r(\R^n)}=0
 \\
 &
 \sup_{t\in[0,T]}t^\alpha\|u(t)\|_{L^r(\R^n)}\leq\delta
 \}.
 \end{align*}
 We define the metric on $K_T=K_T(M,\delta)$ by
 \begin{align*}
 \text{dist}_{K_T}(u,v)
 =
 \sup_{t\in[0,T]}\|u(t)-v(t)\|_{X}
 +
 \sup_{t\in(0,T]}t^\alpha\|u(t)-v(t)\|_{L^r(\R^n)}.
 \end{align*}
 Then
 $(K_T(M,\delta),\text{dist}_{K_T})$ is a nonempty complete metric space for each $(T,M,\delta)\in(\R_+)^3$.
 For given $u_0\in X$,
 we put $M=2\|u_0\|_{X}$
 and define
 \begin{align*}
 \Phi(u)(t)
 =
 T(t)[u_0]
 +
 \int_0^t
 T(t-s)
 [f(u(s))]
 ds
 \end{align*}
 for $u(t)\in K_T(M,\delta)$,
 where $\{T(t)\}_{t>0}$ represents the linear heat semigroup.
 Using the same computations as in Section 4 of \cite{Brezis-Cazenave} p.\,283 - p.\,286,
 we can check that
 $\Phi(u)(t)$ gives a contraction mapping on $(K_T(M,\delta),\text{dist}_{K_T})$
 if $T$ and $\delta$ are sufficiently small.
 This shows the existence part of solutions satisfying (e1) - (e3).
 The uniqueness of solutions satisfying (e1) - (e3) follows from the argument in \cite{Brezis-Cazenave} p.\,286 - p.\,287.
 Put $w(x,t)=\pa_{x_j}u(x,t)$ and fix $t_1,t_2$ such that $0<t_1<t_2<T$.
 Then $w(x,t)$ satisfies
 \begin{align*}
 \begin{cases}
 w_t
 =
 \Delta w+f'(u(x,t))w & \text{for } (x,t)\in\R^n\times(t_1,t_2),
 \\
 w(x,t_1)=u_{x_j}(x,t_1)\in L^2(\R^n) & \text{for } x\in\R^n.
 \end{cases}
 \end{align*}
 Since $f(u)=|u|^{p-1}u$ ($p>1$),
 for any $M>0$,
 there exists $C_{p,M}>0$ such that
 \begin{align*}
 |f'(u_1)-f'(u_2)|
 &<
 C_{p.M}
 \begin{cases}
 |u_1-u_2| & \text{if } p\geq2,\\
 |u_1-u_2|^{p-1} & \text{if } p<2
 \end{cases}
 \\
 &\qquad
 \text{for any }
 u_1,u_2\in\{u\in\R;|u|<M\}.
 \end{align*}
 Hence
 by a standard parabolic estimate,
 we see that
 \begin{itemize}
 \item 
 $w(t)\in C([t_1,t_2];H^1(\R^n))$,
 \item
 $w(x,t)\in C^{2,1}(\R^n\times(t_1,t_2))$.
 \end{itemize}
 This implies $D_x^\alpha u(t)\in C((0,T);L^2(\R^n))$ for all $|\alpha|=2$.
 Since $u_t=\Delta u+f(u)$ for $t\in(0,T)$,
 it follows that $\pa_tu(t)\in C((0,T);L^2(\R^n))$.
 In the same procedure,
 we conclude that
 \begin{itemize}
 \item 
 $\pa_tu(t)\in C([t_1,t_2];H^1(\R^n))$,
 \item
 $\pa_tu(x,t)\in C^{2,1}(\R^n\times(t_1,t_2))$.
 \end{itemize}
 This completes the proof for (e4) - (e7).
 \end{proof}

 \begin{pro}
 \label{PROPOSITION_2.3}
 Let $n\geq3$ and $p=\frac{n+2}{n-2}$.
 Let $h(x)\in L^\frac{2n}{n-2}(\R^n)$,
 and consider the following equation.
 \begin{align}
 \label{equation_e2.2}
 \begin{cases}
 w_t=\Delta w+|w|^{p-1}w
 & \text{\rm for } (x,t)\in\R^n\times(0,T),
 \\
 w(x,0)={\sf Q}(x)+h(x)
 & \text{\rm for } x\in\R^n.
 \end{cases}
 \end{align}
 Let $T(h)$ be the maximal existence time of a solution of \eqref{equation_e2.2}.
 There exist constants ${\sf h}_1>0$, $(\Delta{\sf t})_1>0$
 and a function ${\sf M}(t)\in C((0,(\Delta{\sf t})_1];\R_+)$
 such that
 if $\|h\|_\frac{2n}{n-2}<{\sf h}_1$,
 then
 it holds that
 \begin{enumerate}[\rm(i)]
 \item $T(h)>(\Delta{\sf t})_1$ and
 \item
 $\|w(x,t)\|_{L_x^\infty(\R^n)}<{\sf M}(t)$
 \ \
 \text{\rm for}
 $t\in(0,(\Delta{\sf t})_1]$.
 \end{enumerate}
 The function ${\sf M}(t)$ is independent of $h(x)$,
 and might satisfy $\lim_{t\to0}{\sf M}(t)=\infty$.
 \end{pro}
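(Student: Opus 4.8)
The plan is to compare $w$ with the fixed stationary solution ${\sf Q}$, to run a Weissler--Brezis--Cazenave type contraction argument in the scaling-critical Lebesgue space $L^{\frac{2n}{n-2}}(\R^n)$ (which for $p=\frac{n+2}{n-2}$ coincides with $L^{p+1}(\R^n)$, and contains the data since ${\sf Q}\in L^{\frac{2n}{n-2}}(\R^n)$), and then to upgrade the resulting bound to $L^\infty$ via the smoothing effect of the heat semigroup $\{T(t)\}_{t>0}$. Since $-\Delta{\sf Q}={\sf Q}^p$ and ${\sf Q}>0$, ${\sf Q}$ is a stationary solution of \eqref{equation_1.1}, so Duhamel's formula gives ${\sf Q}=T(t){\sf Q}+\int_0^t T(t-s){\sf Q}^p\,ds$. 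Writing $w={\sf Q}+z$, a mild solution of \eqref{equation_e2.2} corresponds to a solution of
\[
z(t)=T(t)h+\int_0^t T(t-s)\,g(z(s))\,ds,\qquad g(z):=f({\sf Q}+z)-f({\sf Q}),
\]
with $z(0)=h$. One has $|g(z)|\le C({\sf Q}^{p-1}|z|+|z|^p)$ and $|g(z_1)-g(z_2)|\le C({\sf Q}^{p-1}+|z_1|^{p-1}+|z_2|^{p-1})|z_1-z_2|$ for all $p\ge1$. The point of subtracting ${\sf Q}$ is that ${\sf Q}^{p-1}\in L^\infty(\R^n)$ is \emph{subcritical}, so its contribution to the Duhamel integral carries a positive power of the elapsed time, while only the genuinely critical term $|z|^p$ must be controlled by smallness of the solution.

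I would first set up the contraction exactly as in the proof of Proposition~\ref{PROPOSITION_2.2}. Fix $r\in\big(\frac{2n}{n-2},\,\frac{2n}{n-2}\cdot\frac{n+2}{n-2}\big)$ and $\alpha=\frac n2\big(\frac{n-2}{2n}-\frac1r\big)\in(0,1)$, and for $T,\delta>0$ work in the complete metric space
\begin{align*}
K_T(\delta)=\Big\{z\in C\big((0,T];L^{\frac{2n}{n-2}}\cap L^r\big):\
&\sup_{0<t\le T}\|z(t)\|_{\frac{2n}{n-2}}+\sup_{0<t\le T}t^\alpha\|z(t)\|_r\le\delta,\\
&\lim_{t\to0}t^\alpha\|z(t)\|_r=0\Big\},
\end{align*}
with the induced metric. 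From $\|T(t)h\|_{\frac{2n}{n-2}}\le\|h\|_{\frac{2n}{n-2}}$ and $\|T(t)h\|_r\le Ct^{-\alpha}\|h\|_{\frac{2n}{n-2}}$ one gets $\|T(\cdot)h\|_{K_T}\le C_0\|h\|_{\frac{2n}{n-2}}$ uniformly in $T$. For the nonlinear part, the computations of \cite{Brezis-Cazenave} (Section~4, pp.\,283--286), together with ${\sf Q}^{p-1}\in L^\infty$, show that $\Phi(z)(t):=T(t)h+\int_0^t T(t-s)g(z(s))\,ds$ maps $K_T(\delta)$ into itself and is a $\tfrac12$-contraction there, provided: first $\delta$ is small (so that $C\delta^{p-1}\le\tfrac14$, which absorbs the critical term $|z|^p$), then $\|h\|_{\frac{2n}{n-2}}\le\delta/(2C_0)$, and finally $T$ is small (so that $CT\le\tfrac14$, which absorbs the subcritical term ${\sf Q}^{p-1}|z|$); each threshold depends only on $n$. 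Setting ${\sf h}_1:=\delta/(2C_0)$ and $(\Delta{\sf t})_1$ equal to this small $T$, the contraction mapping principle produces a unique $z\in K_{(\Delta{\sf t})_1}(\delta)$, and $w={\sf Q}+z$ is a mild solution of \eqref{equation_e2.2} on $[0,(\Delta{\sf t})_1]$; by uniqueness this is the maximal solution restricted there, so $T(h)>(\Delta{\sf t})_1$, which is (i). Moreover $\|z(t)\|_{\frac{2n}{n-2}}\le\delta$ and $\|z(t)\|_r\le\delta\,t^{-\alpha}$ on $(0,(\Delta{\sf t})_1]$.

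It remains to bootstrap to $L^\infty$. Interpolating between $L^{\frac{2n}{n-2}}$ and $L^r$ gives $\|z(s)\|_m\le\delta\,s^{-\frac n2(\frac{n-2}{2n}-\frac1m)}$ for $\frac{2n}{n-2}\le m\le r$. Feeding this together with $|g(z)|\le C({\sf Q}^{p-1}|z|+|z|^p)$ and $\|T(t)h\|_\infty\le Ct^{-\frac{n-2}{4}}\|h\|_{\frac{2n}{n-2}}$ into the Duhamel formula, and splitting $\int_0^t=\int_0^{t/2}+\int_{t/2}^t$ (on $(0,t/2)$ using the weight-free $L^{\frac{2n}{n-2}}$ bound, on $(t/2,t)$ the best currently available bound, whose $t$-weight then causes no divergence since $s\sim t$), one upgrades the $L^q$-integrability of $z(t)$ in finitely many steps $N=N(n)$ until $q=\infty$ is reached, each step contributing at worst a power $t^{-\gamma_j}$ with $\gamma_j\ge0$. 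One ends with $\|z(t)\|_\infty\le\widetilde{\sf M}(t)$ on $(0,(\Delta{\sf t})_1]$, where $\widetilde{\sf M}(t)$ is a finite sum of terms $C_j\,t^{-\gamma_j}$ with $C_j=C_j(n,{\sf h}_1,(\Delta{\sf t})_1)$ and $\gamma_j\ge0$; hence $\widetilde{\sf M}\in C((0,(\Delta{\sf t})_1];\R_+)$, it is independent of $h$, and $\widetilde{\sf M}(t)\to\infty$ as $t\to0$ because of the term $t^{-\frac{n-2}{4}}$ inherited from $T(t)h$. Taking ${\sf M}(t):=1+\|{\sf Q}\|_\infty+\widetilde{\sf M}(t)$ and using $\|w(t)\|_\infty\le\|{\sf Q}\|_\infty+\|z(t)\|_\infty$ (with $\|{\sf Q}\|_\infty=1$) gives (ii).

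The main obstacle is the first nonlinear estimate. Because the data only lies in the scaling-critical space, the term $|z|^p$ cannot be dominated by a positive power of the elapsed time and must instead be absorbed into the smallness radius $\delta$, which forces one to carry the auxiliary weighted $L^r$ norm throughout. Checking that the exponents conspire so that all the relevant time integrals converge and that the weight comes out to $t^{-\alpha}$ is the delicate part; these are precisely the computations of \cite{Brezis-Cazenave}, valid once $r\in(\frac{2n}{n-2},\frac{2n}{n-2}\cdot\frac{n+2}{n-2})$. One must also verify that $\delta$, ${\sf h}_1$, $(\Delta{\sf t})_1$ and ${\sf M}$ depend only on $n$ and not on the particular $h$; this is clear from the scheme, since $h$ enters only through the bound $\|h\|_{\frac{2n}{n-2}}<{\sf h}_1$. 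The $L^\infty$ bootstrap is then routine, but must be iterated finitely many times because the gain of integrability per step is bounded.
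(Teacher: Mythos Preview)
Your proposal is correct and follows essentially the same approach as the paper: subtract ${\sf Q}$, work with the difference $W=w-{\sf Q}$ via Duhamel, use the same auxiliary exponent $r\in(\tfrac{2n}{n-2},\tfrac{2n}{n-2}\cdot\tfrac{n+2}{n-2})$ and weight $t^\alpha$, and exploit that the ${\sf Q}^{p-1}$-term yields a positive power of time while the $|W|^p$-term is absorbed by smallness. The only notable difference is in the $L^\infty$ step: the paper rewrites the equation as $W_t=\Delta W+V(x,t)W$ with $V=(f({\sf Q}+W)-f({\sf Q}))/W$, observes $\|V(t)\|_{r/(p-1)}\le C(1+t^{-\alpha(p-1)})$ with $r/(p-1)>n/2$, and invokes a local parabolic $L^\infty$ estimate in one stroke, whereas you iterate the heat-semigroup smoothing through Duhamel; both routes are standard and equivalent here.
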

 \begin{proof}
 Let $W(x,t)=w(x,t)-{\sf Q}(x)$.
 Then
 $W(x,t)$ satisfies
 \begin{align}
 \label{equation_e2.3}
 \begin{cases}
 W_t
 =
 \Delta W
 +
 f({\sf Q}+W)
 -
 f({\sf Q})
 \quad\
 \text{for } (x,t)\in\R^n\times(0,T(h)),\\
 W(x,t)|_{t=0}
 =
 h(x)
 \quad\
 \text{for }
 x\in\R^n.
 \end{cases}
 \end{align}
 By the Duhamel formula,
 we have
 \begin{align*}
 W(t)
 =
 T(t)
 [h]
 +
 \int_0^t
 T(t-s)[
 f({\sf Q}+W(s))
 -
 f({\sf Q})
 ]
 ds,
 \end{align*}
 where $\{T(t)\}_{t>0}$ is the linear heat semigroup.
 Fix $r\in(\frac{2n}{n-2},\frac{2n}{n-2}\frac{n+2}{n-2})$.
 By a standard argument,
 we can check that
 \begin{align}
 \nonumber
 \|&W(t)\|_r
 <
 \|
 T(\tau)
 [h]
 \|_r
 +
 \int_0^\tau
 \|
 T(t-s)[
 f({\sf Q}+W)
 -
 f({\sf Q})
 ]
 \|_r
 ds
 \\
 \nonumber
 &<
 C
 t^{-\frac{n}{2}(\frac{n-2}{2n}-\frac{1}{r})}
 \|
 h
 \|_\frac{2n}{n-2}
 \\
 \label{equation_e2.4}
 &\quad
 +
 C
 \int_0^t
 (t-s)^{-\frac{n}{2}(\frac{p}{r}-\frac{1}{r})}
 \|
 f({\sf Q}+W)
 -
 f({\sf Q})
 \|_\frac{r}{p}
 ds.
 \end{align}
 Here we note that
 \begin{align*}
 \nonumber
 \|
 f({\sf Q}+W)
 &-
 f({\sf Q})
 \|_\frac{r}{p}
 <
 p
 \|
 ({\sf Q}+|W|)^{p-1}W
 \|_\frac{r}{p} 
 \\
 \nonumber
 &<
 p
 \|({\sf Q}+|W|)\|_r^{p-1}
 \|W\|_r
 \\
 &<
 C
 \|{\sf Q}\|_r^{p-1}
 \|W\|_r
 +
 C
 \|W\|_r^p.
 \end{align*}
 Set $\alpha=\frac{n}{2}(\frac{n-2}{2n}-\frac{1}{r})>0$.
 Therefore
 \eqref{equation_e2.4} can be rewritten as
 \begin{align}
 \label{equation_e2.5}
 \|W(t)\|_r
 &<C
 t^{-\alpha}
 \|
 h
 \|_\frac{2n}{n-2}
 +
 C
 \|{\sf Q}\|_r^{p-1}
 \int_0^t
 (t-s)^{-\frac{n}{2}(\frac{p}{r}-\frac{1}{r})}
 \|W\|_r
 ds
 \\
 \nonumber
 &\quad
 +
 C
 \int_0^t
 (t-s)^{-\frac{n}{2}(\frac{p}{r}-\frac{1}{r})}
 \|W\|_r^p
 ds.
 \end{align}
 We now define
 \begin{align*}
 K_\tau
 =
 \sup_{t\in(0,\tau)}
 t^\alpha\|W(t)\|_r.
 \end{align*}
 From \eqref{equation_e2.5},
 it holds that
 \begin{align*}
 \nonumber
 &\|W(t)\|_r
 <C
 t^{-\alpha}
 \|
 h
 \|_\frac{2n}{n-2}
 +
 C
 \|{\sf Q}\|_r^{p-1}
 K_\tau
 \int_0^t
 (t-s)^{-\frac{n}{2}\frac{p-1}{r}}
 s^{-\alpha}
 ds
 \\
 \nonumber
 &\quad
 +
 C
 K_\tau^p
 \int_0^t
 (t-s)^{-\frac{n}{2}\frac{p-1}{r}}
 s^{-\alpha p}
 ds
 \\
 \nonumber
 &=
 C
 t^{-\alpha}
 \|
 h
 \|_\frac{2n}{n-2}
 +
 C
 \|{\sf Q}\|_r^{p-1}
 K_\tau
 t^{-\alpha}
 t^{\alpha(p-1)}
 \int_0^1
 (1-\sigma)^{-\frac{n}{2}\frac{p-1}{r}}
 \sigma^{-\alpha}
 ds
 \\
 \nonumber
 &\quad
 +
 C
 K_\tau^p
 t^{-\alpha}
 \int_0^1
 (1-\sigma)^{-\frac{n}{2}\frac{p-1}{r}}
 \sigma^{-\alpha p}
 d\sigma
 \end{align*}
 for $t\in(0,\tau)$.
 From the choice of $\alpha$ and $r$,
 we note that $\frac{n}{2}\frac{p-1}{r}<1$, $\alpha<1$ and $\alpha p<1$.
 Therefore
 there exists a constant $C_1>0$ depending only on $n$ such that
 \begin{align*}
 &
 t^\alpha
 \|W(t)\|_r
 <
 C_1
 \|
 h
 \|_\frac{2n}{n-2}
 +
 C_1
 K_\tau
 t^{\alpha(p-1)}
 +
 C_1
 K_\tau^p
 \\
 &<
 C_1
 \|
 h
 \|_\frac{2n}{n-2}
 +
 C_1
 K_\tau
 \tau^{\alpha(p-1)}
 +
 C_1
 K_\tau^p
 \quad\
 \text{for }
 t\in(0,\tau).
 \end{align*}
 We put $(\Delta{\sf t})_1=(\frac{1}{2C_1})^\frac{1}{\alpha(p-1)}$.
 Then it holds that
 \begin{align*}
 \tfrac{1}{2}
 K_\tau
 &<
 C_1
 \|
 h
 \|_\frac{2n}{n-2}
 +
 C_1
 K_\tau^p
 \quad\
 \text{for }
 \tau\in(0,\min\{(\Delta{\sf t})_1,T(h)\}).
 \end{align*}
 Let $f(m)=\frac{1}{2}m-C_1m^p$ and denote its maximum value by $f_\text{max}>0$.
 The equation $f(m)=\frac{f_\text{max}}{2}$ has two roots,
 which are denoted by $m_1,m_2$ ($m_1<m_2$).
 We fix ${\sf h}_1>0$ small enough such that
 $C_1{\sf h}_1<\frac{f_\text{max}}{2}$.
 Hence
 if $\|h\|_\frac{2n}{n-2}<{\sf h}_1$,
 then it holds that
 \begin{align*}
 \tfrac{1}{2}
 K_\tau
 -
 C_1
 K_\tau^p
 <
 \tfrac{f_\text{max}}{2}
 \quad\
 \text{for }
 \tau\in(0,\min\{(\Delta{\sf t})_1,T(h)\}).
 \end{align*}
 We recall that $\lim_{t\to0}t^\alpha\|w(t)\|_r=0$ for any $h\in L^\frac{2n}{n-2}(\R^n)$
 (see the definition of $K_T(M,\delta)$ in the proof of Proposition \ref{PROPOSITION_2.2}).
 Hence
 it follows that $\lim_{\tau\to0}K_\tau=0$ for any $h\in L^\frac{2n}{n-2}(\R^n)$.
 Therefore
 since $K_\tau$ is continuous and nondecreasing in $\tau$,
 we conclude that if $\|h\|_\frac{2n}{n-2}<{\sf h}_1$,
 then $T(h)>(\Delta{\sf t})_1$ and
 \[
 K_\tau
 <
 m_1
 \quad\
 \text{for }
 \tau\in(0,(\Delta{\sf t})_1).
 \]
 This implies that
 $\|W(t)\|_r<m_1t^{-\alpha}$ for $t\in(0,(\Delta{\sf t})_1)$.
 We here rewrite \eqref{equation_e2.3} as $W_t=\Delta W+V(x,t)W$ with $V(x,t)=\frac{f({\sf Q}+W)-f({\sf Q})}{W}$.
 Since $|V(x,t)|<C({\sf Q}(x)+|W(x,t)|)^{p-1}$,
 it holds that
 \begin{align*}
 \|V(t)\|_\frac{r}{p-1}
 &<
 C
 (\|{\sf Q}\|_r^{p-1}+\|W(t)\|_r^{p-1})
 \\
 &<
 C
 (\|{\sf Q}\|_r^{p-1}+m_1^{p-1}t^{-\alpha(p-1)})
 \quad\
 \text{for }
 t\in(0,(\Delta{\sf t})_1).
 \end{align*}
 Since $\frac{r}{p-1}>\frac{n}{2}$,
 a local parabolic estimate establishes (ii),
 which completes the proof.
 \end{proof}

 \subsection{Spectral properties of $H_y=\Delta_y+f'({\sf Q}(y))$}
 \label{ection_2.3}
 Let us consider the eigenvalue problem associated with the linearized problem around
 the ground state ${\sf Q}(y)$.
 \begin{equation}
 \label{equation_e2.6}
 -H_y\Phi=e\Phi \quad\ \text{in } \R^n,
 \end{equation}
 where the operator $H_y$ is defined by
 \[
 H_y=\Delta_y+V(y),
 \quad\
 V(y)=p{\sf Q}(y)^{p-1}.
 \]
 We now summarize the fundamental properties of $H_y$
 stated in Section 2 of \cite{Collot-Merle-Raphael}.
 \begin{pro}[Proposition 2.2 in \cite{Collot-Merle-Raphael} p.\,223]
 \label{PROPOSITION_2.4}
 Let $n\geq3$.
 The operator $H_y:H^2(\R^n)\to L^2(\R^n)$ is self-adjoint on $L^2(\R^n)$.
 The eigenvalue problem \eqref{equation_e2.6}
 has a unique negative eigenvalue $e=-e_0$ $(e_0>0)$,
 and
 an associated positive, radially symmetric eigenfunction denoted by $\mathcal{Y}(y)$
 decays exponentially {\rm(see \cite{Cortazar-del Pino-Musso} p.\,18)}.
 \begin{align}
 \label{equation_e2.7}
 \mathcal{Y}(y)
 +
 |D_y^\alpha\mathcal{Y}(y)|
 <
 C( 1+|y| )^{-\frac{n-1}{2}}
 e^{-\sqrt{e_0|y|}}
 \end{align}
 for any multi-index $\alpha$ satisfying $1\leq|\alpha|\leq2$.
 For the case $n\geq5$,
 $\Ker H_y\subset H^2(\R^n)$ is given by
 \begin{align}
 \label{equation_e2.8}
 \Ker H_y
 =
 \text{\rm span}
 \{\Lambda_y{\sf Q}(y),\pa_{y_1}{\sf Q}(y),\cdots,\pa_{y_n}{\sf Q}(y)\}.
 \end{align}
 Furthermore
 we have
 \begin{align*}
 \int_{\R^n}
 (|\nabla_y\varphi(y)|^2-V(y)\varphi(y)^2)
 dy
 &>
 0
 \quad
 \text{\rm for }
 \varphi\in X_{H_y}^\perp
 \quad
 (n\geq5),
 \\
 \int_{\R^n}
 (|\nabla_y\varphi(y)|^2-V(y)\varphi(y)^2)
 dy
 &>
 0
 \quad
 \text{\rm for }
 \varphi\in X_{H_y}^\perp
 \quad
 (n\geq7),
 \end{align*}
 where $X_{H_y}^\perp$ is defined by
 \begin{align*}
 X_{H_y}^\perp
 &=
 \{
 \varphi\in {\dot H^1(\R^n)};
 (\varphi,\mathcal{Y})_2=0
 \text{ \rm and }
 \}
 \\
 & \hspace{30mm}
 (\varphi,v)_2=0
 \text{ \rm for any } v\in\Ker H_y
 \}.
 \end{align*}
 \end{pro}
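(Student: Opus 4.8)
I will establish the four assertions in turn; self-adjointness, the negative eigenvalue, and the coercivity are soft, whereas the description of $\Ker H_y$ is the substantive point.

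\textbf{Self-adjointness, essential spectrum and the negative eigenvalue.} Write $H_y=\Delta_y+V$ with $V=p{\sf Q}^{p-1}$. Since ${\sf Q}$ is bounded, $V$ is a bounded symmetric operator, so by Kato--Rellich $H_y$ is self-adjoint on the domain $H^2(\R^n)$ of $\Delta_y$. Because $(n-2)(p-1)=4$, one has $V(y)=O(|y|^{-4})\to0$ as $|y|\to\infty$, so $V$ is a relatively compact perturbation of $\Delta_y$ and Weyl's theorem gives $\sigma_{\rm ess}(-H_y)=\sigma_{\rm ess}(-\Delta_y)=[0,\infty)$; hence any point of $\sigma(-H_y)$ in $(-\infty,0)$ is a discrete eigenvalue of finite multiplicity. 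To produce such a point I use $\inf\sigma(-H_y)=\inf_{0\ne\varphi\in\dot H^1}\|\varphi\|_2^{-2}\int(|\nabla\varphi|^2-V\varphi^2)\,dy$; the quadratic form is finite on $\dot H^1(\R^n)$ since ${\sf Q}^{p-1}\in L^{n/2}(\R^n)$. Testing with $\varphi={\sf Q}$ (which lies in $H^1(\R^n)$ for $n\ge5$; for $n=3,4$ one uses $\chi(\cdot/R){\sf Q}$ and lets $R\to\infty$, all integrals below convergent) and using $-\Delta{\sf Q}={\sf Q}^p$, hence $\int|\nabla{\sf Q}|^2=\int{\sf Q}^{p+1}$, gives $\int(|\nabla{\sf Q}|^2-V{\sf Q}^2)=(1-p)\int{\sf Q}^{p+1}<0$. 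So $\inf\sigma(-H_y)<0$, lies strictly below $\sigma_{\rm ess}$, and is an eigenvalue $-e_0$.

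\textbf{The eigenfunction $\mathcal Y$, the kernel and the counting.} Simplicity of $-e_0$ and positivity of the associated eigenfunction $\mathcal Y$ follow from the Perron--Frobenius argument (replace a minimizer by its modulus, invoke the strong maximum principle, and note that two positive $L^2$ functions cannot be orthogonal), and radial symmetry then follows from uniqueness together with $O(n)$-invariance of $H_y$. Decomposing into spherical harmonics, $-H_y=\bigoplus_{\ell\ge0}A_\ell$ with $A_\ell=-\pa_r^2-\frac{n-1}{r}\pa_r+\frac{\ell(\ell+n-2)}{r^2}-V(r)$ on $L^2(\R_+,r^{n-1}dr)$. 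Differentiating $-\Delta{\sf Q}_\lambda={\sf Q}_\lambda^p$ in $\lambda$ and $-\Delta{\sf Q}(\cdot-\sigma)={\sf Q}(\cdot-\sigma)^p$ in $\sigma$ shows $\Lambda_y{\sf Q},\pa_{y_i}{\sf Q}\in\Ker H_y$. The explicit identity $\Lambda_y{\sf Q}=(1+\tfrac{|y|^2}{n(n-2)})^{-n/2}\big(\tfrac{n-2}{2}-\tfrac{|y|^2}{2n}\big)$ has exactly one zero, so by Sturm oscillation theory $\Lambda_y{\sf Q}$ is the second radial eigenfunction; hence $A_0$ has exactly one eigenvalue below $0$, namely $-e_0$ with its nodeless eigenfunction $\mathcal Y$. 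Likewise $\pa_r{\sf Q}<0$ on $\R_+$ is a nodeless element of $\Ker A_1$, hence the ground state of $A_1$ at eigenvalue $0$, so $A_1\ge0$ with $n$-dimensional zero-eigenspace $\mathrm{span}\{\pa_{y_i}{\sf Q}\}$; and for $\ell\ge2$, $A_\ell=A_1+\frac{\ell(\ell+n-2)-(n-1)}{r^2}$ with strictly positive potential, so $A_\ell>0$ has trivial kernel. Summing over the sectors, $-H_y$ has the unique negative eigenvalue $-e_0$, and $\Ker H_y=\mathrm{span}\{\Lambda_y{\sf Q},\pa_{y_1}{\sf Q},\dots,\pa_{y_n}{\sf Q}\}$, contained in $H^2(\R^n)$ precisely when $\Lambda_y{\sf Q}\in L^2(\R^n)$, i.e.\ $n\ge5$ (for $n\le4$, $\Lambda_y{\sf Q}\sim|y|^{-(n-2)}$ is not square integrable). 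Finally, the decay bound \eqref{equation_e2.7} follows from the radial ODE $\mathcal Y''+\frac{n-1}{r}\mathcal Y'=(e_0-V)\mathcal Y$ with $V\to0$, by comparison with the decaying solution $\sim r^{-(n-1)/2}e^{-\sqrt{e_0}r}$ of $u''+\frac{n-1}{r}u'=e_0u$, after which interior elliptic estimates on dyadic annuli propagate it to $D_y^\alpha\mathcal Y$, $1\le|\alpha|\le2$ (cf.\ \cite{Cortazar-del Pino-Musso}).

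\textbf{Coercivity.} Let $\varphi\in X_{H_y}^\perp$, so in particular $(\varphi,\mathcal Y)_2=0$. Since $\mathcal Y$ is the simple ground state of $-H_y$, the min--max principle yields $\int(|\nabla\varphi|^2-V\varphi^2)\,dy=(-H_y\varphi,\varphi)_2\ge\inf\big(\sigma(-H_y)\setminus\{-e_0\}\big)\|\varphi\|_2^2=0$. To see the inequality is strict for $\varphi\ne0$, suppose $(-H_y\varphi,\varphi)_2=0$. Then for any $\eta$ with $(\eta,\mathcal Y)_2=0$ the nonnegative quadratic polynomial $t\mapsto(-H_y(\varphi+t\eta),\varphi+t\eta)_2$ attains its minimum at $t=0$, so $(-H_y\varphi,\eta)_2=0$ for all such $\eta$; hence $-H_y\varphi=c\mathcal Y$ for some constant $c$, and pairing with $\mathcal Y$, using $-H_y\mathcal Y=-e_0\mathcal Y$ and $(\varphi,\mathcal Y)_2=0$, forces $c=0$. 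Thus $\varphi\in\Ker H_y$, contradicting $\varphi\perp\Ker H_y$ and $\varphi\ne0$. The argument is identical for $n\ge7$.

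\textbf{Main obstacle.} Everything above except the kernel description is routine spectral theory. The real content is the non-degeneracy statement—that $0$ is the \emph{second} eigenvalue of the radial operator $A_0$ while $A_\ell$ for $\ell\ge1$ is positive apart from the $n$-fold zero-eigenspace at $\ell=1$—whose proof rests on the explicit forms of $\Lambda_y{\sf Q}$ and $\pa_r{\sf Q}$ together with Sturm oscillation theory for the $A_\ell$.
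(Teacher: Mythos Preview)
The paper does not supply its own proof of this proposition; it is quoted from \cite{Collot-Merle-Raphael}, with the exponential decay of $\mathcal Y$ referred to \cite{Cortazar-del Pino-Musso}. There is thus nothing in the present paper to compare your argument against. Your outline is a correct and standard route through all the assertions, and you rightly single out the non-degeneracy of ${\sf Q}$ as the only substantive point, handling it via spherical-harmonic decomposition and Sturm oscillation, which is exactly how it is done in the literature.

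One small technical remark on the coercivity step. You write $(-H_y\varphi,\varphi)_2\ge 0\cdot\|\varphi\|_2^2$ via the min--max principle, which presupposes $\varphi\in L^2$, whereas the stated hypothesis is only $\varphi\in\dot H^1$. The repair is routine: truncate, correct the small $\mathcal Y$-component, and pass to the limit to obtain $q(\varphi)\ge0$ for all $\varphi\in\dot H^1$ with $(\varphi,\mathcal Y)_2=0$; then in the equality case your first-order variational argument gives $-H_y\varphi=c\mathcal Y$ weakly, and elliptic regularity together with $\varphi\in L^{2n/(n-2)}$ forces $\varphi(y)=O(|y|^{-(n-2)})$, hence $\varphi\in L^2$ for $n\ge5$, after which you conclude $\varphi\in\Ker H_y$ as written. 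This is an easy but genuine extra step, worth flagging because the $\dot H^1$ versus $H^1$ distinction is precisely what drives the dimension thresholds in the statement.
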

 Following the approach in \cite{Collot-Merle-Raphael} p.\,223 - p.\,224,
 we introduce $n+1$ functions $\Psi_j(y)$ for $j=0,\cdots,n$,
 which are close to $\Lambda_y{\sf Q}(y)$, $\pa_{y_1}{\sf Q}(y)$, $\cdots$, $\pa_{y_n}{\sf Q}(y)$.
 Let $M>1$ be a sufficiently large constant and let
 $\chi_M
 =
 \chi(\tfrac{|y|}{M})$
 (see \eqref{EQUATION_2.1} for the definition of $\chi$).
 We define
 \begin{align}
 \label{equation_e2.9}
 \Psi_0(y)
 &=
 \chi_M
 \Lambda_y{\sf Q}(y)
 -
 (\chi_M\Lambda_y{\sf Q},\tfrac{\mathcal{Y}}{\|\mathcal{Y}\|_{L^2(\R^n)}})_{L^2(\R^n)}
 \tfrac{\mathcal{Y}(y)}{\|\mathcal{Y}\|_{L^2(\R^n)}},
 \\
 \label{equation_e2.10}
 \Psi_j(y)
 &=
 \chi_M
 \pa_{y_j}{\sf Q}(y)
 \qquad
 (j=1,\cdots,n).
 \end{align}
 These functions satisfy the following orthogonality relations.
 \begin{align}
 \label{equation_e2.11}
 \begin{cases}
 (\Psi_j,\mathcal{Y})_{L^2(\R^n)}
 =0 & \text{for } j\in\{0,\cdots,n\},
 \\
 (\Psi_i,\Psi_j)_{L^2(\R^n)}
 =0 & \text{for } i\not=j.
 \end{cases}
 \end{align}
 We define
 \begin{align}
 \label{equation_e2.12}
 X_{\Psi}^\perp
 =&
 \{
 \varphi\in L_\text{loc}^1(\R^n);
 \varphi\in L^q(\R^n) \text{ for some } q\geq1,\
 (\varphi,\mathcal{Y})_{L^2(\R^n)}=0
 \\
 \nonumber
 & \quad
 \text{and }
 (\varphi,\Psi_j)_{L^2(\R^n)}=0
 \text{ for all }
 j\in\{0,\cdots,n\}
 \}.
 \end{align}
 Due to the presence of $\chi_M$,
 the operator $H_y$ exhibits coercivity on
 $\dot H^s(\R^n)\cap X_{\Psi}^\perp$ for certain values of $s$.
 \begin{lem}[Lemma 2.3 in \cite{Collot-Merle-Raphael} p.\,224]
 \label{LEMMA_2.5}
 There exists a constant $M^*>0$ such that
 for any $M>M^*$,
 there exist constants $\bar C_1>0$ and $\bar C_2>0$ such that
 \begin{align}
 \label{equation_e2.13}
 \bar C_1
 \|\nabla v\|_2^2
 &<
 (-H_yv,v)_{L_y^2(\R^n)}
 <
 \bar C_2
 \|\nabla v\|_2^2
 \\
 \nonumber
 &\qquad\quad
 \text{\rm for } v\in\dot H^1(\R^n)\cap X_{\Psi}^\perp \quad (n\geq3),
 \\
 \label{equation_e2.14}
 \bar C_1
 \|\Delta v\|_2^2
 &<
 \|H_yv\|_{L_y^2(\R^n)}^2
 <
 \bar C_2
 \|\Delta v\|_2^2
 \\
 \nonumber
 &\qquad\quad
 \text{\rm for } v\in\dot H^2(\R^n)\cap X_{\Psi}^\perp \quad (n\geq5),
 \\
 \label{equation_e2.15}
 \bar C_1
 \|\nabla\Delta v\|_2^2
 &<
 (-H_y(H_yv),H_yv)_{L_y^2(\R^n)}
 <
 \bar C_2
 \|\nabla\Delta v\|_2^2
 \\
 \nonumber
 &\qquad\quad
 \text{\rm for } v\in\dot H^3(\R^n)\cap X_{\Psi}^\perp \quad (n\geq7).
 \end{align}
 Both constants $\bar C_1$ and $\bar C_2$ depend only on $n$ and $M$.
 \end{lem}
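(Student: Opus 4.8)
The three inequalities are proved by the same compactness--contradiction scheme; the upper bounds are elementary, so I concentrate on the lower bounds, treating \eqref{equation_e2.13} in detail and then indicating the changes needed for \eqref{equation_e2.14} and \eqref{equation_e2.15}.

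\emph{The $\dot H^1$ estimate.} Since $V=p{\sf Q}^{p-1}>0$, the upper bound in \eqref{equation_e2.13} is immediate from $(-H_yv,v)_{L_y^2(\R^n)}=\|\nabla v\|_2^2-\int_{\R^n}Vv^2\,dy\le\|\nabla v\|_2^2$. For the lower bound put
\[
\mu=\inf\Big\{(-H_yv,v)_{L_y^2(\R^n)}\ :\ v\in\dot H^1(\R^n)\cap X_\Psi^\perp,\ \|\nabla v\|_2=1\Big\}
\]
and suppose for contradiction that $\mu\le0$. Since $V\in L^{n/2}(\R^n)\cap L^\infty(\R^n)$ decays at infinity, the map $v\mapsto\int_{\R^n}Vv^2\,dy$ is compact on $\dot H^1(\R^n)$ (control the tails by $\int_{|y|>R}Vv^2\le\|V\|_{L^{n/2}(|y|>R)}\|v\|_{L^{2n/(n-2)}}^2$ and the interior by Rellich). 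Taking a minimising sequence $v_k\rightharpoonup v_*$ in $\dot H^1(\R^n)$, we get $\int_{\R^n}Vv_k^2\,dy\to\int_{\R^n}Vv_*^2\,dy$; if $v_*=0$ this forces $\mu=1>0$, so $v_*\ne0$. The orthogonality conditions pass to the weak limit, so $v_*\in X_\Psi^\perp$, and weak lower semicontinuity gives $(-H_yv_*,v_*)_{L_y^2(\R^n)}\le\mu\|\nabla v_*\|_2^2\le0$. Because $H_y$ has a unique negative eigenvalue, with eigenfunction $\mathcal{Y}$ (Proposition \ref{PROPOSITION_2.4}), the spectral theorem (plus a density argument) gives $(-H_y\varphi,\varphi)_{L_y^2(\R^n)}\ge0$ whenever $(\varphi,\mathcal{Y})_2=0$; hence $(-H_yv_*,v_*)_{L_y^2(\R^n)}=0$. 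So $v_*$ minimises a nonnegative quadratic form on the hyperplane $\{(\varphi,\mathcal{Y})_2=0\}$ and satisfies the Euler--Lagrange equation $-H_yv_*=\beta\mathcal{Y}$; pairing with $\mathcal{Y}$ and using $H_y\mathcal{Y}=-e_0\mathcal{Y}$ shows $\beta=0$, so $H_yv_*=0$. By the non-degeneracy of the bubble, $v_*=\alpha_0\Lambda_y{\sf Q}+\sum_{j=1}^n\alpha_j\pa_{y_j}{\sf Q}$, and then the conditions $(v_*,\Psi_j)_2=0$ ($j=0,\dots,n$) together with $(v_*,\mathcal{Y})_2=0$ form a linear system $(D+E_M)\vec\alpha=0$ with $\vec\alpha=(\alpha_0,\dots,\alpha_n)$, $D$ diagonal and invertible, and $E_M\to0$ as $M\to\infty$ (the off-diagonal couplings vanish in the limit by parity), forcing $v_*=0$ once $M$ is large. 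This contradiction shows $\mu>0$, and \eqref{equation_e2.13} holds with $\bar C_1=\mu$. (For $n\in\{3,4\}$, where $\Lambda_y{\sf Q}\notin L^2$, the same argument works using the $\dot H^1$-version of non-degeneracy in place of the $L^2$-statement of Proposition \ref{PROPOSITION_2.4}.)

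\emph{The $\dot H^2$ and $\dot H^3$ estimates.} For \eqref{equation_e2.14}, the upper bound follows from $\|H_yv\|_2\le\|\Delta v\|_2+\|Vv\|_2$ and $\|Vv\|_2\lesssim\|v/|y|^2\|_2\lesssim\|\Delta v\|_2$, using the second Hardy inequality of Lemma \ref{LEMMA_2.1} (here $n\ge5$). For the lower bound, suppose $v_k\in\dot H^2(\R^n)\cap X_\Psi^\perp$ with $\|\Delta v_k\|_2=1$ and $\|H_yv_k\|_2\to0$; from $\Delta v_k=H_yv_k-Vv_k$ and the compactness of $v\mapsto Vv$ from $\dot H^2(\R^n)$ to $L^2(\R^n)$ (via $\|Vv\|_2\le\|V\|_{L^{n/2}}\|v\|_{L^{2n/(n-4)}}$ and Rellich), a weak limit $v_*$ satisfies $\|Vv_*\|_2=1$ so $v_*\ne0$, while $H_yv_*=0$ and $v_*\in X_\Psi^\perp$ --- impossible for $M$ large, by the end of the previous paragraph. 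Finally, \eqref{equation_e2.15} is reduced to \eqref{equation_e2.13} by the substitution $w=H_yv$: one has $(w,\mathcal{Y})_2=(v,H_y\mathcal{Y})_2=0$ and $(w,\Psi_j)_2=(v,H_y\Psi_j)_2$ with $H_y\Psi_j=[\,H_y,\chi_M\,]\pa_{y_j}{\sf Q}$ supported in $\{M<|y|<2M\}$ and small, so after subtracting a correction of comparable size, $w\in X_\Psi^\perp$, whence $(-H_yw,w)_{L_y^2(\R^n)}\gtrsim\|\nabla w\|_2^2$; the third Hardy inequality of Lemma \ref{LEMMA_2.1} (here $n\ge7$) then converts this into the two-sided comparison with $\|\nabla\Delta v\|_2^2$, and the lower bound is finished by the same contradiction argument as for \eqref{equation_e2.14}.

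\emph{Main difficulty.} The compactness of the potential terms is routine; the delicate point throughout is the uniform-in-$M$ control of the discrepancy between the truncated orthogonality conditions defining $X_\Psi^\perp$ and the exact spectral data in Proposition \ref{PROPOSITION_2.4}. This is what appears in the step showing $X_\Psi^\perp\cap\Ker H_y=\{0\}$ and, for \eqref{equation_e2.14}--\eqref{equation_e2.15}, in replacing $H_yv$ by its $X_\Psi^\perp$-component, and it is precisely what forces $M$ to be large; the dimension restrictions $n\ge5$ and $n\ge7$ enter only through the Hardy inequalities of Lemma \ref{LEMMA_2.1} used to absorb the lower-order terms.
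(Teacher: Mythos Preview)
The paper does not give its own proof of this lemma; it is quoted directly from Lemma~2.3 of \cite{Collot-Merle-Raphael}, so there is no argument here to compare against. Your compactness--contradiction scheme is the standard route to such coercivity estimates and is essentially what one finds in the cited reference; the treatments of \eqref{equation_e2.13} and \eqref{equation_e2.14} are correct as written.

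For \eqref{equation_e2.15} your outline works but is somewhat roundabout: the reduction to \eqref{equation_e2.13} via $w=H_yv$ does not close by itself, since after correcting $w$ into $X_\Psi^\perp$ you still need the separate bound $\|\nabla H_yv\|_2\gtrsim\|\nabla\Delta v\|_2$, which you rightly obtain by a second contradiction argument. A cleaner alternative is to run the contradiction once, directly on \eqref{equation_e2.15}: if $v_k\in\dot H^3\cap X_\Psi^\perp$ with $\|\nabla\Delta v_k\|_2=1$ and $(-H_yw_k,w_k)\to0$ for $w_k=H_yv_k$, then compactness of $w\mapsto\int Vw^2$ on $\dot H^1$ gives $w_k\to w_*$ strongly in $\dot H^1$ with $H_yw_*=0$, and compactness of $v\mapsto\nabla(Vv)$ from $\dot H^3$ to $L^2$ forces $\nabla\Delta v_k\to\nabla\Delta v_*$ strongly, so $v_*\ne0$; then $w_*=H_yv_*\in\Ker H_y$ together with $(H_yv_*,\Lambda_y{\sf Q})_2=(v_*,H_y\Lambda_y{\sf Q})_2=0$ (and likewise for $\pa_{y_j}{\sf Q}$) yields $H_yv_*=0$, hence $v_*\in\Ker H_y\cap X_\Psi^\perp=\{0\}$, a contradiction.
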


 \subsection{Decomposition of solutions in a neighbourhood of the ground states}
 Let $\mathcal{M}$ be the set of all positive solutions to $-\Delta u=u^p$ on $\R^n$,
 which is given by
 \begin{align*}
 \mathcal{M}
 =
 \{\lambda^{-\frac{n-2}{2}}{\sf Q}(\tfrac{x-\sigma}{\lambda})\in \dot H^1(\R^n);\lambda>0,\sigma\in\R^n\}.
 \end{align*}
 Define the distance in $\dot H^1(\R^n)$ from $u$ to $\mathcal M$ as
 \begin{align}
 \label{equation_e2.16}
 \text{dist}_{\dot H^1(\R^n)}(u,\mathcal{M})
 =
 \inf_{\lambda>0,\sigma\in\R^n}
 \|u(x)-\lambda^{-\frac{n-2}{2}}{\sf Q}(\tfrac{x-\sigma}{\lambda})\|_{\dot H^1(\R^n)}.
 \end{align}
 We here prepare notations.
 \begin{align*}
 \mathcal{B}_{f_0(y)}(r)
 &=
 \{f(y)\in \dot H^1(\R^n);
 \|f(y)-f_0(y)\|_{\dot H^1(\R^n)}<r\},
 \\
 \Omega_{(1,0,0)}(r)
 &=
 \{(k,z,a)\in\R_+\times\R^n\times\R;
 \\
 &\qquad
 |k-1|^2+|z|^2+|a|^2\|\mathcal{Y}\|_{\dot H^1(\R^n)}^2<r^2\}.
 \end{align*}
 \begin{lem}[Unique decomposition in a neighborhood of ${\sf Q}(y)$, Lemma 2.5 in \cite{Collot-Merle-Raphael}
 p.\,225 and p.\,275]
 \label{LEMMA_2.6}
 Let $n\geq3$.
 There exist two constants $\nu_1>0$ and $\nu_2>0$,
 and three smooth functions
 \begin{align*}
 (\mathcal{K}(u),\mathcal{Z}(u),\mathcal{A}(u))
 :
 \mathcal{B}_{{\sf Q}}(\nu_2)
 \to
 \R_+\times\R^n\times\R
 \end{align*}
 satisfying the following {\rm(I) - (II)}.
 \begin{enumerate}[\rm(I)]
 \item
 $|\mathcal{K}(u)-1|^2+|\mathcal{Z}(u)|^2+|\mathcal{A}(u)|^2\|\mathcal{Y}\|_{\dot H^1(\R^n)}^2
 <\nu_1^2$
 \quad
 {\rm for all}
 $u\in\mathcal{B}_{{\sf Q}}(\nu_2)$.
 \item
 For each $u\in\mathcal{B}_{{\sf Q}}(\nu_2)$,
 define
 \begin{align*}
 v(y)
 =
 \mathcal{K}(u)^\frac{n-2}{2}u(\mathcal{K}(u)y+\mathcal{Z}(u))
 -{\sf Q}(y)-\mathcal{A}(u)\mathcal{Y}(y).
 \end{align*}
 Then $v(y)$ satisfies the following orthogonality conditions.
 \begin{align*}
 \begin{cases}
 (
 v(y),\mathcal{Y}(y)
 )_{L_y^2(\R^n)}
 =0,
 \\
 (
 v(y),\Psi_j(y)
 )_{L_y^2(\R^n)}
 =0
 & \text{\rm for } j\in\{0,\cdots,n\}.
 \end{cases}
 \end{align*}
 \end{enumerate}
 Furthermore
 for $(u,k,z,a)\in\mathcal{B}_{{\sf Q}}(\nu_2)\times \Omega_{(1,0,0)}(\nu_1)$,
 the following conditions {\rm(cd1)} and {\rm(cd2)} are equivalent.
 \begin{enumerate}[\rm(cd1)]
 \item The function $v(y)=k^\frac{n-2}{2}u(ky+z)-{\sf Q}(y)-a\mathcal{Y}(y)$ satisfies
 \begin{align*}
 \begin{cases}
 (
 v(y),\mathcal{Y}(y)
 )_{L_y^2(\R^n)}
 =0,
 \\
 (
 v(y),\Psi_j(y)
 )_{L_y^2(\R^n)}
 =0
 & \text{\rm for } j\in\{0,\cdots,n\},
 \end{cases}
 \end{align*}
 \item $(k,z,a)=(\mathcal{K}(u),\mathcal{Z}(u),\mathcal{A}(u))$.
 \end{enumerate}
 \end{lem}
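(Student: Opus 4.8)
The plan is to obtain the decomposition from the implicit function theorem, following \cite{Collot-Merle-Raphael}. For $u$ in a small $\dot H^1(\R^n)$-ball around ${\sf Q}$ and $(k,z,a)$ near $(1,0,0)$, set
\[
v[u,k,z,a](y)=k^{\frac{n-2}{2}}u(ky+z)-{\sf Q}(y)-a\mathcal{Y}(y),
\]
and define the map $F=(F_{-1},F_0,\dots,F_n)$ with values in $\R^{n+2}$ by
\[
F_{-1}[u,k,z,a]=(v[u,k,z,a],\mathcal{Y})_{L^2(\R^n)},
\qquad
F_j[u,k,z,a]=(v[u,k,z,a],\Psi_j)_{L^2(\R^n)}\quad(0\le j\le n).
\]
The $n+2$ scalar equations $F=0$ are precisely the orthogonality conditions listed in (II), and they match the $n+2$ real unknowns $(k,z,a)\in\R_+\times\R^n\times\R$; moreover $F[{\sf Q},1,0,0]=0$ since $v[{\sf Q},1,0,0]\equiv0$. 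For the regularity of $F$, I would use that $\mathcal{Y}$ and its derivatives decay exponentially by \eqref{equation_e2.7} while each $\Psi_j$ is compactly supported by \eqref{equation_e2.9}--\eqref{equation_e2.10}: pairing against $u\in L^{\frac{2n}{n-2}}(\R^n)$ is then licit by H\"older, $F$ is affine (hence smooth) in $u$, and, after the substitution $x=ky+z$ the pairings become integrals of $u$ against $\mathcal{Y}(\tfrac{\cdot-z}{k})$ and $\Psi_j(\tfrac{\cdot-z}{k})$, so that the maps $(k,z)\mapsto\mathcal{Y}(\tfrac{\cdot-z}{k}),\Psi_j(\tfrac{\cdot-z}{k})$ are $C^\infty$ into $L^{\frac{2n}{n+2}}(\R^n)$ near $(1,0)$; hence $F$ is $C^\infty$.

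The decisive step is to show that the partial differential $D_{(k,z,a)}F$ at $({\sf Q},1,0,0)$ is an isomorphism of $\R^{n+2}$. Differentiating $v$ there gives $\pa_k v=\Lambda_y{\sf Q}$, $\pa_{z_j}v=\pa_{y_j}{\sf Q}$, $\pa_a v=-\mathcal{Y}$. Since $H_y(\Lambda_y{\sf Q})=H_y(\pa_{y_j}{\sf Q})=0$ and $H_y\mathcal{Y}=e_0\mathcal{Y}$ with $e_0>0$, integration by parts (legitimate because $\mathcal{Y}$ decays exponentially) yields $(\Lambda_y{\sf Q},\mathcal{Y})_2=(\pa_{y_j}{\sf Q},\mathcal{Y})_2=0$, so the entire $(k,z)$-part of the $F_{-1}$-row vanishes while $\pa_aF_{-1}=-\|\mathcal{Y}\|_2^2\ne0$. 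The orthogonality relations \eqref{equation_e2.11} give $\pa_aF_j=-(\mathcal{Y},\Psi_j)_2=0$ for $0\le j\le n$, and parity in each variable $y_j$ — recall that ${\sf Q}$, $\Lambda_y{\sf Q}$, $\Psi_0$ are even whereas $\pa_{y_j}{\sf Q}$ and $\Psi_j$ $(j\ge1)$ are odd in $y_j$ — annihilates every remaining off-diagonal pairing. Finally, with $M$ fixed large, the diagonal entries $(\Lambda_y{\sf Q},\Psi_0)_2=\int_{\R^n}\chi_M(\Lambda_y{\sf Q})^2\,dy$ and $(\pa_{y_j}{\sf Q},\Psi_j)_2=\int_{\R^n}\chi_M(\pa_{y_j}{\sf Q})^2\,dy$ are strictly positive — and finite for every $n\ge3$, the cut-off $\chi_M$ absorbing the fact that $\Lambda_y{\sf Q}\notin L^2(\R^n)$ when $n=3,4$. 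Hence $D_{(k,z,a)}F[{\sf Q},1,0,0]$ is diagonal with non-zero entries, thus invertible.

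The implicit function theorem now provides $\nu_2>0$ and $C^\infty$ functions $(\mathcal{K},\mathcal{Z},\mathcal{A}):\mathcal{B}_{{\sf Q}}(\nu_2)\to\R_+\times\R^n\times\R$ with $(\mathcal{K}({\sf Q}),\mathcal{Z}({\sf Q}),\mathcal{A}({\sf Q}))=(1,0,0)$ and $F[u,\mathcal{K}(u),\mathcal{Z}(u),\mathcal{A}(u)]=0$, which is exactly (II); shrinking $\nu_2$ and using continuity then gives (I) for a suitable $\nu_1$. For the equivalence, (cd2)$\Rightarrow$(cd1) is built into the construction, and (cd1)$\Rightarrow$(cd2) is the local-uniqueness clause of the implicit function theorem, valid once $\nu_1$ is taken below its uniqueness radius. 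The only delicate point I anticipate is the end of the invertibility argument: the cut-off parameter $M$ (cf.\ Lemma \ref{LEMMA_2.5}) must be fixed once and for all, large enough that the diagonal stays bounded away from $0$, and one must keep track that it is the exponential decay of $\mathcal{Y}$, rather than mere square-integrability, that permits differentiating the pairings in $(k,z)$; the remaining low-dimensional bookkeeping, caused by $\Lambda_y{\sf Q}\notin L^2(\R^n)$ for $n=3,4$, is then routine.
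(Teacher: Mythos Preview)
Your proof is correct and follows essentially the same route as the paper: both set up the $(n+2)$ orthogonality conditions as a map $F$, verify $F[{\sf Q},1,0,0]=0$, compute that $D_{(k,z,a)}F$ at the base point is diagonal with non-zero entries, and invoke the implicit function theorem together with the change-of-variables observation $F_j(u,k,z,a)=(u,k^{-\frac{n+2}{2}}\Psi_j(\tfrac{\cdot-z}{k}))_2+\text{(affine in }a)$ for smoothness. Your write-up is in fact slightly more explicit than the paper's on \emph{why} the Jacobian is diagonal (parity in each $y_j$ for the $\Psi_j$-rows, and the identity $e_0(\Lambda_y{\sf Q},\mathcal Y)_2=(H_y\Lambda_y{\sf Q},\mathcal Y)_2=0$ for the $\mathcal Y$-row), which the paper leaves implicit.
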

 \begin{proof}
 Define the mapping
 ${\bf F}(u,k,z,a):\dot H^1(\R^n)\times\R\times\R^n\times\R\to\R^{n+2}$
 by
 \begin{align*}
 {\bf F}(u,k,z,a)
 =
 \begin{pmatrix}
 F_0(u,k,z,a)
 \\
 \vdots
 \\
 F_{n+1}(u,k,z,a)
 \end{pmatrix},
 \end{align*}
 where
 \begin{align*}
 F_j(u,k,z,a)
 =
 (
 k^\frac{n-2}{2}u(k y+z)
 -
 {\sf Q}(y)
 -
 a\mathcal{Y}(y),
 \Psi_j(y)
 )_{L_y^2(\R^n)}
 \end{align*}
 for
 $j\in\{0,\cdots,n\}$
 and
 \begin{align*}
 F_{n+1}(u,k,z,a)
 =
 (
 k^\frac{n-2}{2}u(k y+z)
 -
 {\sf Q}(y)
 -
 a\mathcal{Y}(y),
 \mathcal{Y}(y)
 )_{L_y^2(\R^n)}.
 \end{align*}
 For simplicity,
 let
 \begin{align*}
 x=k y+z.
 \end{align*}
 We easily see that
 $F_j(u,k,z,a)$ is a $C^1$ function for $j\in\{0,\cdots,n+1\}$
 and
 their derivatives are given by
 \begin{align*}
 ( \pa_uF_j(u,k,z,a), \varphi)_{\dot H^1(\R^n)}
 &=
 (
 k^\frac{n-2}{2}\varphi(x),
 \Psi_j(y)
 )_{L_y^2(\R^n)},
 \\
 ( \pa_uF_{n+1}(u,k,z,a), \varphi)_{\dot H^1(\R^n)}
 &=
 (
 k^\frac{n-2}{2}\varphi(x),
 \mathcal{Y}(y)
 )_{L_y^2(\R^n)}
 \end{align*}
 and
 \begin{align*}
 \tfrac{\pa F_j}{\pa k}(u,k,z,a)
 &=
 (
 \tfrac{n-2}{2}k^\frac{n-4}{2}u(x)+k^{\frac{n-2}{2}}y\cdot(\nabla_xu)(x),
 \Psi_j(y)
 )_{L_y^2(\R^n)},
 \\
 \tfrac{\pa F_{n+1}}{\pa k}(u,k,z,a)
 &=
 (
 \tfrac{n-2}{2}k^\frac{n-4}{2}u(x)+k^{\frac{n-2}{2}}y\cdot(\nabla_xu)(x),
 \mathcal{Y}(y)
 )_{L_y^2(\R^n)},
 \\
 \tfrac{\pa F_j}{\pa z_l}(u,k,z,a)
 &=
 (
 k^{\frac{n-2}{2}}u_{x_l}(x),
 \Psi_j(y)
 )_{L_y^2(\R^n)},
 \\
 \tfrac{\pa F_{n+1}}{\pa z_l}(u,k,z,a)
 &=
 (
 k^{\frac{n-2}{2}}u_{x_l}(x),
 \mathcal{Y}(y)
 )_{L_y^2(\R^n)},
 \\
 \tfrac{\pa F_j}{\pa a}(u,k,z,a)
 &=
 (
 -\mathcal{Y}(y),
 \Psi_j(y)
 )_{L_y^2(\R^n)},
 \\
 \tfrac{\pa F_{n+1}}{\pa a}(u,k,z,a)
 &=
 (
 -\mathcal{Y}(y),
 \mathcal{Y}(y)
 )_{L_y^2(\R^n)}.
 \end{align*}
 Therefore
 it holds that
 \begin{align*}
 \begin{pmatrix}
 \frac{\pa F_0}{\pa k} & \frac{\pa F_0}{\pa z_1} & \cdots & \frac{\pa F_0}{\pa z_n} & \frac{\pa F_0}{\pa a}
 \\
 \frac{\pa F_1}{\pa k} & \frac{\pa F_1}{\pa z_1} & \cdots & \frac{\pa F_1}{\pa z_n} & \frac{\pa F_1}{\pa a}
 \\
 \vdots & \vdots & \cdots & \vdots & \vdots
 \\
 \frac{\pa F_n}{\pa k} & \frac{\pa F_n}{\pa z_1} & \cdots & \frac{\pa F_n}{\pa z_n} & \frac{\pa F_n}{\pa a}
 \\
 \frac{\pa F_{n+1}}{\pa k} & \frac{\pa F_{n+1}}{\pa z_1} & \cdots & \frac{\pa F_{n+1}}{\pa z_n} &
 \frac{\pa F_{n+1}}{\pa a}
 \end{pmatrix}
 &=
 \begin{pmatrix}
 J_0 & 0 & \cdots & \cdots & 0
 \\
 0 & J_1 & 0 & \cdots & 0
 \\
 \vdots & & \ddots & & \vdots
 \\
 \vdots & \vdots & 0 & J_n & 0
 \\
 0 & \cdots & \cdots & 0 & J_{n+1}
 \end{pmatrix}
 \\
 &
 \text{for } (u,k,z,a)=({\sf Q},1,0,0),
 \end{align*}
 where
 \begin{align*}
 J_0
 &=
 (\Lambda_y{\sf Q}(y),\Psi_0(y) )_{L_y^2(\R^n)},
 \\
 J_j
 &=
 (\pa_{y_j}{\sf Q}(y),\Psi_j(y) )_{L_y^2(\R^n)} 
 \quad
 (j=1,\cdots,n),
 \\
 J_{n+1}
 &=
 -\|\mathcal{Y}\|_{L^2(\R^n)}^2.
 \end{align*}
 Since ${\bf F}({\sf Q},1,0,0)=0$,
 by the implicit function theorem,
 there exist two constants $\nu_1>0$ and $\nu_2>0$,
 and
 three $C^1$ functions
 \[
 (\mathcal{K}(u),\mathcal{Z}(u),\mathcal{A}(u)):
 \mathcal{B}_{{\sf Q}}(\nu_2)\to
 \R_+\times\R^n\times\R
 \]
 such that
 for $(u,k,z,a)\in\mathcal{B}_{{\sf Q}}(\nu_2)\times\Omega_{(1,0,0)}(\nu_1)$,
 the following conditions are equivalent.
 \begin{itemize}
 \item
 ${\bf F}(u,k,z,a)=0$,
 \item
 $(k,z,a)=(\mathcal{K}(u),\mathcal{Z}(u),\mathcal{A}(u))$.
 \end{itemize}
 Finally,
 we check that the mapping $(\mathcal{K}(u),\mathcal{Z}(u),\mathcal{A}(u)):
 \mathcal{B}_{{\sf Q}}(\nu_2)\to
 \R_+\times\R^n\times\R$ are $C^\infty$.
 To show this fact,
 it is enough to confirm that
 the mapping $F_j(u,k,z,a):\mathcal{B}_{{\sf Q}}(\nu_2)\to\R_+\times\R^n\times\R$
 is $C^\infty$ for any $j\in\{0,\cdots,n+1\}$.
 We observe that
 \begin{align*}
 F_j(u,k,z,a)
 &=
 (
 k^\frac{n-2}{2}u(k y+z)
 -
 {\sf Q}(y)
 -
 a\mathcal{Y}(y),
 \Psi_j(y)
 )_{L_y^2(\R^n)}
 \\
 &=
 (
 u(x),
 k^{-\frac{n+2}{2}}\Psi_j(\tfrac{x-z}{k})
 )_{L_x^2(\R^n)}
 -
 (
 {\sf Q}+a\mathcal{Y},
 \Psi_j
 )_{L_y^2(\R^n)}
 \\
 &
 \text{for }
 j\in\{0,\cdots,n\}.
 \end{align*}
 From this relation,
 it is clear that $F_j(u,k,z,a)$ is $C^\infty$ for $j\in\{0,\cdots,n\}$.
 In the same reason,
 $F_{n+1}(u,k,z,a)$ is $C^\infty$.
 The proof is completed.
 \end{proof}
 \begin{lem}[Unique decomposition in a neighborhood of $\mathcal{M}$]
 \label{LEMMA_2.7}
 Let $n\geq3$ and define
 \[
 \mathcal{U}_{\eta}
 =
 \{u(x)\in\dot H^1(\R^n);\text{\rm dist}_{\dot H^1(\R^n)}(u(x),\mathcal{M})<\eta\}.
 \]
 There exist two constants $\eta_1^*>0$ and $\eta_2^*>0$,
 and three $C^\infty$ functions
 \begin{align*}
 (\mathcal{K}_{\mathcal M}(u),\mathcal{Z}_{\mathcal M}(u),\mathcal{A}_{\mathcal M}(u))
 :
 \mathcal{U}_{\eta_2^*}
 \to
 \R_+\times\R^n\times\R
 \end{align*}
 satisfying the following {\rm(i)} - {\rm(iii)}.
 \begin{enumerate}[\rm(i)]
 \item
 For each $u(x)\in\mathcal{U}_{\eta_2^*}$,
 define
 \begin{align}
 \label{equation_e2.17}
 v(y)
 =
 \mathcal{K}_{\mathcal M}(u)^\frac{n-2}{2}u(\mathcal{K}_{\mathcal M}(u) y+\mathcal{Z}_{\mathcal M}(u))
 -
 {\sf Q}(y)
 -
 \mathcal{A}_{\mathcal M}(u)\mathcal{Y}(y).
 \end{align}
 Then
 it holds that
 \[
 \|v(y)\|_{\dot H_y^1(\R^n)}^2+|\mathcal{A}_{\mathcal M}(u)|^2\|\mathcal{Y}\|_{\dot H^1(\R^n)}^2
 <
 (\eta_1^*)^2
 \quad
 \text{\rm for all } u\in \mathcal{U}_{\eta_2^*}.
 \]
 \item
 $\begin{cases}
 (
 v(y),\mathcal{Y}(y)
 )_{L_y^2(\R^n)}
 =0,
 \\
 (
 v(y),\Psi_j(y)
 )_{L_y^2(\R^n)}
 =0
 & \text{\rm for } j\in\{0,\cdots,n\}.
 \end{cases}$
 \item
 Furthermore
 for any $\eta_1\in(0,\eta_1^*)$,
 there exists $\eta_2\in(0,\eta_2^*)$ such that
 \[
 \|v(y)\|_{\dot H_y^1(\R^n)}^2
 +
 |\mathcal{A}_{\mathcal M}(u)|^2\|\mathcal{Y}\|_{\dot H^1(\R^n)}^2
 <
 \eta_1^2
 \quad\
 \text{\rm for all } u\in \mathcal{U}_{\eta_2}.
 \]
 \end{enumerate}
 \end{lem}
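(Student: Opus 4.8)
The plan is to bootstrap the local statement of Lemma 2.6 (decomposition near the single bubble ${\sf Q}$) up to the scaling-invariant neighborhood $\mathcal{U}_\eta$ of the whole manifold $\mathcal{M}$, using the scaling and translation invariance of $\dot H^1$. First I would record the key invariance: if $u\in\mathcal{U}_\eta$ then by definition there exist $\mu>0$ and $\sigma\in\R^n$ with $\|u-\mu^{-\frac{n-2}{2}}{\sf Q}(\tfrac{\cdot-\sigma}{\mu})\|_{\dot H^1}<\eta$; setting $\tilde u(y)=\mu^{\frac{n-2}{2}}u(\mu y+\sigma)$, the $\dot H^1$-norm is scale/translation invariant, so $\|\tilde u-{\sf Q}\|_{\dot H^1}<\eta$, i.e. $\tilde u\in\mathcal{B}_{{\sf Q}}(\eta)$. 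Choosing $\eta_2^*\le\nu_2$ (with $\nu_2$ from Lemma 2.6), Lemma 2.6 applies to $\tilde u$ and yields $(\mathcal{K}(\tilde u),\mathcal{Z}(\tilde u),\mathcal{A}(\tilde u))$ together with a remainder $v(y)=\mathcal{K}(\tilde u)^{\frac{n-2}{2}}\tilde u(\mathcal{K}(\tilde u)y+\mathcal{Z}(\tilde u))-{\sf Q}(y)-\mathcal{A}(\tilde u)\mathcal{Y}(y)$ obeying the orthogonality conditions. I would then \emph{define}
\[
\mathcal{K}_{\mathcal M}(u)=\mu\,\mathcal{K}(\tilde u),\qquad
\mathcal{Z}_{\mathcal M}(u)=\mu\,\mathcal{Z}(\tilde u)+\sigma,\qquad
\mathcal{A}_{\mathcal M}(u)=\mathcal{A}(\tilde u),
\]
and check, by composing the two changes of variables, that
$\mathcal{K}_{\mathcal M}(u)^{\frac{n-2}{2}}u(\mathcal{K}_{\mathcal M}(u)y+\mathcal{Z}_{\mathcal M}(u))-{\sf Q}(y)-\mathcal{A}_{\mathcal M}(u)\mathcal{Y}(y)$ is exactly the same function $v(y)$ produced by Lemma 2.6 applied to $\tilde u$. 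Hence (ii) holds automatically, and (i) follows from part (I) of Lemma 2.6 (which bounds $|\mathcal{A}(\tilde u)|$) together with the coercivity estimate \eqref{equation_e2.13}: since $v\in\dot H^1\cap X_\Psi^\perp$ and $v=H_y^{-1}(\cdots)$-type bounds are not needed — one just notes $\|v\|_{\dot H^1}$ is controlled by $\|\tilde u-{\sf Q}\|_{\dot H^1}$ plus $|\mathcal{A}(\tilde u)|\|\mathcal{Y}\|_{\dot H^1}$, all $\to0$ as $\eta_2^*\to0$.

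The first genuine issue is \textbf{well-definedness}: the pair $(\mu,\sigma)$ is not unique (it is only an approximate minimizer, and even an exact minimizer is unique only up to the stabilizer, which here is trivial but must be argued). I would resolve this exactly as in Lemma 2.6's use of the implicit function theorem: shrinking $\eta_2^*$ if necessary, for $u\in\mathcal{U}_{\eta_2^*}$ the minimization $\inf_{\lambda,\sigma}\|u-\lambda^{-\frac{n-2}{2}}{\sf Q}(\tfrac{\cdot-\sigma}{\lambda})\|_{\dot H^1}$ is attained at a point $(\lambda(u),\sigma(u))$ lying in a small neighborhood of a fixed reference choice, and on that neighborhood the map $(\lambda,\sigma,a)\mapsto{\bf F}$ of Lemma 2.6 has invertible derivative, so the critical point — equivalently the solution of ${\bf F}=0$ — is unique in $\Omega_{(1,0,0)}(\nu_1)$ after rescaling. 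Concretely, I would prove the equivalence (cd1)$\Leftrightarrow$(cd2) of Lemma 2.6 is \emph{scale/translation equivariant}: if $(k,z,a)$ works for $\tilde u$ then $(\mu k,\mu z+\sigma,a)$ works for $u$, and the uniqueness clause in Lemma 2.6 forces $(\mathcal{K}_{\mathcal M},\mathcal{Z}_{\mathcal M},\mathcal{A}_{\mathcal M})$ to be independent of the initial choice of $(\mu,\sigma)$. This equivariance argument is the main obstacle — everything else is bookkeeping.

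For \textbf{smoothness} (the $C^\infty$ claim), once well-definedness is settled I would argue locally: fix $u_0\in\mathcal{U}_{\eta_2^*}$, pick one admissible $(\mu_0,\sigma_0)$ for it, and note that for $u$ in a small $\dot H^1$-ball around $u_0$ the \emph{same} $(\mu_0,\sigma_0)$ still places $\mu_0^{\frac{n-2}{2}}u(\mu_0\cdot+\sigma_0)$ in $\mathcal{B}_{{\sf Q}}(\nu_2)$; then $\mathcal{K}_{\mathcal M}(u)=\mu_0\,\mathcal{K}(\mu_0^{\frac{n-2}{2}}u(\mu_0\cdot+\sigma_0))$ is a composition of the $C^\infty$ map $\mathcal{K}$ from Lemma 2.6 with the (linear, hence $C^\infty$) rescaling $u\mapsto\mu_0^{\frac{n-2}{2}}u(\mu_0\cdot+\sigma_0)$, and similarly for $\mathcal{Z}_{\mathcal M}$, $\mathcal{A}_{\mathcal M}$; since smoothness is local and the value is independent of the choice of $(\mu_0,\sigma_0)$, this gives $C^\infty$ on all of $\mathcal{U}_{\eta_2^*}$. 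Finally, part (iii) is immediate from the continuity of $(\mathcal{K}_{\mathcal M},\mathcal{Z}_{\mathcal M},\mathcal{A}_{\mathcal M})$ and the remainder map at the "center" together with the quantitative version already built into Lemma 2.6(I): given $\eta_1<\eta_1^*$, the estimate $\|v\|_{\dot H^1}^2+|\mathcal{A}_{\mathcal M}(u)|^2\|\mathcal{Y}\|_{\dot H^1}^2\le C\,\mathrm{dist}_{\dot H^1}(u,\mathcal M)^2$ (obtained by the same argument as (i), tracking constants) lets one take $\eta_2=\eta_1/\sqrt{C}$, possibly further shrunk below $\eta_2^*$.
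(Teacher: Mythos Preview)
Your approach is essentially the same as the paper's: rescale $u$ to a neighborhood of ${\sf Q}$, apply Lemma~2.6, and push the result back. The existence part and the smoothness argument (local representation with a fixed $(\mu_0,\sigma_0)$) match the paper almost exactly.

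There is, however, a genuine gap in your well-definedness step. You write that ``the uniqueness clause in Lemma~2.6 forces $(\mathcal{K}_{\mathcal M},\mathcal{Z}_{\mathcal M},\mathcal{A}_{\mathcal M})$ to be independent of the initial choice of $(\mu,\sigma)$.'' But the uniqueness in Lemma~2.6 applies only to triples lying in the small window $\Omega_{(1,0,0)}(\nu_1)$. If $(\mu_1,\sigma_1)$ and $(\mu_2,\sigma_2)$ are two admissible choices and you rescale everything to the $\tilde u_1$-frame, the second decomposition becomes the triple $\bigl(\tfrac{K_2}{K_1},\tfrac{Z_2-Z_1}{K_1},A_2\bigr)$, and you must first prove that $|\tfrac{K_2}{K_1}-1|$ and $|\tfrac{Z_2-Z_1}{K_1}|$ are small enough to fall inside $\Omega_{(1,0,0)}(\nu_1)$ before you can invoke uniqueness. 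Your sentence about the minimizer ``lying in a small neighborhood of a fixed reference choice'' does not supply this: there is no fixed reference, and the fact that two approximate minimizers must have nearby parameters is a nontrivial statement about the geometry of $\mathcal{M}$.

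The paper closes this gap with Lemma~2.8, which says precisely that if $\|{\sf Q}(y)-k^{\frac{n-2}{2}}{\sf Q}(ky+z)\|_{\dot H^1}$ is small then $|k-1|$ and $|z|$ are small (quantitatively). Since the two bubbles $K_i^{-\frac{n-2}{2}}{\sf Q}(\tfrac{\cdot-Z_i}{K_i})$ are both $\eta_1^*$-close to $u$, they are $2\sqrt{2}\,\eta_1^*$-close to each other, and Lemma~2.8 then forces $|\tfrac{K_2}{K_1}-1|^2+|\tfrac{Z_2-Z_1}{K_1}|^2<\tfrac{\nu_1^2}{128}$. Only after this does Lemma~2.6's uniqueness apply. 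You should either prove or cite this rigidity estimate; without it the equivariance argument does not close. (A minor additional point: your claimed Lipschitz bound $\|v\|_{\dot H^1}^2+|\mathcal A_{\mathcal M}|^2\|\mathcal Y\|_{\dot H^1}^2\le C\,\mathrm{dist}(u,\mathcal M)^2$ for (iii) is stronger than what is proved or needed; the paper only obtains $o(1)$ as $\eta_2\to0$ via the chain $\eta_2\mapsto\nu_2'\mapsto\nu_1'$.)
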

 \begin{proof}
 We first prove the existence of a decomposition:
 \[
 u(x)\in\mathcal{U}_{\eta_2}\mapsto(k,z,a)
 \]
 satisfying the following (li) - (lii).
 \begin{enumerate}[(li)]
 \item
 For each $u\in\mathcal U_{\eta_2}$,
 let $w(y)=k^\frac{n-2}{2}u(ky+z)-{\sf Q}(y)-a\mathcal{Y}(y)$.
 Then
 it holds that
 \[
 \|w(y)\|_{\dot H_y^1(\R^n)}^2+|a|^2\|\mathcal{Y}\|_{\dot H^1(\R^n)}^2
 <
 \eta_1^2
 \quad\
 \text{for all }
 u\in \mathcal{U}_{\eta_2} \text{ and }
 \]
 \item
 $\begin{cases}
 (
 w(y),\mathcal{Y}(y)
 )_{L_y^2(\R^n)}
 =0,
 \\
 (
 w(y),\Psi_j(y)
 )_{L_y^2(\R^n)}
 =0
 & \text{\rm for } j\in\{0,\cdots,n\}.
 \end{cases}$
 \end{enumerate}
 Let $\nu_1$ and $\nu_2$ be positive constants given in Lemma \ref{LEMMA_2.6}.
 We introduce another small positive constant $\nu_1'\in(0,\nu_1)$.
 By the continuity of $(\mathcal{K}(u),\mathcal{Z}(u),\mathcal{A}(u))$ (see Lemma \ref{LEMMA_2.6}),
 there exists $\nu_2'\in(0,\nu_2)$ such that
 \begin{align}
 \label{equation_e2.18}
 |\mathcal{K}(u)-1|^2
 &+
 |\mathcal{Z}(u)|^2
 +
 |\mathcal{A}(u)|^2\|\mathcal{Y}\|_{\dot H^1(\R^n)}^2
 <
 (\nu_1')^2
 \\
 \nonumber
 & 
 \text{for all } u\in \mathcal{B}_{\sf Q}(\nu_2').
 \end{align}
 Let $\eta_2^*\in(0,\frac{\nu_2'}{2})$ and choose $\eta_2\in(0,\eta_2^*)$.
 For every $u(x)\in\mathcal{U}_{\eta_2}$,
 there exists $(\lambda,\sigma)\in\R_+\times\R^n$ such that
 \[
 \|u(x)-\lambda^{-\frac{n-2}{2}}{\sf Q}(\tfrac{x-\sigma}{\lambda})\|_{\dot H_x^1(\R^n)}
 <
 2\eta_2.
 \]
 By changing variables,
 we have
 \[
 \|\lambda^\frac{n-2}{2}u(\lambda x+\sigma)-{\sf Q}(x)\|_{\dot H_x^1(\R^n)}
 <
 2\eta_2.
 \]
 We put $U(x)=\lambda^\frac{n-2}{2}u(\lambda x+\sigma)$,
 then
 \begin{align}
 \label{equation_e2.19}
 \|U(x)-{\sf Q}(x)\|_{\dot H_x^1(\R^n)}
 <
 2\eta_2.
 \end{align}
 Since $2\eta_2<2\eta_2^*<\nu_2'<\nu_2$,
 from Lemma \ref{LEMMA_2.6},
 the function
 $w(y)=k^\frac{n-2}{2}U(ky+z)-{\sf Q}(y)-a\mathcal{Y}(y)$
 with $(k,z,a)=(\mathcal{K}(U),\mathcal{Z}(U),\mathcal{A}(U))$
 satisfies
 \[
 \begin{cases}
 (
 w(y),\mathcal{Y}(y)
 )_{L_y^2(\R^n)}
 =0,
 \\
 (
 w(y),\Psi_j(y)
 )_{L_y^2(\R^n)}
 =0
 & \text{\rm for } j\in\{0,\cdots,n\}.
 \end{cases}
 \]
 From the choice of $\nu_2'$,
 it follows from \eqref{equation_e2.18} that
 \begin{align}
 \label{equation_e2.20}
 |k-1|^2+|z|^2+|a|^2\|\mathcal{Y}\|_{\dot H^1(\R^n)}^2<(\nu_1')^2.
 \end{align}
 Furthermore
 from the definition of $U(x)$ and $w(y)$,
 we immediately see that
 \begin{align}
 \label{equation_e2.21}
 w(y)
 &=
 k^\frac{n-2}{2}U(ky+z)-{\sf Q}(y)-a\mathcal{Y}(y)
 \\
 \nonumber
 &=
 k^\frac{n-2}{2}
 \lambda^\frac{n-2}{2}
 u(\lambda(ky+z)+\sigma)-{\sf Q}(y)-a\mathcal{Y}(y)
 \\
 \nonumber
 &=
 (k\lambda)^\frac{n-2}{2}
 u((k\lambda)y+\lambda z+\sigma)-{\sf Q}(y)-a\mathcal{Y}(y).
 \end{align}
 Let $K=k\lambda$ and $Z=\lambda z+\sigma$,
 then
 \begin{align}
 \label{equation_e2.22}
 w(y)
 =
 K^\frac{n-2}{2}
 u(Ky+Z)-{\sf Q}(y)-a\mathcal{Y}(y).
 \end{align}
 From \eqref{equation_e2.19} - \eqref{equation_e2.21},
 $w(y)$ can be estimated as
 \begin{align*}
 \|w(y)&\|_{\dot H_y^1(\R^n)}
 =
 \|
 k^\frac{n-2}{2}U(ky+z)-{\sf Q}(y)-a\mathcal{Y}(y)
 \|_{\dot H_y^1(\R^n)}
 \\
 &<
 \|
 k^\frac{n-2}{2}U(ky+z)-k^\frac{n-2}{2}{\sf Q}(ky+z)
 \|_{\dot H_y^1(\R^n)}
 \\
 &\quad+
 \|
 k^\frac{n-2}{2}{\sf Q}(ky+z)-{\sf Q}(y)
 \|_{\dot H_y^1(\R^n)}
 +
 \|
 a\mathcal{Y}(y)
 \|_{\dot H_y^1(\R^n)}
 \\
 &<
 \|
 U(x)-{\sf Q}(x)
 \|_{\dot H_x^1(\R^n)}
 +
 \|
 k^\frac{n-2}{2}{\sf Q}(ky+z)-{\sf Q}(y)
 \|_{\dot H_y^1(\R^n)}
 +
 \nu_1'
 \\
 &<
 2\eta_2
 +
 \|
 k^\frac{n-2}{2}{\sf Q}(ky+z)-{\sf Q}(y)
 \|_{\dot H_y^1(\R^n)}
 +
 \nu_1'.
 \end{align*}
 Since $|k-1|^2+|z|^2<(\nu_1')^2$  (see \eqref{equation_e2.20}),
 we note from Lemma \ref{LEMMA_2.8} that
 $\|
 k^\frac{n-2}{2}{\sf Q}(ky+z)-{\sf Q}(y)
 \|_{\dot H_y^1(\R^n)}<C\nu_1'$.
 Therefore
 there exists a constant $C'>0$ depending only on $n$ such that
 \begin{align}
 \label{equation_e2.23}
 \|w(y)\|_{\dot H_y^1(\R^n)}
 <
 2\eta_2
 +
 C'\nu_1'.
 \end{align}
 We now choose $\nu_1'$, $\nu_2'$ and $\eta_2$ as follows.
 For any given $\eta_1>0$,
 we first fix $\nu_1'\in(0,\nu_1)$ such that $C'\nu_1'<\frac{\eta_1}{8}$ in \eqref{equation_e2.23}
 and $\nu_1'<\frac{\eta_1}{8}$.
 We next choose $\nu_2'\in(0,\nu_2)$ such that \eqref{equation_e2.18} holds for all
 $u\in\mathcal{B}_{{\sf Q}}(\nu_2')$.
 We finally take $\eta_2>0$ such that $\eta_2<\min\{\frac{\nu_2'}{2},\frac{\eta_1}{16}\}$.
 From the choice of $\nu_1'$, $\nu_2'$, $\eta_2$, and \eqref{equation_e2.23},
 we have
 \begin{align*}
 &
 \|w(y)\|_{\dot H_y^1(\R^n)}^2
 +
 |a|^2\|\mathcal{Y}\|_{\dot H^1(\R^n)}^2
 <
 (2\eta_2+C'\nu_1')^2+(\nu_1')^2
 \\
 &<
 (\tfrac{\eta_1}{8}+\tfrac{\eta_1}{8})^2+(\tfrac{\eta_1}{8})^2
 <
 \tfrac{5}{64}\eta_1^2
 \quad\
 \text{for all }
 u\in\mathcal{B}_{\sf Q}(\eta_2).
 \end{align*}
 Therefore
 we have proved that \eqref{equation_e2.22} gives the desired decomposition
 satisfying (li) - (lii).
 To define functions
 $(\mathcal{K}_{\mathcal M}(u),\mathcal{Z}_{\mathcal M}(u),\mathcal{A}_{\mathcal M}(u)):
 \mathcal{U}_{\eta_2^*}$ $\to\R_+\times\R^n\times\R$ as described in the statement of this lemma,
 it is enough to prove the uniqueness of this decomposition.
 Assume that there exist a function
 $u(x)\in \dot H^1(\R^n)$ and $(K_i,Z_i,A_i)\in\R_+\times\R^n\times\R$ for $i=1,2$ such that
 \begin{align}
 \label{equation_e2.24}
 w_i(y)
 &=
 K_i^\frac{n-2}{2}
 u(K_iy+Z_i)
 -
 {\sf Q}(y)
 -
 A_i
 \mathcal{Y}(y)
 \qquad
 (i=1,2)
 \end{align}
 with
 \begin{align}
 \label{equation_e2.25}
 \|w_i(y)\|_{\dot H_y^1(\R^n)}^2
 +
 |A_i|^2\|\mathcal{Y}\|_{\dot H^1(\R^n)}^2
 <
 (\eta_1^*)^2
 \qquad
 (i=1,2) 
 \end{align}
 and
 \begin{align}
 \label{equation_e2.26}
 \begin{cases}
 (
 w_i(y),\mathcal{Y}(y)
 )_{L_y^2(\R^n)}
 =0,
 \\
 (
 w_i(y),\Psi_j(y)
 )_{L_y^2(\R^n)}
 =0
 &
 \text{for } j\in\{0,\cdots,n\}
 \end{cases}
 \quad\
 (i=1,2).
 \end{align}
 We now claim that
 if $\eta_1^*$ given in \eqref{equation_e2.25} is sufficiently small,
 then
 \begin{align}
 \label{equation_e2.27}
 K_1=K_2,
 \quad
 Z_1=Z_2
 \quad
 \text{and }
 \
 A_1=A_2.
 \end{align}
 From \eqref{equation_e2.24},
 we note that
 \[
 u(x)
 =
 K_i^{-\frac{n-2}{2}}
 {\sf Q}(\tfrac{x-Z_i}{K_i})
 +
 K_i^{-\frac{n-2}{2}}
 A_i
 \mathcal{Y}(\tfrac{x-Z_i}{K_i})
 +
 K_i^{-\frac{n-2}{2}}
 w_i(\tfrac{x-Z_i}{K_i})
 \]
 for each $i=1,2$.
 From this relation,
 we observe that
 \begin{align*}
 \|
 &
 K_2^{-\frac{n-2}{2}}
 {\sf Q}(\tfrac{x-Z_2}{K_2})
 -
 K_1^{-\frac{n-2}{2}}
 {\sf Q}(\tfrac{x-Z_1}{K_1})
 \|_{\dot H_x^1(\R^n)}
 \\
 &<
 \sum_{i=1}^2
 |A_i|
 \cdot
 \|K_i^{-\frac{n-2}{2}}\mathcal{Y}(\tfrac{x-Z_i}{K_i})\|_{\dot H_x^1(\R^n)}
 +
 \sum_{i=1}^2
 \|K_i^{-\frac{n-2}{2}}w_i(\tfrac{x-Z_i}{K_i})\|_{\dot H_x^1(\R^n)}
 \\
 &<
 \sum_{i=1}^2
 (
 |A_i|
 \cdot
 \|\mathcal{Y}(y)\|_{\dot H_y^1(\R^n)}
 +
 \|w_i(y)\|_{\dot H_y^1(\R^n)}
 ).
 \end{align*}
 Combining \eqref{equation_e2.25},
 we get
 \begin{align*}
 \|
 K_2^{-\frac{n-2}{2}}
 {\sf Q}(\tfrac{x-Z_2}{K_2})
 -
 K_1^{-\frac{n-2}{2}}
 {\sf Q}(\tfrac{x-Z_1}{K_1})
 \|_{\dot H_x^1(\R^n)}
 <
 2\sqrt{2}\eta_1^*.
 \end{align*}
 By change of variables,
 we have
 \begin{align}
 \label{equation_e2.28}
 \left\|
 {\sf Q}(y)
 -
 (\tfrac{K_2}{K_1})^{\frac{n-2}{2}}
 {\sf Q}\left(
 (\tfrac{K_2}{K_1})y+\tfrac{Z_2-Z_1}{K_1}
 \right)
 \right\|_{\dot H_y^1(\R^n)}
 <
 2\sqrt{2}\eta_1^*.
 \end{align}
 Hence from Lemma \ref{LEMMA_2.8},
 there exists $\bar\eta_1^*>0$ depending only on $n$ such that
 if $\eta_1^*\in(0,\bar\eta_1^*)$,
 then we have that
 $|\frac{K_2}{K_1}-1|<\frac{1}{2}$, $|\frac{Z_2-Z_1}{K_1}|<1$
 and \eqref{equation_e2.28} can be written as
 \begin{align}
 \label{equation_e2.29}
 (\tfrac{K_2}{K_1}-1)^2
 \|\Lambda_y{\sf Q}\|_{\dot H^1(\R^n)}^2
 +
 |\tfrac{Z_2-Z_1}{K_1}|^2
 \|\nabla_y{\sf Q}\|_{\dot H^1(\R^n)}^2
 <
 16(\eta_1^*)^2.
 \end{align}
 Using \eqref{equation_e2.29},
 we now fix $\eta_1^*\in(0,\bar\eta_1^*)$ such that
 \begin{itemize}
 \item
 $\eta_1^*<\frac{\nu_1}{64}\min\{\|\Lambda_y{\sf Q}\|_{\dot H^1(\R^n)},\|\nabla_y{\sf Q}\|_{\dot H^1(\R^n)}\}$,
 \item
 $\eta_1^*<\tfrac{1}{4}\min\{\nu_1,\nu_2\}$
 \quad
 ($\nu_1$ and $\nu_2$ are constants given in Lemma \ref{LEMMA_2.6}).
 \end{itemize}
 From the choice of $\eta_1^*$,
 \eqref{equation_e2.29} implies
 \begin{align}
 \label{equation_e2.30}
 |\tfrac{K_2}{K_1}-1|^2
 +
 |\tfrac{Z_2-Z_1}{K_1}|^2
 <
 \tfrac{\nu_1^2}{128}.
 \end{align}
 Put
 $U_1(x)
 =
 K_1^\frac{n-2}{2}u(K_1x+Z_1)$.
 From the definition of $w_i(y)$ (see \eqref{equation_e2.24}),
 they are expressed as
 \begin{align}
 \nonumber
 w_1(y)
 &=
 K_1^\frac{n-2}{2}
 u(K_1y+Z_1)
 -
 {\sf Q}(y)
 -
 A_1
 \mathcal{Y}(y)
 \\
 \label{equation_e2.31}
 &=
 U_1(y)
 -
 {\sf Q}(y)
 -
 A_1
 \mathcal{Y}(y),
 \\
 \nonumber
 w_2(y)
 &=
 K_2^\frac{n-2}{2}
 u(K_2y+Z_2)
 -
 {\sf Q}(y)
 -
 A_2
 \mathcal{Y}(y)
 \\
 \label{equation_e2.32}
 &=
 (\tfrac{K_2}{K_1})^\frac{n-2}{2}
 U_1((\tfrac{K_2}{K_1})y+\tfrac{Z_2-Z_1}{K_1})
 -
 {\sf Q}(y)
 -
 A_2
 \mathcal{Y}(y).
 \end{align}
 From the assumption \eqref{equation_e2.25},
 we deduce from \eqref{equation_e2.31} that
 \begin{align*}
 \|U_1-{\sf Q}\|_{\dot H^1(\R^n)}
 =
 \|w_1+A_1\mathcal{Y}\|_{\dot H^1(\R^n)}
 <
 \sqrt{2}\eta_1^*
 <
 \tfrac{\sqrt{2}}{4}\nu_2.
 \end{align*}
 To summarize the above argument, we confirm the following four facts.
 \begin{itemize}
 \item
 $\|U_1-{\sf Q}\|_{\dot H^1(\R^n)}<\frac{\sqrt{2}}{4}\nu_2$,
 \item
 $|A_i|^2\|\mathcal{Y}\|_{\dot H^1(\R^n)}^2<(\eta_1^*)^2<\frac{\nu_1^2}{16}$ \quad ($i=1,2$),
 \item
 $|\tfrac{K_2}{K_1}-1|^2+|\tfrac{Z_2-Z_1}{K_1}|^2<\tfrac{\nu_1^2}{128}$
 \quad
 (see \eqref{equation_e2.30})
 and
 \item
 $w_1(y)$ and $w_2(y)$ defined in \eqref{equation_e2.31} - \eqref{equation_e2.32}
 satisfy the orthogonal conditions \eqref{equation_e2.26}.
 \end{itemize}
 Therefore
 Lemma \ref{LEMMA_2.6} implies
 \begin{align*}
 (1,0,A_1)
 &=
 (\mathcal{K}(U_1),\mathcal{Z}(U_1),\mathcal{A}(U_1)),
 \\
 (\tfrac{K_2}{K_1},\tfrac{Z_2-Z_1}{K_1},A_2)
 &=
 (\mathcal{K}(U_1),\mathcal{Z}(U_1),\mathcal{A}(U_1)).
 \end{align*}
 Thus the claim \eqref{equation_e2.27} is proved.
 Once the mapping
 \begin{align*}
 (\mathcal{K}_{\mathcal{M}}(u),\mathcal{Z}_{\mathcal{M}}(u),\mathcal{A}_{\mathcal{M}}(u))
 :
 \mathcal{U}_{\eta_2^*}
 \to
 \R_+\times\R^n\times\R
 \end{align*}
 is defined,
 the $C^\infty$ differentiability of this mapping follows from Lemma \ref{LEMMA_2.6}.
 The proof is completed.
 \end{proof}

 \begin{lem}
 \label{LEMMA_2.8}
 Let $n\geq3$.
 There exist two constants $C_1>0$ and $C_2>0$ depending only on $n$ such that
 \begin{align*}
 \|{\sf Q}(y)&-k^\frac{n-2}{2}{\sf Q}(ky+z)\|_{\dot H_y^1(\R^n)}
 >
 C_1
 \quad
 \text{ \rm if } |k-1|<\tfrac{1}{2} \text{ \rm or } |z|<1,
 \\
 \|{\sf Q}(y)&-k^\frac{n-2}{2}{\sf Q}(ky+z)\|_{\dot H_y^1(\R^n)}
 \\
 &=
 \sqrt{
 (k-1)^2\|\Lambda_y{\sf Q}\|_{\dot H^1(\R^n)}^2
 +
 |z|^2\|\nabla_y{\sf Q}\|_{\dot H^1(\R^n)}^2
 }
 +
 {\sf R}(k,z)
 \\
 &
 \text{\rm if } |k-1|<\tfrac{1}{2} \text{ \rm and } |z|<1,
 \end{align*}
 where
 ${\sf R}(k,z)$ is a certain function satisfying
 $|{\sf R}(k,z)|<C_2(|k-1|^\frac{3}{2}+|z|^\frac{3}{2})$.
 \end{lem}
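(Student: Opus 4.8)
The plan is to set $G(k,z)(y):=k^{\frac{n-2}{2}}{\sf Q}(ky+z)$, so that the quantity to estimate is $d(k,z):=\|{\sf Q}-G(k,z)\|_{\dot H^1(\R^n)}$, and to record three elementary facts about $G$ before treating the two assertions. Since $p=\frac{n+2}{n-2}$, the rescaling $v(y)\mapsto k^{\frac{n-2}{2}}v(ky+z)$ is an isometry of $\dot H^1(\R^n)$, so $\|G(k,z)\|_{\dot H^1(\R^n)}=\|{\sf Q}\|_{\dot H^1(\R^n)}$ for all $(k,z)$, and hence
\[
d(k,z)^2=2\|{\sf Q}\|_{\dot H^1(\R^n)}^2-2({\sf Q},G(k,z))_{\dot H^1(\R^n)}.
\]
Next, $G:(0,\infty)\times\R^n\to\dot H^1(\R^n)$ is continuous, which follows from the strong continuity of dilations and translations on $L^2(\R^n)$ applied to $\nabla_y{\sf Q}$. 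Finally, $G$ is injective: from $\max_y G(k,z)(y)=k^{\frac{n-2}{2}}$ and the location of the maximiser one recovers $(k,z)$, so $G(k,z)={\sf Q}$ forces $(k,z)=(1,0)$.

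For the lower bound --- that $d\geq C_1>0$ on the closed set $S:=\{(k,z):|k-1|\geq\tfrac12\text{ or }|z|\geq1\}$, which excludes $(1,0)$ --- I would argue by contradiction. Suppose $(k_m,z_m)\in S$ with $d(k_m,z_m)\to0$. Either $(k_m,z_m)$ has a subsequence converging to some $(k_*,z_*)\in(0,\infty)\times\R^n$, in which case $(k_*,z_*)\in S$ (closedness) while continuity of $G$ gives $G(k_*,z_*)={\sf Q}$, forcing $(k_*,z_*)=(1,0)\notin S$, a contradiction; or $(k_m,z_m)$ leaves every compact subset of $(0,\infty)\times\R^n$, so that along a subsequence $k_m\to0$, $k_m\to\infty$, or $|z_m|\to\infty$. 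In each of these regimes $G(k_m,z_m)\rightharpoonup0$ weakly in $\dot H^1(\R^n)$ --- the standard fact that the orbit of a fixed $\dot H^1$-function under the dilation/translation group tends weakly to zero once the group parameters escape every compact set; I would justify it by testing against $\phi\in C_c^\infty(\R^n)$, changing variables, and invoking the compact support of $\nabla\phi$ together with $n\geq3$. The identity above then gives $d(k_m,z_m)^2\to2\|{\sf Q}\|_{\dot H^1(\R^n)}^2>0$, again a contradiction. Hence $C_1:=\inf_S d>0$.

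For the expansion near $(1,0)$, I would first check that $G$ is $C^2$ (indeed $C^\infty$) as a map from a neighbourhood of $(1,0)$ into $\dot H^1(\R^n)$: every $(k,z)$-derivative of $G$ is a finite combination of ${\sf Q},\nabla_y{\sf Q},D_y^2{\sf Q},\dots$ evaluated at rescaled arguments and multiplied by polynomial weights in $y$, and the gradient of each such term lies in $L^2(\R^n)$ for $n\geq3$ (e.g. $\nabla_y\Lambda_y{\sf Q}(y)=O(|y|^{-(n-1)})\in L^2$, where $\Lambda_y{\sf Q}=\tfrac{n-2}{2}{\sf Q}+y\cdot\nabla_y{\sf Q}$), uniformly for $(k,z)$ near $(1,0)$; for $n=3,4$ this must be read in $\dot H^1$, not $L^2$, since $\Lambda_y{\sf Q}\notin L^2$ there. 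The differential of $G$ at $(1,0)$ sends $(k-1,z)$ to $(k-1)\Lambda_y{\sf Q}+z\cdot\nabla_y{\sf Q}$, so Taylor's theorem yields $\|{\sf Q}-G(k,z)+(k-1)\Lambda_y{\sf Q}+z\cdot\nabla_y{\sf Q}\|_{\dot H^1(\R^n)}\leq C(|k-1|^2+|z|^2)$ for $|k-1|+|z|$ small. Expanding $\|{\sf Q}-G(k,z)\|_{\dot H^1(\R^n)}^2$ and using the orthogonalities $(\Lambda_y{\sf Q},\partial_{y_j}{\sf Q})_{\dot H^1(\R^n)}=0$ and $(\partial_{y_i}{\sf Q},\partial_{y_j}{\sf Q})_{\dot H^1(\R^n)}=0$ for $i\neq j$ --- which follow from $-\Delta\Lambda_y{\sf Q}=p{\sf Q}^{p-1}\Lambda_y{\sf Q}$ and $-\Delta\partial_{y_j}{\sf Q}=p{\sf Q}^{p-1}\partial_{y_j}{\sf Q}$ (these functions lie in $\Ker H_y$; cf. Proposition \ref{PROPOSITION_2.4}) together with the parity of the integrands, ${\sf Q}^{p-1}$ and $\Lambda_y{\sf Q}$ being radial while $\partial_{y_j}{\sf Q}$ is odd in $y_j$ --- I obtain
\[
\|{\sf Q}-G(k,z)\|_{\dot H^1(\R^n)}^2=(k-1)^2\|\Lambda_y{\sf Q}\|_{\dot H^1(\R^n)}^2+|z|^2\|\nabla_y{\sf Q}\|_{\dot H^1(\R^n)}^2+B(k,z),
\]
where $|z|^2\|\nabla_y{\sf Q}\|_{\dot H^1(\R^n)}^2=\|z\cdot\nabla_y{\sf Q}\|_{\dot H^1(\R^n)}^2$ by the vanishing of the off-diagonal inner products, and the error $B$, coming from the cross terms with the quadratic remainder, satisfies $|B(k,z)|\leq C(|k-1|+|z|)^3$. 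Writing $A(k,z)$ for the sum of the first two (nonnegative) terms and using $|\sqrt{A+B}-\sqrt A|\leq\sqrt{|B|}$ (valid for $A,A+B\geq0$) together with $(|k-1|+|z|)^{3/2}\leq 2^{3/2}(|k-1|^{3/2}+|z|^{3/2})$ then gives the stated expansion with a remainder ${\sf R}(k,z)$ obeying $|{\sf R}(k,z)|\leq C_2(|k-1|^{3/2}+|z|^{3/2})$.

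The only step I expect to go beyond routine bookkeeping is the weak-convergence input in the second paragraph --- that $G(k,z)\rightharpoonup0$ in $\dot H^1(\R^n)$ as $(k,z)$ escapes every compact subset of $(0,\infty)\times\R^n$ --- and, to a lesser extent, the verification of $C^2$ regularity of $G$ into $\dot H^1(\R^n)$ in the borderline dimensions $n=3,4$; both are classical, so I anticipate a short proof overall.
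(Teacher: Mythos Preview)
Your proof is correct. For the expansion near $(k,z)=(1,0)$ you take the same route as the paper: Taylor expand $G(k,z)(y)=k^{\frac{n-2}{2}}{\sf Q}(ky+z)$ to second order, use the $\dot H^1$-orthogonality of $\Lambda_y{\sf Q}$ and the $\partial_{y_j}{\sf Q}$ to diagonalise the quadratic form, and pass from the squared norm to the norm. The only cosmetic difference is that the paper writes out the second $(k,z)$-derivatives of $G$ explicitly and bounds each term in $\dot H^1$ by hand, which is precisely the $C^2$ check you describe abstractly. For the lower bound on $S=\{|k-1|\geq\tfrac12\text{ or }|z|\geq1\}$ the approaches genuinely differ. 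The paper passes to $L^{\frac{2n}{n-2}}$ via Sobolev and treats four regions separately ($0<k<\tfrac12$, $k>\tfrac32$, $|z|>1$ with $k\leq1$, $|z|>1$ with $k>1$), in each case restricting the integral to a concrete ball on which the pointwise difference $|{\sf Q}(y)-k^{\frac{n-2}{2}}{\sf Q}(ky+z)|$ is bounded below by an explicit positive number. Your argument is soft: continuity and injectivity of $G$ handle the compact part of $S$, and the concentration--compactness fact $G(k,z)\rightharpoonup0$ in $\dot H^1$ as $(k,z)$ escapes compacta, plugged into $d(k,z)^2=2\|{\sf Q}\|_{\dot H^1}^2-2({\sf Q},G(k,z))_{\dot H^1}$, handles the rest. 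Your route is shorter and more conceptual; the paper's is fully elementary (no weak-convergence input) and produces an explicit $C_1$.
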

 \begin{proof}
 We first consider the case where $|k-1|<\frac{1}{2}$ and $|z|<1$.
 Put $G(k,z,y)=k^\frac{n-2}{2}{\sf Q}(ky+z)$.
 We apply a Taylor expansion.
 \begin{align*}
 G(k,z,y)
 &=
 G(1,0,y)
 +
 \tfrac{\pa G}{\pa k}(1,0,y)
 (k-1)
 +
 \sum_{j=1}^n
 \tfrac{\pa G}{\pa z_j}(1,0,y)
 z_j
 +
 R_2
 \\
 &=
 {\sf Q}(y)
 +
 (k-1)
 \Lambda_y{\sf Q}(y)
 +
 z\cdot
 \nabla_y{\sf Q}(y)
 +
 R_2(k,z,y).
 \end{align*}
 The remainder term $R_2$ can be expressed as
 \begin{align*}
 R_2(k,z,y)
 &=
 \tfrac{1}{2}
 \tfrac{\pa^2G}{\pa k^2}
 (k_\theta,z_\theta,y)
 \cdot 
 (k-1)^2
 +
 \tfrac{1}{2}
 \sum_{j=1}^n
 \tfrac{\pa^2G}{\pa z_j^2}
 (k_\theta,z_\theta,y)
 \cdot
 z_j^2
 \\
 &\quad
 +
 \sum_{j=1}^n
 \tfrac{\pa^2G}{\pa k\pa z_j}
 (k_\theta,z_\theta,y)
 \cdot
 (k-1)z_j
 +
 \sum_{i<j}
 \tfrac{\pa^2G}{\pa z_i\pa z_j}
 (k_\theta,z_\theta,y)
 \cdot
 z_iz_j,
 \end{align*}
 where $k_\theta=1+\theta(k-1)$, $z_\theta=\theta z$ for some $\theta\in(0,1)$.
 Let $x=ky+z$.
 A straightforward computation yields
 \begin{align*}
 \tfrac{\pa^2G}{\pa k^2}
 (k,z,y)
 &=
 \tfrac{(n-2)(n-4)}{4}
 k^\frac{n-6}{2}
 {\sf Q}(x)
 +
 (n-2)
 k^\frac{n-4}{2}
 \sum_{j=1}^n
 y_j
 \tfrac{\pa {\sf Q}}{\pa x_j}
 (x)
 \\
 &\quad
 +
 k^\frac{n-2}{2}
 \sum_{i<j}
 y_iy_j
 \tfrac{\pa^2{\sf Q}}{\pa x_i\pa x_j}(x),
 \\
 \tfrac{\pa^2G}{\pa z_i\pa z_j}
 (k,z,y)
 &=
 k^\frac{n-2}{2}
 \tfrac{\pa^2{\sf Q}}{\pa x_i\pa x_j}
 (x),
 \\
 \tfrac{\pa^2G}{\pa k\pa z_j}
 (k,z,y)
 &=
 \tfrac{n-2}{2}
 k^\frac{n-4}{2}
 \tfrac{\pa {\sf Q}}{\pa x_j}(x)
 +
 k^\frac{n-2}{2}
 \sum_{k=1}^n
 y_k
 \tfrac{\pa^2{\sf Q}}{\pa x_k\pa x_j}(x).
 \end{align*}
 Since $\frac{1}{2}<|k_\theta|<2$ and $|z_\theta|<1$,
 we see that
 \begin{align*}
 &
 \|
 \tfrac{\pa^2G}{\pa k^2}
 (k_\theta,z_\theta,y)
 \|_{\dot H_y^1(\R^n)}
 \\
 &<
 c
 \|
 {\sf Q}(x)
 \|_{\dot H_y^1(\R^n)}
 +
 c
 \sum_{j=1}^n
 \|
 y_j\tfrac{\pa {\sf Q}}{\pa x_j}(x)
 \|_{\dot H_y^1(\R^n)}
 \\
 &\quad
 +
 c
 \sum_{i<j}
 \|
 y_iy_j
 \tfrac{\pa^2{\sf Q}}{\pa x_i\pa x_j}(x)
 \|_{\dot H_y^1(\R^n)}
 \\
 &<
 c
 \|
 {\sf Q}(x)
 \|_{\dot H_y^1(\R^n)}
 +
 c
 \|
 \tfrac{x_j-z_j}{k}
 \tfrac{\pa {\sf Q}}{\pa x_j}(x)
 \|_{\dot H_y^1(\R^n)}
 \\
 &\quad
 +
 c
 \sum_{i<j}
 \|
 \tfrac{(x_j-z_j)(x_i-z_i)}{k^2}
 \tfrac{\pa^2{\sf Q}}{\pa x_i\pa x_j}(x)
 \|_{\dot H_y^1(\R^n)}
 \\
 &<
 c
 \|
 {\sf Q}(x)
 \|_{\dot H_y^1(\R^n)}
 +
 c
 \|
 x_j
 \tfrac{\pa {\sf Q}}{\pa x_j}(x)
 \|_{\dot H_y^1(\R^n)}
 +
 c
 |z_j|
 \cdot
 \|
 \tfrac{\pa {\sf Q}}{\pa x_j}(x)
 \|_{\dot H_y^1(\R^n)}
 \\
 &\quad
 +
 c
 \sum_{i<j}
 \|
 x_jx_i
 \tfrac{\pa^2{\sf Q}}{\pa x_i\pa x_j}(x)
 \|_{\dot H_y^1(\R^n)}
 +
 c
 \sum_{i\not=j}
 |z_i|\cdot
 \|
 x_j
 \tfrac{\pa^2{\sf Q}}{\pa x_i\pa x_j}(x)
 \|_{\dot H_y^1(\R^n)}
 \\
 &\quad
 +
 c
 \sum_{i<j}
 |z_i||z_j|\cdot
 \|
 \tfrac{\pa^2{\sf Q}}{\pa x_i\pa x_j}(x)
 \|_{\dot H_y^1(\R^n)}.
 \end{align*}
 By changing variables,
 we get
 \begin{align*}
 &
 \|
 \tfrac{\pa^2G}{\pa k^2}
 (k_\theta,z_\theta,y)
 \|_{\dot H_y^1(\R^n)}
 \\
 &<
 c
 k_\theta^{-\frac{n-2}{2}}
 \|
 {\sf Q}(x)
 \|_{\dot H_x^1(\R^n)}
 +
 c
 k_\theta^{-\frac{n-2}{2}}
 \|
 x_j
 \tfrac{\pa {\sf Q}}{\pa x_j}(x)
 \|_{\dot H_x^1(\R^n)}
 +
 c
 k_\theta^{-\frac{n-2}{2}}
 \|
 \tfrac{\pa {\sf Q}}{\pa x_j}(x)
 \|_{\dot H_x^1(\R^n)}
 \\
 &\quad
 +
 c
 k^{-\frac{n-2}{2}}
 \sum_{i<j}
 \|
 x_jx_i
 \tfrac{\pa^2{\sf Q}}{\pa x_i\pa x_j}(x)
 \|_{\dot H_x^1(\R^n)}
 +
 c
 k_\theta^{-\frac{n-2}{2}}
 \sum_{i\not=j}
 \|
 x_j
 \tfrac{\pa^2{\sf Q}}{\pa x_i\pa x_j}(x)
 \|_{\dot H_x^1(\R^n)}
 \\
 &\quad
 +
 c
 k_\theta^{-\frac{n-2}{2}}
 \sum_{i\not=j}
 \|
 \tfrac{\pa^2{\sf Q}}{\pa x_i\pa x_j}(x)
 \|_{\dot H_x^1(\R^n)}
 <
 C.
 \end{align*}
 Here the constant $C$ depends only on $n$.
 In the same manner,
 we can verify that there exists a constant $C>0$ depending only on $n$ such that
 \begin{align*}
 \sum_{i,j=1}^n
 \|
 \tfrac{\pa^2G}{\pa z_iz_j}
 (k_\theta,z_\theta,y)
 \|_{\dot H_y^1(\R^n)}
 +
 \sum_{j=1}^n
 \|
 \tfrac{\pa^2G}{\pa k\pa z_j}
 (k_\theta,z_\theta,y)
 \|_{\dot H_y^1(\R^n)}
 < 
 C.
 \end{align*}
 Thus
 we conclude that
 there exists a constant $C_1>0$ depending only on $n$ such that
 if
 $|k-1|<\tfrac{1}{2} \text{ and } |z|<1$,
 then
 \begin{align}
 \label{equation_e2.33}
 \|R_2(k,z,y)\|_{\dot H_y^1(\R^n)}
 <
 C_1((k-1)^2+|z|^2).
 \end{align}
 We now derive estimates for
 $\|{\sf Q}(y)-k^\frac{n-2}{2}{\sf Q}(ky+z)\|_{\dot H_y^1(\R^n)}$.
 Noting that
 $(\Lambda_y{\sf Q},z\cdot\nabla_y{\sf Q})_{\dot H_y^1(\R^n)}$ $=0$,
 $(\tfrac{\pa{\sf Q}}{\pa y_i},\tfrac{\pa{\sf Q}}{\pa y_j})_{\dot H_y^1(\R^n)}=0$ if $i\not=j$,
 and using \eqref{equation_e2.33},
 we have
 \begin{align*}
 \|&{\sf Q}(y)-k^\frac{n-2}{2}{\sf Q}(ky+z)\|_{\dot H_y^1(\R^n)}^2
 \\
 &=
 \|(k-1)\Lambda_y{\sf Q}(y)+z\cdot\nabla_y{\sf Q}(y)+R_2(k,z,y)\|_{\dot H_y^1(\R^n)}^2
 \\
 &=
 (k-1)^2
 \|\Lambda_y{\sf Q}(y)\|_{\dot H_y^1(\R^n)}^2
 +
 |z|^2
 \|\nabla_y{\sf Q}(y)\|_{\dot H_y^1(\R^n)}^2
 +
 {\sf R}(k,z)
 \\
 &
 \text{if }
 |k-1|<\tfrac{1}{2} \text{ and } |z|<1,
 \end{align*}
 where ${\sf R}(k,z)$ is a certain function satisfying
 $|{\sf R}(k,z)|<C(|k-1|^3+|z|^3)$.
 This proves the case $|k-1|<\frac{1}{2}$ and $|z|<1$.
 We next consider the case $|k-1|>\frac{1}{2}$ or $|z|>1$.
 It is enough to show that
 \begin{align*}
 \|{\sf Q}(y)&-k^\frac{n-2}{2}{\sf Q}(ky+z)\|_{L_y^\frac{2n}{n-2}(\R^n)}
 >
 C_1
 \end{align*}
 for any $(k,z)\in\R_+\times\R^n$ such that $|k-1|>\frac{1}{2}$ or $|z|>1$.
 Firs we assume that $0<k<\frac{1}{2}$.
 Let $\Omega_1=\{y\in\R^n;{\sf Q}(y)>\frac{3}{4}\}$.
 Then
 \begin{align*}
 \int_{\R^n}
 &|{\sf Q}(y)-k^\frac{n-2}{2}{\sf Q}(ky+z)|^\frac{2n}{n-2}
 dy
 \\
 &>
 \int_{\Omega_1}
 |{\sf Q}(y)-k^\frac{n-2}{2}{\sf Q}(ky+z)|^\frac{2n}{n-2}
 dy
 \\
 &>
 \int_{\Omega_1}
 (\tfrac{3}{4}-(\tfrac{1}{2})^\frac{n-2}{2})^\frac{2n}{n-2}
 dy
 >
 \int_{\Omega_1}
 (\tfrac{1}{4})^\frac{2n}{n-2}
 dy
 \\
 &=
 (\tfrac{1}{4})^\frac{2n}{n-2}
 |\Omega_1|.
 \end{align*}
 For the case $k>\frac{3}{2}$,
 we change variables.
 \begin{align*}
 \int_{\R^n}
 &|{\sf Q}(y)-k^\frac{n-2}{2}{\sf Q}(ky+z)|^\frac{2n}{n-2}
 dy
 \\
 &=
 \int_{\R^n}
 |k^{-\frac{n-2}{2}}{\sf Q}(\tfrac{x-z}{k})-{\sf Q}(x)|^\frac{2n}{n-2}
 dx.
 \end{align*}
 Put
 $d_1=1-(\frac{3}{2})^{-\frac{n-2}{2}}>0$
 and
 $\Omega_2=\{x\in\R^n;{\sf Q}(x)>1-\frac{d_1}{2}\}$.
 Similarly,
 it follows that
 \begin{align*}
 \int_{\R^n}
 &
 |k^{-\frac{n-2}{2}}{\sf Q}(\tfrac{x-z}{k})-{\sf Q}(x)|^\frac{2n}{n-2}
 dx
 \\
 &>
 \int_{\Omega_2}
 |{\sf Q}(x)-k^{-\frac{n-2}{2}}{\sf Q}(\tfrac{x-z}{k})|^\frac{2n}{n-2}
 dy
 \\
 &>
 \int_{\Omega_2}
 (1-\tfrac{d_1}{2}-k^{-\frac{n-2}{2}})
 dy
 \\
 &>
 \int_{\Omega_2}
 (1-\tfrac{d_1}{2}-(\tfrac{3}{2})^{-\frac{n-2}{2}})
 dy
 =
 \tfrac{d_1}{2}
 |\Omega_2|.
 \end{align*}
 Finally,
 we address the case $|z|>1$.
 Suppose that $|z|>1$ and $k\in(0,1)$.
 From the explicit form of ${\sf Q}(x)$,
 it holds that
 \begin{align*}
 &
 \sup_{|y|<\frac{1}{2}}
 k^\frac{n-2}{2}{\sf Q}(ky+z)
 =
 k^\frac{n-2}{2}
 {\sf Q}((|z|-\tfrac{k}{2}){\bf e})
 \qquad
 (|{\bf e}|=1)
 \\
 &=
 k^\frac{n-2}{2}
 \left(
 1+\tfrac{(|z|-\tfrac{k}{2})^2}{n(n-2)}
 \right)^{-\frac{n-2}{2}}
 <
 \left(
 1+\tfrac{1}{4n(n-2)}
 \right)^{-\frac{n-2}{2}}.
 \end{align*}
 Hence
 we have
 \begin{align*}
 \int_{\R^n}
 &|{\sf Q}(y)-k^\frac{n-2}{2}{\sf Q}(ky+z)|^\frac{2n}{n-2}
 dy
 \\
 &>
 \int_{|y|<\frac{1}{2}}
 |{\sf Q}(y)-k^\frac{n-2}{2}{\sf Q}(ky+z)|^\frac{2n}{n-2}
 dy
 \\
 &>
 \left(
 1-
 (
 1+\tfrac{1}{4n(n-2)}
 )^{-\frac{n-2}{2}}
 \right)^\frac{2n}{n-2}
 \int_{|y|<\frac{1}{2}}
 dy
 \\
 &
 \text{if }
 |z|>1 \text{ and } k\in(0,1).
 \end{align*}
 Next consider the case $|z|>1$ and $k>1$.
 We easily see that
 \begin{align*}
 \int_{\R^n}
 &|{\sf Q}(y)-k^\frac{n-2}{2}{\sf Q}(ky+z)|^\frac{2n}{n-2}
 dy
 \\
 &=
 \int_{\R^n}
 |k^{-\frac{n-2}{2}}{\sf Q}(\tfrac{x-z}{k})-{\sf Q}(x)|^\frac{2n}{n-2}
 \\
 &>
 \int_{|x|<\frac{1}{2}}
 |k^{-\frac{n-2}{2}}{\sf Q}(\tfrac{x-z}{k})-{\sf Q}(x)|^\frac{2n}{n-2}
 dx
 \\
 &>
 \left\{
 1-k^{-\frac{n-2}{2}}(1+\tfrac{1}{n(n-2)}\tfrac{1}{4k^2})^{-\frac{n-2}{2}}
 \right\}^\frac{2n}{n-2}
 \int_{|x|<\frac{1}{2}}
 dx.
 \end{align*}
 Let $f(k)=k^{-\frac{n-2}{2}}(1+\tfrac{1}{n(n-2)}\tfrac{1}{4k^2})^{-\frac{n-2}{2}}$.
 Since $\sup_{k>1}f(k)=f(1)$ when $n\geq3$,
 it follows that
 \begin{align*}
 \int_{\R^n}
 |{\sf Q}(y)-k^\frac{n-2}{2}{\sf Q}(ky+z)|^\frac{2n}{n-2}
 dy
 >
 \left\{
 1-(1+\tfrac{1}{n(n-2)}\tfrac{1}{4})^{-\frac{n-2}{2}}
 \right\}^\frac{2n}{n-2}
 \int_{|x|<\frac{1}{2}}
 dx.
 \end{align*}
 This completes the proof.
 \end{proof}

 \subsection{Ancient solutions}
 As in \cite{Collot-Merle-Raphael},
 we will use the following lemma to show that
 a solution which escapes from the neighborhood of $\mathcal M$ exhibits type I blowup.
 \begin{lem}[Proposition 3.1 in \cite{Collot-Merle-Raphael} p.\,239]
 \label{LEMMA_2.9}
 Let $n\geq3$ and $p=\frac{n+2}{n-2}$.
 There exists a smooth, radially symmetric solution
 ${\sf Q}^+(x,t)$ of
 \[
 \pa_t u=\Delta_x u+|u|^{p-1}u
 \]
 defined on $(x,t)\in\R^n\times(-\infty,T)$ for some $T>0$
 satisfying {\rm (i1) - (i7)}.
 \begin{enumerate}[\rm ({i}1)]
 \item
 $\dis\lim_{t\to-\infty}\|\nabla_x{\sf Q}^+(x,t)-\nabla_x{\sf Q}(x)\|_2=0$,
 \item
 there exist $\epsilon_1>0$ and $C>0$ such that
 ${\sf Q}^+(x,t)$ can be expressed as
 \begin{align*}
 {\sf Q}^+(x,t)
 =
 {\sf Q}(x)
 +
 \epsilon_1
 e^{e_0t}
 \mathcal{Y}(x)
 +
 v(x,t),
 \end{align*}
 where $v(x,t )$ is a remainder satisfying
 \[
 \|v(t)\|_2
 +
 \|v(t)\|_\infty
 +
 \|\nabla_xv(t)\|_2
 <
 C\epsilon_1^2
 e^{2e_0t}
 \quad\
 \text{\rm for }
 t\in(-\infty,0),
 \]
 \item
 ${\sf Q}^+(x,t)>0$ \ {\rm for} $(x,t)\in\R^n\times(-\infty,T)$,
 \item
 ${\sf Q}^+(x,t)$ {\rm is non increasing in} $|x|$ {\rm for any fixed} $t\in(-\infty,T)$, 
 \item
 $\pa_t{\sf Q}^+(x,t)>0$ \ {\rm for} $(x,t)\in\R^n\times(-\infty,T)$,
 \item
 ${\sf Q}^+(x,t)$ blows up at $t=T$ and the blow up is of type I,
 \item
 ${\sf Q}^+(x,t)$ satisfies {\rm (a1) - (a6)} in {\rm Theorem 2} in {\rm\cite{Harada_ODE}} {\rm p.\,5 - p.\,6},
 which implies that this blowup is a nondegenerate ODE type blowup.
 \end{enumerate}
 \end{lem}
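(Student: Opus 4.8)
The plan is to follow the strategy of Proposition 3.1 in \cite{Collot-Merle-Raphael}: first build an ancient solution lying on the one--dimensional unstable manifold of ${\sf Q}$ attached to the eigenvalue $-e_0<0$ of $H_y$, and then read off the qualitative properties (i3)--(i7) from maximum principle arguments together with the ODE--blowup theory of \cite{Harada_ODE}.

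\textbf{Step 1 (construction on $(-\infty,0]$).} Writing $u={\sf Q}+\phi$, one has $\phi_t=H\phi+N(\phi)$ with $N(\phi)=f({\sf Q}+\phi)-f({\sf Q})-f'({\sf Q})\phi=O(\phi^2)$, and recall $H\mathcal{Y}=e_0\mathcal{Y}$. I would split $\phi=b(t)\mathcal{Y}+\phi^\perp$ with $(\phi^\perp,\mathcal{Y})_2=0$; since $H$ is self--adjoint and its spectrum on $\mathcal{Y}^\perp$ is nonnegative, $\|e^{sH}P^\perp\|\lesssim1$ for $s\ge0$, whereas $e^{sH}$ acts as $e^{e_0 s}$ on $\langle\mathcal{Y}\rangle$. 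Fixing $\epsilon_1>0$ small and pinning the unstable amplitude by integrating from $-\infty$, I would solve, by contraction in the space of radial functions with norm $\sup_{t\le0}e^{-2e_0 t}\big(|b(t)-\epsilon_1 e^{e_0 t}|+\|\phi^\perp(t)\|_{L^2\cap L^\infty\cap\dot H^1}\big)$, the integral system
\[
b(t)=\epsilon_1 e^{e_0 t}+\int_{-\infty}^t e^{e_0(t-s)}\frac{(\mathcal{Y},N(\phi(s)))_2}{\|\mathcal{Y}\|_2^2}\,ds,\qquad
\phi^\perp(t)=\int_{-\infty}^t e^{(t-s)H}P^\perp N(\phi(s))\,ds.
\]
Since $N(\phi)=O(\epsilon_1^2 e^{2e_0 t})$, the smoothing bounds $\|e^{sH}g\|_{L^\infty\cap\dot H^1}\lesssim(1+s^{-n/4})\|g\|_2$ together with the exponential weights make this a contraction for $\epsilon_1$ small, producing a smooth radial solution ${\sf Q}^+$ on $\R^n\times(-\infty,0]$ whose remainder $v=\phi-\epsilon_1 e^{e_0 t}\mathcal{Y}$ obeys the bounds in (i2); then (i1) is immediate from $\|v(t)\|_{\dot H^1}+\epsilon_1 e^{e_0 t}\|\mathcal{Y}\|_{\dot H^1}\to0$ as $t\to-\infty$.

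\textbf{Step 2 (continuation and sign properties).} Let $(-\infty,T)$ be the maximal interval of existence of ${\sf Q}^+$. Fix $t_0\ll-1$: by (i2) the datum ${\sf Q}^+(\cdot,t_0)$ is strictly positive, radially nonincreasing, and satisfies $\partial_t{\sf Q}^+(\cdot,t_0)>0$ --- for the last point I would write $\partial_t{\sf Q}^+=\epsilon_1 e_0 e^{e_0 t}\mathcal{Y}+\partial_t v$ with $\|\partial_t v(t)\|=O(\epsilon_1^2 e^{2e_0 t})$ (differentiate the integral equations, or use parabolic regularity). On each slab $[t_0,t_1]$ with $t_1<T$ the zeroth--order coefficient $p({\sf Q}^+)^{p-1}$ is bounded, so the linear parabolic maximum principle --- valid on $\R^n$ thanks to the spatial decay encoded in (i2) --- propagates positivity, radial monotonicity (applied to $\partial_r{\sf Q}^+$) and $\partial_t{\sf Q}^+>0$ (applied to $w=\partial_t{\sf Q}^+$, which solves $w_t=\Delta w+p({\sf Q}^+)^{p-1}w$) forward in time; letting $t_0\to-\infty$ gives (i3)--(i5) on all of $(-\infty,T)$.

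\textbf{Step 3 (finite--time type I blowup and nondegeneracy).} A Taylor expansion of the energy at ${\sf Q}$, using $E'[{\sf Q}]=0$ and $E''[{\sf Q}]=-H$, gives $E[{\sf Q}^+(t_0)]=E[{\sf Q}]-\tfrac12 e_0\|\mathcal{Y}\|_2^2\,\epsilon_1^2 e^{2e_0 t_0}+O(\epsilon_1^3 e^{3e_0 t_0})<E[{\sf Q}]$, while $\|\nabla{\sf Q}^+(t_0)\|_2^2=\|\nabla{\sf Q}\|_2^2+2\epsilon_1 e^{e_0 t_0}(\mathcal{Y},{\sf Q}^p)_2+\cdots>\|\nabla{\sf Q}\|_2^2$ because $\mathcal{Y},{\sf Q}>0$. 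Hence ${\sf Q}^+(t_0)$ lies in the blow--up region, and the concavity method (as in \cite{Collot-Merle-Raphael}) forces $T<\infty$. Since ${\sf Q}^+$ is radially nonincreasing, $\Delta{\sf Q}^+(0,t)\le0$, so $\|{\sf Q}^+(t)\|_\infty={\sf Q}^+(0,t)$ obeys $\tfrac{d}{dt}{\sf Q}^+(0,t)\le{\sf Q}^+(0,t)^p$, and integration on $(t,T)$ yields ${\sf Q}^+(0,t)\le(p-1)^{-1/(p-1)}(T-t)^{-1/(p-1)}$, which is the type I bound (i6). Finally, ${\sf Q}^+$ is positive, radially nonincreasing, time--increasing and of type I --- precisely the hypotheses of Theorem 2 in \cite{Harada_ODE} --- so (i7) follows and the blowup is a nondegenerate ODE blowup. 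The step I expect to be the main obstacle is the quantitative construction in Step 1, i.e.\ keeping simultaneous control of the $L^2$, $L^\infty$ and $\dot H^1$ norms with the sharp weight $e^{2e_0 t}$ so that (i2) holds and the maximum principles of Step 2 acquire enough spatial decay to run on $\R^n$; after that, extracting $T<\infty$ from the energy drop and verifying the detailed hypotheses (a1)--(a6) of \cite{Harada_ODE} is the remaining substantive point, the rest being maximum--principle bookkeeping.
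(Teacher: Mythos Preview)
Your overall strategy is the standard one and matches the construction in \cite{Collot-Merle-Raphael}; the paper's own proof is essentially a citation, only adding the brief remarks that (i4)--(i5) follow from the explicit form of the construction there, that (i5) implies (i6) via the well--known fact that time--monotone solutions blow up with type I rate in the Sobolev--critical case, and that (i7) is a direct consequence of (i4).

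There is one concrete gap in your Step 2. You claim that, from (i2) alone, the datum ${\sf Q}^+(\cdot,t_0)$ is strictly positive and that $\partial_t{\sf Q}^+(\cdot,t_0)>0$. The norms in (i2) do not give this: ${\sf Q}(x)\sim|x|^{-(n-2)}\to0$ and $\mathcal{Y}(x)$ decays exponentially, while the flat bound $\|v(t_0)\|_\infty\le C\epsilon_1^2e^{2e_0t_0}$ (and likewise for $\partial_t v$) is a constant, so at large $|x|$ the remainder can dominate and you cannot conclude a sign. The same issue arises for radial monotonicity. You flag this correctly as the main obstacle, but the fix is not to sharpen the $e^{2e_0t}$ weight --- it is to run the contraction of Step 1 in a spatially weighted $L^\infty$ space, for instance with weight $\mathcal{Y}(x)^{-1}$ or $(1+|x|)^{n-2}$, so that $|v(x,t)|+|\partial_t v(x,t)|\lesssim \epsilon_1^2 e^{2e_0t}\mathcal{Y}(x)$ pointwise. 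This is what makes the positivity and monotonicity at $t_0$ immediate and what justifies the parabolic maximum principle on $\R^n$ (which needs decay of the subsolution at spatial infinity). Once this is in place, your Steps 2--3 go through; your route to (i6) via $\Delta{\sf Q}^+(0,t)\le0$ and an ODE comparison at the origin is a valid alternative to the paper's appeal to (i5).
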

 \begin{proof}
 We provide a brief explanation of the proof for (i4) - (i7).
 From their construction (see (3.2) in \cite{Collot-Merle-Raphael} p.\,239),
 we easily see that (i4) - (i5) hold.
 It is well known that (i5) implies (i6) when $p=\frac{n+2}{n-2}$.
 Furthermore
 (i7) is a direct consequence of (i4).
 \end{proof}
 \begin{rem}
 \label{REMARK_2.10}
 The solution ${\sf Q}^+(x,t)$ in Lemma {\rm\ref{LEMMA_2.9}}
 coincides with the ancient solution obtained in Theorem {\rm2.3} of {\rm\cite{Fila-Yanagida}}.
 \end{rem}

\section{Dynamics in a neighbourhood of $\mathcal{M}$}
 \label{section_3}
 We recall the definition of $\mathcal{U}_\eta$.
 \[
 \mathcal{U}_{\eta}
 =
 \{u(x)\in\dot H^1(\R^n);\text{\rm dist}_{\dot H^1(\R^n)}(u(x),\mathcal{M})<\eta\},
 \]
 where
 $\mathcal{M}
 =
 \{\frac{1}{\lambda^\frac{n-2}{2}}{\sf Q}(\tfrac{x-\sigma}{\lambda})\in \dot H^1(\R^n);\lambda>0,\sigma\in\R^n\}$.
 A goal of this section is to study the dynamics in $\mathcal{U}_\eta$ for sufficiently small $\eta>0$.
 Let $\eta_1^*$ and $\eta_2^*$ be positive constants defined in Lemma \ref{LEMMA_2.7},
 and fix $\eta_1\in(0,\eta_1^*)$ which will be chosen in Section \ref{section_4}.
 From (iii) in Lemma \ref{LEMMA_2.7} ,
 there exists $\eta_2\in(0,\eta_2^*)$ such that
 \[
 \|v(y)\|_{\dot H_y^1(\R^n)}^2
 +
 |\mathcal{A}_{\mathcal M}(u)|^2\|\mathcal{Y}\|_{\dot H^1(\R^n)}^2
 <
 \eta_1^2
 \quad\
 \text{for all }
 u\in \mathcal{U}_{\eta_2},
 \]
 where $v(y)$ is defined by
 \begin{align*}
 v(y)
 =
 \mathcal{K}_{\mathcal M}(u)^\frac{n-2}{2}u(\mathcal{K}_{\mathcal M}(u) y+\mathcal{Z}_{\mathcal M}(u))
 -
 {\sf Q}(y)
 -
 \mathcal{A}_{\mathcal M}(u)\mathcal{Y}(y).
 \end{align*}
 In this section,
 we consider a solution $u(x,t)$ of \eqref{equation_1.1} for $t\in(t_1,t_2)$
 satisfying the following conditions (a1) - (a3).
 \vspace{2mm}
 \begin{enumerate}[({a}1)]
 \label{a_property}
 \item $u(t)\in C([t_1,t_2];H^1(\R^n))$,
 \item $u(x,t)\in L^\infty(\R^n\times(t_1',t_2))$ \ {\rm for any} $t_1'\in(t_1,t_2)$,
 \item $\text{\rm dist}_{\dot H^1(\R^n)}(u(t),\mathcal{M})<\eta_2$ \ {\rm for} $t\in (t_1,t_2)$,
 \end{enumerate}
 \vspace{2mm}
 where $(t_1,t_2)$ denotes a subinterval of $(0,T)$.
 From Lemma \ref{LEMMA_2.7},
 if a solution $u(x,t)$ of \eqref{equation_1.1} satisfies (a1) - (a3),
 then it can be written as
 \begin{align}
 \label{EQUATION_e3.1}
 u(x,t)
 &=
 \lambda(t)^{-\frac{n-2}{2}}
 \{
 {\sf Q}(y)
 +
 a(t)\mathcal{Y}(y)
 +
 \epsilon(y,t)
 \}
 \quad\
 \text{wiht }
 y=\tfrac{x-z(t)}{\lambda(t)}
 \\
 \nonumber
 &\
 \text{in }
 t\in(t_1,t_2),
 \end{align}
 where $\lambda(t),z(t),a(t)$ and $\epsilon(y,s)$ are defined by
 \begin{align*}
 \begin{cases}
 (\lambda(t),z(t),a(t))
 =
 (\mathcal{K}_{\mathcal M}(u(t)),\mathcal{Z}_{\mathcal M}(u(t)),\mathcal{A}_{\mathcal M}(u(t))),
 \\
 \epsilon(y,t)
 =
 \lambda(t)^\frac{n-2}{2}u(\lambda(t)y+z(t),t)
 -
 {\sf Q}(y)
 -
 a(t)\mathcal{Y}(y).
 \end{cases}
 \end{align*}
 Here
 $\epsilon(y,t)$ satisfies the following orthogonal conditions.
 \begin{align}
 \label{EQUATION_e3.2}
 \begin{cases}
 (\epsilon(t),{\mathcal Y})_{L_y^2(\R^n)}=0 & \text{for } t\in(t_1,t_2), \\
 (\epsilon(t),\Psi_j)_{L_y^2(\R^n)}=0 & \text{for } t\in(t_1,t_2) \quad (j=0,1,\cdots,n).
 \end{cases}
 \end{align}
 From Lemma \ref{LEMMA_2.5},
 we have
 \begin{align}
 \label{EQUATION_e3.3}
 \begin{cases}
 \bar C_1
 \|\nabla_y\epsilon(t)\|_2^2
 <
 (-H_y\epsilon(t),\epsilon(t))_{L_y^2(\R^n)}
 <
 \bar C_2
 \|\nabla_y\epsilon(t)\|_2^2,
 \\
 \bar C_1
 \|\Delta_y\epsilon(t)\|_2^2
 <
 \|H_y\epsilon(t)\|_{L_y^2(\R^n)}^2
 <
 \bar C_2
 \|\Delta_y\epsilon(t)\|_2^2,
 \end{cases}
 \end{align}
 where both constants $\bar C_1$ and $\bar C_2$ are independent of $\epsilon(y,t)$.
 Furthermore
 by the choice of $\eta_2$,
 it follows that
 \begin{align}
 \label{EQUATION_e3.4}
 \|\nabla_y\epsilon(t)\|_2^2+|a(t)|^2\|\nabla_y\mathcal{Y}\|_2^2<\eta_1^2
 \quad\
 \text{for } t\in(t_1,t_2).
 \end{align}
 Since
 $(\mathcal{K}_{\mathcal M}(u),\mathcal{Z}_{\mathcal M}(u),\mathcal{A}_{\mathcal M}(u)):
 \mathcal{U}_{\eta_2}\to\R_+\times\R^n\times\R$ are smooth,
 and
 by the regularity of the solution $u(x,t)$ (see (e1) - (e7) in Proposition \ref{PROPOSITION_2.2}),
 we can verify that
 \begin{enumerate}[(b1)]
 \item $(\lambda(t),z(t),a(t))\in C([t_1,t_2];\R_+\times\R^n\times\R)\cap C^1((t_1,t_2);\R_+\times\R^n\times\R)$,
 \item $\epsilon(t)\in C([t_1,t_2];H_y^1(\R^n))\cap C^1((t_1,t_2);H_y^1(\R^n))\cap C((t_1,t_2);H_y^3(\R^n))$,
 \item $D_y^\alpha\epsilon(y,t),D_y^\beta\pa_t\epsilon(y,t)\in C(\R^n\times(t_1,t_2))$ \quad ($0\leq|\alpha|\leq3$, $0\leq|\beta|\leq2$),
 \item $D_y^\alpha\epsilon(y,t),D_y^\beta\pa_t\epsilon(y,t)\in L^\infty(\R^n\times(t_1+\Delta t,t_2))$ \ for any $\Delta t\in(0,t_2-t_1)$
 \\ ($0\leq|\alpha|\leq3$, $0\leq|\beta|\leq2$).
 \end{enumerate}
 Substituting \eqref{EQUATION_e3.1} into $u_t=\Delta u+f(u)$,
 we get
 \begin{align}
 \nonumber
 -
 \lambda
 &
 \lambda_t
 (\Lambda_y{\sf Q})
 +
 \lambda^2
 a_t
 {\mathcal Y}
 -
 \lambda
 \lambda_t
 a
 (\Lambda_y{\mathcal Y})
 +
 \lambda^2
 \epsilon_t
 -
 \lambda
 \lambda_t
 (\Lambda_y\epsilon)
 \\
 \nonumber
 &\quad
 -
 \lambda
 \tfrac{dz}{dt}
 \cdot\nabla_y{\sf Q}
 -
 \lambda a
 \tfrac{dz}{dt}
 \cdot\nabla_y\mathcal{Y}
 -
 \lambda
 \tfrac{dz}{dt}
 \cdot\nabla_y\epsilon
 \\
 \nonumber
 &=
 \Delta_y{\sf Q}
 +
 a
 \Delta_y{\mathcal Y}
 +
 \Delta_y\epsilon
 +
 f({\sf Q}+a{\mathcal Y}+\epsilon)
 \\
 \label{EQUATION_e3.5}
 &=
 a
 H_y{\mathcal Y}
 +
 H_y\epsilon
 +
 N
 \qquad
 \text{for }
 t\in (t_1,t_2).
 \end{align}
 The nonlinear term denoted by $N$ is given by
 \begin{align}
 \label{EQUATION_e3.6}
 N=
 f({\sf Q}+a{\mathcal Y}+\epsilon)
 -
 f({\sf Q})
 -
 f'({\sf Q})
 (a{\mathcal Y}+\epsilon).
 \end{align}
 Since $p=\frac{n+2}{n-2}=2$ when $n=6$,
 from \eqref{EQUATION_e3.6},
 there exists a constant $C>0$ depending only on $n$ such that
 \begin{align}
 \label{EQUATION_e3.7}
 |N|
 <
 C(
 |a|^p\mathcal{Y}^p
 +
 |\epsilon|^p
 )
 \quad\
 (n=6).
 \end{align}
 Equation \eqref{EQUATION_e3.5} can be rearranged as follows.
 \begin{align}
 \label{EQUATION_e3.8}
 \lambda^2
 \epsilon_t
 &=
 H_y\epsilon
 +
 \lambda
 \lambda_t
 (\Lambda_y\epsilon)
 +
 \lambda
 \tfrac{dz}{dt}
 \cdot
 \nabla_y\epsilon
 +
 \lambda
 \lambda_t
 (\Lambda_y{\sf Q})
 \\
 \nonumber
 &\quad
 -
 (
 \lambda^2
 a_t
 -
 e_0a
 )
 {\mathcal Y}
 +
 \lambda
 \lambda_t
 a
 (\Lambda_y{\mathcal Y})
 \\
 \nonumber
 &\quad
 +
 \lambda
 \tfrac{dz}{dt}
 \cdot
 \nabla_y{\sf Q}
 +
 \lambda
 a
 \tfrac{dz}{dt}
 \cdot\nabla_y
 \mathcal{Y}
 +
 N
 \quad\
 \text{for }
 t\in (t_1,t_2).
 \end{align}
 We introduce a new time variable.
 \begin{align*}
 s=s(t)=\int_0^t\tfrac{dt'}{\lambda(t')^2}.
 \end{align*}
 Since $\tfrac{\pa }{\pa t}=\tfrac{1}{\lambda^2}\tfrac{\pa}{\pa s}$,
 we rewrite \eqref{EQUATION_e3.8} in the new time variable $s$.
 \begin{align}
 \label{EQUATION_e3.9}
 \epsilon_s
 &=
 H_y\epsilon
 +
 \tfrac{\lambda_s}{\lambda}
 (\Lambda_y\epsilon)
 +
 \tfrac{1}{\lambda}
 \tfrac{dz}{ds}
 \cdot
 \nabla_y\epsilon
 +
 \tfrac{\lambda_s}{\lambda}
 (\Lambda_y{\sf Q})
 -
 (
 a_s
 -
 e_0a
 )
 {\mathcal Y}
 \\
 \nonumber
 &\quad
 +
 \tfrac{\lambda_s}{\lambda}
 a
 (\Lambda_y{\mathcal Y})
 +
 \tfrac{1}{\lambda}
 \tfrac{dz}{ds}\cdot\nabla_y
 {\sf Q}
 +
 \tfrac{a}{\lambda}
 \tfrac{dz}{ds}\cdot\nabla_y
 \mathcal{Y}
 +
 N.
 \end{align}
 In this section,
 we study the behavior of the following quantities along the argument in \cite{Collot-Merle-Raphael}.
 \begin{itemize}
 \label{Scaling}
 \item Scaling parameter $\lambda(t)$ \quad (Lemma \ref{LEMMA_3.2} and Lemma \ref{LEMMA_3.4} - Lemma \ref{LEMMA_3.8}),
 \item Shift parameter $z(t)$ \quad (Lemma \ref{LEMMA_3.2}, Lemma \ref{LEMMA_3.4} and Lemma \ref{LEMMA_3.8}),
 \item Amplitude of unstable mode $a(t)$ \quad (Lemma \ref{LEMMA_3.1} and Lemma \ref{LEMMA_3.2}),
 \item Error norm $\|\nabla_y\epsilon(y,t)\|_2$ \quad (Lemma \ref{LEMMA_3.10}),
 \item Error norm $\|\Delta_y\epsilon(y,t)\|_2$ \quad (Lemma \ref{LEMMA_3.1}),
 \item Error norm $\|\epsilon(y,t)\|_2$ \quad
 (Lemma \ref{LEMMA_3.5} - Lemma \ref{LEMMA_3.8}),
 \item Error norm $\|\epsilon(y,t)\|_\infty$ \quad
 (Lemma \ref{LEMMA_3.9}).
 \end{itemize}
 In the work of Collot-Merle-Rapha\"el \cite{Collot-Merle-Raphael} for $n\geq7$,
 they derived precise bounds for
 \begin{align*}
 \lambda(t),\ z(t),\ a(t),\ \|\nabla_y\epsilon(t)\|_2,\ \|\Delta_y\epsilon(t)\|_2
  \text{ and } \|\epsilon(t)\|_\infty.
 \end{align*}
 These quantities are typically central in the analysis of blowup dynamics of solutions in nonlinear PDEs.
 The key to obtaining these bounds is their detailed analysis of a coupled system of differential equations for
 \begin{center}
 $\lambda(t)$, $z(t)$, $a(t)$, $\|\nabla_y\epsilon(t)\|_2$ and $\|\Delta_y\epsilon(t)\|_2$.
 \end{center}
 Our new approach for the case $n=6$ is to investigate a coupled system of differential equations for
 \begin{center}
 $\lambda(t)$, $z(t)$, $a(t)$, $\|\nabla_y\epsilon(t)\|_2$ and $\|\epsilon(t)\|_2$.
 \end{center}
 Therefore
 our argument requires a stronger integrability condition on the initial data,
 $u_0(x)\in L^2(\R^n)$.
 We here remark that although Lemma \ref{LEMMA_3.3} is elementary,
 it plays a crucial role in our argument.

 \noindent
 {\bf Notation.}
 Define
 \begin{align*}
 s_i=\int_0^{t_i}\tfrac{dt'}{\lambda(t')^2}
 \quad\
 \text{for }
 i=1,2.
 \end{align*}
 The constant $C$ represents a generic positive constant,
 which may vary from line to line, but is independent of all the parameters $\eta_1$, $\eta_2$, $t_1$, $t_2$, $s_1$ and $s_2$
 in this section.

 Since the proofs of (2.14) - (2.16) in Lemma 2.6, Lemma 2.7 and Lemma 2.9
 in \cite{Collot-Merle-Raphael} remain valid for $n=6$,
 we can establish
 Lemma \ref{LEMMA_3.1}, Lemma \ref{LEMMA_3.2} and
 Lemma \ref{LEMMA_3.10} in this section
 by using the same argument.
 However,
 for the sake of completeness,
 we provide their full proofs below.
 \begin{lem}[Lemma 2.7 in \cite{Collot-Merle-Raphael} p.\,232 ]
 \label{LEMMA_3.1}
 Let $n=6$,
 and
 $u(x,t)$ be a solution of \eqref{equation_1.1} for $t\in(t_1,t_2)$ satisfying {\rm(a1) - (a3)} stated on {\rm p.\,\pageref{a_property}}.
 There exist constants $h_1>0$ and $C>0$,
 depending only on $n$
 such that if $\eta_1\in(0,h_1)$,
 then
 \begin{align}
 \label{EQUATION_3.10}
 \int_{s_1}^{s_2}
 \{
 \|H_y\epsilon(y,s)\|_{L_y^2(\R^n)}^2+a(s)^2
 \}
 ds
 <
 C
 \eta_1^2.
 \end{align}
 \end{lem}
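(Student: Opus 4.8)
The plan is to extract \eqref{EQUATION_3.10} from the dissipation of the energy $E$ along the flow. Substituting the decomposition \eqref{EQUATION_e3.1} into $u_t=\Delta u+f(u)$ and using $-\Delta_y{\sf Q}=f({\sf Q})$, $H_y\mathcal{Y}=e_0\mathcal{Y}$, and the definition \eqref{EQUATION_e3.6} of $N$, one finds
\begin{align*}
u_t(x,t)
=
\lambda(t)^{-\frac{n-2}{2}-2}
\bigl(e_0a(t)\mathcal{Y}(y)+H_y\epsilon(y,t)+N(y,t)\bigr),
\qquad
y=\tfrac{x-z(t)}{\lambda(t)}.
\end{align*}
Changing variables $x\mapsto y$ (which produces a factor $\lambda^n$) and using $ds=dt/\lambda^2$ gives
\begin{align*}
\int_{t_1}^{t_2}\|u_t(t)\|_{L_x^2(\R^n)}^2\,dt
=
\int_{s_1}^{s_2}\|e_0a(s)\mathcal{Y}+H_y\epsilon(s)+N(s)\|_{L_y^2(\R^n)}^2\,ds,
\end{align*}
while by the regularity recorded in Proposition \ref{PROPOSITION_2.2} the standard identity $\frac{d}{dt}E[u(t)]=-\|u_t(t)\|_{L^2(\R^n)}^2$ holds on $(t_1,t_2)$ and extends continuously to the endpoints, so the left-hand side equals $E[u(t_1)]-E[u(t_2)]$. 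It therefore suffices to prove (a) $E[u(t_1)]-E[u(t_2)]\le C\eta_1^2$ and (b) a coercivity bound $\|e_0a\mathcal{Y}+H_y\epsilon+N\|_{L^2(\R^n)}^2\ge c(a^2+\|H_y\epsilon\|_{L^2(\R^n)}^2)$ valid pointwise in $s$ for $\eta_1$ small.

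For (a) I would Taylor-expand $E$ around ${\sf Q}$. By the scale invariance of $E$ and \eqref{EQUATION_e3.1} one has $E[u(t)]=E[{\sf Q}+w]$ with $w=a(t)\mathcal{Y}+\epsilon(t)$ and $\|\nabla_y w\|_2<2\eta_1$ by \eqref{EQUATION_e3.4}. Since $E'[{\sf Q}]=-\Delta{\sf Q}-f({\sf Q})=0$ and $E''[{\sf Q}]=-H_y$, and since $p+1=3$ for $n=6$, a second-order expansion of $t\mapsto|t|^3$ together with $\dot H^1(\R^n)\hookrightarrow L^3(\R^n)$ yields
\begin{align*}
E[u(t)]-E[{\sf Q}]
=\tfrac12(-H_yw,w)_{L_y^2(\R^n)}+O(\|\nabla_y w\|_2^3)
=-\tfrac{e_0}{2}a(t)^2\|\mathcal{Y}\|_2^2+\tfrac12(-H_y\epsilon(t),\epsilon(t))_{L_y^2(\R^n)}+O(\eta_1^3),
\end{align*}
where the cross term drops because $(\epsilon,\mathcal{Y})_2=0$. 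Each term on the right is $O(\eta_1^2)$ uniformly in $t\in[t_1,t_2]$: indeed $a^2\le C\eta_1^2$ and $0<(-H_y\epsilon,\epsilon)_2\le\bar C_2\|\nabla_y\epsilon\|_2^2\le\bar C_2\eta_1^2$ by \eqref{EQUATION_e3.3}--\eqref{EQUATION_e3.4}. Hence $E[u(t_1)]-E[u(t_2)]\le 2\sup_{t\in[t_1,t_2]}|E[u(t)]-E[{\sf Q}]|\le C\eta_1^2$.

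For (b), the identity $(\mathcal{Y},H_y\epsilon)_2=(H_y\mathcal{Y},\epsilon)_2=e_0(\mathcal{Y},\epsilon)_2=0$ gives $\|e_0a\mathcal{Y}+H_y\epsilon\|_2^2=e_0^2a^2\|\mathcal{Y}\|_2^2+\|H_y\epsilon\|_2^2$, so it is enough to absorb $N$, i.e.\ to show $\|N\|_2^2\le C\eta_1^2(a^2+\|H_y\epsilon\|_2^2)$. This is the step where the dimension $n=6$ is essential. By \eqref{EQUATION_e3.7}, $|N|\le C(a^2\mathcal{Y}^2+\epsilon^2)$, so $\|N\|_2\le C(a^2\|\mathcal{Y}\|_4^2+\|\epsilon\|_4^2)$; since $\dot H^1(\R^6)$ does not embed in $L^4$, I would instead invoke the Gagliardo--Nirenberg inequality $\|\epsilon\|_4\le C\|\nabla_y\epsilon\|_2^{1/2}\|\Delta_y\epsilon\|_2^{1/2}$ in $\R^6$, combined with $\|\Delta_y\epsilon\|_2\le C\|H_y\epsilon\|_2$ from \eqref{EQUATION_e3.3} and $\|\nabla_y\epsilon\|_2\le\eta_1$ from \eqref{EQUATION_e3.4}, giving $\|\epsilon\|_4^2\le C\eta_1\|H_y\epsilon\|_2$; moreover $\|\mathcal{Y}\|_4<\infty$ by the exponential decay \eqref{equation_e2.7}, so $a^2\|\mathcal{Y}\|_4^2\le C\eta_1|a|$ using $|a|\le C\eta_1$. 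Squaring and summing yields $\|N\|_2^2\le C\eta_1^2(a^2+\|H_y\epsilon\|_2^2)$, whence for $\eta_1\in(0,h_1)$ with $h_1$ small (depending only on $n$),
\begin{align*}
\|e_0a\mathcal{Y}+H_y\epsilon+N\|_2^2
\ge\tfrac12\bigl(e_0^2a^2\|\mathcal{Y}\|_2^2+\|H_y\epsilon\|_2^2\bigr)-\|N\|_2^2
\ge c\bigl(a^2+\|H_y\epsilon\|_2^2\bigr).
\end{align*}
Combining (a) and (b),
\begin{align*}
c\int_{s_1}^{s_2}\bigl(a(s)^2+\|H_y\epsilon(s)\|_2^2\bigr)\,ds
\le\int_{s_1}^{s_2}\|e_0a\mathcal{Y}+H_y\epsilon+N\|_2^2\,ds
=E[u(t_1)]-E[u(t_2)]
\le C\eta_1^2,
\end{align*}
which is \eqref{EQUATION_3.10}. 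The main obstacle is precisely estimate (b): in six dimensions the quadratic nonlinearity $N$ cannot be controlled in $L^2$ from the $\dot H^1$ smallness of $\epsilon$ alone, so one genuinely needs the $\dot H^2$ (equivalently $\|H_y\cdot\|_2$) control of $\epsilon$ together with the interpolation above — which is exactly why $\|H_y\epsilon\|_2$, rather than a lower-order norm, must appear on the left of \eqref{EQUATION_3.10}, and this sets the pattern for the coupled differential system used in the remainder of the section.
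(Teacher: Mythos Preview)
Your proposal is correct and follows essentially the same route as the paper: both use the energy dissipation identity to write $E[u(t_1)]-E[u(t_2)]=\int_{s_1}^{s_2}\|e_0a\mathcal{Y}+H_y\epsilon+N\|_2^2\,ds$, Taylor-expand $E[u(t_i)]$ around $E[{\sf Q}]$ using scale invariance and the orthogonality $(\epsilon,\mathcal{Y})_2=0$ to bound the left-hand side by $C\eta_1^2$, and then absorb $N$ into the main term via $\|N\|_2^2\le C\eta_1^2(a^2+\|H_y\epsilon\|_2^2)$. The only cosmetic difference is that the paper estimates $\|\epsilon\|_4^4$ by H\"older $\|\epsilon^2\|_{n/(n-4)}\|\epsilon^{2(p-1)}\|_{n/4}$ whereas you interpolate $\|\epsilon\|_4\le C\|\nabla_y\epsilon\|_2^{1/2}\|\Delta_y\epsilon\|_2^{1/2}$; in $n=6$ these are the same inequality.
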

 \begin{proof}
 From the energy identity,
 we see that
 \begin{align*}
 E[u(t_1)]
 &-
 E[u(t_2)]
 =
 \int_{t_1}^{t_2}
 dt
 \int_{\R_x^n}
 u_t(x,t)^2
 dx
 \\
 &=
 \int_{t_1}^{t_2}
 dt
 \int_{\R_x^n}
 (\Delta_xu+f(u))^2
 dx
 \\
 &=
 \int_{t_1}^{t_2}
 \tfrac{dt}{\lambda^{n+2}}
 \int_{\R_x^n}
 \{
 \Delta_y({\sf Q}+a\mathcal{Y}+\epsilon)
 +f({\sf Q}+a\mathcal{Y}+\epsilon)
 \}^2
 dx
 \\
 &=
 \int_{t_1}^{t_2}
 \tfrac{dt}{\lambda^2}
 \int_{\R_y^n}
 (aH_y\mathcal{Y}+H_y\epsilon+N)^2
 dy.
 \end{align*}
 Since $H_y\mathcal{Y}=e_0\mathcal{Y}$,
 we obtain
 \begin{align}
 \label{EQUATION_3.11}
 E[u(t_1)]
 &-
 E[u(t_2)]
 =
 \int_{s_1}^{s_2}
 (
 a^2e_0^2\|\mathcal{Y}\|_{L_y^2(\R^n)}^2
 +
 \|H_y\epsilon\|_{L_y^2(\R^n)}^2
 )
 ds
 \\
 \nonumber
 & \quad
 +
 \int_{s_1}^{s_2}
 \{
 2(ae_0\mathcal{Y}+H_y\epsilon,N)_{L_y^2(\R^n)}
 +
 \|N\|_{L_y^2(\R^n)}^2
 \}
 ds.
 \end{align}
 From \eqref{EQUATION_e3.7},
 we observe that
 \begin{align*}
 \|N\|_{L_y^2(\R^n)}^2
 &<
 C|a|^{2p}
 +
 C\int_{\R_y^n}\epsilon^2\cdot|\epsilon|^{2(p-1)}dy
 \\
 &<
 C|a|^{2p}
 +
 C
 \|\epsilon^2\|_\frac{n}{n-4}
 \|\epsilon^{2(p-1)}\|_\frac{n}{4}
 \\
 &<
 C|a|^{2p}
 +
 C
 \|\epsilon\|_\frac{2n}{n-4}^2
 \|\epsilon\|_\frac{2n}{n-2}^{2(p-1)}
 \\
 &<
 C|a|^{2p}
 +
 C
 \|\Delta_y\epsilon\|_2^2\|\nabla_y\epsilon\|_2^{2(p-1)}.
 \end{align*}
 From \eqref{EQUATION_e3.3} - \eqref{EQUATION_e3.4},
 we deduce that
 \begin{align}
 \label{EQUATION_3.12}
 \|N\|_{L_y^2(\R^n)}^2
 &<
 C|a|^{2p}
 +
 C
 \eta_1^{2(p-1)}
 \|H_y\epsilon\|_2^2.
 \end{align}
 Therefore
 from \eqref{EQUATION_3.11} and \eqref{EQUATION_3.12},
 there exists $h_1>0$ depending only on $n$ such that
 if $\eta_1\in(0,h_1)$,
 then
 \begin{align}
 \label{EQUATION_3.13}
 E[u(t_1)]
 &-
 E[u(t_2)]
 >
 \tfrac{1}{2}
 \int_{s_1}^{s_2}
 (
 a^2e_0^2\|\mathcal{Y}\|_{L_y^2(\R^n)}^2
 +
 \|H_y\epsilon\|_{L_y^2(\R^n)}^2
 )
 ds.
 \end{align}
 By the change of variable,
 $E[u(t_i)]$ ($i=1,2$) can be expressed as
 \begin{align}
 \nonumber
 E&[u(t_i)]
 =
 \int_{\R_x^n}
 (\tfrac{1}{2}|\nabla_xu(x,t_i)|^2-\tfrac{1}{p+1}|u(x,t_i)|^{p+1})
 dx
 \\
 \nonumber
 &=
 \tfrac{1}{2}
 \int_{\R_y^n}
 |\nabla_y({\sf Q}(y)+a(t_i)\mathcal{Y}(y)+\epsilon(y,t_i))|^2
 dy
 \\
 \nonumber
 &\quad
 -
 \tfrac{1}{p+1}
 \int_{\R_y^n}
 |{\sf Q}(y)+a(t_i)\mathcal{Y}(y)+\epsilon(y,t_i)|^{p+1}
 dy
 \\
 \label{EQUATION_3.14}
 &=
 E[{\sf Q}]
 -
 \tfrac{e_0}{2}
 a(t_i)^2
 \|\mathcal{Y}\|_2^2
 +
 \tfrac{1}{2}
 (-H_y\epsilon(t_i),\epsilon(t_i))_2
 -
 R_i
 \end{align}
 for $i=1,2$,
 where
 \begin{align*}
 R_i
 &=
 \tfrac{1}{p+1}
 \int_{\R_y^n}
 |{\sf Q}(y)+a(t_i)\mathcal{Y}(y)+\epsilon(y,t_i)|^{p+1}
 dy
 \\
 &
 -
 \tfrac{1}{p+1}
 \int_{\R_y^n}
 {\sf Q}(y)|^{p+1}
 dy
 -
 \int_{\R_y^n}
 {\sf Q}(y)^p\{a(t_i)\mathcal{Y}(y)+\epsilon(y,t_i)\}
 dy
 \\
 &
 -
 \int_{\R_y^n}
 p{\sf Q}(y)^{p-1}\{a(t_i)\mathcal{Y}(y)+\epsilon(y,t_i)\}^2
 dy.
 \end{align*}
 Since $p=\frac{n+2}{n-2}=2$ when $n=6$,
 it holds that
 \begin{align}
 \nonumber
 |R_i|
 &<
 C
 \int_{\R_y^n}
 |a(t_i)\mathcal{Y}(y)+\epsilon(y,t_i)|^{p+1}
 dy
 \\
 \label{EQUATION_3.15}
 &<
 C|a(t_i)|^{p+1}+C\|\nabla_y\epsilon(t_i)\|_2^{p+1}.
 \end{align}
 Combining \eqref{EQUATION_3.13} - \eqref{EQUATION_3.15} together with \eqref{EQUATION_e3.4},
 we obtain the desired estimate.
 \end{proof}

\subsection{Coupled system of differential equations for
$\lambda(t)$, $z(t)$, $a(t)$ and $\|\epsilon(t)\|_2$}
 \begin{lem}[(2.14) - (2.16) in Lemma 2.6 in \cite{Collot-Merle-Raphael} p.\,225 - p.\,226]
 \label{LEMMA_3.2}
 Let $n=6$,
 and
 $u(x,t)$ be as in Lemma {\rm\ref{LEMMA_3.1}}.
 There exist constants $h_1>0$ and $C>0$,
 depending only on $n$
 such that if $\eta_1\in(0,h_1)$,
 then
 \begin{align}
 \label{EQ_3.16}
 |a_s-e_0a|
 &<
 Ca^2
 +
 C
 \min\{
 \|\nabla_y\epsilon\|_2^2,
 \|\Delta_y\epsilon\|_2^2
 \}
 \quad\
 \text{\rm for }
 s\in(s_1,s_2),
 \\
 \label{EQ_3.17}
 \left|
 \frac{\lambda_s}{\lambda}
 \right|
 &<
 Ca^2
 +
 C
 \min\{
 \|\nabla_y\epsilon\|_2,
 \|\Delta_y\epsilon\|_2
 \}
 \quad\
 \text{\rm for }
 s\in(s_1,s_2),
 \\
 \label{EQ_3.18}
 \left|
 \frac{1}{\lambda}
 \frac{dz}{ds}
 \right|
 &<
 Ca^2
 +
 C
 \min\{
 \|\nabla_y\epsilon\|_2,
 \|\Delta_y\epsilon\|_2
 \}
 \quad\
 \text{\rm for }
 s\in(s_1,s_2).
 \end{align}
 \end{lem}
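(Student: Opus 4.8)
The plan is to derive the three modulation equations by testing the evolution equation \eqref{EQUATION_e3.9} against $\mathcal{Y}$ and against $\Psi_0,\dots,\Psi_n$, and then to read off the bounds by inverting the resulting near-diagonal linear system for $a_s-e_0a$, $\tfrac{\lambda_s}{\lambda}$ and $\tfrac1\lambda\tfrac{dz}{ds}$. Throughout I write $N_\epsilon:=\min\{\|\nabla_y\epsilon\|_2,\|\Delta_y\epsilon\|_2\}$ and $\mu:=|a_s-e_0a|+|\tfrac{\lambda_s}{\lambda}|+|\tfrac1\lambda\tfrac{dz}{ds}|$. Since $\epsilon(s)\in C^1((s_1,s_2);H_y^1(\R^n))$ by (b1)--(b4) and $\epsilon$ obeys the orthogonality relations \eqref{EQUATION_e3.2}, differentiating them in $s$ gives $(\epsilon_s,\mathcal{Y})_2=0$ and $(\epsilon_s,\Psi_j)_2=0$ for $j=0,\dots,n$.

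First I would isolate the leading terms. Pairing \eqref{EQUATION_e3.9} with $\mathcal{Y}$ and using $(H_y\epsilon,\mathcal{Y})_2=e_0(\epsilon,\mathcal{Y})_2=0$, $(\Lambda_y{\sf Q},\mathcal{Y})_2=0$ (because $\Lambda_y{\sf Q}\in\Ker H_y$ and $\mathcal{Y}$ are eigenfunctions of the self-adjoint $H_y$ for the distinct eigenvalues $0$ and $e_0$), and $(\nabla_y{\sf Q},\mathcal{Y})_2=(\nabla_y\mathcal{Y},\mathcal{Y})_2=0$ by parity, the only surviving leading term is $-(a_s-e_0a)\|\mathcal{Y}\|_2^2$. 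Pairing with $\Psi_0$ and with $\Psi_j$ ($1\le j\le n$) and using the orthogonality \eqref{equation_e2.11}, $(\nabla_y{\sf Q},\Psi_0)_2=0$ by parity, and the facts that $H_y\Psi_0$ equals $[H_y,\chi_M]\Lambda_y{\sf Q}$ plus a multiple of $\mathcal{Y}$ and $H_y\Psi_j=[H_y,\chi_M]\pa_{y_j}{\sf Q}$ (so that, after using $(\epsilon,\mathcal{Y})_2=0$, one has $(H_y\epsilon,\Psi_0)_2=(\epsilon,[H_y,\chi_M]\Lambda_y{\sf Q})_2$ and $(H_y\epsilon,\Psi_j)_2=(\epsilon,[H_y,\chi_M]\pa_{y_j}{\sf Q})_2$), the leading terms are $\tfrac{\lambda_s}{\lambda}(\Lambda_y{\sf Q},\Psi_0)_2$ and $\tfrac1\lambda\tfrac{dz_j}{ds}(\pa_{y_j}{\sf Q},\Psi_j)_2$. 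Since $\Lambda_y{\sf Q},\pa_{y_j}{\sf Q}\in L^2(\R^6)$, for $M$ large these coefficients are $\int\chi_M(\Lambda_y{\sf Q})^2\,dy$ and $\int\chi_M(\pa_{y_j}{\sf Q})^2\,dy$, bounded below by a positive constant depending only on $n$.

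Next I would estimate each remaining term. The commutators $[H_y,\chi_M]\Lambda_y{\sf Q}$ and $[H_y,\chi_M]\pa_{y_j}{\sf Q}$ are smooth and supported in $\{M\le|y|\le 2M\}$, hence lie in $L^{2n/(n+2)}(\R^6)\cap L^{2n/(n+4)}(\R^6)$, so $|(H_y\epsilon,\Psi_j)_2|\le C N_\epsilon$ by the Sobolev embeddings $\dot H^1\hookrightarrow L^{2n/(n-2)}$, $\dot H^2\hookrightarrow L^{2n/(n-4)}$; the exponential decay of $\mathcal{Y}$, $\Lambda_y\mathcal{Y}$, $\nabla_y\mathcal{Y}$ from Proposition \ref{PROPOSITION_2.4}, together with one integration by parts exploiting the compact support of $\chi_M$, gives the same bound for $|(\Lambda_y\epsilon,\mathcal{Y})_2|$, $|(\nabla_y\epsilon,\mathcal{Y})_2|$, $|(\Lambda_y\epsilon,\Psi_0)_2|$ and $|(\pa_{y_k}\epsilon,\Psi_j)_2|$. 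For the nonlinearity, \eqref{EQUATION_e3.7} yields $|N|\le C(|a|^p\mathcal{Y}^p+|\epsilon|^p)$ with $p=2$, hence the pairings of $N$ with $\mathcal{Y}$ and $\Psi_j$ are $\le Ca^2+C\int|\epsilon|^2\big(\mathcal{Y}+|\Psi_0|+\textstyle\sum_j|\Psi_j|\big)\,dy\le Ca^2+CN_\epsilon^2$ by H\"older. Collecting all of this, the three families of equations yield
\[
\mu\le C N_\epsilon+C\mu(N_\epsilon+|a|)+C(a^2+N_\epsilon^2).
\]
By \eqref{EQUATION_e3.4} one has $N_\epsilon+|a|\le C\eta_1$, so for $\eta_1$ small the middle term is absorbed on the left, giving $\mu\le C(a^2+N_\epsilon)$; this is \eqref{EQ_3.17} and \eqref{EQ_3.18}.

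Finally, to obtain the sharper bound \eqref{EQ_3.16} I would return to the $\mathcal{Y}$-equation, in which there is no linear-in-$\epsilon$ error term at all: every remaining term is quadratic in $(a,\epsilon)$ or a modulation parameter times $N_\epsilon$ or $|a|$, so that
\[
|a_s-e_0a|\,\|\mathcal{Y}\|_2^2\le C\mu(N_\epsilon+|a|)+C(a^2+N_\epsilon^2).
\]
Substituting $\mu\le C(a^2+N_\epsilon)$ and using $N_\epsilon\le C\eta_1$ together with $N_\epsilon|a|\le\tfrac12(N_\epsilon^2+a^2)$ gives $|a_s-e_0a|\le Ca^2+CN_\epsilon^2$, which is \eqref{EQ_3.16}. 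The one genuinely delicate point is the control of the linear-in-$\epsilon$ terms $(H_y\epsilon,\Psi_j)_2$: a priori $H_y\epsilon$ lies only in a negative-order space, so one must use the commutator structure $H_y\Psi_j=[H_y,\chi_M](\cdot)$ and the compact support of $\chi_M\Lambda_y{\sf Q}$, $\chi_M\pa_{y_j}{\sf Q}$ to transfer $H_y$ onto the test function; that $\Lambda_y{\sf Q},\pa_{y_j}{\sf Q}\in L^2(\R^6)$ is precisely what keeps the leading coefficients nondegenerate, while the cleanly quadratic form of $N$ when $p=2$ makes the nonlinear error harmless. The rest is the routine near-diagonal inversion of the modulation system, as in \cite{Collot-Merle-Raphael}.
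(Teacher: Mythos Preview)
Your proof is correct and follows essentially the same approach as the paper's: test \eqref{EQUATION_e3.9} against $\mathcal{Y}$ and $\Psi_0,\dots,\Psi_n$, bound the linear-in-$\epsilon$ terms $(H_y\epsilon,\Psi_j)_2=(\epsilon,H_y\Psi_j)_2$ and $(\Lambda_y\epsilon,\Psi_j)_2$ via H\"older/Sobolev (the paper phrases this through the Hardy inequalities of Lemma~\ref{LEMMA_2.1} but the effect is the same), bound $(N,\cdot)_2$ by $Ca^2+CN_\epsilon^2$ using \eqref{EQUATION_e3.7}, and then invert the near-diagonal system using the smallness $|a|+N_\epsilon\le C\eta_1$ from \eqref{EQUATION_e3.4}. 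Your explicit use of the commutator identity $H_y\Psi_j=[H_y,\chi_M]\pa_{y_j}{\sf Q}$ makes the structure of the $(H_y\epsilon,\Psi_j)_2$ bound slightly more transparent than in the paper, but the argument is otherwise identical.
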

 \begin{proof}
 By multiplying equation \eqref{EQUATION_e3.9} by $\mathcal{Y}$ and $\Psi_j$ ($j=0,\cdots,n$),
 and using \eqref{EQUATION_e3.2},
 we obtain
 \begin{align}
 \label{EQ_3.19}
 0
 &=
 0
 +
 \tfrac{\lambda_s}{\lambda}
 (\Lambda_y\epsilon,\mathcal{Y})_2
 +
 (\tfrac{1}{\lambda}
 \tfrac{dz}{ds}
 \cdot\nabla_y\epsilon,\mathcal{Y})_2
 +
 0
 -
 (
 a_s
 -
 e_0a
 )
 \|{\mathcal Y}\|_2^2
 \\
 \nonumber
 &\quad
 +
 \tfrac{\lambda_s}{\lambda}
 a
 (\Lambda_y{\mathcal Y},{\mathcal Y})_2
 +
 0
 +
 0
 +
 (N,{\mathcal Y})_2,
 \\
 \label{EQ_3.20}
 0
 &=
 (H_y\epsilon,\Psi_0)
 +
 \tfrac{\lambda_s}{\lambda}
 (\Lambda_y\epsilon,\Psi_0)_2
 +
 ( \tfrac{1}{\lambda}
 \tfrac{dz}{ds}
 \cdot
 \nabla_y\epsilon,\Psi_0)_2
 +
 \tfrac{\lambda_s}{\lambda}
 (\Lambda_y{\sf Q},\Psi_0)_2
 \\
 \nonumber
 &\quad
 -
 0
 +
 \tfrac{\lambda_s}{\lambda}
 a
 (\Lambda_y{\mathcal Y},\Psi_0)_2
 +
 0
 +
 0
 +
 (N,\Psi_0)_2,
 \\
 \label{EQ_3.21}
 0
 &=
 (H_y\epsilon,\Psi_j)
 +
 \tfrac{\lambda_s}{\lambda}
 (\Lambda_y\epsilon,\Psi_j)_2
 +
 (\tfrac{1}{\lambda}
 \tfrac{dz}{ds}
 \cdot
 \nabla_y\epsilon,\Psi_j)_2
 +
 0
 -
 0
 \\
 \nonumber
 &\quad
 +
 0
 +
 \underbrace{
 (
 \tfrac{1}{\lambda}
 \tfrac{dz}{ds}\cdot
 \nabla_y
 {\sf Q},
 \Psi_j)_2
 }_{=\tfrac{1}{\lambda}
 \tfrac{dz_j}{ds}(
 \pa_{y_j}
 {\sf Q},
 \Psi_j)_2}
 +
 \underbrace{
 (
 \tfrac{a}{\lambda}
 \tfrac{dz}{ds}\cdot
 \nabla_y
 \mathcal{Y},
 \Psi_j
 )_2
 }_{=\tfrac{a}{\lambda}
 \tfrac{dz_j}{ds}(
 \pa_{y_j}
 \mathcal{Y},
 \Psi_j)_2}
 +
 (N,\Psi_j)_2
 \\
 \nonumber
 &
 \text{for } j=1,\cdots,n.
 \end{align}
 Since $p=\frac{n+2}{n-2}$ when $n=6$,
 from \eqref{EQUATION_e3.7} and the Hardy inequalities (see Lemma \ref{LEMMA_2.1}),
 we easily see that
 \begin{align*}
 |(N,{\mathcal Y})_2|
 &<
 C|a|^2
 +
 C(\epsilon^2,{\mathcal Y})_2
 \\
 &<
 C|a|^2
 +
 C\min\{\|\nabla_y\epsilon\|_2^2,\|\nabla_y\epsilon\|_2\|\Delta_y\epsilon\|_2,\|\Delta_y\epsilon\|_2^2\},
 \\
 |(N,\Psi_j)_2|
 &<
 C|a|^2
 +
 C\min\{\|\nabla_y\epsilon\|_2^2,\|\nabla_y\epsilon\|_2\|\Delta_y\epsilon\|_2,\|\Delta_y\epsilon\|_2^2\}
 \\
 &
 \text{for } j=0,\cdots,n.
 \end{align*}
 For simplicity,
 we put
 \begin{align*}
 X_1
 &=
 \min\{\|\nabla_y\epsilon\|_2,\|\Delta_y\epsilon\|_2\},
 \\
 X_2
 &=
 \min\{\|\nabla_y\epsilon\|_2^2,\|\nabla_y\epsilon\|_2\|\Delta_y\epsilon\|_2,\|\Delta_y\epsilon\|_2^2\}.
 \end{align*}
 From \eqref{EQ_3.19},
 we see that
 \begin{align}
 \label{EQ_3.22}
 |a_s-e_0a|
 &<
 C(
 |\tfrac{\lambda_s}{\lambda}|
 X_1
 +
 |\tfrac{1}{\lambda}\tfrac{dz}{ds}|
 X_1
 +
 |\tfrac{\lambda_s}{\lambda}||a|
 +
 a^2
 +
 X_2
 ).
 \end{align}
 Furthermore
 \eqref{EQ_3.20} immediately implies
 \begin{align*}
 |
 \tfrac{\lambda_s}{\lambda}
 |
 \cdot
 |
 &(\Lambda_y\epsilon,\Psi_0)_2
 +
 (\Lambda_y{\sf Q},\Psi_0)_2
 +
 a(\Lambda_y\mathcal{Y},\Psi_0)_2
 |
 \\
 \nonumber
 &<
 C(
 X_1
 +
 |\tfrac{1}{\lambda}\tfrac{dz}{ds}|
 X_1
 +
 a^2
 +
 X_2
 ).
 \end{align*}
 We note that
 there exists $h_1>0$ depends only on $n$ such that
 if $\eta_1\in(0,h_1)$,
 then we have
 \[
 |
 (\Lambda_y\epsilon,\Psi_0)_2
 +
 (\Lambda_y{\sf Q},\Psi_0)_2
 +
 a(\Lambda_y\mathcal{Y},\Psi_0)_2
 |
 >
 \tfrac{1}{2}(\Lambda_y{\sf Q},\Psi_0)_2.
 \]
 Therefore
 it follows that
 \begin{align}
 \label{EQ_3.23}
 |
 \tfrac{\lambda_s}{\lambda}
 |
 <
 C(
 X_1
 +
 |\tfrac{1}{\lambda}\tfrac{dz}{ds}|
 X_1
 +
 a^2
 +
 X_2
 )
 \quad\
 \text{if }
 \eta_1\in(0,h_1).
 \end{align}
 To rewrite \eqref{EQ_3.21} in a matrix form,
 we define a $n\times n$ matrix $D$ by
 \begin{align*}
 D_{ij}
 =
 \begin{cases}
 (\pa_{y_j}\epsilon,\Psi_j)_2
 +
 \tfrac{1}{n}\int_{\R_y^n}\chi_M
 (|\nabla_y{\sf Q}|^2+a\nabla_y\mathcal{Y}\cdot\nabla_y{\sf Q})dy
 & \text{if } i=j,
 \\
 (\pa_{y_j}\epsilon,\Psi_i)_2
 & \text{if } i\not=j,
 \end{cases}
 \end{align*}
 here we used the fact that
 $(\pa_{y_j}{\sf Q},\Psi_j)_2=\frac{1}{n}\int_{\R_y^n}\chi_M|\nabla_y{\sf Q}|^2dy$ and
 $(\pa_{y_j}\mathcal{Y},\Psi_j)_2=\frac{1}{n}$ $\int_{\R_y^n}\chi_M\nabla_y\mathcal{Y}\cdot\nabla_y{\sf Q}dy$.
 From the definition of $D$,
 there exists $h_1'\in(0,h_1)$ depending only on $n$ such that
 if $\eta_1\in(0,h_1')$,
 the inverse matrix $D^{-1}$ satisfies
 \[
 \sup_{1\leq i,j\leq n}
 |[D^{-1}]_{ij}|
 <
 \tfrac{2n}{\int_{\R_y^n}\chi_M|\nabla_y{\sf Q}|^2dy}.
 \]
 Since \eqref{EQ_3.21} is written as
 \begin{align*}
 D
 \begin{pmatrix}
 \tfrac{1}{\lambda}
 \tfrac{dz_1}{ds}
 \\
 \tfrac{1}{\lambda}
 \tfrac{dz_2}{ds}
 \\
 \vdots
 \\
 \tfrac{1}{\lambda}
 \tfrac{dz_n}{ds}
 \end{pmatrix}
 =
 -
 \begin{pmatrix}
 (H_y\epsilon,\Psi_1)_2+\frac{\lambda_s}{\lambda}(\Lambda_y\epsilon,\Psi_1)_2+(N,\Psi_1)_2
 \\
 (H_y\epsilon,\Psi_2)_2+\frac{\lambda_s}{\lambda}(\Lambda_y\epsilon,\Psi_2)_2+(N,\Psi_2)_2
 \\
 \vdots
 \\
 (H_y\epsilon,\Psi_n)_2+\frac{\lambda_s}{\lambda}(\Lambda_y\epsilon,\Psi_n)_2+(N,\Psi_n)_2
 \end{pmatrix},
 \end{align*}
 we deduce that
 \begin{align}
 \label{EQ_3.24}
 |\tfrac{1}{\lambda}\tfrac{dz}{ds}|
 <
 C(
 X_1+|\tfrac{\lambda_s}{\lambda}|X_1+a^2+X_2
 ).
 \end{align}
 Combining \eqref{EQ_3.22} - \eqref{EQ_3.24},
 we obtain the conclusion.
 \end{proof}

 \begin{lem}
 \label{LEMMA_3.3}
 Let $n\geq3$.
 Then
 \begin{align}
 \label{EQ_3.25}
 (\Lambda_yv,\Lambda_y{\sf Q})_2
 =
 \tfrac{n-6}{2}
 (v,\Lambda_y{\sf Q})_2
 +
 (v,R)_2
 \quad\
 \text{\rm for }
 v(y)\in H_y^1(\R^n).
 \end{align}
 The function $R(y)\in C^\infty(\R^n)$ is independent of $v(y)$,
 and satisfies
 \[
 |R(y)|+(1+|y|)|\nabla_yR(y)|
 <
 C(1+|y|)^{-n}
 \quad\
 \text{\rm for }
 y\in\R^n.
 \]
 \end{lem}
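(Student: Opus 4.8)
The plan is to integrate by parts so as to transfer the operator $\Lambda_y$ entirely onto ${\sf Q}$; the assertion then becomes an explicit computation with the Aubin--Talenti profile, out of which both the constant $\tfrac{n-6}{2}$ and the function $R$ will fall.

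First I would record the formal $L^2$-adjoint of $\Lambda_y$. Recalling $\Lambda_y f=\tfrac{n-2}{2}f+y\cdot\nabla_y f$, integrating $y\cdot\nabla_y v$ against $\Lambda_y{\sf Q}$ by parts shows $\Lambda_y^{*}=-\Lambda_y-2$, and hence
\[
(\Lambda_y v,\Lambda_y{\sf Q})_2=-2\,(v,\Lambda_y{\sf Q})_2-(v,\Lambda_y^2{\sf Q})_2 .
\]
This is valid first for $v\in C_c^\infty(\R^n)$ and then for all $v\in H_y^1(\R^n)$ by density, since after the integration by parts the right-hand side involves $v$ only through $L^2$ pairings against $\Lambda_y{\sf Q}$ and $\Lambda_y^2{\sf Q}$, both of which decay like $|y|^{-(n-2)}$ and hence lie in $L^2(\R^n)$ for $n\ge 5$, in particular for $n=6$. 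At this point the lemma has been reduced to the pointwise identity $-\Lambda_y^2{\sf Q}-2\Lambda_y{\sf Q}=\tfrac{n-6}{2}\Lambda_y{\sf Q}+R$, i.e. to exhibiting $R=-\Lambda_y^2{\sf Q}-\tfrac{n-2}{2}\Lambda_y{\sf Q}$ and checking its size.

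Next I would carry out the computation. With $\rho(y)=1+\tfrac{|y|^2}{n(n-2)}$ one has ${\sf Q}=\rho^{-\frac{n-2}{2}}$ and $y\cdot\nabla_y\rho=2(\rho-1)$, so that for every real $\beta$
\[
\Lambda_y(\rho^\beta)=\bigl(\tfrac{n-2}{2}+2\beta\bigr)\rho^\beta-2\beta\,\rho^{\beta-1}.
\]
Applying this once gives $\Lambda_y{\sf Q}=(n-2)\rho^{-\frac n2}-\tfrac{n-2}{2}\rho^{1-\frac n2}$, and applying it again gives $\Lambda_y^2{\sf Q}=-n(n-2)\rho^{-\frac n2}+n(n-2)\rho^{-\frac n2-1}+\tfrac{(n-2)^2}{4}\rho^{1-\frac n2}$. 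Substituting into $R=-\Lambda_y^2{\sf Q}-\tfrac{n-2}{2}\Lambda_y{\sf Q}$, the $\rho^{1-\frac n2}$ terms (which decay only like $|y|^{-(n-2)}$) cancel exactly, leaving
\[
R(y)=\tfrac{(n-2)(n+2)}{2}\Bigl(1+\tfrac{|y|^2}{n(n-2)}\Bigr)^{-\frac n2}-n(n-2)\Bigl(1+\tfrac{|y|^2}{n(n-2)}\Bigr)^{-\frac n2-1}.
\]
This function is smooth and radial, independent of $v$, and since $\rho(y)\sim\tfrac{|y|^2}{n(n-2)}$ as $|y|\to\infty$ one reads off $|R(y)|\le C(1+|y|)^{-n}$; each spatial derivative produces a factor $\rho^{-1}\nabla_y\rho=O(|y|^{-1})$ at infinity, so $(1+|y|)\,|\nabla_yR(y)|\le C(1+|y|)^{-n}$ as well, which is the claimed bound.

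The computation is elementary; the one place calling for care is the reduction step, where for general $v\in H_y^1(\R^n)$ the pairing $(\Lambda_y v,\Lambda_y{\sf Q})_2$ has to be understood through the integration-by-parts form above, because $y\,\Lambda_y{\sf Q}\notin L^2(\R^6)$ whereas $\Lambda_y{\sf Q},\Lambda_y^2{\sf Q}\in L^2$, so only the latter are needed on the right-hand side. I expect the bookkeeping in the explicit computation (arranging the cancellation of the slowly decaying $\rho^{1-\frac n2}$ parts, which is exactly what forces the constant $\tfrac{n-6}{2}$) to be the only slightly delicate point.
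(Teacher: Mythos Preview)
Your proof is correct and follows essentially the same approach as the paper: both transfer $\Lambda_y$ onto $\Lambda_y{\sf Q}$ via the adjoint relation $\Lambda_y^{*}=-(\tfrac{n+2}{2}+y\cdot\nabla_y)=-\Lambda_y-2$, and then compute the resulting function explicitly to isolate the $\tfrac{n-6}{2}\Lambda_y{\sf Q}$ piece and a faster-decaying remainder. The only difference is in the bookkeeping of that computation: the paper expands ${\sf Q}$ around its pure-power tail $r^{-(n-2)}$ and tracks correction terms $h_1,h_2$, whereas you use the closed-form action $\Lambda_y(\rho^\beta)=(\tfrac{n-2}{2}+2\beta)\rho^\beta-2\beta\,\rho^{\beta-1}$, which is tidier and yields an explicit formula for $R$; both make the same cancellation of the $|y|^{-(n-2)}$ parts.
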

 \begin{proof}
 For simplicity,
 we set
 \begin{align*}
 r=|y|.
 \end{align*}
 Let $v\in H^1(\R^n)$.
 It is straightforward to see that
 \begin{align}
 \nonumber
 (\Lambda_yv,\Lambda_y{\sf Q})_2
 &=
 \tfrac{n-2}{2}(v,\Lambda_y{\sf Q})_2
 +
 (y\cdot\nabla_yv,\Lambda_y{\sf Q})_2
 \\
 \nonumber
 &=
 \tfrac{n-2}{2}(v,\Lambda_y{\sf Q})_2
 -
 (v,\nabla_y\cdot(y\Lambda_y{\sf Q}))_2
 \\
 \nonumber
 &=
 (v,-(\tfrac{n+2}{2}+y\cdot\nabla_y)\Lambda_y{\sf Q})_2
 \\
 \label{EQ_3.26}
 &=
 (v,-(\tfrac{n+2}{2}+r\pa_r)\Lambda_y{\sf Q})_2.
 \end{align}
 Note that ${\sf Q}(y)$ is expressed as
 \begin{align}
 \nonumber
 {\sf Q}(y)
 &=
 (
 1+\tfrac{r^2}{n(n-2)}
 )^{-\frac{n-2}{2}}
 =
 r^{-(n-2)}
 (
 \tfrac{1}{r^2}+\tfrac{1}{n(n-2)}
 )^{-\frac{n-2}{2}}
 \\
 \nonumber
 &=
 (
 \tfrac{1}{\omega_n}
 )^{-\frac{n-2}{2}}
 r^{-(n-2)}
 +
 r^{-(n-2)}
 \underbrace{
 \{
 (
 \tfrac{1}{r^2}+\tfrac{1}{\omega_n}
 )^{-\frac{n-2}{2}}
 -
 (
 \tfrac{1}{\omega_n}
 )^{-\frac{n-2}{2}}
 \}
 }_{=h(r)}
 \\
 \label{EQ_3.27}
 &=
 \omega_n^\frac{n-2}{2}
 r^{-(n-2)}
 +
 h(r)r^{-(n-2)}
 \quad\
 \text{with }
 \omega_n=n(n-2).
 \end{align}
 From \eqref{EQ_3.27},
 we have
 \begin{align}
 \nonumber
 \Lambda_y{\sf Q}
 &=
 (
 \tfrac{n-2}{2}+r\pa_r
 )
 \{
 \omega_n^\frac{n-2}{2}
 r^{-(n-2)}
 +
 hr^{-(n-2)}
 \}
 \\
 \label{EQ_3.28}
 &=
 -
 \tfrac{n-2}{2}
 \omega_n^\frac{n-2}{2}
 r^{-(n-2)}
 +
 \underbrace{
 (
 -\tfrac{n-2}{2}h+r\pa_rh
 )
 }_{=h_1(r)}
 r^{-(n-2)},
 \\
 \nonumber
 r\pa_r
 (\Lambda_y{\sf Q})
 &=
 r\pa_r
 \{
 -
 \tfrac{n-2}{2}
 \omega_n^\frac{n-2}{2}
 r^{-(n-2)}
 +
 (
 -\tfrac{n-2}{2}h+r\pa_rh
 )
 r^{-(n-2)}
 \}
 \\
 \nonumber
 &=
 \tfrac{(n-2)^2}{2}
 \omega_n^\frac{n-2}{2}
 r^{-(n-2)}
 \\
 \nonumber
 &\quad
 +
 \underbrace{
 (
 \tfrac{(n-2)^2}{2}h
 -
 \tfrac{3(n-2)}{2}r\pa_rh
 +
 (r\pa_r)^2h
 )
 }_{=h_2(r)}
 r^{-(n-2)}
 \\
 \nonumber
 &=
 -(n-2)
 \cdot
 (-
 \tfrac{n-2}{2})
 \omega_n^\frac{n-2}{2}
 r^{-(n-2)}
 +
 h_2(r)
 r^{-(n-2)}
 \\
 \nonumber
 &=
 -(n-2)(\Lambda_y{\sf Q}
 -
 h_1(r)r^{-(n-2)})
 +
 h_2(r)
 r^{-(n-2)}
 \\
 \nonumber
 &=
 -(n-2)\Lambda_y{\sf Q}
 +
 \{
 (n-2)h_1(r)
 +
 h_2(r)
 \}
 r^{-(n-2)}.
 \end{align}
 Therefore
 we obtain
 \begin{align*}
 -(\tfrac{n+2}{2}+r\pa_r)
 \Lambda_y{\sf Q}
 &=
 -
 \tfrac{n+2}{2}
 \Lambda_y{\sf Q}
 +
 (n-2)
 \Lambda_y{\sf Q}
 \\
 &\quad
 -
 \{
 (n-2)h_1(r)
 +
 h_2(r)
 \}
 r^{-(n-2)}
 \\
 &=
 \tfrac{n-6}{2}
 \Lambda_y{\sf Q}
 -
 \{
 (n-2)h_1(r)
 +
 h_2(r)
 \}
 r^{-(n-2)}.
 \end{align*}
 Since $|h_1(r)|+|h_2(r)|+|r\pa_rh_1(r)|+|r\pa_rh_2(r)|<Cr^{-2}$ for $r>1$,
 the conclusion follows from \eqref{EQ_3.26}.
 \end{proof}

 \begin{lem}[(2.17) - (2.18) in Lemma 2.6 in \cite{Collot-Merle-Raphael} p.\,226]
 \label{LEMMA_3.4}
 Let $n=6$,
 and
 $u(x,t)$ be as in Lemma {\rm\ref{LEMMA_3.1}}.
 There exist constants $h_1>0$ and $C>0$,
 depending only on $n$
 such that if $\eta_1\in(0,h_1)$,
 then
 \begin{align}
 \label{EQ_3.29}
 &
 \left|
 \frac{d}{ds}
 \left(
 \log\lambda
 +
 \frac{n(n-2)}{4}
 \sum_{j=1}^n
 \frac{(\epsilon,\pa_{y_j}{\sf Q})_2^2}
 {\|\nabla_y{\sf Q}\|_2^2\|\Lambda_y{\sf Q}\|_2^2}
 -
 \frac{(\epsilon,\Lambda_y{\sf Q})_2}{\|\Lambda_y{\sf Q}\|_2^2}
 \right)
 \right|
 \\
 \nonumber
 &<
 C
 (a^2+\|\Delta_y\epsilon\|_2^2)
 \quad\
 \text{\rm for }
 s\in(s_1,s_2)
 \end{align}
 and
 \begin{align}
 \label{EQ_3.30}
 &
 \left|
 \frac{1}{\lambda}
 \frac{dz_j}{ds}
 -
 \frac{n(\epsilon_s,\pa_{y_j}{\sf Q})_2}{\|\nabla_y{\sf Q}\|_2^2}
 +
 \frac{\lambda_s}{\lambda}
 \frac{
 n(\Lambda_y\epsilon,\pa_{y_j}{\sf Q})_2
 }{\|\nabla_y{\sf Q}\|_2^2}
 \right|
 \\
 \nonumber
 &<
 C(a^2+\|\Delta_y\epsilon\|_2^2)
 \quad\
 \text{\rm for }
 s\in(s_1,s_2),
 \quad
 j\in\{1,\cdots,n\}.
 \end{align}
 \end{lem}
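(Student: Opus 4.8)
The plan is to test the evolution equation \eqref{EQUATION_e3.9} against the kernel elements $\pa_{y_j}{\sf Q}$ and $\Lambda_y{\sf Q}$ in $L^2(\R^6)$ and to rearrange the resulting identities. Since $n=6$, both $\Lambda_y{\sf Q}$ and $\pa_{y_j}{\sf Q}$ belong to $L^2(\R^6)$, and the hypothesis $u_0\in L^2(\R^n)$ (hence $\epsilon(t)\in L^2$, cf.\ (b2)) is exactly what makes every pairing below legitimate. I would establish \eqref{EQ_3.30} first, since it is needed for \eqref{EQ_3.29}. Pairing \eqref{EQUATION_e3.9} with $\pa_{y_j}{\sf Q}$, and using $H_y\pa_{y_j}{\sf Q}=0$ with self-adjointness of $H_y$ (justified by $\epsilon(t)\in H_y^3$), the parity vanishings $(\mathcal{Y},\pa_{y_j}{\sf Q})_2=(\Lambda_y{\sf Q},\pa_{y_j}{\sf Q})_2=(\Lambda_y\mathcal{Y},\pa_{y_j}{\sf Q})_2=0$, and $(\pa_{y_k}{\sf Q},\pa_{y_j}{\sf Q})_2=\tfrac{\delta_{kj}}{n}\|\nabla_y{\sf Q}\|_2^2$, one is left with
\begin{align*}
(\epsilon_s,\pa_{y_j}{\sf Q})_2
=
\tfrac{\lambda_s}{\lambda}(\Lambda_y\epsilon,\pa_{y_j}{\sf Q})_2
+
\tfrac1n\|\nabla_y{\sf Q}\|_2^2\tfrac1\lambda\tfrac{dz_j}{ds}
+
\textstyle\sum_k\tfrac1\lambda\tfrac{dz_k}{ds}(\pa_{y_k}\epsilon,\pa_{y_j}{\sf Q})_2
+
\tfrac a\lambda\tfrac{dz_j}{ds}(\pa_{y_j}\mathcal{Y},\pa_{y_j}{\sf Q})_2
+
(N,\pa_{y_j}{\sf Q})_2 .
\end{align*}
Estimate \eqref{EQ_3.30} is precisely the assertion that, after moving the first two terms to the left, the remaining three are $O(a^2+\|\Delta_y\epsilon\|_2^2)$. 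For this I would use Lemma \ref{LEMMA_3.2} for $|\tfrac1\lambda\tfrac{dz}{ds}|\le Ca^2+C\|\Delta_y\epsilon\|_2$, the Hardy inequality $\int\epsilon^2|y|^{-4}dy\le C\|\Delta_y\epsilon\|_2^2$ of Lemma \ref{LEMMA_2.1} (valid for $n\ge5$) to get $|(\pa_{y_k}\epsilon,\pa_{y_j}{\sf Q})_2|=|(\epsilon,\pa_{y_k}\pa_{y_j}{\sf Q})_2|\le C\|\Delta_y\epsilon\|_2$ (as $\pa^2{\sf Q}\sim|y|^{-n}$) and, from $|N|\lesssim a^2\mathcal{Y}^2+\epsilon^2$ of \eqref{EQUATION_e3.7}, $|(N,\pa_{y_j}{\sf Q})_2|\le C(a^2+\|\Delta_y\epsilon\|_2^2)$, together with the exponential decay of $\mathcal{Y}$; each cross term such as $a^2\|\Delta_y\epsilon\|_2$ is absorbed by Young's inequality using $|a|\le\eta_1$ from \eqref{EQUATION_e3.4}.

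For \eqref{EQ_3.29} I would instead pair \eqref{EQUATION_e3.9} with $\Lambda_y{\sf Q}$; using $H_y\Lambda_y{\sf Q}=0$ and the parity vanishings $(\mathcal{Y},\Lambda_y{\sf Q})_2=(\pa_{y_j}{\sf Q},\Lambda_y{\sf Q})_2=(\pa_{y_j}\mathcal{Y},\Lambda_y{\sf Q})_2=0$ this gives
\begin{align*}
\tfrac{d}{ds}\!\Big(\log\lambda-\tfrac{(\epsilon,\Lambda_y{\sf Q})_2}{\|\Lambda_y{\sf Q}\|_2^2}\Big)
=
-\tfrac1{\|\Lambda_y{\sf Q}\|_2^2}\Big[\tfrac{\lambda_s}{\lambda}(\Lambda_y\epsilon,\Lambda_y{\sf Q})_2+\tfrac{\lambda_s}{\lambda}a(\Lambda_y\mathcal{Y},\Lambda_y{\sf Q})_2+\textstyle\sum_j\tfrac1\lambda\tfrac{dz_j}{ds}(\pa_{y_j}\epsilon,\Lambda_y{\sf Q})_2+(N,\Lambda_y{\sf Q})_2\Big].
\end{align*}
The term $(\Lambda_y\epsilon,\Lambda_y{\sf Q})_2$ is controlled by Lemma \ref{LEMMA_3.3}: since $n=6$ the coefficient $\tfrac{n-6}{2}$ vanishes, so $(\Lambda_y\epsilon,\Lambda_y{\sf Q})_2=(\epsilon,R)_2$ with $R\sim|y|^{-n}$, whence $|(\Lambda_y\epsilon,\Lambda_y{\sf Q})_2|\le C\|\Delta_y\epsilon\|_2$ by Hardy — this is exactly where the borderline dimension is used. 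The terms $(N,\Lambda_y{\sf Q})_2$ and $\tfrac{\lambda_s}{\lambda}a(\Lambda_y\mathcal{Y},\Lambda_y{\sf Q})_2$ are $O(a^2+\|\Delta_y\epsilon\|_2^2)$ as before. The delicate term is $\sum_j\tfrac1\lambda\tfrac{dz_j}{ds}(\pa_{y_j}\epsilon,\Lambda_y{\sf Q})_2$, and the role of the two correction terms in \eqref{EQ_3.29} is to annihilate it. First, integrating by parts and inserting the explicit expansion of ${\sf Q}$ exactly as in the proof of Lemma \ref{LEMMA_3.3} yields $(\pa_{y_j}\epsilon,\Lambda_y{\sf Q})_2=-(\epsilon,\pa_{y_j}\Lambda_y{\sf Q})_2=\tfrac{n-2}{2}(\epsilon,\pa_{y_j}{\sf Q})_2+(\epsilon,\tilde R_j)_2$ with $\tilde R_j\sim|y|^{-(n+1)}$, so the remainder is $O(\|\Delta_y\epsilon\|_2)$. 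Second, differentiating $\tfrac{n(n-2)}{4}\sum_j\tfrac{(\epsilon,\pa_{y_j}{\sf Q})_2^2}{\|\nabla_y{\sf Q}\|_2^2\|\Lambda_y{\sf Q}\|_2^2}$ and substituting the displayed $z_j$-identity (equivalently \eqref{EQ_3.30}) for $(\epsilon_s,\pa_{y_j}{\sf Q})_2$, its leading piece is exactly $+\tfrac{n-2}{2\|\Lambda_y{\sf Q}\|_2^2}\sum_j\tfrac1\lambda\tfrac{dz_j}{ds}(\epsilon,\pa_{y_j}{\sf Q})_2$, which cancels the delicate term; this is what pins down the constant $\tfrac{n(n-2)}{4}$.

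After the cancellation only quadratic remainders survive, and I would bound each by $C(a^2+\|\Delta_y\epsilon\|_2^2)$. The one needing a little care has the shape $\tfrac{\lambda_s}{\lambda}\sum_j(\epsilon,\pa_{y_j}{\sf Q})_2(\Lambda_y\epsilon,\pa_{y_j}{\sf Q})_2$: writing $(\Lambda_y\epsilon,\pa_{y_j}{\sf Q})_2=\tfrac{n-4}{2}(\epsilon,\pa_{y_j}{\sf Q})_2+O(\|\Delta_y\epsilon\|_2)$ by the same integration by parts, the off-diagonal part is handled by $|(\epsilon,\pa_{y_j}{\sf Q})_2|\le C\|\nabla_y\epsilon\|_2\le C\eta_1$ and Young, while for the squared part $\tfrac{\lambda_s}{\lambda}\sum_j(\epsilon,\pa_{y_j}{\sf Q})_2^2$ I would exploit the orthogonality $(\epsilon,\chi_M\pa_{y_j}{\sf Q})_2=0$ to localize the integral defining $(\epsilon,\pa_{y_j}{\sf Q})_2$ to $\{|y|>M\}$ and then apply the interpolated Hardy bound $\int\epsilon^2|y|^{-3}dy\le C\|\nabla_y\epsilon\|_2\|\Delta_y\epsilon\|_2$ (obtained from the first two inequalities of Lemma \ref{LEMMA_2.1}), which gives $(\epsilon,\pa_{y_j}{\sf Q})_2^2\le CM^{-1}\|\nabla_y\epsilon\|_2\|\Delta_y\epsilon\|_2$ and, after $\|\nabla_y\epsilon\|_2\le\eta_1$ and Young, the desired bound. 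I expect the bookkeeping to be the main obstacle — choosing in which norm each factor is measured so that every product collapses to $C(a^2+\|\Delta_y\epsilon\|_2^2)$ — and the structural facts that make $n=6$ go through are the vanishing of $\tfrac{n-6}{2}$ in Lemma \ref{LEMMA_3.3} and the exact matching of constants between the delicate term and the two correction terms in \eqref{EQ_3.29}.
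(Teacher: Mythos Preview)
Your proposal is correct and follows essentially the same route as the paper: test \eqref{EQUATION_e3.9} against $\pa_{y_j}{\sf Q}$ and $\Lambda_y{\sf Q}$, invoke Lemma~\ref{LEMMA_3.3} (the vanishing of $\tfrac{n-6}{2}$), use the identity $(\pa_{y_j}\epsilon,\Lambda_y{\sf Q})_2=\tfrac{n-2}{2}(\epsilon,\pa_{y_j}{\sf Q})_2+(\epsilon,k)_2$ with $k\sim|y|^{-(n+1)}$ (the paper's \eqref{EQ_3.38}), and observe that the correction term is designed to absorb the resulting $\tfrac{d}{ds}$-piece. The only noticeable difference is in how you bound the cross term of the shape $\tfrac{\lambda_s}{\lambda}(\epsilon,\pa_{y_j}{\sf Q})_2(\Lambda_y\epsilon,\pa_{y_j}{\sf Q})_2$: you split off a square $(\epsilon,\pa_{y_j}{\sf Q})_2^2$ and exploit the orthogonality $(\epsilon,\chi_M\pa_{y_j}{\sf Q})_2=0$ together with an interpolated Hardy bound, whereas the paper handles its analogous term $I_1$ more directly by the single interpolation estimate $|(\Lambda_y\epsilon,\pa_{y_j}{\sf Q})_2|+|(\pa_{y_j}\epsilon,\Lambda_y{\sf Q})_2|\le C\|\Delta_y\epsilon\|_2^{1/2}\|\nabla_y\epsilon\|_2^{1/2}$ obtained via $\|\epsilon\|_{\frac{2n}{n-3}}\le\|\epsilon\|_{\frac{2n}{n-2}}^{1/2}\|\epsilon\|_{\frac{2n}{n-4}}^{1/2}$ (see \eqref{EQ_3.46}--\eqref{EQ_3.47}). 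Your route works but is slightly longer; the paper's interpolation is the cleaner shortcut.
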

 \begin{proof}
 We multiply equation \eqref{EQUATION_e3.9} by $\Lambda_y{\sf Q}$ to get
 \begin{align}
 \nonumber
 (\epsilon_s,\Lambda_y{\sf Q})_2
 &=
 0
 +
 \tfrac{\lambda_s}{\lambda}
 (\Lambda_y\epsilon,\Lambda_y{\sf Q})_2
 +
 (\tfrac{1}{\lambda}
 \tfrac{dz}{ds}
 \cdot\nabla_y\epsilon,\Lambda_y{\sf Q})_2
 +
 \tfrac{\lambda_s}{\lambda}
 \|\Lambda_y{\sf Q}\|_2^2
 \\
 \nonumber
 &\quad
 -
 0
 +
 \tfrac{\lambda_s}{\lambda}
 a
 (\Lambda_y{\mathcal Y},\Lambda_y{\sf Q})_2
 +
 0
 +
 0
 +
 (N,\Lambda_y{\sf Q})_2
 \\
 \label{EQ_3.31}
 &=
 \tfrac{\lambda_s}{\lambda}
 (\Lambda_y\epsilon,\Lambda_y{\sf Q})_2
 +
 \sum_{j=1}^n
 \tfrac{1}{\lambda}
 \tfrac{dz_j}{ds}
 (\pa_{y_j}\epsilon,\Lambda_y{\sf Q})_2
 +
 \tfrac{\lambda_s}{\lambda}
 \|\Lambda_y{\sf Q}\|_2^2
 \\
 \nonumber
 &\quad
 +
 \tfrac{\lambda_s}{\lambda}
 a
 (\Lambda_y{\mathcal Y},\Lambda_y{\sf Q})_2
 +
 (N,\Lambda_y{\sf Q})_2.
 \end{align}
 Note that
 due to the condition (a1),
 we have
 \[
 \epsilon(y,t)\in C([s_1,s_2];H^1(\R^n)).
 \]
 From Lemma \ref{LEMMA_3.3},
 we observe that $(\Lambda_y\epsilon,\Lambda_y{\sf Q})_2=(v,R)_2$ when $n=6$.
 Hence \eqref{EQ_3.31} can be simplified as follows.
 \begin{align}
 \label{EQ_3.32}
 &(\epsilon_s,\Lambda_y{\sf Q})_2
 =
 \underbrace{
 \tfrac{\lambda_s}{\lambda}
 (\Lambda_y\epsilon,R)_2
 }_{=J_1}
 +
 \sum_{j=1}^n
 \tfrac{1}{\lambda}
 \tfrac{dz_j}{ds}
 (\pa_{y_j}\epsilon,\Lambda_y{\sf Q})_2
 \\
 \nonumber
 &\quad
 +
 \tfrac{\lambda_s}{\lambda}
 \|\Lambda_y{\sf Q}\|_2^2
 +
 \underbrace{
 \tfrac{\lambda_s}{\lambda}
 a
 (\Lambda_y{\mathcal Y},\Lambda_y{\sf Q})_2
 }_{=J_2}
 +
 \underbrace{
 (N,\Lambda_y{\sf Q})_2
 }_{=J_3}.
 \end{align}
 In the same way,
 multiplying \eqref{EQUATION_e3.9} by $\pa_{y_j}{\sf Q}$ ($j=1,\cdots,n$),
 we get
 \begin{align}
 \nonumber
 (\epsilon_s&,\pa_{y_j}{\sf Q})_2
 =
 0
 +
 \tfrac{\lambda_s}{\lambda}
 (\Lambda_y\epsilon,\pa_{y_j}{\sf Q})_2
 +
 \sum_{i=1}^n
 \tfrac{1}{\lambda}
 \tfrac{dz_i}{ds}
 (\pa_{y_i}\epsilon,\pa_{y_j}{\sf Q})_2
 +
 0
 -
 0
 \\
 \nonumber
 &\quad
 +
 0
 +
 \tfrac{1}{\lambda}
 \tfrac{dz_j}{ds}
 \underbrace{
 (\pa_{y_j}{\sf Q},\pa_{y_j}{\sf Q})_2
 }_{=\frac{1}{n}\|\nabla_y{\sf Q}\|_2^2}
 +
 \tfrac{a}{\lambda}
 \tfrac{dz_j}{ds}
 \underbrace{
 (\pa_{y_j}\mathcal{Y},\pa_{y_j}{\sf Q})_2
 }_{=\frac{1}{n}(\nabla_y\mathcal{Y},\nabla_y{\sf Q})_2}
 \\
 \nonumber
 &\quad
 +
 (N,\pa_{y_j}{\sf Q})_2
 \\
 \label{EQ_3.33}
 &=
 \tfrac{\lambda_s}{\lambda}
 (\Lambda_y\epsilon,\pa_{y_j}{\sf Q})_2
 +
 \sum_{i=1}^n
 \tfrac{1}{\lambda}
 \tfrac{dz_i}{ds}
 (\pa_{y_i}\epsilon,\pa_{y_j}{\sf Q})_2
 +
 \tfrac{1}{\lambda}
 \tfrac{dz_j}{ds}
 \tfrac{\|\nabla_y{\sf Q}\|_2^2}{n}
 \\
 \nonumber
 &\quad
 +
 \tfrac{a}{\lambda}
 \tfrac{dz_j}{ds}
 \tfrac{(\nabla_y\mathcal{Y},\nabla_y{\sf Q})_2}{n}
 +
 (N,\pa_{y_j}{\sf Q})_2.
 \end{align}
 This implies
 \begin{align}
 \label{EQ_3.34}
 \tfrac{1}{\lambda}
 \tfrac{dz_j}{ds}
 &=
 \tfrac{n(\epsilon_s,\pa_{y_j}{\sf Q})_2}{\|\nabla_y{\sf Q}\|_2^2}
 -
 \tfrac{\lambda_s}{\lambda}
 \tfrac{
 n(\Lambda_y\epsilon,\pa_{y_j}{\sf Q})_2
 }{\|\nabla_y{\sf Q}\|_2^2}
 -
 \sum_{i=1}^n
 \tfrac{1}{\lambda}
 \tfrac{dz_i}{ds}
 \tfrac{n(\pa_{y_i}\epsilon,\pa_{y_j}{\sf Q})_2}
 {\|\nabla_y{\sf Q}\|_2^2}
 \\
 \nonumber
 &\quad
 -
 \tfrac{a}{\lambda}
 \tfrac{dz_j}{ds}
 \tfrac{(\nabla_y\mathcal{Y},\nabla_y{\sf Q})_2}{\|\nabla_y{\sf Q}\|_2^2}
 -
 \tfrac{n(N,\pa_{y_j}{\sf Q})_2}
 {\|\nabla_y{\sf Q}\|_2^2}
 \qquad
 (j=1,\cdots,n).
 \end{align}
 By using \eqref{EQ_3.34},
 we compute the second term on the right hand side of \eqref{EQ_3.32}.
 \begin{align}
 \label{EQ_3.35}
 \tfrac{1}{\lambda}
 \tfrac{dz_j}{ds}
 &(\pa_{y_j}\epsilon,\Lambda_y{\sf Q})_2
 =
 \tfrac{n(\epsilon_s,\pa_{y_j}{\sf Q})_2\cdot(\pa_{y_j}\epsilon,\Lambda_y{\sf Q})_2}{\|\nabla_y{\sf Q}\|_2^2}
 +
 \underbrace{
 (-
 \tfrac{\lambda_s}{\lambda}
 )
 \tfrac{
 n(\Lambda_y\epsilon,\pa_{y_j}{\sf Q})_2\cdot(\pa_{y_j}\epsilon,\Lambda_y{\sf Q})_2}{\|\nabla_y{\sf Q}\|_2^2}
 }_{=I_1}
 \\
 \nonumber
 &
 +
 \underbrace{
 \sum_{i=1}^n
 (-\tfrac{1}{\lambda})
 \tfrac{dz_i}{ds}
 \tfrac{n(\pa_{y_i}\epsilon,\pa_{y_j}{\sf Q})_2\cdot(\pa_{y_j}\epsilon,\Lambda_y{\sf Q})_2}
 {\|\nabla_y{\sf Q}\|_2^2}
 }_{=I_2}
 +
 \underbrace{
 (-\tfrac{a}{\lambda})
 \tfrac{dz_j}{ds}
 \tfrac{(\nabla_y\mathcal{Y},\nabla_y{\sf Q})_2\cdot(\pa_{y_j}\epsilon,\Lambda_y{\sf Q})_2}{\|\nabla_y{\sf Q}\|_2^2}
 }_{=I_3}
 \\
 \nonumber
 &
 +
 \underbrace{
 \tfrac{n(-N,\pa_{y_j}{\sf Q})_2\cdot(\pa_{y_j}\epsilon,\Lambda_y{\sf Q})_2}
 {\|\nabla_y{\sf Q}\|_2^2}
 }_{=I_4}
 \quad\
 (j=1,\cdots,n).
 \end{align}
 From \eqref{EQ_3.27} - \eqref{EQ_3.28},
 it is straightforward to see that
 \begin{align}
 \nonumber
 \pa_{y_j}{\sf Q}
 &=
 \tfrac{y_j}{r}\pa_r{\sf Q}
 \\
 \nonumber
 &=
 \tfrac{y_j}{r}\pa_r
 (\omega_n^\frac{n-2}{2}r^{-(n-2)}+hr^{-(n-2)})
 \\
 \nonumber
 &=
 -(n-2)
 \omega_n^\frac{n-2}{2}
 y_j
 r^{-n}
 +
 \underbrace{
 (r\pa_rh-(n-2)h)
 }_{=h_3(r)}
 y_jr^{-n},
 \\
 \nonumber
 \pa_{y_j}
 \Lambda_y{\sf Q}
 &=
 \pa_{y_j}
 (-\tfrac{n-2}{2}\omega_nr^{-(n-2)}+h_1r^{-(n-2)})
 \\
 \nonumber
 &=
 \tfrac{y_n}{r}\pa_r
 (-\tfrac{n-2}{2}\omega_nr^{-(n-2)}+h_1r^{-(n-2)})
 \\
 \nonumber
 &=
 \tfrac{(n-2)^2}{2}\omega_ny_nr^{-n}
 +
 \underbrace{(r\pa_rh_1-(n-2)h_1)}_{=h_4(r)}
 y_jr^{-n}
 \\
 \nonumber
 &=
 -
 \tfrac{n-2}{2}
 (\pa_{y_j}{\sf Q}
 -
 h_3(r)
 y_jr^{-n}
 +
 h_4(r)
 y_jr^{-n}
 \\
 \label{EQ_3.36}
 &=
 -
 \tfrac{n-2}{2}
 \pa_{y_j}{\sf Q}
 +
 \underbrace{
 \{
 \tfrac{n-2}{2}
 h_3(r)
 +
 h_4(r)
 \}
 y_jr^{-n}
 }_{=k(r)}.
 \end{align}
 Here we can easily check that
 \begin{align}
 \label{EQ_3.37}
 |k(r)|
 <
 Cr^{-(n+1)}.
 \end{align}
 Relation \eqref{EQ_3.36} implies
 \begin{align}
 \label{EQ_3.38}
 (\pa_{y_j}\epsilon,\Lambda_y{\sf Q})_2
 &=
 -
 (\epsilon,\pa_{y_j}\Lambda_y{\sf Q})_2
 =
 \tfrac{n-2}{2}
 (\epsilon,\pa_{y_j}{\sf Q})_2
 -
 (\epsilon,k)_2.
 \end{align}
 Using \eqref{EQ_3.38},
 we arrange the first term on the right hand side of \eqref{EQ_3.35}.
 \begin{align*}
 \tfrac{n(\epsilon_s,\pa_{y_j}{\sf Q})_2\cdot(\pa_{y_j}\epsilon,\Lambda_y{\sf Q})_2}
 {\|\nabla_y{\sf Q}\|_2^2}
 &=
 \tfrac{n(n-2)}{2}
 \tfrac{(\epsilon_s,\pa_{y_j}{\sf Q})_2\cdot(\epsilon,\pa_{y_j}{\sf Q})_2}
 {\|\nabla_y{\sf Q}\|_2^2}
 -
 \tfrac{n(\epsilon_s,\pa_{y_j}{\sf Q})_2(\epsilon,k)_2}
 {\|\nabla_y{\sf Q}\|_2^2}
 \\
 &=
 \tfrac{d}{ds}
 (
 \tfrac{n(n-2)}{4}
 \tfrac{(\epsilon,\pa_{y_j}{\sf Q})_2^2}
 {\|\nabla_y{\sf Q}\|_2^2}
 )
 +
 \underbrace{
 \tfrac{-n(\epsilon_s,\pa_{y_j}{\sf Q})_2\cdot(\epsilon,k)_2}
 {\|\nabla_y{\sf Q}\|_2^2}
 }_{=I_5}.
 \end{align*}
 Hence
 we can rewrite \eqref{EQ_3.35} as
 \begin{align}
 \label{EQ_3.39}
 \tfrac{1}{\lambda}
 \tfrac{dz_j}{ds}
 (\pa_{y_j}\epsilon,\Lambda_y{\sf Q})_2
 =
 \tfrac{d}{ds}
 (
 \tfrac{n(n-2)}{4}
 \tfrac{(\epsilon,\pa_{y_j}{\sf Q})_2^2}
 {\|\nabla_y{\sf Q}\|_2^2}
 )
 +
 \sum_{i=1}^5
 I_i
 \qquad
 (j=1,\cdots,n).
 \end{align}
 Substituting \eqref{EQ_3.39} into \eqref{EQ_3.32},
 we obtain
 \begin{align*}
 (\epsilon_s,\Lambda_y{\sf Q})_2
 &=
 \sum_{j=1}^n
 \tfrac{1}{\lambda}
 \tfrac{dz_j}{ds}
 (\pa_{y_j}\epsilon,\Lambda_y{\sf Q})_2
 +
 \tfrac{\lambda_s}{\lambda}
 \|\Lambda_y{\sf Q}\|_2^2
 +
 \sum_{i=1}^3
 J_i
 \\
 &=
 \tfrac{d}{ds}
 (
 \tfrac{n(n-2)}{4}
 \tfrac{(\epsilon,\pa_{y_j}{\sf Q})_2^2}
 {\|\nabla_y{\sf Q}\|_2^2}
 )
 +
 \tfrac{\lambda_s}{\lambda}
 \|\Lambda_y{\sf Q}\|_2^2
 +
 \sum_{i=1}^3
 J_i
 +
 \sum_{i=1}^5
 I_i.
 \end{align*}
 This immediately implies
 \begin{align}
 \label{EQ_3.40}
 \frac{d}{ds}
 &
 \left\{
 (\epsilon,\Lambda_y{\sf Q})_2
 -
 \tfrac{n(n-2)}{4}
 \sum_{j=1}^n
 \tfrac{(\epsilon,\pa_{y_j}{\sf Q})_2^2}
 {\|\nabla_y{\sf Q}\|_2^2}
 -
 \log\lambda
 \|\Lambda_y{\sf Q}\|_2^2
 \right\}
 \\
 \nonumber
 &=
 \sum_{i=1}^3
 J_i
 +
 \sum_{i=1}^5
 I_i.
 \end{align}
 We collect $J_i$ ($i=1,2,3$) and $I_i$ ($i=1,\cdots,5$)  below.
 \begin{itemize}
 \item
 $J_1
 =
 \tfrac{\lambda_s}{\lambda}
 (\Lambda_y\epsilon,R)_2$,
 \item
 $J_2
 =
 \tfrac{\lambda_s}{\lambda}
 a
 (\Lambda_y{\mathcal Y},\Lambda_y{\sf Q})_2$,
 \item
 $J_3
 =
 (N,\Lambda_y{\sf Q})_2$,
 \end{itemize}
 \begin{itemize}
 \item
 $I_1
 =
 (-
 \tfrac{\lambda_s}{\lambda}
 )
 \tfrac{
 n(\Lambda_y\epsilon,\pa_{y_j}{\sf Q})_2\cdot(\pa_{y_j}\epsilon,\Lambda_y{\sf Q})_2}{\|\nabla_y{\sf Q}\|_2^2}$,
 \item
 $\dis I_2=
 \sum_{i=1}^n
 (-\tfrac{1}{\lambda})
 \tfrac{dz_i}{ds}
 \tfrac{n(\pa_{y_i}\epsilon,\pa_{y_j}{\sf Q})_2\cdot(\pa_{y_j}\epsilon,\Lambda_y{\sf Q})_2}
 {\|\pa_{y_j}{\sf Q}\|_2^2}$,
 \item
 $I_3=
 (-\tfrac{a}{\lambda})
 \tfrac{dz_j}{ds}
 \tfrac{(\nabla_y\mathcal{Y},\nabla_y{\sf Q})_2\cdot(\pa_{y_j}\epsilon,\Lambda_y{\sf Q})_2}{\|\nabla_y{\sf Q}\|_2^2}$,
 \item
 $I_4=
 \tfrac{n(-N,\pa_{y_j}{\sf Q})_2\cdot(\pa_{y_j}\epsilon,\Lambda_y{\sf Q})_2}
 {\|\nabla_y{\sf Q}\|_2^2}$,
 \item
 $I_5
 =
 \tfrac{-n(\epsilon_s,\pa_{y_j}{\sf Q})_2\cdot(\epsilon,k)_2}
 {\|\nabla_y{\sf Q}\|_2^2}$.
 \end{itemize}
 We first give estimates for $J_i$ ($i=1,2,3$).
 We recall from Lemma \ref{LEMMA_3.3} that $|R(y)|<C(1+|y|)^{-n}$ for $y\in\R^n$.
 Hence
 we get from \eqref{EQ_3.17} and the Hardy inequalities (see Lemma \ref{LEMMA_2.1}) that
 \begin{align}
 \label{EQ_3.41}
 |J_1|
 <
 |\tfrac{\lambda_s}{\lambda}|
 \cdot
 |(\Lambda_y\epsilon,R)_2|
 <
 C(a^2+\|\Delta_y\epsilon\|_2)
 \|\Delta_y\epsilon\|_2.
 \end{align}
 Furthermore
 from \eqref{EQ_3.17}, \eqref{EQUATION_e3.7} and the Hardy inequalities,
 we get
 \begin{align*}
 \nonumber
 |J_2|
 &<
 |\tfrac{\lambda_s}{\lambda}
 a(\Lambda_y{\mathcal Y},\Lambda_y{\sf Q})_2|
 <
 C(a^2+\|\Delta_y\epsilon\|_2)|a|,
 \\
 |J_3|
 &<
 |(N,\Lambda_y{\sf Q})_2|
 <
 C|a|^p+C|(|\epsilon|^p,|\Lambda_y{\sf Q}|)_2.
 \end{align*}
 Since $p=\frac{n+2}{n-2}=2$ when $n=6$,
 from the Hardy inequality,
 we observe that
 \begin{align*}
 |(|\epsilon|^p,|\Lambda_y{\sf Q}|)_2
 &<
 \int_{\R^n}
 \tfrac{C\epsilon^2}{(1+|y|^2)^\frac{n-2}{2}}
 dy
 =
 \int_{\R^n}
 \tfrac{C\epsilon^2}{(1+|y|^2)^2}
 dy
 <
 C
 \|\Delta_y\epsilon\|_2^2.
 \end{align*}
 Note that
 $a(t)$ is bounded by
 $|a(t)|^2\|\nabla_y\mathcal{Y}\|_2^2<\eta_1^2$ from \eqref{EQUATION_e3.4}.
 Therefore
 it follows that
 \begin{align}
 \label{EQ_3.42}
 |J_2|+|J_3|
 <
 C(a^2+\|\Delta_y\epsilon\|_2^2).
 \end{align}
 In the same way,
 we easily see that
 \begin{align}
 \nonumber
 |I_2|
 &<
 \sum_{i=1}^n
 |\tfrac{1}{\lambda}
 \tfrac{dz_i}{ds}|
 \tfrac{n|(\pa_{y_i}\epsilon,\pa_{y_j}{\sf Q})_2|\cdot|(\pa_{y_j}\epsilon,\Lambda_y{\sf Q})_2|}
 {\|\nabla_y{\sf Q}\|_2^2}
 \\
 \nonumber
 &<
 C
 \sum_{i=1}^n
 (a^2+\|\Delta_y\epsilon\|_2)
 \tfrac{|(\pa_{y_j}\pa_{y_i}\epsilon,{\sf Q})_2|\cdot\|\nabla_y\epsilon\|_2}
 {\|\nabla_y{\sf Q}\|_2^2}
 \\
 \label{EQ_3.43}
 &<
 C
 (a^2+\|\Delta_y\epsilon\|_2)
 \|\Delta_y\epsilon\|_2
 \|\nabla_y\epsilon\|_2,
 \\
 \nonumber
 |I_3|
 &<
 |\tfrac{a}{\lambda}
 \tfrac{dz_j}{ds}|
 \tfrac{|(\nabla_y\mathcal{Y},\nabla_y{\sf Q})_2|\cdot|(\pa_{y_j}\epsilon,\Lambda_y{\sf Q})_2|}{\|\nabla_y{\sf Q}\|_2^2}
 \\
 \label{EQ_3.44}
 &<
 C|a|
 (a^2+\|\Delta_y\epsilon\|_2)
 \|\nabla_y\epsilon\|_2,
 \\
 \nonumber
 |I_4|
 &<
 \tfrac{n|(N,\pa_{y_j}{\sf Q})_2|\cdot|(\pa_{y_j}\epsilon,\Lambda_y{\sf Q})_2|}
 {\|\nabla_y{\sf Q}\|_2^2}
 \\
 \nonumber
 &<
 C\{|a|^p+(|\epsilon|^p,|\nabla_y{\sf Q}|)_2\}
 \cdot\|\nabla_y\epsilon\|_2
 \\
 \label{EQ_3.45}
 &<
 C(a^2+\|\Delta_y\epsilon\|_2^2)
 \|\nabla_y\epsilon\|_2.
 \end{align}
 To derive estimates for $I_1$,
 we compute $(\Lambda_y\epsilon,\pa_{y_j}{\sf Q})_2$ and $(\pa_{y_j}\epsilon,\Lambda_y{\sf Q})_2$.
 Let $r=\frac{2n}{n-3}\in(\frac{2n}{n-2},\frac{2n}{n-4})$.
 Since $(\Lambda_y\epsilon,\pa_{y_j}{\sf Q})_2=(\epsilon,-(\frac{n+2}{2}+y\cdot\nabla_y)\pa_{y_j}{\sf Q})_2$ (see \eqref{EQ_3.26}),
 we have
 \begin{align*}
 |(\Lambda_y\epsilon,\pa_{y_j}{\sf Q})_2|
 &=
 |(\epsilon,-(\tfrac{n+2}{2}+y\cdot\nabla_y)\pa_{y_j}{\sf Q})_2|
 \\
 &<
 C(|\epsilon|,(1+|y|)^{-(n-1)})_2
 \\
 &<
 C
 \|\epsilon\|_r
 \|(1+|y|)^{-(n-1)}\|_{r'}
 \\
 &<
 C
 \|\epsilon\|_\frac{2n}{n-4}^\frac{1}{2}
 \|\epsilon\|_\frac{2n}{n-2}^\frac{1}{2}
 \|(1+|y|)^{-(n-1)}\|_{r'}.
 \end{align*}
 Here we used $\frac{1}{r}=\frac{(\frac{1}{2})}{(\frac{2n}{n-4})}+\frac{(\frac{1}{2})}{(\frac{2n}{n-2})}$.
 Since $r'=\frac{2n}{n+3}$,
 it holds that
 \[
 \|(1+|y|)^{-(n-1)}\|_{r'}
 <
 \infty
 \quad\
 \text{if }
 n\geq6.
 \]
 Hence it follows that
 \begin{align}
 \label{EQ_3.46}
 |(\Lambda_y\epsilon,\pa_{y_j}{\sf Q})_2|
 <
 C
 \|\epsilon\|_\frac{2n}{n-4}^\frac{1}{2}
 \|\epsilon\|_\frac{2n}{n-2}^\frac{1}{2}
 <
 C
 \|\Delta_y\epsilon\|_2^\frac{1}{2}
 \|\nabla_y\epsilon\|_2^\frac{1}{2}.
 \end{align}
 In quite the same way,
 we can check that
 \begin{align}
 \label{EQ_3.47}
 |(\pa_{y_j}\epsilon,\Lambda_y{\sf Q})_2|
 <
 C
 \|\Delta_y\epsilon\|_2^\frac{1}{2}
 \|\nabla_y\epsilon\|_2^\frac{1}{2}.
 \end{align}
 Therefore
 from \eqref{EQ_3.46} - \eqref{EQ_3.47},
 we deduce that
 \begin{align}
 \nonumber
 |I_1|
 &<
 |
 \tfrac{\lambda_s}{\lambda}
 |
 \tfrac{
 n|(\Lambda_y\epsilon,\pa_{y_j}{\sf Q})_2|\cdot|(\pa_{y_j}\epsilon,\Lambda_y{\sf Q})_2|}{\|\nabla_y{\sf Q}\|_2^2}
 \\
 \label{EQ_3.48}
 &<
 C(a^2+\|\Delta_y\epsilon\|_2)
 \|\Delta_y\epsilon\|_2
 \|\nabla_y\epsilon\|_2.
 \end{align}
 We finally provide estimates for $I_5$.
 From \eqref{EQ_3.37},
 we observe that
 \begin{align}
 \label{EQ_3.49}
 |I_5|
 <
 \tfrac{n|(\epsilon_s,\pa_{y_j}{\sf Q})_2|\cdot|(\epsilon,k)_2|}
 {\|\nabla_y{\sf Q}\|_2^2}
 <
 C|(\epsilon_s,\pa_{y_j}{\sf Q})_2|
 \|\Delta_y\epsilon\|_2.
 \end{align}
 Due to \eqref{EQ_3.33},
 we have
 \begin{align*}
 |(\epsilon_s&,\pa_{y_j}{\sf Q})_2|
 <
 |\tfrac{\lambda_s}{\lambda}|
 \cdot
 |(\Lambda_y\epsilon,\pa_{y_j}{\sf Q})_2|
 +
 \sum_{i=1}^n
 |\tfrac{1}{\lambda}
 \tfrac{dz_i}{ds}|
 \cdot
 |(\pa_{y_i}\epsilon,\pa_{y_j}{\sf Q})_2|
 \\
 \nonumber
 &\quad
 +
 |\tfrac{1}{\lambda}
 \tfrac{dz_j}{ds}|
 \tfrac{\|\nabla_y{\sf Q}\|_2^2}{n}
 +
 |\tfrac{a}{\lambda}
 \tfrac{dz_j}{ds}|
 \tfrac{|(\nabla_y\mathcal{Y},\nabla_y{\sf Q})_2|}{n}
 +
 |(N,\pa_{y_j}{\sf Q})_2|
 \\
 &<
 C
 |\tfrac{\lambda_s}{\lambda}|
 \cdot
 \|\nabla_y\epsilon\|_2
 +
 C
 \sum_{i=1}^n
 |\tfrac{1}{\lambda}
 \tfrac{dz_i}{ds}|
 \cdot
 \|\nabla_y\epsilon\|_2
 \\
 \nonumber
 &\quad
 +
 C
 |\tfrac{1}{\lambda}
 \tfrac{dz_j}{ds}|
 +
 C
 |a|\cdot
 |\tfrac{1}{\lambda}
 \tfrac{dz_j}{ds}|
 +
 C|(N,\pa_{y_j}{\sf Q})_2|
 \\
 &<
 C
 (a^2+\|\Delta_y\epsilon\|_2)
 (\|\nabla_y\epsilon\|_2+1+|a|)
 +
 |(N,\pa_{y_j}{\sf Q})_2|.
 \end{align*}
 Since
 $|(N,\pa_{y_j}{\sf Q})_2|<Ca^2+C\|\Delta_y\epsilon\|_2\|\nabla_y\epsilon\|_2$ when $n=6$
 (see \eqref{EQUATION_e3.7}),
 we obtain
 \begin{align}
 \nonumber
 |(\epsilon_s,\pa_{y_j}{\sf Q})_2|
 &<
 C
 (a^2+\|\Delta_y\epsilon\|_2)
 (1+\|\nabla_y\epsilon\|_2+|a|)
 \\
 \label{EQ_3.50}
 &<
 C
 (a^2+\|\Delta_y\epsilon\|_2).
 \end{align}
 In the last inequality,
 we used \eqref{EQUATION_e3.4}.
 Combining \eqref{EQ_3.49} - \eqref{EQ_3.50},
 we conclude
 \begin{align}
 \label{EQ_3.51}
 |I_5|
 <
 C(a^2+\|\Delta_y\epsilon\|_2)\|\Delta_y\epsilon\|_2.
 \end{align}
 Estimate \eqref{EQ_3.29} follows from
 \eqref{EQ_3.40} with \eqref{EQ_3.41} - \eqref{EQ_3.45}, \eqref{EQ_3.48} and \eqref{EQ_3.51}.
 In the same way,
 \eqref{EQ_3.30} can be derived from \eqref{EQ_3.34}.
 The proof is completed.
 \end{proof}

 \begin{lem}
 \label{LEMMA_3.5}
 Let $n=6$,
 and
 $u(x,t)$ be as in Lemma {\rm\ref{LEMMA_3.1}}.
 There exist constants $h_1>0$ and $C>0$,
 depending only on $n$
 such that if $\eta_1\in(0,h_1)$,
 then
 \begin{align}
 \label{EQ_3.52}
 &
 \frac{d}{ds}
 \left(
 \frac{\lambda^2}{2}
 \|\epsilon\|_2^2
 -
 \frac{\lambda^2}{2}
 (\Lambda_y{\sf Q},\epsilon)_2
 +
 \frac{\lambda^2}{4}
 \|\Lambda_y{\sf Q}\|_2^2
 \right)
 \\
 \nonumber
 &=
 \lambda^2
 (H_y\epsilon,\epsilon)_2
 +
 \lambda^2
 (N,\epsilon)_2 
 +
 \lambda^2K(s)
 \quad\
 \text{\rm for }
 s\in(s_1,s_2),
 \end{align}
 where $K(s)$ is a function satisfying $|K(s)|<C(a^2+\|\Delta_y\epsilon\|_2^2)$.
 \end{lem}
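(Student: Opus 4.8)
The plan is to differentiate the quantity on the left of \eqref{EQ_3.52}, which I shall call $\mathcal{E}(s)$, along the flow, using the evolution equation \eqref{EQUATION_e3.9}; the regularity recorded in (b1)--(b4) justifies the differentiation and the integrations by parts below. The two correction terms $-\tfrac{\lambda^2}{2}(\Lambda_y{\sf Q},\epsilon)_2$ and $\tfrac{\lambda^2}{4}\|\Lambda_y{\sf Q}\|_2^2$ are engineered precisely so that, upon differentiation, the two contributions containing $\Lambda_y{\sf Q}$ that resist a direct estimate cancel: for $n=6$ one has $\Lambda_y{\sf Q}(y)\sim|y|^{-4}$ as $|y|\to\infty$, so $\Lambda_y{\sf Q}\notin L^{2n/(n+2)}(\R^6)$ and the term $\tfrac{\lambda_s}{\lambda}(\Lambda_y{\sf Q},\epsilon)_2$ arising from pairing \eqref{EQUATION_e3.9} with $\epsilon$ is not bounded by $\|\nabla_y\epsilon\|_2$ or $\|\Delta_y\epsilon\|_2$ and must be absorbed into the left-hand side instead.

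First I would record two scalar identities. \textbf{(Step 1: pair \eqref{EQUATION_e3.9} with $\epsilon$.)} Using $(\epsilon,\mathcal{Y})_2=0$ from \eqref{EQUATION_e3.2} and the integration-by-parts identities $(\Lambda_y\epsilon,\epsilon)_2=-\|\epsilon\|_2^2$ and $(\nabla_y\epsilon,\epsilon)_2=0$, I get
\[
(\epsilon_s,\epsilon)_2
=
(H_y\epsilon,\epsilon)_2
-
\tfrac{\lambda_s}{\lambda}\|\epsilon\|_2^2
+
\tfrac{\lambda_s}{\lambda}(\Lambda_y{\sf Q},\epsilon)_2
+
(N,\epsilon)_2
+
\mathcal{R}_1,
\]
with $\mathcal{R}_1=\tfrac{\lambda_s}{\lambda}a(\Lambda_y\mathcal{Y},\epsilon)_2+\tfrac{1}{\lambda}\tfrac{dz}{ds}\cdot(\nabla_y{\sf Q},\epsilon)_2+\tfrac{a}{\lambda}\tfrac{dz}{ds}\cdot(\nabla_y\mathcal{Y},\epsilon)_2$. \textbf{(Step 2: pair \eqref{EQUATION_e3.9} with $\Lambda_y{\sf Q}$.)} Using $H_y\Lambda_y{\sf Q}=0$ (hence $(H_y\epsilon,\Lambda_y{\sf Q})_2=0$ by self-adjointness of $H_y$), $(\mathcal{Y},\Lambda_y{\sf Q})_2=0$ and $(\nabla_y{\sf Q},\Lambda_y{\sf Q})_2=0$ (orthogonality of $H_y$-eigenfunctions for distinct eigenvalues, respectively parity), and Lemma \ref{LEMMA_3.3} with $n=6$, which gives $(\Lambda_y\epsilon,\Lambda_y{\sf Q})_2=(\epsilon,R)_2$, I get
\[
(\epsilon_s,\Lambda_y{\sf Q})_2
=
\tfrac{\lambda_s}{\lambda}\|\Lambda_y{\sf Q}\|_2^2
+
\tfrac{\lambda_s}{\lambda}(\epsilon,R)_2
+
(N,\Lambda_y{\sf Q})_2
+
\mathcal{R}_2,
\]
where $\mathcal{R}_2$ collects the remaining lower-order terms of \eqref{EQUATION_e3.9} (those in $\tfrac1\lambda\tfrac{dz}{ds}\cdot\nabla_y\epsilon$, $\tfrac{\lambda_s}{\lambda}a\Lambda_y\mathcal{Y}$ and $\tfrac{a}{\lambda}\tfrac{dz}{ds}\cdot\nabla_y\mathcal{Y}$ tested against $\Lambda_y{\sf Q}$).

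\textbf{(Step 3: assemble and cancel.)} Using $\tfrac{d}{ds}(\lambda^2 g)=\lambda^2 g_s+2\lambda\lambda_s g$ and substituting Steps 1--2 into $\tfrac{d}{ds}\mathcal{E}$, three pairs of terms cancel exactly: the $\tfrac{\lambda_s}{\lambda}\|\epsilon\|_2^2$ terms (weight derivative of $\tfrac{\lambda^2}{2}\|\epsilon\|_2^2$ against the $-\tfrac{\lambda_s}{\lambda}\|\epsilon\|_2^2$ of Step 1); the $\tfrac{\lambda_s}{\lambda}(\Lambda_y{\sf Q},\epsilon)_2$ terms (against the weight derivative of $-\tfrac{\lambda^2}{2}(\Lambda_y{\sf Q},\epsilon)_2$); and the $\tfrac{\lambda_s}{\lambda}\|\Lambda_y{\sf Q}\|_2^2$ terms (derivative of $\tfrac{\lambda^2}{4}\|\Lambda_y{\sf Q}\|_2^2$ against the corresponding piece of $-\tfrac{\lambda^2}{2}(\Lambda_y{\sf Q},\epsilon_s)_2$). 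What survives is $\lambda^2(H_y\epsilon,\epsilon)_2+\lambda^2(N,\epsilon)_2+\lambda^2K(s)$ with
\[
\lambda^2K(s)
=
\lambda^2\mathcal{R}_1
-
\tfrac{\lambda^2}{2}\mathcal{R}_2
-
\tfrac{\lambda\lambda_s}{2}(\epsilon,R)_2
-
\tfrac{\lambda^2}{2}(N,\Lambda_y{\sf Q})_2 .
\]
Every summand of $K(s)$ is a product of one of $\tfrac{\lambda_s}{\lambda}$, $\tfrac1\lambda\tfrac{dz}{ds}$, $a$ with a pairing of $\epsilon$, $\nabla_y\epsilon$ or $N$ against a fast-decaying function ($R$, $\mathcal{Y}$, $\Lambda_y\mathcal{Y}$, or $(1-\chi_M)\nabla_y{\sf Q}$, the latter after using $(\Psi_j,\epsilon)_2=0$), together with $-\tfrac12(N,\Lambda_y{\sf Q})_2$; I would bound these by $C(a^2+\|\Delta_y\epsilon\|_2^2)$ exactly as $J_1,\dots,J_3$ and $I_1,\dots,I_5$ were estimated in the proof of Lemma \ref{LEMMA_3.4}, invoking the Hardy inequalities (Lemma \ref{LEMMA_2.1}), the decay bounds \eqref{equation_e2.7} and Lemma \ref{LEMMA_3.3}, the pointwise bound \eqref{EQUATION_e3.7} on $N$, the a priori estimates \eqref{EQ_3.16}--\eqref{EQ_3.18}, and the smallness of $\eta_1$ via \eqref{EQUATION_e3.4}.

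The genuinely delicate point is not this bookkeeping but the very choice of the coefficients $\tfrac12$ and $\tfrac14$ in $\mathcal{E}(s)$: with any other normalization an uncancelled multiple of $(\Lambda_y{\sf Q},\epsilon)_2$ or of $\|\Lambda_y{\sf Q}\|_2^2$ would remain on the right, and since in dimension six neither of these is controlled by $a$, $\|\nabla_y\epsilon\|_2$ or $\|\Delta_y\epsilon\|_2$, the estimate $|K(s)|<C(a^2+\|\Delta_y\epsilon\|_2^2)$ would fail. Checking that these two coefficients are exactly the ones producing the complete cancellation is the crux of the argument.
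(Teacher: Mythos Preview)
Your overall architecture is sound and is in fact a cleaner alternative to the paper's proof: you test \eqref{EQUATION_e3.9} once against $\epsilon$ and once against $\Lambda_y{\sf Q}$ and combine, whereas the paper tests only against $\epsilon$ and then feeds in the refined modulation identities \eqref{EQ_3.29}--\eqref{EQ_3.30} from Lemma~\ref{LEMMA_3.4}, producing total derivatives of $(\epsilon,\pa_{y_j}{\sf Q})_2^2$ whose combined coefficient $\tfrac{n(n-6)}{8}$ vanishes for $n=6$. Your route avoids invoking Lemma~\ref{LEMMA_3.4} altogether.

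There is, however, a real gap in your remainder estimate. You assert that every summand of $K(s)$ is a pairing of $\epsilon$ against a ``fast-decaying'' weight and can be handled as the $J_i$, $I_i$ were. This is false for the two translation terms
\[
\tfrac{1}{\lambda}\tfrac{dz_j}{ds}\,(\pa_{y_j}{\sf Q},\epsilon)_2\ \ \text{in }\mathcal R_1
\qquad\text{and}\qquad
\tfrac{1}{\lambda}\tfrac{dz_j}{ds}\,(\pa_{y_j}\epsilon,\Lambda_y{\sf Q})_2\ \ \text{in }\mathcal R_2.
\]
Even after using $(\Psi_j,\epsilon)_2=0$, the weight $(1-\chi_M)\pa_{y_j}{\sf Q}$ decays only like $|y|^{-(n-1)}=|y|^{-5}$, which is the borderline rate \emph{not} in $L^{6/5}(\R^6)$; hence $(\pa_{y_j}{\sf Q},\epsilon)_2$ is not controlled by $\|\Delta_y\epsilon\|_2$ alone (in Fourier variables $\widehat{\pa_{y_j}{\sf Q}}(\xi)\sim|\xi|^{-1}$ near $0$, so $|\xi|^{-2}\widehat{\pa_{y_j}{\sf Q}}\notin L^2$). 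The best available bound is the interpolated one $|(\pa_{y_j}{\sf Q},\epsilon)_2|\lesssim\|\nabla_y\epsilon\|_2^{1/2}\|\Delta_y\epsilon\|_2^{1/2}$ as in \eqref{EQ_3.46}--\eqref{EQ_3.47}, and combined with $|\tfrac{1}{\lambda}\tfrac{dz}{ds}|\lesssim a^2+\|\Delta_y\epsilon\|_2$ this leaves a term $\|\nabla_y\epsilon\|_2^{1/2}\|\Delta_y\epsilon\|_2^{3/2}$, which is \emph{not} $O(a^2+\|\Delta_y\epsilon\|_2^2)$.

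What actually saves your $K(s)$ is a further $n=6$ cancellation you did not mention. By \eqref{EQ_3.36}--\eqref{EQ_3.38} one has $(\pa_{y_j}\epsilon,\Lambda_y{\sf Q})_2=\tfrac{n-2}{2}(\epsilon,\pa_{y_j}{\sf Q})_2-(\epsilon,k)_2$ with $|k(y)|\le C(1+|y|)^{-(n+1)}$, so in $\mathcal R_1-\tfrac12\mathcal R_2$ the two dangerous terms combine to
\[
\tfrac{1}{\lambda}\tfrac{dz_j}{ds}\Big[(1-\tfrac{n-2}{4})(\pa_{y_j}{\sf Q},\epsilon)_2+\tfrac12(\epsilon,k)_2\Big],
\]
and $1-\tfrac{n-2}{4}=0$ exactly when $n=6$. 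The surviving piece $\tfrac{1}{\lambda}\tfrac{dz_j}{ds}\cdot\tfrac12(\epsilon,k)_2$ is now genuinely $O((a^2+\|\Delta_y\epsilon\|_2)\|\Delta_y\epsilon\|_2)\le C(a^2+\|\Delta_y\epsilon\|_2^2)$ since $k\in L^{6/5}$. Thus your approach does work, but the ``crux'' is not only the choice of the coefficients $\tfrac12,\tfrac14$ in $\mathcal E$: a second dimension-specific identity, equivalent to the paper's $\tfrac{n(n-6)}{8}=0$, must be invoked inside $K$ to close the estimate.
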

 \begin{proof}
 Multiplying equation \eqref{EQUATION_e3.9} by $\epsilon$,
 we obtain
 \begin{align}
 \label{EQ_3.53}
 \tfrac{1}{2}
 \tfrac{d}{ds}
 \|\epsilon\|_2^2
 &=
 (H_y\epsilon,\epsilon)_2
 +
 \tfrac{\lambda_s}{\lambda}
 (\Lambda_y\epsilon,\epsilon)_2
 +
 0
 +
 \tfrac{\lambda_s}{\lambda}
 (\Lambda_y{\sf Q},\epsilon)_2
 -
 0
 \\
 \nonumber
 &\quad
 +
 \tfrac{\lambda_s}{\lambda}
 a
 (\Lambda_y{\mathcal Y},\epsilon)_2
 +
 \sum_{i=1}^n
 \tfrac{1}{\lambda}
 \tfrac{dz_i}{ds}
 (\pa_{y_i}{\sf Q},\epsilon)_2
 \\
 \nonumber
 &\quad
 +
 \sum_{i=1}^n
 \tfrac{a}{\lambda}
 \tfrac{dz_i}{ds}
 (\pa_{y_i}\mathcal{Y},\epsilon)_2
 +
 (N,\epsilon)_2.
 \end{align}
 Since $\Lambda_y=\frac{n-2}{2}+y\cdot\nabla_y$,
 we verify that
 \begin{align*}
 (\Lambda_y\epsilon,\epsilon)_2
 &=
 \tfrac{n-2}{2}
 (\epsilon,\epsilon)_2
 +
 (y\cdot\nabla\epsilon,\epsilon)_2
 \\
 &=
 \tfrac{n-2}{2}
 (\epsilon,\epsilon)_2
 +
 \tfrac{1}{2}
 \int y\cdot\nabla_y\epsilon^2
 \\
 &=
 \tfrac{n-2}{2}
 (\epsilon,\epsilon)_2
 -
 \tfrac{n}{2}
 (\epsilon,\epsilon)_2
 =
 -
 \|\epsilon\|_2^2.
 \end{align*}
 Hence
 we can rewrite \eqref{EQ_3.53} as
 \begin{align}
 \label{EQ_3.54}
 &
 \tfrac{1}{2}
 \tfrac{d}{ds}
 \|\epsilon\|_2^2
 +
 \tfrac{\lambda_s}{\lambda}
 \|\epsilon\|_2^2
 =
 (H_y\epsilon,\epsilon)_2
 +
 \tfrac{\lambda_s}{\lambda}
 (\Lambda_y{\sf Q},\epsilon)_2
 +
 \tfrac{\lambda_s}{\lambda}
 a
 (\Lambda_y{\mathcal Y},\epsilon)_2
 \\
 \nonumber
 &
 +
 \sum_{i=1}^n
 \tfrac{1}{\lambda}
 \tfrac{dz_i}{ds}
 (\pa_{y_i}{\sf Q},\epsilon)_2
 +
 \sum_{i=1}^n
 \tfrac{a}{\lambda}
 \tfrac{dz_i}{ds}
 (\pa_{y_i}\mathcal{Y},\epsilon)_2
 +
 (N,\epsilon)_2.
 \end{align}
 Multiplying \eqref{EQ_3.54} by $\lambda^2$,
 we have
 \begin{align}
 \nonumber
 &
 \tfrac{d}{ds}
 (
 \tfrac{\lambda^2}{2}
 \|\epsilon\|_2^2
 )
 =
 \lambda^2
 (H_y\epsilon,\epsilon)_2
 +
 \lambda
 \lambda_s
 (\Lambda_y{\sf Q},\epsilon)_2
 +
 \lambda^2
 \underbrace{
 \tfrac{\lambda_s}{\lambda}
 a
 (\Lambda_y{\mathcal Y},\epsilon)_2
 }_{=K_1}
 \\
 \nonumber
 &\quad
 +
 \lambda^2
 \sum_{i=1}^n
 \tfrac{1}{\lambda}
 \tfrac{dz_i}{ds}
 (\pa_{y_i}{\sf Q},\epsilon)_2
 +
 \lambda^2
 \underbrace{
 \sum_{i=1}^n
 \tfrac{a}{\lambda}
 \tfrac{dz_i}{ds}
 (\pa_{y_i}\mathcal{Y},\epsilon)_2
 }_{=K_2}
 +
 \lambda^2
 (N,\epsilon)_2
 \\
 \label{EQ_3.55}
 &=
 \lambda^2
 (H_y\epsilon,\epsilon)_2
 +
 \lambda
 \lambda_s
 (\Lambda_y{\sf Q},\epsilon)_2
 +
 \lambda^2
 \sum_{i=1}^n
 \tfrac{1}{\lambda}
 \tfrac{dz_i}{ds}
 (\pa_{y_i}{\sf Q},\epsilon)_2
 \\
 \nonumber
 &\quad
 +
 (K_1+K_2)
 \lambda^2
 +
 \lambda^2
 (N,\epsilon)_2.
 \end{align}
 We arrange the second term on the right hand side of \eqref{EQ_3.55}.
 \begin{align}
 \nonumber
 \lambda
 \lambda_s
 &
 (\Lambda_y{\sf Q},\epsilon)_2
 =
 \tfrac{d}{ds}
 \tfrac{\lambda^2}{2}
 \cdot
 (\epsilon,\Lambda_y{\sf Q})_2
 \\
 \nonumber
 &=
 \tfrac{d}{ds}
 \{
 \tfrac{\lambda^2}{2}
 (\epsilon,\Lambda_y{\sf Q})_2
 \}
 -
 \tfrac{\lambda^2}{2}
 (\epsilon_s,\Lambda_y{\sf Q})_2
 \\
 \label{EQ_3.56}
 &=
 \tfrac{d}{ds}
 \{
 \tfrac{\lambda^2}{2}
 (\epsilon,\Lambda_y{\sf Q})_2 \}
 -
 \tfrac{\|\Lambda_y{\sf Q}\|_2^2\lambda^2}{2}
 \tfrac{(\epsilon_s,\Lambda_y{\sf Q})_2}{\|\Lambda_y{\sf Q}\|_2^2}.
 \end{align}
 Note that
 \eqref{EQ_3.29} implies
 \begin{align}
 \label{EQ_3.57}
 \tfrac{(\epsilon_s,\Lambda_y{\sf Q})_2}{\|\Lambda_y{\sf Q}\|_2^2}
 =
 \tfrac{\lambda_s}{\lambda}
 +
 \tfrac{d}{ds}
 (
 \tfrac{n(n-2)}{4}
 \sum_{j=1}^n
 \tfrac{(\epsilon,\pa_{y_j}{\sf Q})_2^2}
 {\|\nabla_y{\sf Q}\|_2^2\|\Lambda_y{\sf Q}\|_2^2}
 )
 +
 O_1,
 \end{align}
 where $O_1$ satisfies
 \begin{align}
 \label{EQ_3.58}
 |O_1|
 <
 C
 (a^2+\|\Delta_y\epsilon\|_2^2).
 \end{align}
 We substitute \eqref{EQ_3.57} into \eqref{EQ_3.56} to get
 \begin{align}
 \nonumber
 &
 \lambda
 \lambda_s
 (\Lambda_y{\sf Q},\epsilon)_2
 \\
 \nonumber
 &=
 \tfrac{d}{ds}
 \{
 \tfrac{\lambda^2}{2}
 (\Lambda_y{\sf Q},\epsilon)_2 
 \}
 -
 \tfrac{\|\Lambda_y{\sf Q}\|_2^2\lambda^2}{2}
 \{
 \tfrac{\lambda_s}{\lambda}
 +
 \tfrac{d}{ds}
 (
 \tfrac{n(n-2)}{4}
 \sum_{i=1}^n
 \tfrac{(\epsilon,\pa_{y_i}{\sf Q})_2^2}
 {\|\nabla_y{\sf Q}\|_2^2\|\Lambda_y{\sf Q}\|_2^2}
 )
 +
 O_1
 \}
 \\
 \nonumber
 &=
 \tfrac{d}{ds}
 \{
 \tfrac{\lambda^2}{2}
 (\Lambda_y{\sf Q},\epsilon)_2 
 \}
 -
 \tfrac{\|\Lambda_y{\sf Q}\|_2^2}{2}
 \lambda\lambda_s
 -
 \tfrac{n(n-2)\lambda^2}{8}
 \tfrac{d}{ds}
 \sum_{i=1}^n
 \tfrac{(\epsilon,\pa_{y_i}{\sf Q})_2^2}
 {\|\nabla_y{\sf Q}\|_2^2}
 \\
 \nonumber
 &\quad
 -
 \tfrac{\|\Lambda_y{\sf Q}\|_2^2}{2}
 \lambda^2
 O_1
 \\
 \nonumber
 &=
 \tfrac{d}{ds}
 \{
 \tfrac{\lambda^2}{2}
 (\Lambda_y{\sf Q},\epsilon)_2 
 -
 \tfrac{\lambda^2}{4}
 \|\Lambda_y{\sf Q}\|_2^2
 -
 \tfrac{n(n-2)\lambda^2}{8}
 \sum_{j=1}^n
 \tfrac{(\epsilon,\pa_{y_j}{\sf Q})_2^2}
 {\|\nabla_y{\sf Q}\|_2^2}
 \}
 \\
 \nonumber
 &\quad
 +
 \lambda^2
 \underbrace{
 \tfrac{n(n-2)}{4}
 \tfrac{\lambda_s}{\lambda}
 \sum_{j=1}^n
 \tfrac{(\epsilon,\pa_{y_j}{\sf Q})_2^2}
 {\|\nabla_y{\sf Q}\|_2^2}
 }_{=K_3}
 +
 \underbrace{
 (-\tfrac{\|\Lambda_y{\sf Q}\|_2^2}{2})
 O_1
 }_{=K_4}
 \lambda^2
 \\
 \label{EQ_3.59}
 &=
 \tfrac{d}{ds}
 \{
 \tfrac{\lambda^2}{2}
 (\Lambda_y{\sf Q},\epsilon)_2 
 -
 \tfrac{\lambda^2}{4}
 \|\Lambda_y{\sf Q}\|_2^2
 -
 \tfrac{n(n-2)\lambda^2}{8}
 \sum_{j=1}^n
 \tfrac{(\epsilon,\pa_{y_j}{\sf Q})_2^2}
 {\|\nabla_y{\sf Q}\|_2^2}
 \}
 \\
 \nonumber
 &\quad
 +
 (K_3+K_4)
 \lambda^2.
 \end{align}
 To arrange the third term
 $\lambda^2
 \sum_{i=1}^n
 \tfrac{1}{\lambda}
 \tfrac{dz_i}{ds}
 (\pa_{y_i}{\sf Q},\epsilon)_2$
 on the right hand side of \eqref{EQ_3.55},
 we first recall \eqref{EQ_3.30}.
 \begin{align}
 \label{EQ_3.60}
 \tfrac{1}{\lambda}
 \tfrac{dz_j}{ds}
 =
 \tfrac{n(\epsilon_s,\pa_{y_j}{\sf Q})_2}{\|\nabla_y{\sf Q}\|_2^2}
 -
 \tfrac{\lambda_s}{\lambda}
 \tfrac{
 n(\Lambda_y\epsilon,\pa_{y_j}{\sf Q})_2
 }{\|\nabla_y{\sf Q}\|_2^2}
 +
 O_2
 \qquad
 (j=1,\cdots,n),
 \end{align}
 where
 \begin{align}
 \label{EQ_3.61}
 |O_2|
 <
 C(a^2+\|\Delta_y\epsilon\|_2^2).
 \end{align}
 From \eqref{EQ_3.60},
 we see that
 \begin{align*}
 \tfrac{1}{\lambda} 
 \tfrac{dz_j}{ds}
 (\pa_{y_j}{\sf Q},\epsilon)_2
 &=
 \tfrac{
 n(\epsilon_s,\pa_{y_j}{\sf Q})_2\cdot(\epsilon,\pa_{y_j}{\sf Q})_2
 }{\|\nabla_y{\sf Q}\|_2^2}
 +
 \underbrace{
 (-\tfrac{\lambda_s}{\lambda})
 \tfrac{
 n(\Lambda_y\epsilon,\pa_{y_j}{\sf Q})_2
 \cdot
 (\epsilon,\pa_{y_j}{\sf Q})_2
 }{\|\nabla_y{\sf Q}\|_2^2}
 }_{=K_5}
 \\
 &\quad
 +
 \underbrace{
 (\epsilon,\pa_{y_j}{\sf Q})_2
 O_2
 }_{=K_6}
 \\
 &=
 \tfrac{d}{ds}
 (\tfrac{n(\epsilon,\pa_{y_j}{\sf Q})_2^2
 }{2\|\nabla_y{\sf Q}\|_2^2}
 )
 +
 K_5+K_6
 \quad\
 (j=1,\cdots,n).
 \end{align*}
 Therefore
 the third term on the right hand side of \eqref{EQ_3.55} can be simplified as
 \begin{align}
 \nonumber
 \lambda^2
 &
 \sum_{j=1}^n
 \tfrac{1}{\lambda}
 \tfrac{dz_j}{ds}
 (\pa_{y_j}{\sf Q},\epsilon)_2
 =
 \lambda^2
 \tfrac{d}{ds}
 \sum_{j=1}^n
 \tfrac{n(\epsilon,\pa_{y_j}{\sf Q})_2^2
 }{2\|\nabla_y{\sf Q}\|_2^2}
 +
 (K_5+K_6)
 \lambda^2
 \\
 \nonumber
 &=
 \tfrac{d}{ds}
 \sum_{j=1}^n
 \tfrac{\lambda^2n(\epsilon,\pa_{y_j}{\sf Q})_2^2}{2\|\nabla_y{\sf Q}\|_2^2}
 +
 \lambda^2
 \underbrace{
 \sum_{j=1}^n
 (-\tfrac{\lambda_s}{\lambda})
 \tfrac{n(\epsilon,\pa_{y_j}{\sf Q})_2^2}{\|\nabla_y{\sf Q}\|_2^2}
 }_{=K_7}
 +
 (K_5+K_6)
 \lambda^2
 \\
 \label{EQ_3.62}
 &=
 \tfrac{d}{ds}
 \{
 \tfrac{n\lambda^2}{2}
 \sum_{j=1}^n
 \tfrac{(\epsilon,\pa_{y_j}{\sf Q})_2^2
 }{\|\nabla_y{\sf Q}\|_2^2}
 \}
 +
 \lambda^2
 \sum_{i=5}^7
 K_i.
 \end{align}
 Substituting \eqref{EQ_3.59} and \eqref{EQ_3.62} into \eqref{EQ_3.55},
 we finally obtain
 \begin{align*}
 &\tfrac{d}{ds}
 (
 \tfrac{\lambda^2}{2}
 \|\epsilon\|_2^2
 )
 \\
 &=
 \lambda^2
 (H_y\epsilon,\epsilon)_2
 +
 \lambda
 \lambda_s
 (\Lambda_y{\sf Q},\epsilon)_2
 +
 \lambda^2
 \sum_{i=1}^n
 \tfrac{1}{\lambda}
 \tfrac{dz_i}{ds}
 (\pa_{y_i}{\sf Q},\epsilon)_2
 \\
 \nonumber
 &\quad
 +
 (K_1+K_2)
 \lambda^2
 +
 \lambda^2
 (N,\epsilon)_2
 \\
 \nonumber
 &=
 \lambda^2
 (H_y\epsilon,\epsilon)_2
 \\
 \nonumber
 &\quad
 +
 \tfrac{d}{ds}
 \{
 \tfrac{\lambda^2}{2}
 (\Lambda_y{\sf Q},\epsilon)_2 
 -
 \tfrac{\lambda^2}{4}
 \|\Lambda_y{\sf Q}\|_2^2
 -
 \tfrac{n(n-2)\lambda^2}{8}
 \sum_{j=1}^n
 \tfrac{(\epsilon,\pa_{y_j}{\sf Q})_2^2}
 {\|\nabla_y{\sf Q}\|_2^2}
 \}
 \\
 \nonumber
 &\quad
 +
 (K_3+K_4)
 \lambda^2
 +
 \tfrac{d}{ds}
 \{
 \tfrac{n\lambda^2}{2}
 \sum_{j=1}^n
 \tfrac{(\epsilon,\pa_{y_j}{\sf Q})_2^2
 }{\|\nabla_y{\sf Q}\|_2^2}
 \}
 +
 \lambda^2
 \sum_{i=5}^7
 K_i
 \\
 &\quad
 +
 (K_1+K_2)
 \lambda^2
 +
 \lambda^2
 (N,\epsilon)_2
 \\
 \nonumber
 &=
 \lambda^2
 (H_y\epsilon,\epsilon)_2
 \\
 \nonumber
 &\quad
 +
 \tfrac{d}{ds}
 \{
 \tfrac{\lambda^2}{2}
 (\Lambda_y{\sf Q},\epsilon)_2 
 -
 \tfrac{\lambda^2}{4}
 \|\Lambda_y{\sf Q}\|_2^2
 -
 \tfrac{n(n-6)\lambda^2}{8}
 \sum_{j=1}^n
 \tfrac{(\epsilon,\pa_{y_j}{\sf Q})_2^2}
 {\|\nabla_y{\sf Q}\|_2^2}
 \}
 \\
 \nonumber
 &\quad
 +
 \lambda^2
 \sum_{i=1}^7
 K_i
 +
 \lambda^2
 (N,\epsilon)_2.
 \end{align*}
 Therefore
 we conclude
 \begin{align}
 \label{EQ_3.63}
 \tfrac{d}{ds}
 &
 \{
 \tfrac{\lambda^2}{2}
 \|\epsilon\|_2^2
 -
 \tfrac{\lambda^2}{2}
 (\Lambda_y{\sf Q},\epsilon)_2
 +
 \tfrac{\lambda^2}{4}
 \|\Lambda_y{\sf Q}\|_2^2
 +
 \tfrac{n(n-6)\lambda^2}{8}
 \sum_{j=1}^n
 \tfrac{(\epsilon,\pa_{y_j}{\sf Q})_2^2}
 {\|\nabla_y{\sf Q}\|_2^2}
 \}
 \\
 \nonumber
 &=
 \lambda^2
 (H_y\epsilon,\epsilon)_2
 +
 \lambda^2
 (N,\epsilon)_2
 +
 \lambda^2
 \sum_{i=1}^7
 K_i.
 \end{align}
 We collect $K_i$ ($i=1,\cdots,6$) bellow.
 \begin{itemize}
 \item
 $K_1=\tfrac{\lambda_s}{\lambda}
 a
 (\Lambda_y{\mathcal Y},\epsilon)_2$,
 \item
 $\dis K_2=
 \sum_{i=1}^n
 \tfrac{a}{\lambda}
 \tfrac{dz_i}{ds}
 (\pa_{y_i}\mathcal{Y},\epsilon)_2$,
 \item
 $\dis K_3=
 \tfrac{n(n-2)}{4}
 \tfrac{\lambda_s}{\lambda}
 \sum_{j=1}^n
 \tfrac{(\epsilon,\pa_{y_j}{\sf Q})_2^2}
 {\|\nabla_y{\sf Q}\|_2^2}$,
 \item
 $K_4=
 (-\tfrac{\|\Lambda_y{\sf Q}\|_2^2}{2})
 O_1$,
 \item
 $K_5=
 (-\tfrac{\lambda_s}{\lambda})
 \tfrac{
 n(\Lambda_y\epsilon,\pa_{y_j}{\sf Q})_2
 \cdot
 (\epsilon,\pa_{y_j}{\sf Q})_2
 }{\|\nabla_y{\sf Q}\|_2^2}$,
 \item
 $K_6=
 (\epsilon,\pa_{y_j}{\sf Q})_2
 O_2$,
 \item
 $\dis K_7=
 \sum_{j=1}^n
 (-\tfrac{\lambda_s}{\lambda})
 \tfrac{n(\epsilon,\pa_{y_j}{\sf Q})_2^2}{\|\nabla_y{\sf Q}\|_2^2}$.
 \end{itemize}
 From the Hardy inequalities,
 we can verify that
 \begin{align}
 \label{EQ_3.64}
 |K_1|+|K_2|
 &<
 C
 |\tfrac{\lambda_s}{\lambda}|
 \cdot
 |a|\cdot
 \|\nabla_y\epsilon\|_2.
 \end{align}
 Due to \eqref{EQ_3.46} - \eqref{EQ_3.47},
 it follows that
 \begin{align}
 \label{EQ_3.65}
 |K_3|+|K_5|+|K_7|
 &<
 C
 |\tfrac{\lambda_s}{\lambda}|
 \cdot
 \|\Delta_y\epsilon\|_2 
 \|\nabla_y\epsilon\|_2.
 \end{align}
 Since
 $|O_1|<C(a^2+\|\Delta_y\epsilon\|_2^2)$ (see \eqref{EQ_3.58}) and
 $|O_2|<C(a^2+\|\Delta_y\epsilon\|_2^2)$ (see \eqref{EQ_3.61}),
 we have
 \begin{align}
 \nonumber
 |K_4|+|K_6|
 &<
 C(O_1+\|\nabla_y\epsilon\|_2O_2)
 \\
 \label{EQ_3.66}
 &<
 C
 (1+\|\nabla_y\epsilon\|_2)
 (a^2+\|\Delta_y\epsilon\|_2^2).
 \end{align}
 Combining \eqref{EQ_3.63} and \eqref{EQ_3.64} - \eqref{EQ_3.66} with \eqref{EQ_3.17},
 we obtain the conclusion.
 \end{proof}

 \subsection{Bounds for $\lambda(t)$, $z(t)$, $\|\epsilon(t)\|_2$
 and $\|\epsilon(t)\|_\infty$}
 \begin{lem}
 \label{LEMMA_3.6}
 Let $n=6$,
 and
 $u(x,t)$ be as in Lemma {\rm\ref{LEMMA_3.1}}.
 There exist constants $h_1>0$ and $C>0$,
 depending only on $n$
 such that if $\eta_1\in(0,h_1)$,
 then
 \begin{align}
 \label{EQ_3.67}
 \lambda(s)^2
 &(
 \|\epsilon(s)\|_2^2
 +
 \|\Lambda_y{\sf Q}\|_2^2
 )
 +
 \int_{s_1}^{s_2}
 \lambda(s)^2
 (-H_y\epsilon(s),\epsilon(s))_2
 ds
 \\
 \nonumber
 &<
 4\lambda(s_1)^2
 (
 \|\epsilon(s_1)\|_2^2
 +
 \|\Lambda_y{\sf Q}\|_2^2
 )
 \quad\
 \text{\rm for }
 s\in(s_1,s_2).
 \end{align}
 \end{lem}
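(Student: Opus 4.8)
The plan is to run a continuity (bootstrap) argument based on the identity \eqref{EQ_3.52} of Lemma~\ref{LEMMA_3.5}. Write
\[
 P(s)
 =
 \frac{\lambda^2}{2}\|\epsilon\|_2^2
 -
 \frac{\lambda^2}{2}(\Lambda_y{\sf Q},\epsilon)_2
 +
 \frac{\lambda^2}{4}\|\Lambda_y{\sf Q}\|_2^2
 =
 \frac{\lambda^2}{4}\|\epsilon-\Lambda_y{\sf Q}\|_2^2
 +
 \frac{\lambda^2}{4}\|\epsilon\|_2^2\ \ge\ 0 ,
\]
and let $\Phi(s)=\lambda(s)^2(\|\epsilon(s)\|_2^2+\|\Lambda_y{\sf Q}\|_2^2)$ be the quantity to be estimated. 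The algebraic identity above gives at once $P\le\Phi$ and, by the triangle inequality, $P(s_1)\le\frac34\Phi(s_1)$. For the reverse comparison I would use that $(\epsilon,\mathcal Y)_2=(\epsilon,\Psi_0)_2=0$ forces $(\epsilon,\chi_M\Lambda_y{\sf Q})_2=0$, so $|(\epsilon,\Lambda_y{\sf Q})_2|=|(\epsilon,(1-\chi_M)\Lambda_y{\sf Q})_2|\le CM^{-1}\|\epsilon\|_2$, since $\Lambda_y{\sf Q}(y)\sim|y|^{-4}$ lies in $L^2(\R^6)$ and its tail past $|y|=M$ has $L^2$-mass $O(M^{-1})$; with Cauchy--Schwarz and Young this gives, for $M$ large depending only on $n$,
\[
 \big(\tfrac14-o_M(1)\big)\,\Phi(s)\ \le\ P(s)\ \le\ \Phi(s)\qquad (s_1<s<s_2).
\]

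Next I rewrite \eqref{EQ_3.52} as $\tfrac{d}{ds}P+\lambda^2(-H_y\epsilon,\epsilon)_2=\lambda^2(N,\epsilon)_2+\lambda^2K(s)$, use the coercivity $(-H_y\epsilon,\epsilon)_2\ge\bar C_1\|\nabla_y\epsilon\|_2^2\ge0$ from \eqref{EQUATION_e3.3}, and integrate over $(s_1,s)$:
\[
 P(s)+\int_{s_1}^{s}\lambda^2(-H_y\epsilon,\epsilon)_2\,ds'
 =P(s_1)+\mathcal E(s),\qquad
 \mathcal E(s):=\int_{s_1}^{s}\lambda^2\big[(N,\epsilon)_2+K(s')\big]\,ds'.
\]
Both terms on the left being nonnegative, it is enough to control $\mathcal E$: this bounds $P(s)$ (hence $\Phi(s)$), the $\lambda^2$-weighted coercive integral $\int_{s_1}^{s}\lambda^2\|\nabla_y\epsilon\|_2^2\,ds'$, and, letting $s\to s_2$, also $\int_{s_1}^{s_2}\lambda^2(-H_y\epsilon,\epsilon)_2\,ds'$.

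For the estimate of $\mathcal E$ I would argue by bootstrap: assume $\Phi(s')\le4\Phi(s_1)$ on $[s_1,s]$, so that there $\lambda(s')^2\le C\Phi(s_1)$, $\lambda(s')\|\epsilon(s')\|_2\le2\sqrt{\Phi(s_1)}$, and moreover, integrating \eqref{EQ_3.29} of Lemma~\ref{LEMMA_3.4} and bounding its right-hand side by Lemma~\ref{LEMMA_3.1}, $\lambda(s')$ stays comparable to $\lambda(s_1)$. Since $|K|<C(a^2+\|\Delta_y\epsilon\|_2^2)$ and Lemma~\ref{LEMMA_3.1} together with \eqref{EQUATION_e3.3} give $\int_{s_1}^{s_2}(a^2+\|\Delta_y\epsilon\|_2^2)\,ds'<C\eta_1^2$, one gets $\int\lambda^2|K|\,ds'\le C\eta_1^2\Phi(s_1)$. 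From \eqref{EQUATION_e3.7}, $|(N,\epsilon)_2|\le Ca^2(\mathcal Y^2,|\epsilon|)_2+C\|\epsilon\|_3^3\le Ca^2\|\nabla_y\epsilon\|_2+C\|\epsilon\|_2^{3/2}\|\nabla_y\epsilon\|_2^{3/2}$, using the exponential decay of $\mathcal Y$ and the Gagliardo--Nirenberg inequality $\|\epsilon\|_3\le C\|\epsilon\|_2^{1/2}\|\nabla_y\epsilon\|_2^{1/2}$ on $\R^6$; then, using $\|\nabla_y\epsilon\|_2+|a|<\eta_1$ from \eqref{EQUATION_e3.4}, the weighted coercive bound $\int_{s_1}^{s}\lambda^2\|\nabla_y\epsilon\|_2^2\,ds'\le C\Phi(s_1)$ (furnished by the master identity, as part of the same bootstrap), the comparability of $\lambda$, and interpolation in $s'$, I expect $\int\lambda^2|(N,\epsilon)_2|\,ds'\le C\eta_1^{1/2}\Phi(s_1)$. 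Altogether $|\mathcal E(s)|\le C(\eta_1^{1/2}+M^{-1})\Phi(s_1)$, so $P(s)\le(\frac34+o(1))\Phi(s_1)$ and hence $\Phi(s)\le(4+o_M(1))P(s)<4\Phi(s_1)$ once $\eta_1$ is small and $M$ large; this strictly improves the bootstrap assumption, and since $s\mapsto\Phi(s)$ is continuous (by (b1)--(b2)) the bound propagates to all of $(s_1,s_2)$. Adding $\int_{s_1}^{s_2}\lambda^2(-H_y\epsilon,\epsilon)_2\,ds'\le P(s_1)+|\mathcal E|\le(\frac34+o(1))\Phi(s_1)$ then yields \eqref{EQ_3.67}.

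The delicate step will be the control of $\int_{s_1}^{s}\lambda^2(N,\epsilon)_2\,ds'$, in particular of the cubic term $\int\lambda^2\|\epsilon\|_3^3\,ds'$: the reparametrized interval $(s_1,s_2)$ may be infinite, so unweighted quantities such as $\|\nabla_y\epsilon\|_2$ or $\|\epsilon\|_2$ need not be integrable in $s'$, and one must instead route the $\lambda^2$-weighted coercive quantity $\int\lambda^2\|\nabla_y\epsilon\|_2^2\,ds'$ through the Gagliardo--Nirenberg interpolation while keeping $\lambda$ comparable to $\lambda(s_1)$ via Lemma~\ref{LEMMA_3.4}; this is also precisely where the extra hypothesis $u_0\in L^2(\R^n)$ enters. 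Finally, the comparison $\Phi\lesssim P$ — and indeed the very availability of the $\|\epsilon\|_2$-equation \eqref{EQ_3.52} — relies on $\Lambda_y{\sf Q}\in L^2(\R^6)$, which is where the borderline dimension $n=6$ is used.
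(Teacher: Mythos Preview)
Your overall strategy is right and close to the paper's, but there is one real error and one unnecessary detour that you should fix.

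The error is your Gagliardo--Nirenberg step: in $\R^6$ the exponent $3$ is exactly the critical Sobolev exponent $\tfrac{2n}{n-2}$, so the interpolation $\|\epsilon\|_3\le C\|\epsilon\|_2^{1/2}\|\nabla_y\epsilon\|_2^{1/2}$ is false (it fails scaling). What is true, and strictly better for your purposes, is the endpoint Sobolev inequality $\|\epsilon\|_3\le C\|\nabla_y\epsilon\|_2$. With this, $|(N,\epsilon)_2|\le C\|\epsilon\|_3^3+Ca^2\|\nabla_y\epsilon\|_2\le C\eta_1(\|\nabla_y\epsilon\|_2^2+a^2)$, and the paper simply absorbs this into the coercive term \emph{before} integrating:
\[
 (-H_y\epsilon,\epsilon)_2-(N,\epsilon)_2\ \ge\ \tfrac12(-H_y\epsilon,\epsilon)_2-Ca^2\qquad(\eta_1\ \text{small}),
\]
so that after integrating only $\int_{s_1}^{s}\lambda^2(Ca^2+|K|)\,ds'$ remains as an error. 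The paper then bounds $|(\epsilon,\Lambda_y{\sf Q})_2|$ via $\|\Lambda_y{\sf Q}\|_{2n/(n+1)}\|\epsilon\|_{2n/(n-1)}\le C\eta_1^{1/2}\|\epsilon\|_2^{1/2}$ and Young (your orthogonality argument $|(\epsilon,\Lambda_y{\sf Q})_2|=|(\epsilon,(1-\chi_M)\Lambda_y{\sf Q})_2|\le CM^{-1}\|\epsilon\|_2$ is a perfectly valid alternative), reads off from the resulting inequality a differential inequality for $\lambda(s)^2$, and closes by a direct Gronwall using $\int(a^2+\|\Delta_y\epsilon\|_2^2)\,ds<C\eta_1^2$ from Lemma~\ref{LEMMA_3.1}. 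No bootstrap is needed.

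The detour is your appeal to Lemma~\ref{LEMMA_3.4} to show that $\lambda(s')$ stays comparable to $\lambda(s_1)$. It is unnecessary once you use the correct Sobolev bound above: the bootstrap already gives $\lambda(s')^2\le 4\Phi(s_1)/\|\Lambda_y{\sf Q}\|_2^2$, and that upper bound alone suffices to control both $\int\lambda^2|K|\,ds'$ and $\int\lambda^2 a^2\,ds'$, while $\int\lambda^2\|\nabla_y\epsilon\|_2^2\,ds'$ is handled by the coercive integral itself. Worse, as written the appeal is nearly circular: integrating \eqref{EQ_3.29} leaves the term $(\epsilon(s),\Lambda_y{\sf Q})_2$, which via your own bound is $\le CM^{-1}\|\epsilon(s)\|_2$; under the bootstrap you only know $\lambda(s)\|\epsilon(s)\|_2\le 2\sqrt{\Phi(s_1)}$, so a small $\lambda(s)$ would make $\|\epsilon(s)\|_2$ large and the bound on $|\log\lambda|$ useless. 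Dropping this step and either following the paper's Gronwall route or closing your bootstrap with the corrected Sobolev estimate gives a clean proof.
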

 \begin{proof}
 From \eqref{EQ_3.52},
 we recall that
 \begin{align}
 \nonumber
 \tfrac{d}{ds}
 \{
 \tfrac{\lambda^2}{2}
 (
 \|\epsilon\|_2^2
 &-
 (\Lambda_y{\sf Q},\epsilon)_2
 +
 \tfrac{\|\Lambda_y{\sf Q}\|_2^2}{2}
 )
 \}
 \\
 \label{EQ_3.68}
 &=
 \lambda^2
 \{
 (H_y\epsilon,\epsilon)_2
 +
 (N,\epsilon)_2 
 +
 K(s)
 \}.
 \end{align}
 From \eqref{EQUATION_e3.7} and \eqref{EQUATION_e3.4},
 we observe that
 \begin{align}
 \nonumber
 |(N,\epsilon)_2|
 &<
 C(\epsilon^2+a^2\mathcal{Y}^2,\epsilon)_2
 <
 C
 \|\nabla_y\epsilon\|_2^3
 +
 C
 a^2
 \|\nabla_y\epsilon\|_2
 \\
 \label{EQ_3.69}
 &<
 C
 \eta_1
 \|\nabla_y\epsilon\|_2^2
 +
 C\eta_1a^2.
 \end{align}
 Combining \eqref{EQ_3.69} and \eqref{EQUATION_e3.3},
 we have
 \begin{align*}
 (-H_y\epsilon,\epsilon)_2
 -
 (N,\epsilon)_2
 &>
 (-H_y\epsilon,\epsilon)_2
 -
 C
 \eta_1
 \|\nabla_y\epsilon\|_2^2
 -
 C\eta_1
 a^2
 \\
 &>
 (1-\tfrac{C}{\bar C_1}\eta_1)
 (-H_y\epsilon,\epsilon)_2
 -
 C\eta_1
 a^2.
 \end{align*}
 This implies
 \begin{align*}
 (-H_y\epsilon,\epsilon)_2
 -
 (N,\epsilon)_2
 &>
 \tfrac{1}{2}
 (-H_y\epsilon,\epsilon)_2
 -
 C
 a^2
 \quad\
 \text{if }
 \eta_1<\tfrac{\bar C_1}{2C}.
 \end{align*}
 Hence
 \eqref{EQ_3.68} can be simplified as
 \begin{align}
 \label{EQ_3.70}
 \tfrac{d}{ds}
 \{
 \tfrac{\lambda^2}{2}
 (
 \|\epsilon\|_2^2
 &-
 (\Lambda_y{\sf Q},\epsilon)_2
 +
 \tfrac{\|\Lambda_y{\sf Q}\|_2^2}{2}
 )
 \}
 \\
 \nonumber
 &<
 \lambda^2
 \{
 \tfrac{1}{2}(H_y\epsilon,\epsilon)_2
 +
 Ca^2
 +
 |K(s)|
 \}.
 \end{align}
 Integrating \eqref{EQ_3.70} over $s\in(s_1,s)$,
 we obtain
 \begin{align}
 \label{EQ_3.71}
 \tfrac{\lambda(s)^2}{2}
 &(
 \|\epsilon(s)\|_2^2
 -
 (\Lambda_y{\sf Q},\epsilon(s))_2
 +
 \tfrac{1}{2}
 \|\Lambda_y{\sf Q}\|_2^2
 )
 \\
 \nonumber
 &<
 \tfrac{\lambda(s_1)^2}{2}
 (
 \|\epsilon(s_1)\|_2^2
 -
 (\Lambda_y{\sf Q},\epsilon(s_1))_2
 +
 \tfrac{1}{2}
 \|\Lambda_y{\sf Q}\|_2^2
 )
 \\
 \nonumber
 &\quad
 +
 \int_{s_1}^s
 \lambda^2
 \{
 \tfrac{1}{2}
 (H_y\epsilon,\epsilon)_2
 +
 Ca^2
 +
 |K(s')|
 \}
 ds'.
 \end{align}
 We next provide estimates for $(\Lambda_{y}{\sf Q},\epsilon)_2$.
 Note that $\|\Lambda_{y}{\sf Q}\|_\frac{2n}{n+1}$ is finite when $n\geq6$.
 Therefore
 from \eqref{EQUATION_e3.4},
 we have
 \begin{align}
 \nonumber
 |(\Lambda_{y}{\sf Q},\epsilon)_2|
 &<
 \|\Lambda_{y}{\sf Q}\|_\frac{2n}{n+1}
 \|\epsilon\|_\frac{2n}{n-1}
 <
 \underbrace{
 \|\Lambda_{y}{\sf Q}\|_\frac{2n}{n+1}
 \|\epsilon\|_\frac{2n}{n-2}^\frac{1}{2}
 }_{<C\eta_1^\frac{1}{2}}
 \|\epsilon\|_2^\frac{1}{2}
 \\
 \label{EQ_3.72}
 &<
 \tfrac{3}{4}
 (
 C
 \eta_1^\frac{1}{2}
 )^\frac{4}{3}
 +
 \tfrac{1}{4}
 \|\epsilon\|_2^2
 <
 \tfrac{\|\Lambda_{y}{\sf Q}\|_2^2}{8}
 +
 \tfrac{1}{4}
 \|\epsilon\|_2^2
 \end{align}
 if
 $\tfrac{3}{4}
 (
 C^2
 \eta_1
 )^\frac{2}{3}<\tfrac{\|\Lambda_{y}{\sf Q}\|_2^2}{8}$.
 Combining \eqref{EQ_3.71} - \eqref{EQ_3.72},
 we obtain
 \begin{align}
 \label{EQ_3.73}
 &
 \tfrac{3\lambda(s)^2}{16}
 (
 \|\epsilon(s)\|_2^2
 +
 \|\Lambda_y{\sf Q}\|_2^2
 )
 +
 \int_{s_1}^s
 \tfrac{\lambda^2}{2}
 (-H_y\epsilon,\epsilon)_2
 ds'
 \\
 \nonumber
 &<
 \tfrac{5\lambda(s_1)^2}{8}
 (
 \|\epsilon(s_1)\|_2^2
 +
 \|\Lambda_y{\sf Q}\|_2^2
 )
 +
 \int_{s_1}^s
 \lambda^2
 \{
 Ca^2
 +
 |K(s')|
 \}
 ds'.
 \end{align}
 From \eqref{EQ_3.73},
 it follows that
 \begin{align*}
 \lambda(s)^2
 <
 \tfrac{10\lambda(s_1)^2}{3}
 (
 \tfrac{\|\epsilon(s_1)\|_2^2}{\|\Lambda_y{\sf Q}\|_2^2}
 +
 1
 )
 +
 \tfrac{16}{3\|\Lambda_y{\sf Q}\|_2^2}
 \int_{s_1}^s
 \lambda^2
 \{
 Ca(s')^2
 +
 |K(s')|
 \}
 ds'
 \end{align*}
 for $s\in(s_1,s_2)$.
 Applying the Gronwall inequality,
 we obtain
 \begin{align*}
 \lambda(s)^2
 <
 \tfrac{10\lambda(s_1)^2}{3}
 (
 \tfrac{\|\epsilon(s_1)\|_2^2}{\|\Lambda_y{\sf Q}\|_2^2}
 +
 1
 )
 e^{
 \tfrac{16}{3\|\Lambda_y{\sf Q}\|_2^2}
 \int_{s_1}^{s_2}
 \{
 Ca(s)^2
 +
 |K(s)|
 \}
 ds
 }
 \end{align*}
 for $s\in(s_1,s_2)$.
 We recall that $|K(s)|<C(a^2+\|\Delta_y\epsilon\|_2^2)$ (see Lemma \ref{LEMMA_3.5})
 and $\int_{s_1}^{s_2}(a(s)^2+\|\Delta_y\epsilon\|_2^2)ds<C\eta_1^2$ (see Lemma \ref{LEMMA_3.1}).
 Therefore
 it follows that
 \begin{align}
 \label{EQ_3.74}
 \lambda(s)^2
 <
 \tfrac{10\lambda(s_1)^2}{3}
 (
 \tfrac{\|\epsilon(s_1)\|_2^2}{\|\Lambda_y{\sf Q}\|_2^2}
 +
 1
 )
 e^{C\eta_1^2}
 \quad\
 \text{for }
 s\in(s_1,s_2).
 \end{align}
 Substituting \eqref{EQ_3.74} into the last term of \eqref{EQ_3.73},
 we obtain
 \begin{align*}
 &
 \tfrac{3\lambda(s)^2}{16}
 (
 \|\epsilon(s)\|_2^2
 +
 \|\Lambda_y{\sf Q}\|_2^2
 )
 +
 \int_{s_1}^s
 \tfrac{\lambda^2}{2}
 (-H_y\epsilon,\epsilon)_2
 ds'
 \\
 \nonumber
 &<
 \tfrac{5\lambda(s_1)^2}{8}
 (
 \|\epsilon(s_1)\|_2^2
 +
 \|\Lambda_y{\sf Q}\|_2^2
 )
 +
 \tfrac{10\lambda(s_1)^2}{3}
 (
 \tfrac{\|\epsilon(s_1)\|_2^2}{\|\Lambda_y{\sf Q}\|_2^2}
 +
 1
 )
 e^{C\eta_1^2}
 (C\eta_1^2).
 \end{align*}
 Therefore
 if we choose $\eta_1$ sufficiently small,
 we arrive at the conclusion.
 \end{proof}

 \begin{lem}
 \label{LEMMA_3.7}
 Let $n=6$,
 and
 $u(x,t)$ be as in Lemma {\rm\ref{LEMMA_3.1}}.
 There exist constants $h_1>0$ and $C>0$,
 depending only on $n$
 such that if $\eta_1\in(0,h_1)$,
 then
 \begin{align}
 \nonumber
 &|(\epsilon(s),\Lambda_y{\sf Q})_2|
 <
 C
 \left(
 \|\nabla_y\epsilon(s)\|_2
 +
 \int_{|\xi|>1}
 \frac{|u(\xi+z(s),t)|}{|\xi|^{n-2}}
 d\xi
 +1
 \right)
 \\
 \label{EQ_3.75}
 &\hspace{30mm}
 \text{\rm for } s\in(s_1,s_2)
 \quad\ \text{\rm if } \lambda(s)>\tfrac{1}{e},
 \\
 \nonumber
 &|(\epsilon(s),\Lambda_y{\sf Q})_2|
 \\
 \nonumber
 &\quad
 <
 C
 \left(
 |\log\lambda(s)|^\frac{n+2}{2n}
 \|\nabla_y\epsilon(s)\|_2
 +
 \int_{|\xi|>1}
 \frac{|u(\xi+z(s),t)|}{|\xi|^{n-2}}
 d\xi
 +
 1
 \right)
 \\
 \label{EQ_3.76}
 &\quad\
 \text{\rm for } s\in(s_1,s_2)
 \quad\
 \text{\rm if } \lambda(s)<\tfrac{1}{e}.
 \end{align}
 \end{lem}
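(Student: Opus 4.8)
The plan is to exploit two features special to $n=6$: the pointwise bound $|\Lambda_y{\sf Q}(y)|\le C(1+|y|)^{-(n-2)}\le C|y|^{-(n-2)}$ (immediate from $\Lambda_y{\sf Q}=\tfrac{n-2}{2}{\sf Q}+y\cdot\nabla_y{\sf Q}$, or from Lemma~\ref{LEMMA_3.3}), and the fact that after undoing the rescaling of $\epsilon$ the scaling weight that appears is $\lambda^{(n-6)/2}$, which equals $1$ precisely when $n=6$. Fix $s\in(s_1,s_2)$, write $\lambda=\lambda(s)$, $z=z(s)$, set $R=1/\lambda$, and split
\[
(\epsilon,\Lambda_y{\sf Q})_2=\int_{|y|<R}\epsilon\,\Lambda_y{\sf Q}\,dy+\int_{|y|>R}\epsilon\,\Lambda_y{\sf Q}\,dy=:A+B .
\]
Everything then reduces to estimating $A$ and $B$ separately.

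For $A$ I would use H\"older with exponents $\tfrac{2n}{n-2}$ and $\tfrac{2n}{n+2}$ together with the Sobolev embedding $\dot H^1(\R^n)\hookrightarrow L^{2n/(n-2)}(\R^n)$, obtaining
\[
|A|\le\|\epsilon\|_{\frac{2n}{n-2}}\,\|\Lambda_y{\sf Q}\|_{L^{2n/(n+2)}(|y|<R)}\le C\|\nabla_y\epsilon\|_2\,\|\Lambda_y{\sf Q}\|_{L^{2n/(n+2)}(|y|<R)} .
\]
The point is that $(n-2)\cdot\tfrac{2n}{n+2}=n$ when $n=6$, so a direct computation in polar coordinates gives $\int_{|y|<R}|\Lambda_y{\sf Q}|^{\frac{2n}{n+2}}\,dy\le C$ if $R\le e$ and $\le C\log R$ if $R>e$; hence $\|\Lambda_y{\sf Q}\|_{L^{2n/(n+2)}(|y|<R)}$ is bounded by a constant when $\lambda>1/e$ (so $R<e$) and by $C(\log R)^{(n+2)/(2n)}=C|\log\lambda|^{(n+2)/(2n)}$ when $\lambda<1/e$ (so $R=1/\lambda>e$). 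This is exactly the factor appearing in \eqref{EQ_3.75}--\eqref{EQ_3.76}.

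For $B$ I would use $|\Lambda_y{\sf Q}(y)|\le C|y|^{-(n-2)}$ on $\{|y|>R\}$, then insert $\epsilon(y,t)=\lambda^{\frac{n-2}{2}}u(\lambda y+z,t)-{\sf Q}(y)-a\mathcal{Y}(y)$. The contributions of ${\sf Q}$ and $a\mathcal{Y}$ are bounded, using $|a|<\eta_1$ from \eqref{EQUATION_e3.4} and the decay of ${\sf Q}$ and $\mathcal{Y}$, by the finite constant $\int_{\R^n}\tfrac{{\sf Q}(y)}{|y|^{n-2}}\,dy+\eta_1\int_{\R^n}\tfrac{\mathcal{Y}(y)}{|y|^{n-2}}\,dy$; note here that $\int_{\R^n}{\sf Q}(y)|y|^{-(n-2)}\,dy<\infty$ for $n=6$. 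For the remaining term I change variables $\xi=\lambda y$ (so $y=\xi/\lambda$, $dy=\lambda^{-n}d\xi$, $\lambda y+z=\xi+z$, $\{|y|>R\}=\{|\xi|>\lambda R\}=\{|\xi|>1\}$), which produces the weight $\lambda^{\frac{n-2}{2}+(n-2)-n}=\lambda^{(n-6)/2}=1$, so that
\[
C\lambda^{\frac{n-2}{2}}\int_{|y|>R}\frac{|u(\lambda y+z,t)|}{|y|^{n-2}}\,dy=C\int_{|\xi|>1}\frac{|u(\xi+z,t)|}{|\xi|^{n-2}}\,d\xi .
\]
Collecting the bounds for $A$ and $B$ yields \eqref{EQ_3.75} for $\lambda(s)>1/e$ and \eqref{EQ_3.76} for $\lambda(s)<1/e$.

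There is no deep obstacle here; the essential observation—rather than a genuine difficulty—is that the borderline logarithmic divergence of $\|\Lambda_y{\sf Q}\|_{2n/(n+2)}$ at $n=6$ forces the cut at the $\lambda$‑dependent radius $R\sim1/\lambda$, which is what produces the $|\log\lambda|^{(n+2)/(2n)}$ loss, and that at this same dimension the rescaling weight $\lambda^{(n-6)/2}$ collapses to $1$, so the far‑field remainder is controlled by $\int_{|\xi|>1}|u(\xi+z,t)|\,|\xi|^{-(n-2)}\,d\xi$ with no power of $\lambda$ in front. This is precisely the quantity rendered finite by the hypothesis $u_0\in L^2(\R^n)$ (by Cauchy--Schwarz, since $|\xi|^{-(n-2)}\in L^2(\{|\xi|>1\})$ when $n=6$), and for which a differential inequality will be established in the subsequent lemmas, closing the system for $\lambda$, $z$, $a$ and $\|\epsilon\|_2$.
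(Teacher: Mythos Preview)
Your proof is correct and follows essentially the same approach as the paper: the same split at $R=1/\lambda$, the same H\"older estimate with exponents $\tfrac{2n}{n-2}$ and $\tfrac{2n}{n+2}$ for the inner region (yielding the logarithmic factor when $n=6$), and the same change of variables $\xi=\lambda y$ for the $u$-part of the outer region, where the weight $\lambda^{(n-6)/2}$ collapses to $1$. The only cosmetic difference is that for the ${\sf Q}$ and $a\mathcal{Y}$ contributions in the outer region the paper uses Cauchy--Schwarz against $\|\Lambda_y{\sf Q}\|_2$ rather than your pointwise bound $|\Lambda_y{\sf Q}|\le C|y|^{-(n-2)}$; both give an $O(1)$ contribution.
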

 \begin{proof}
 It is straightforward to see that
 \begin{align}
 \nonumber
 (\epsilon,\Lambda_y{\sf Q})_2
 &=
 \int
 \epsilon\Lambda_y{\sf Q} 
 dy
 \\
 \nonumber
 &=
 \int_{|y|<\frac{1}{\lambda(s)}}
 \epsilon\Lambda_y{\sf Q} 
 dy
 +
 \int_{|y|>\frac{1}{\lambda(s)}}
 \epsilon\Lambda_y{\sf Q} 
 dy
 \\
 \label{EQ_3.77}
 &<
 \|\epsilon\|_\frac{2n}{n-2}
 \|\Lambda_y{\sf Q}{\bf 1}_{|y|<\frac{1}{\lambda(s)}}\|_\frac{2n}{n+2}
 +
 \int_{|y|>\frac{1}{\lambda(s)}}
 \epsilon\Lambda_y{\sf Q} 
 dy.
 \end{align}
 We first compute $\|\Lambda_y{\sf Q}{\bf 1}_{|y|<\frac{1}{\lambda(s)}}\|_\frac{2n}{n+2}$.
 Put
 $q_1
 =
 \int_{|y|<e}|\Lambda_y{\sf Q}|^\frac{2n}{n+2}dy$.
 It is clear that
 \begin{align*}
 \|\Lambda_y{\sf Q}{\bf 1}_{|y|<\frac{1}{\lambda(s)}}\|_\frac{2n}{n+2}
 <
 q_1^\frac{n+2}{2n}
 \quad\
 \text{when }
 \lambda(s)>\tfrac{1}{e}.
 \end{align*}
 Note that when $n=6$
 \[
 |\Lambda_y{\sf Q}(y)|^\frac{2n}{n+2}
 <
 C|y|^{-(n-2)\frac{2n}{n+2}}
 =
 C|y|^{-n}
 \quad\
 \text{for } |y|>e.
 \]
 Therefore
 when $\lambda(s)<\frac{1}{e}$,
 we have
 \begin{align*}
 \|\Lambda_y&{\sf Q}{\bf 1}_{|y|<\frac{1}{\lambda(s)}}\|_\frac{2n}{n+2}
 =
 \left(
 \int_{|y|<e}
 +
 \int_{e<|y|<\frac{1}{\lambda(s)}}
 |\Lambda_y{\sf Q}|^\frac{2n}{n+2}
 dy
 \right)^\frac{n+2}{2n}
 \\
 &<
 \left(
 q_1
 +
 C
 \int_{e<|y|<\frac{1}{\lambda(s)}}
 |y|^{-n}
 dy
 \right)^\frac{n+2}{2n}
 \\
 &=
 (
 q_1
 +
 C
 +
 C
 |\log\lambda(s)|
 )^\frac{n+2}{2n}
 \\
 &<
 C^\frac{n+2}{2n}
 |\log\lambda(s)|^\frac{n+2}{2n}
 (
 \tfrac{q_1+1}{|\log\lambda(s)|}
 +
 1
 )^\frac{n+2}{2n}
 \\
 &<
 C^\frac{n+2}{2n}
 |\log\lambda(s)|^\frac{n+2}{2n}
 (
 q_1+1
 )^\frac{n+2}{2n}.
 \end{align*}
 In the last line,
 we used $|\log\lambda(s)|>1$ when $\lambda(s)<\frac{1}{e}$.
 Therefore
 there exists a constant $C>0$ depending only on $n$ such that
 \begin{align}
 \label{EQ_3.78}
 \|\Lambda_y{\sf Q}
 {\bf 1}_{|y|<\frac{1}{\lambda(s)}}\|_\frac{2n}{n+2}
 &<
 \begin{cases}
 C & \text{if } \lambda(s)>\frac{1}{e},
 \\
 C|\log\lambda(s)|^\frac{n+2}{2n}
 & \text{if } \lambda(s)<\frac{1}{e}.
 \end{cases}
 \end{align}
 We next estimate the last term of \eqref{EQ_3.77}.
 We recall that
 $\epsilon(y,s)$ is defied by
 $\epsilon(y,s)=\lambda^\frac{n-2}{2}u(x,t)-{\sf Q}(y)-a\mathcal{Y}(y)$ with $x=\lambda y+z$,
 which implies
 \begin{align}
 \label{EQ_3.79}
 \int_{|y|>\frac{1}{\lambda(s)}}
 &|\epsilon
 \Lambda_y{\sf Q}|
 dy
 =
 \int_{|y|>\frac{1}{\lambda(s)}}
 |(\lambda^\frac{n-2}{2}u-{\sf Q}-a\mathcal{Y})\Lambda_y{\sf Q}|
 dy
 \\
 \nonumber
 &<
 \int_{|y|>\frac{1}{\lambda(s)}}
 |\lambda^\frac{n-2}{2}u\Lambda_y{\sf Q}|
 dy
 +
 (\|{\sf Q}\|_2+|a|\cdot\|\mathcal{Y}\|_2)
 \|\Lambda_y{\sf Q}\|_2.
 \end{align}
 By the change of variables,
 we see that
 \begin{align}
 \nonumber
 \int_{|y|>\frac{1}{\lambda(s)}}
 &|\lambda^\frac{n-2}{2}u\Lambda_y{\sf Q}|
 dy
 <
 C
 \lambda^\frac{n-2}{2}
 \int_{|y|>\frac{1}{\lambda(s)}}
 |u(x,t)|
 \cdot
 |y|^{-(n-2)}
 dy
 \\
 \nonumber
 &=
 C
 \lambda(s)^\frac{n-2}{2}
 \int_{|x-z(s)|>1}
 |u(x,t)|
 \cdot
 |\tfrac{x-z(s)}{\lambda(s)}|^{-(n-2)}
 \cdot
 \tfrac{dx}{\lambda(s)^n}
 \\
 \nonumber
 &=
 C
 \lambda(s)^\frac{n-6}{2}
 \int_{|x-z(s)|>1}
 \tfrac{|u(x,t)|}{|x-z(s)|^{n-2}}
 dx
 \\
 \label{EQ_3.80}
 &=
 C
 \lambda(s)^\frac{n-6}{2}
 \int_{|x\xi|>1}
 \tfrac{|u(\xi+z(s),t)|}{|\xi|^{n-2}}
 d\xi.
 \end{align}
 Therefore
 \eqref{EQ_3.75} - \eqref{EQ_3.76} follow from
 \eqref{EQ_3.77} - \eqref{EQ_3.80}.
 \end{proof}

 \begin{lem}
 \label{LEMMA_3.8}
 Let $n=6$,
 and
 $u(x,t)$ be as in Lemma {\rm\ref{LEMMA_3.1}}.
 There exist a constant $h_1>0$
 depending only on $n$
 such that if $\eta_1\in(0,h_1)$,
 then
 \begin{align}
 \label{EQ_3.81}
 &\quad
 \lambda_{\inf}
 =
 \inf_{s\in(s_1,s_2)}
 \lambda(s)
 >0,
 \\
 \label{EQ_3.82}
 &\quad
 \lambda_{\sup}
 =
 \sup_{s\in(s_1,s_2)}
 \lambda(s)
 <\infty,
 \\
 \label{EQ_3.83}
 &\sup_{s\in(s_1,s_2)}
 |z(s)|
 <
 |z(s_1)|
 +
 C_1.
 \end{align}
 All constants $\lambda_{\inf}$, $\lambda_{\sup}$ and $C_1$ depend on
 $n$, $\lambda(s_1)$, $\|\epsilon(s_1)\|_2$.
 \end{lem}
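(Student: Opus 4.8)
I would establish, in order: the upper bound $\lambda_{\sup}<\infty$, a uniform bound on $\|u(t)\|_{L^2(\R^6)}$ (hence on the tail integral in Lemma \ref{LEMMA_3.7}), then the lower bound $\lambda_{\inf}>0$, and finally the bound on $z$. The upper bound is immediate from Lemma \ref{LEMMA_3.6}: discarding the nonnegative dissipation term in \eqref{EQ_3.67} gives $\lambda(s)^2\|\Lambda_y{\sf Q}\|_2^2\le 4\lambda(s_1)^2(\|\epsilon(s_1)\|_2^2+\|\Lambda_y{\sf Q}\|_2^2)$ for all $s\in(s_1,s_2)$, so $\lambda_{\sup}^2\le 4\lambda(s_1)^2(\|\epsilon(s_1)\|_2^2/\|\Lambda_y{\sf Q}\|_2^2+1)$. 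For the $L^2$ bound I would use the change of variables $x=\lambda y+z$, which gives $\|u(t)\|_{L^2(\R^6)}^2=\lambda^2\|{\sf Q}+a\mathcal{Y}+\epsilon\|_{L_y^2(\R^6)}^2$; since ${\sf Q},\Lambda_y{\sf Q},\mathcal{Y}\in L^2(\R^6)$, this together with \eqref{EQUATION_e3.4} ($|a|\lesssim\eta_1$), the bound $\lambda^2\|\epsilon\|_2^2\le C$ from \eqref{EQ_3.67}, and $\lambda_{\sup}<\infty$ yields $\sup_t\|u(t)\|_{L^2(\R^6)}\le C_u$ with $C_u$ depending only on $n,\lambda(s_1),\|\epsilon(s_1)\|_2$. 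Because $2(n-2)>n$ for $n=6$, Cauchy--Schwarz then bounds the tail: $\int_{|\xi|>1}|u(\xi+z(s),t)|\,|\xi|^{-(n-2)}\,d\xi\le C_n\|u(t)\|_{L^2}\le C_nC_u$.

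\textbf{Lower bound for $\lambda$.} I would integrate \eqref{EQ_3.29} on $(s_1,s)$, using Lemma \ref{LEMMA_3.1} together with $\|\Delta_y\epsilon\|_2^2\lesssim\|H_y\epsilon\|_2^2$ from \eqref{EQUATION_e3.3} to control $\int_{s_1}^{s_2}(a^2+\|\Delta_y\epsilon\|_2^2)\,ds\le C\eta_1^2$. This gives $|\log\lambda(s)-\log\lambda(s_1)|\le C\eta_1^2+|G(s)|+|G(s_1)|$, where $G$ is the combination $\tfrac{n(n-2)}{4}\sum_j\tfrac{(\epsilon,\pa_{y_j}{\sf Q})_2^2}{\|\nabla_y{\sf Q}\|_2^2\|\Lambda_y{\sf Q}\|_2^2}-\tfrac{(\epsilon,\Lambda_y{\sf Q})_2}{\|\Lambda_y{\sf Q}\|_2^2}$ appearing in \eqref{EQ_3.29}. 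Since $\pa_{y_j}{\sf Q}\in L^{2n/(n+2)}(\R^6)$, Hölder and Sobolev give $|(\epsilon,\pa_{y_j}{\sf Q})_2|\le C\|\nabla_y\epsilon\|_2\le C\eta_1$, so the first part of $G$ is $O(\eta_1^2)$, and at $s=s_1$ one also has $|(\epsilon(s_1),\Lambda_y{\sf Q})_2|\le\|\epsilon(s_1)\|_2\|\Lambda_y{\sf Q}\|_2$, so $|G(s_1)|$ is controlled by the data. For $|G(s)|$ I would apply Lemma \ref{LEMMA_3.7} and the uniform tail bound above: when $\lambda(s)\ge 1/e$ there is nothing to do, and when $\lambda(s)<1/e$ one gets
\[
|\log\lambda(s)|\le|\log\lambda(s_1)|+C\eta_1|\log\lambda(s)|^{\frac{n+2}{2n}}+C',
\]
with $C'$ depending only on $n,\lambda(s_1),\|\epsilon(s_1)\|_2$. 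Since $\tfrac{n+2}{2n}=\tfrac23<1$ for $n=6$, Young's inequality $C\eta_1 t^{2/3}\le\tfrac12 t+C''\eta_1^3$ absorbs the middle term and yields $|\log\lambda(s)|\le 2(|\log\lambda(s_1)|+C'+C''\eta_1^3)$, hence the desired positive lower bound \eqref{EQ_3.81}.

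\textbf{Bound for $z$.} Here the interval $(s_1,s_2)$ may be infinite, so I would not integrate $|dz_j/ds|$ directly but instead reorganise \eqref{EQ_3.30}: writing $\lambda(\epsilon_s,\pa_{y_j}{\sf Q})_2=\tfrac{d}{ds}[\lambda(\epsilon,\pa_{y_j}{\sf Q})_2]-\lambda_s(\epsilon,\pa_{y_j}{\sf Q})_2$, equation \eqref{EQ_3.30} becomes
\[
\tfrac{dz_j}{ds}=\tfrac{n}{\|\nabla_y{\sf Q}\|_2^2}\tfrac{d}{ds}\bigl(\lambda(\epsilon,\pa_{y_j}{\sf Q})_2\bigr)-\tfrac{n\lambda_s}{\|\nabla_y{\sf Q}\|_2^2}\bigl((\epsilon,\pa_{y_j}{\sf Q})_2+(\Lambda_y\epsilon,\pa_{y_j}{\sf Q})_2\bigr)+\mathcal{E}_j,
\]
where $|\mathcal{E}_j|\le C\lambda(a^2+\|\Delta_y\epsilon\|_2^2)$. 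Integrating on $(s_1,s)$: the total-derivative term gives a boundary contribution $\le C\lambda_{\sup}\sup_s|(\epsilon,\pa_{y_j}{\sf Q})_2|\le C\lambda_{\sup}\eta_1$; the term $\mathcal{E}_j$ contributes $\le C\lambda_{\sup}\int_{s_1}^{s_2}(a^2+\|\Delta_y\epsilon\|_2^2)\,ds\le C\lambda_{\sup}\eta_1^2$ by Lemma \ref{LEMMA_3.1}; and for the middle term I would use $|\lambda_s|\le C\lambda_{\sup}(a^2+\|\nabla_y\epsilon\|_2)$ from \eqref{EQ_3.17}, the bounds $|(\epsilon,\pa_{y_j}{\sf Q})_2|+|(\Lambda_y\epsilon,\pa_{y_j}{\sf Q})_2|\le C\|\nabla_y\epsilon\|_2$ (valid for $n=6$, via \eqref{EQ_3.26}), and the two finite quantities $\int a^2\,ds\le C\eta_1^2$ (Lemma \ref{LEMMA_3.1}) and $\int\|\nabla_y\epsilon\|_2^2\,ds\le\lambda_{\inf}^{-2}\bar C_1^{-1}\int\lambda^2(-H_y\epsilon,\epsilon)_2\,ds<\infty$ (Lemma \ref{LEMMA_3.6}, \eqref{EQUATION_e3.3}, and the lower bound on $\lambda$ just proved) to see that $\int_{s_1}^{s_2}|\lambda_s|\,\|\nabla_y\epsilon\|_2\,ds<\infty$. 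Collecting these gives $|z(s)-z(s_1)|\le C_1$ with $C_1$ depending only on $n,\lambda(s_1),\|\epsilon(s_1)\|_2$, which is \eqref{EQ_3.83}.

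\textbf{Main obstacle.} The genuinely delicate step, and the one specific to the borderline dimension $n=6$, is the lower bound for $\lambda$: unlike the case $n\ge7$ of \cite{Collot-Merle-Raphael}, here $\Lambda_y{\sf Q}\notin L^{2n/(n+2)}(\R^6)$, so $(\epsilon,\Lambda_y{\sf Q})_2$ is not controlled by $\|\nabla_y\epsilon\|_2$; Lemma \ref{LEMMA_3.7} produces instead the borderline factor $|\log\lambda(s)|^{\frac{n+2}{2n}}$ together with a tail term in $u$, and the argument closes precisely because $\frac{n+2}{2n}<1$ in six dimensions and because the $L^2$ hypothesis on $u_0$ makes that tail uniformly bounded. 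The only other nontrivial technical point is that $(s_1,s_2)$ may be unbounded, which rules out a naive Cauchy--Schwarz estimate for $z$ and forces the total-derivative reorganisation of \eqref{EQ_3.30} used above.
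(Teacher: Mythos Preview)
Your proposal is correct and follows essentially the same route as the paper's proof: both derive $\lambda_{\sup}<\infty$ and $\sup_s\lambda^2\|\epsilon\|_2^2<\infty$ from Lemma~\ref{LEMMA_3.6}, use the decomposition to bound $\|u(t)\|_{L^2(\R^6)}$ uniformly and hence the tail integral in Lemma~\ref{LEMMA_3.7} by Cauchy--Schwarz, then integrate \eqref{EQ_3.29} and exploit $\tfrac{n+2}{2n}<1$ to close the bound on $|\log\lambda(s)|$, and finally reorganise \eqref{EQ_3.30} via the total derivative $\tfrac{d}{ds}[\lambda(\epsilon,\pa_{y_j}{\sf Q})_2]$ and control the remaining integrals through Lemma~\ref{LEMMA_3.1} and the dissipation bound $\int\lambda^2(-H_y\epsilon,\epsilon)_2\,ds<\infty$ of Lemma~\ref{LEMMA_3.6}. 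Your identification of the main obstacle (the failure of $\Lambda_y{\sf Q}\in L^{2n/(n+2)}$ in $n=6$ and the role of the $L^2$ hypothesis on $u_0$) matches the paper's emphasis exactly.
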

 \begin{proof}
 We first derive the uniform bound for $\|u(x,t)\|_{L_x^2(\R^n)}$.
 Note that
 $u(x,t)$ is expressed as
 \[
 u(x,t)=\lambda^{-\frac{n-2}{2}}\{{\sf Q}(y)+a\mathcal{Y}(y)+\epsilon(y,s)\}
 \quad\
 \text{with }
 x=\lambda y+z.
 \]
 Therefore
 we obtain
 \begin{align*}
 \|u(x&,t)\|_{L_x^2(\R^n)}^2
 =
 \int_{\R_x^n}
 u(x,t)^2
 dx
 \\
 &=
 \int_{\R_x^n}
 \{
 \lambda^{-\frac{n-2}{2}}
 ({\sf Q}+a\mathcal{Y}+\epsilon)
 \}^2
 dx 
 \\
 &=
 \lambda^{-(n-2)}
 \int_{\R_y^n}
 ({\sf Q}(y)+a\mathcal{Y}(y)+\epsilon(y,s))^2
 \cdot
 \lambda^n
 dy
 \\
 &<
 2
 \lambda^2
 (\|{\sf Q}\|_2^2+a^2\|\mathcal{Y}\|_2^2+\|\epsilon(y,s)\|_{L_y^2(\R^n)}^2)
 \\
 &<
 2
 \lambda^2
 (\|{\sf Q}\|_2^2+\tfrac{\|\mathcal{Y}\|_2^2\eta_1^2}{\|\nabla_y\mathcal{Y}\|_2^2}+\|\epsilon(s)\|_2^2).
 \end{align*}
 From \eqref{EQ_3.67},
 it is straightforward to see that
 \begin{align*}
 \lambda(s)^2
 &<
 4\lambda(s_1)^2
 (
 \tfrac{\|\epsilon(s_1)\|_2^2}{\|\Lambda_y{\sf Q}\|_2^2}
 +1
 )
 \quad\
 \text{\rm for }
 s\in(s_1,s_2),
 \\
 \|\epsilon(s)\|_2^2
 &<
 4
 \tfrac{\lambda(s_1)^2}{\lambda(s)^2}
 (
 \|\epsilon(s_1)\|_2^2+\|\Lambda_y{\sf Q}\|_2^2
 )
 \quad\
 \text{\rm for }
 s\in(s_1,s_2).
 \end{align*}
 Therefore we obtain
 \begin{align*}
 \nonumber
 &\|u(x,t)\|_{L_x^2(\R^n)}^2
 \\
 &<
 8\lambda(s_1)^2
 (
 \tfrac{\|\epsilon(s_1)\|_2^2}{\|\Lambda_y{\sf Q}\|_2^2}
 +1
 )
 (\|{\sf Q}\|_2^2+\tfrac{\|\mathcal{Y}\|_2^2\eta_1^2}{\|\nabla_y\mathcal{Y}\|_2^2})
 +
 2\lambda^2
 \|\epsilon(s)\|_2^2
 \\
 &<
 C
 \lambda(s_1)^2
 (
 \|\epsilon(s_1)\|_2^2
 +
 \|\Lambda_y{\sf Q}\|_2^2
 ),
 \end{align*}
 where the constant $C>0$ depends only on $n$.
 As a consequence,
 it follows that
 \begin{align}
 \nonumber
 \int_{|\xi|>1}
 \tfrac{|u(\xi+z(s),t)|}{|\xi|^{n-2}}
 d\xi
 &<
 \|u(x,t)\|_{L_x^2(\R^n)}
 \||\xi|^{-(n-2)}{\bf 1}_{|\xi|>1}\|_{L_\xi^2(\R^n)}
 \\
 \label{EQ_3.84}
 &<
 C
 \lambda(s_1)
 (
 \|\epsilon(s_1)\|_2
 +
 \|\Lambda_y{\sf Q}\|_2
 ).
 \end{align}
 Here we used the fact that
 $\||\xi|^{-(n-2)}{\bf 1}_{|\xi|>1}\|_{L_\xi^2(\R^n)}<\infty$ when $n=6$.
 To derive the bound for $|(\epsilon(s),\Lambda_y{\sf Q})_2|$,
 we here apply Lemma \ref{LEMMA_3.7}.
 Let $h_1$ be the positive constant given in Lemma \ref{LEMMA_3.7}.
 Substituting \eqref{EQ_3.84} into \eqref{EQ_3.75} - \eqref{EQ_3.76},
 and using $\|\nabla_y\epsilon\|_2<\eta_1$,
 we find that
 if $\eta_1\in(0,h_1)$,
 then
 \begin{align}
 \nonumber
 |(\epsilon(s),\Lambda_y{\sf Q})_2|
 &<
 C
 (
 \|\nabla_y\epsilon\|_2
 +
 \underbrace{
 \lambda(s_1)
 (
 \|\epsilon(s_1)\|_2
 +
 \|\Lambda_y{\sf Q}\|_2
 )
 }_{=X_1}
 +
 1
 )
 \\
 \nonumber
 &<
 C
 (
 \eta_1
 +
 X_1
 +
 1
 )
 \\
 \label{EQ_3.85}
 &=
 C
 (
 X_1
 +
 1
 )
 \quad\
 \text{\rm if } \lambda(s)>\tfrac{1}{e},
 \\
 \label{EQ_3.86}
 |(\epsilon(s),\Lambda_y{\sf Q})_2|
 &<
 C
 (
 |\log\lambda(s)|^\frac{n+2}{2n}
 +
 X_1
 +
 1
 )
 \quad\
 \text{\rm if } \lambda(s)<\tfrac{1}{e}.
 \end{align}
 Integrating both sides of \eqref{EQ_3.29},
 we obtain
 \begin{align}
 \label{EQ_3.87}
 &
 |
 \log\lambda(s)
 +
 \tfrac{n(n-2)}{4}
 \sum_{j=1}^n
 \tfrac{(\epsilon(s),\pa_{y_j}{\sf Q})_2^2}
 {\|\nabla_y{\sf Q}\|_2^2\|\Lambda_y{\sf Q}\|_2^2}
 -
 \tfrac{(\epsilon(s),\Lambda_y{\sf Q})_2}{\|\Lambda_y{\sf Q}\|_2^2}
 |
 \\
 \nonumber
 &<
 \underbrace{
 |
 \log\lambda(s_1)
 +
 \tfrac{n(n-2)}{4}
 \sum_{j=1}^n
 \tfrac{(\epsilon(s_1),\pa_{y_j}{\sf Q})_2^2}
 {\|\nabla_y{\sf Q}\|_2^2\|\Lambda_y{\sf Q}\|_2^2}
 -
 \tfrac{(\epsilon(s_1),\Lambda_y{\sf Q})_2}{\|\Lambda_y{\sf Q}\|_2^2}
 |
 }_{=Y_1}
 \\
 \nonumber
 &\quad
 +
 C
 \int_{s_1}^s
 (a^2+\|\Delta_y\epsilon\|_2^2)
 ds'
 \quad\
 \text{for }
 s\in(s_1,s_2).
 \end{align}
 We recall from Lemma \ref{LEMMA_3.1} that $\int_{s_1}^{s_2}(a^2+\|\Delta_y\epsilon\|_2^2)ds<C\eta_1^2$.
 Due to \eqref{EQ_3.85} - \eqref{EQ_3.86},
 we can rewrite \eqref{EQ_3.87} as
 \begin{align}
 \nonumber
 &|\log\lambda(s)|
 <
 |
 \tfrac{n(n-2)}{4}
 \sum_{j=1}^n
 \tfrac{(\epsilon(s),\pa_{y_j}{\sf Q})_2^2}
 {\|\nabla_y{\sf Q}\|_2^2\|\Lambda_y{\sf Q}\|_2^2}
 -
 \tfrac{(\epsilon(s),\Lambda_y{\sf Q})_2}{\|\Lambda_y{\sf Q}\|_2^2}
 |
 +
 Y_1
 +
 C
 \\
 \nonumber
 &<
 \tfrac{n(n-2)}{4}
 \sum_{j=1}^n
 \tfrac{\|\pa_{y_j}\epsilon(s)\|_2^2\|{\sf Q}\|_2^2}
 {\|\nabla_y{\sf Q}\|_2^2\|\Lambda_y{\sf Q}\|_2^2}
 +
 \tfrac{|(\epsilon(s),\Lambda_y{\sf Q})_2|}{\|\Lambda_y{\sf Q}\|_2^2}
 +
 Y_1
 +
 C
 \\
 \nonumber
 &<
 \underbrace{
 \tfrac{n(n-2)}{4}
 \tfrac{\|\nabla_y\epsilon(s)\|_2^2\|{\sf Q}\|_2^2}
 {\|\nabla_y{\sf Q}\|_2^2\|\Lambda_y{\sf Q}\|_2^2}
 }_{<C\eta_1^2}
 +
 \tfrac{|(\epsilon(s),\Lambda_y{\sf Q})_2|}{\|\Lambda_y{\sf Q}\|_2^2}
 +
 Y_1
 +
 C
 \\
 \label{EQ_3.88}
 &<
 \begin{cases}
 C
 (
 X_1+1
 )
 +
 Y_1
 & \text{if } \lambda(s)>\frac{1}{e},
 \\
 |\log\lambda(s)|^\frac{n+2}{2n}
 +
 C(
 X_1+1
 )
 +
 Y_1
 & \text{if } \lambda(s)<\frac{1}{e}
 \end{cases}
 \end{align}
 for $s\in(s_1,s_2)$.
 The constant $C>0$ in \eqref{EQ_3.88} depends only on $n$.
 Since $\frac{n+2}{2n}=\frac{8}{12}$ when $n=6$,
 there exists a constant $C_1>0$ depending only on $n$, $X_1$ and $Y_1$ such that 
 \[
 \sup_{s\in(s_1,s_2)}
 |\log\lambda(s)|
 <
 C_1.
 \]
 Therefore
 \eqref{EQ_3.81} and \eqref{EQ_3.82} are proved.
 We next show \eqref{EQ_3.83}.
 We recall from \eqref{EQ_3.30} that
 \begin{align*}
 &
 \tfrac{1}{\lambda}
 \tfrac{dz_j}{ds}
 -
 \tfrac{n(\epsilon_s,\pa_{y_j}{\sf Q})_2}{\|\nabla_y{\sf Q}\|_2^2}
 +
 \tfrac{\lambda_s}{\lambda}
 \tfrac{
 n(\Lambda_y\epsilon,\pa_{y_j}{\sf Q})_2
 }{\|\nabla_y{\sf Q}\|_2^2}
 =
 K_1(s)
 \\
 &
 \text{for }
 s\in(s_1,s_2),\
 j\in\{1,\cdots,n\},
 \end{align*}
 where $|K_1(s)|<C(a^2+\|\Delta_y\epsilon\|_2^2)$.
 We rewrite this equation as
 \begin{align*}
 &
 \tfrac{dz_j}{ds}
 -
 \tfrac{d}{ds}
 (
 \tfrac{n\lambda(\epsilon,\pa_{y_j}{\sf Q})_2}{\|\nabla_y{\sf Q}\|_2^2}
 )
 \\
 &
 =
 \lambda
 \{
 K_1(s)
 +
 \underbrace{
 \tfrac{\lambda_s}{\lambda}
 (
 \tfrac{-n(\epsilon,\pa_{y_j}{\sf Q})_2}{\|\nabla_y{\sf Q}\|_2^2}
 ) 
 }_{=K_2(s)}
 +
 \underbrace{
 \tfrac{\lambda_s}{\lambda}
 \tfrac{
 -n(\Lambda_y\epsilon,\pa_{y_j}{\sf Q})_2
 }{\|\nabla_y{\sf Q}\|_2^2}
 }_{=K_3(s)}
 \}.
 \end{align*}
 Integrating both sides over $s\in(s_1,s)$,
 we get
 \begin{align}
 \label{EQ_3.89}
 z_j(s)
 &-
 \tfrac{n\lambda(s)(\epsilon(s),\pa_{y_j}{\sf Q})_2}{\|\nabla_y{\sf Q}\|_2^2}
 =
 z_j(s_1)
 -
 \tfrac{n\lambda(s_1)(\epsilon(s_1),\pa_{y_j}{\sf Q})_2}{\|\nabla_y{\sf Q}\|_2^2}
 \\
 \nonumber
 &+
 \sum_{i=1}^3
 \int
 K_i(s')
 \lambda(s')
 ds'.
 \end{align}
 From \eqref{EQUATION_e3.3},
 there exists a constant $C>0$ depending only on $n$ such that
 \begin{align}
 \nonumber
 |K_2(s)|
 &<
 |
 \tfrac{\lambda_s}{\lambda}
 (
 \tfrac{n(\epsilon,\pa_{y_j}{\sf Q})_2}{\|\nabla_y{\sf Q}\|_2^2}
 )
 |
 <
 |
 \tfrac{\lambda_s}{\lambda}
 |
 \cdot
 \tfrac{n\|\nabla_y\epsilon\|_2\|{\sf Q}\|_2}{\|\nabla_y{\sf Q}\|_2^2}
 \\
 \label{EQ_3.90}
 &<
 C(
 |
 \tfrac{\lambda_s}{\lambda}
 |^2
 +
 (-H_y\epsilon,\epsilon)
 ),
 \\
 \label{EQ_3.91}
 |K_3(s)|
 &<
 C(
 |
 \tfrac{\lambda_s}{\lambda}
 |^2
 +
 (-H_y\epsilon,\epsilon)
 ).
 \end{align}
 Note from \eqref{EQ_3.17} and \eqref{EQUATION_e3.4}
 that $|\frac{\lambda_s}{\lambda}|^2<C(a^2+\|\Delta_y\epsilon\|_2^2)$.
 Therefore
 combining \eqref{EQ_3.89} - \eqref{EQ_3.91},
 we obtain
 \begin{align}
 \nonumber
 &
 |
 z_j(s)
 -
 z_j(s_1)
 |
 \\
 \nonumber
 &<
 \sup_{s\in(s_1,s_2)}
 \tfrac{2n\lambda(s)\|\nabla_y\epsilon(s)\|_2\|{\sf Q}\|_2}{\|\nabla_y{\sf Q}\|_2^2}
 +
 \sum_{i=1}^3
 \int_{s_1}^s
 |K_i(s')|
 \lambda(s')
 ds'
 \\
 \nonumber
 &<
 \tfrac{n\lambda_{\sup}\eta_1\|{\sf Q}\|_2}{\|\nabla_y{\sf Q}\|_2^2}
 +
 \lambda_{\sup}
 \int_{s_1}^{s_2}
 C
 \{
 a^2+\|\Delta_y\epsilon\|_2^2+(-H_y\epsilon,\epsilon)_2
 \}
 ds
 \\
 \label{EQ_3.92}
 &<
 \tfrac{n\lambda_{\sup}\eta_1\|{\sf Q}\|_2}{\|\nabla_y{\sf Q}\|_2^2}
 +
 C
 \lambda_{\sup}
 \eta_1^2
 +
 C
 \lambda_{\sup}
 \int_{s_1}^{s_2}
 (-H_y\epsilon,\epsilon)_2
 ds
 \\
 \nonumber
 &
 \text{for }
 s\in(s_1,s_2),\
 j\in\{1,\cdots,n\}.
 \end{align}
 In the last inequality,
 we used Lemma \ref{LEMMA_3.1}.
 Furthermore
 from Lemma \ref{LEMMA_3.6},
 we observe that
 \begin{align}
 \label{EQ_3.93}
 \int_{s_1}^{s_2}
 (-H_y\epsilon(s),\epsilon(s))_2
 ds
 &<
 4
 (\tfrac{\lambda(s_1)}{\lambda_{\inf}})^2
 (
 \|\epsilon(s_1)\|_2^2
 +
 \|\Lambda_y{\sf Q}\|_2^2
 )
 \\
 \nonumber
 &
 \text{\rm for }
 s\in(s_1,s_2).
 \end{align}
 Without loss of generality,
 we can assume  that $\eta_1<1$.
 Combining \eqref{EQ_3.92} - \eqref{EQ_3.93},
 we conclude
 \begin{align*}
 |
 z_j(s)
 -
 z_j(s_1)
 |
 &<
 C\lambda_{\sup}
 +
 C
 \lambda_{\sup}
 (\tfrac{\lambda(s_1)}{\lambda_{\inf}})^2
 (\|\epsilon(s_1)\|_2^2+\|\Lambda_y{\sf Q}\|_2^2)
 \\
 &
 \text{for }
 s\in(s_1,s_2),\
 j\in\{1,\cdots,n\}.
 \end{align*}
 This proves \eqref{EQ_3.83}.
 \end{proof}

 \begin{lem}
 \label{LEMMA_3.9}
 Let $n=6$,
 and
 $u(x,t)$ be as in Lemma {\rm\ref{LEMMA_3.1}}.
 Furthermore
 let $(\Delta{\sf t}_1)$ and ${\sf M}(t)$
 be defined in Proposition {\rm\ref{PROPOSITION_2.3}},
 and define
 \begin{align*}
 {\sf M}_1(t)
 =
 \begin{cases}
 {\sf M}(t) & \text{\rm if } t\in(0,(\Delta{\sf t})_1),\\
 {\sf M}((\Delta{\sf t})_1) & \text{\rm if } t>(\Delta{\sf t})_1.
 \end{cases}
 \end{align*}
 There exist a constant $h_1>0$ depending only on $n$
 such that
 if $\eta_1\in(0,h_1)$,
 then
 \begin{align}
 \label{EQ_3.94}
 \|\epsilon(y,t)\|_{L_y^\infty(\R^n)}
 &<
 \left(\frac{\lambda_{\sup}}{\lambda_{\inf}}\right)^\frac{n-2}{2}
 {\sf M}_1\left(\frac{t-t_1}{\lambda(t_1)^2}\right)
 +
 \|{\sf Q}\|_\infty
 +
 \|\mathcal Y\|_\infty
 \\
 \nonumber
 &\text{\rm for }
 t\in(t_1,t_2).
 \end{align}
 \end{lem}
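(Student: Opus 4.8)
The idea is to reduce the $L^\infty$ bound on $\epsilon$ to an $L^\infty$ bound on a rescaled version of $u$, apply the local smoothing estimate of Proposition \ref{PROPOSITION_2.3} to that rescaled solution, and treat large elapsed times by a ``restart'' argument that keeps the elapsed (rescaled) time equal to $(\Delta{\sf t})_1$.

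First I would reduce to a statement about $u$ itself. From the definition $\epsilon(y,t)=\lambda(t)^\frac{n-2}{2}u(\lambda(t)y+z(t),t)-{\sf Q}(y)-a(t)\mathcal{Y}(y)$ and the triangle inequality,
\[
\|\epsilon(t)\|_{L_y^\infty(\R^n)}
\le
\lambda(t)^\frac{n-2}{2}\|u(t)\|_{L_x^\infty(\R^n)}
+\|{\sf Q}\|_\infty+|a(t)|\,\|\mathcal{Y}\|_\infty .
\]
By \eqref{EQUATION_e3.4} we have $|a(t)|<\eta_1/\|\nabla_y\mathcal{Y}\|_2$, so after shrinking $\eta_1$ we may assume $|a(t)|<1$; since $\mathcal{Y}$ and $\nabla_y\mathcal{Y}$ decay exponentially (Proposition \ref{PROPOSITION_2.4}) and $\|{\sf Q}\|_\infty=1$, the last two terms are then bounded by $\|{\sf Q}\|_\infty+\|\mathcal{Y}\|_\infty$. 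Hence it suffices to prove $\lambda(t)^\frac{n-2}{2}\|u(t)\|_\infty<(\lambda_{\sup}/\lambda_{\inf})^\frac{n-2}{2}{\sf M}_1\big(\tfrac{t-t_1}{\lambda(t_1)^2}\big)$ for $t\in(t_1,t_2)$, where $\lambda_{\inf},\lambda_{\sup}$ are the finite positive constants from Lemma \ref{LEMMA_3.8}.

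Next, for any fixed $t_*\in[t_1,t_2)$ I would consider the rescaled, recentered function $\tilde u(x,\tau)=\lambda(t_*)^\frac{n-2}{2}u(\lambda(t_*)x+z(t_*),\,t_*+\lambda(t_*)^2\tau)$. By scale and translation invariance of $u_t=\Delta u+|u|^{p-1}u$ it solves the same equation with $\tilde u(\cdot,0)={\sf Q}+h_*$, where $h_*=a(t_*)\mathcal{Y}+\epsilon(\cdot,t_*)$; by scale invariance of the $\dot H^1$ norm together with (a1)--(a2) it has the regularity (e1)--(e3) of Proposition \ref{PROPOSITION_2.2}, hence coincides with the unique solution of \eqref{equation_e2.2}. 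By Sobolev embedding and \eqref{EQUATION_e3.4}, $\|h_*\|_{\frac{2n}{n-2}}\le C\|\nabla_y\epsilon(t_*)\|_2+|a(t_*)|\,\|\mathcal{Y}\|_{\frac{2n}{n-2}}<C\eta_1$, so after shrinking $\eta_1$ once more we have $\|h_*\|_{\frac{2n}{n-2}}<{\sf h}_1$. Proposition \ref{PROPOSITION_2.3} then yields $T(h_*)>(\Delta{\sf t})_1$ and $\|\tilde u(\cdot,\tau)\|_\infty<{\sf M}(\tau)$ for $\tau\in(0,(\Delta{\sf t})_1]$; undoing the rescaling gives
\[
\lambda(t_*)^\frac{n-2}{2}\|u(t)\|_\infty<{\sf M}\!\Big(\tfrac{t-t_*}{\lambda(t_*)^2}\Big)
\qquad\text{whenever }\ \tfrac{t-t_*}{\lambda(t_*)^2}\in(0,(\Delta{\sf t})_1].
\]

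Finally I would split into two cases. If $\tfrac{t-t_1}{\lambda(t_1)^2}\le(\Delta{\sf t})_1$, take $t_*=t_1$ to get $\lambda(t_1)^\frac{n-2}{2}\|u(t)\|_\infty<{\sf M}\big(\tfrac{t-t_1}{\lambda(t_1)^2}\big)={\sf M}_1\big(\tfrac{t-t_1}{\lambda(t_1)^2}\big)$. If $\tfrac{t-t_1}{\lambda(t_1)^2}>(\Delta{\sf t})_1$, the function $g(s)=s+\lambda(s)^2(\Delta{\sf t})_1$ is continuous on $[t_1,t]$ by (b1) and satisfies $g(t_1)<t<g(t)$ (using the case hypothesis and Lemma \ref{LEMMA_3.8}), so by the intermediate value theorem there is $t_*\in(t_1,t)\subset(t_1,t_2)$ with $\tfrac{t-t_*}{\lambda(t_*)^2}=(\Delta{\sf t})_1$, giving $\lambda(t_*)^\frac{n-2}{2}\|u(t)\|_\infty<{\sf M}((\Delta{\sf t})_1)={\sf M}_1\big(\tfrac{t-t_1}{\lambda(t_1)^2}\big)$. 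In both cases, multiplying by $(\lambda(t)/\lambda(t_*))^\frac{n-2}{2}\le(\lambda_{\sup}/\lambda_{\inf})^\frac{n-2}{2}$ and inserting this into the reduction above yields \eqref{EQ_3.94}. The main obstacle is the restart step: making sure that for large $t$ one can choose $t_*$ so that the rescaled elapsed time is \emph{exactly} $(\Delta{\sf t})_1$ (so that ${\sf M}$ is evaluated at a fixed point rather than near $0$, where it may blow up), and checking carefully that the rescaled profile genuinely agrees with the solution furnished by Proposition \ref{PROPOSITION_2.3} via the uniqueness in Proposition \ref{PROPOSITION_2.2}; everything else is bookkeeping about the constants and the dependence of $h_1$ on $n$.
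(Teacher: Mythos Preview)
Your proposal is correct and follows essentially the same approach as the paper: rescale around a base point $t_*$, apply Proposition~\ref{PROPOSITION_2.3} to the rescaled solution $\tilde u$, and then undo the rescaling using the uniform bounds $\lambda_{\inf}\le\lambda\le\lambda_{\sup}$ from Lemma~\ref{LEMMA_3.8}. The paper treats the short-time case $\tfrac{t-t_1}{\lambda(t_1)^2}\le(\Delta{\sf t})_1$ exactly as you do (with $t_*=t_1$) and then dismisses the long-time case with ``in the same way''; your intermediate-value-theorem choice of $t_*$ with $\tfrac{t-t_*}{\lambda(t_*)^2}=(\Delta{\sf t})_1$ is a clean way to make that second case precise, and your explicit appeal to the uniqueness in Proposition~\ref{PROPOSITION_2.2} to identify $\tilde u$ with the solution of \eqref{equation_e2.2} is a detail the paper leaves implicit.
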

 \begin{proof}
 We recall that
 \begin{align}
 \label{EQ_3.95}
 u(x,t)
 =
 \lambda(t)^{-\frac{n-2}{2}}
 \{
 {\sf Q}(\tfrac{x-z(t)}{\lambda(t)})
 +
 a(t)\mathcal{Y}(\tfrac{x-z(t)}{\lambda(t)})
 +
 \epsilon(\tfrac{x-z(t)}{\lambda(t)},t)
 \}.
 \end{align}
 Fix $t_0\in(t_1,t_2)$.
 We now provide estimates for $\|u(t_0)\|_\infty$.
 We consider the two cases
 $0<\tfrac{t_0-t_1}{\lambda(t_1)^2}<(\Delta{\sf t})_1$ and 
 $\tfrac{t_0-t_1}{\lambda(t_1)^2}>(\Delta{\sf t})_1$ separately.
 First assume that
 $0<\tfrac{t_0-t_1}{\lambda(t_1)^2}<(\Delta{\sf t})_1$,
 and put
 \[
 \tau_0
 =
 \tfrac{t_0-t_1}{\lambda(t_1)^2}
 <
 (\Delta{\sf t}_1).
 \]
 We define
 \begin{align*}
 w(\xi,\tau)
 &=
 \lambda_1^{\frac{n-2}{2}}u(\lambda_1\xi+z_1,t_1+\lambda_1^2\tau)
 \\
 &
 \text{with }
 \lambda_1=\lambda(t_1),\
 z_1=z(t_1).
 \end{align*}
 We easily check that
 $w(\xi,\tau)$ satisfies
 \begin{align*}
 \begin{cases}
 w_\tau=\Delta_\xi w+|w|^{p-1}w 
 \quad\
 \text{for } (\xi,\tau)\in\R^n\times(0,\tau_0),
 \\
 w(\xi,\tau)|_{\tau=0}
 =
 {\sf Q}(\xi)
 +
 a(t_1)\mathcal{Y}(\xi)
 +
 \epsilon(\xi,t_1).
 \end{cases}
 \end{align*}
 From Proposition \ref{PROPOSITION_2.3},
 we have
 \begin{align*}
 \|w(\xi,\tau_0)\|_{L_\xi^\infty(\R^n)}
 <
 {\sf M}(\tau_0).
 \end{align*}
 Note
 that
 $w(\xi,\tau_0)=
 \lambda_1^\frac{n-2}{2}
 u(\lambda_1\xi+z_1,t_0)$.
 Therefore
 it follows that
 \begin{align*}
 \|u(x,t_0)\|_{L_x^\infty(\R^n)}
 &=
 \lambda_1^{-\frac{n-2}{2}}
 \|w(\xi,\tau_0)\|_{L_\xi^\infty(\R^n)}
 <
 \lambda_1^{-\frac{n-2}{2}}
 {\sf M}(\tau_0).
 \end{align*}
 Combining \eqref{EQ_3.95},
 we conclude that
 \begin{align}
 \|\epsilon(y,t_0)\|_{L_y^\infty(\R^n)}
 <
 (\tfrac{\lambda(t_0)}{\lambda(t_1)})^\frac{n-2}{2}
 {\sf M}(\tau_0)
 +
 \|{\sf Q}\|_\infty+a(t_3)\|\mathcal Y\|_\infty.
 \end{align}
 Therefore
 \eqref{EQ_3.94}
 is established
 for $t\in(t_1,t_2)$ such that
 $\frac{t-t_1}{\lambda(t_1)^2}<(\Delta{\sf t})_1$.
 In the same way as above,
 we can derive \eqref{EQ_3.94}
 for the case $\frac{t-t_1}{\lambda(t_1)^2}>(\Delta{\sf t})_1$,
 which completes the proof.
 \end{proof}

 \subsection{Differential equation for $\|\nabla_y\epsilon(t)\|_2$}
 We here provide a differential equation for $\|\nabla_y\epsilon(t)\|_2$,
 which will be often used in Section \ref{section_4}.
 \begin{lem}[(2.42) in Lemma 2.9 in \cite{Collot-Merle-Raphael} p.\,234]
 \label{LEMMA_3.10}
 Let $n=6$,
 and
 $u(x,t)$ be as in Lemma {\rm\ref{LEMMA_3.1}}.
 There exist constants $h_1>0$ and $C>0$,
 depending only on $n$
 such that if $\eta_1\in(0,h_1)$,
 then
 \begin{align}
 \label{EQ_3.97}
 \frac{d}{ds}
 [-(H_y\epsilon,\epsilon)_2]
 <
 -
 \frac{1}{2}
 \|H_y\epsilon\|_2^2
 +
 Ca^4
 \quad\
 \text{\rm for }
 s\in(s_1,s_2).
 \end{align}
 \end{lem}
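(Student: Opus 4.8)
The plan is to differentiate the coercive energy $-(H_y\epsilon,\epsilon)_2$ along the rescaled flow and to check that, after discarding the leading dissipative term $-2\|H_y\epsilon\|_2^2$, every remaining contribution is either identically zero or can be absorbed into $\tfrac{3}{2}\|H_y\epsilon\|_2^2+Ca^4$ once $\eta_1$ is chosen small. Up to the value $p=2$ this is the same computation as for (2.42) in \cite{Collot-Merle-Raphael}.

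Since $H_y$ is self-adjoint and independent of $s$, the regularity (b1)--(b4) gives $\frac{d}{ds}[-(H_y\epsilon,\epsilon)_2]=-2(H_y\epsilon,\epsilon_s)_2$. Substituting $\epsilon_s$ from \eqref{EQUATION_e3.9}, the term $H_y\epsilon$ produces $-2\|H_y\epsilon\|_2^2$, and three further terms vanish: $(H_y\epsilon,\mathcal{Y})_2=e_0(\epsilon,\mathcal{Y})_2=0$ by \eqref{EQUATION_e3.2}, while $(H_y\epsilon,\Lambda_y{\sf Q})_2=(\epsilon,H_y\Lambda_y{\sf Q})_2=0$ and $(H_y\epsilon,\pa_{y_j}{\sf Q})_2=0$ because $\Lambda_y{\sf Q},\pa_{y_j}{\sf Q}\in\Ker H_y$ for $n\geq5$ (Proposition \ref{PROPOSITION_2.4}). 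Hence
\begin{align*}
\frac{d}{ds}[-(H_y\epsilon,\epsilon)_2]
&=-2\|H_y\epsilon\|_2^2
-2\tfrac{\lambda_s}{\lambda}(H_y\epsilon,\Lambda_y\epsilon)_2
-2\tfrac{1}{\lambda}\tfrac{dz}{ds}\cdot(H_y\epsilon,\nabla_y\epsilon)_2
\\
&\quad
-2\tfrac{\lambda_s}{\lambda}a(H_y\epsilon,\Lambda_y\mathcal{Y})_2
-2\tfrac{a}{\lambda}\tfrac{dz}{ds}\cdot(H_y\epsilon,\nabla_y\mathcal{Y})_2
-2(H_y\epsilon,N)_2.
\end{align*}

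The only delicate term is the one with $\Lambda_y\epsilon$, since $\Lambda_y\epsilon$ carries $y\cdot\nabla_y\epsilon$. Here one integrates by parts (the manipulations being justified by (b1)--(b4)) to obtain the Pohozaev-type identity $(H_y\epsilon,\Lambda_y\epsilon)_2=-\big((V+\tfrac{1}{2}\,y\cdot\nabla_yV)\epsilon,\epsilon\big)_2$ with $V=p{\sf Q}^{p-1}$, in which the dangerous $\|\nabla_y\epsilon\|_2^2$ cancels. Since $|V|+|y\cdot\nabla_yV|\leq C(1+|y|)^{-4}\leq C|y|^{-3}$ in $\R^6$, Cauchy--Schwarz followed by the first two Hardy inequalities of Lemma \ref{LEMMA_2.1} gives $|(H_y\epsilon,\Lambda_y\epsilon)_2|\leq C\|\nabla_y\epsilon\|_2\|\Delta_y\epsilon\|_2$. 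For the other bilinear terms I use plain Cauchy--Schwarz together with the finiteness of $\|\Lambda_y\mathcal{Y}\|_2$ and $\|\nabla_y\mathcal{Y}\|_2$ (exponential decay, \eqref{equation_e2.7}) and the norm equivalences \eqref{EQUATION_e3.3}; for the nonlinear term I invoke $p=2$, so that $|N|\leq C(a^2\mathcal{Y}^2+\epsilon^2)$ by \eqref{EQUATION_e3.7} and $\|N\|_2^2\leq Ca^4+C\eta_1^2\|H_y\epsilon\|_2^2$ exactly as in \eqref{EQUATION_3.12}. Inserting the bounds \eqref{EQ_3.17}--\eqref{EQ_3.18} (in the form $|\tfrac{\lambda_s}{\lambda}|+|\tfrac{1}{\lambda}\tfrac{dz}{ds}|\leq Ca^2+C\|\Delta_y\epsilon\|_2$) and the smallness $\|\nabla_y\epsilon\|_2+|a|\leq C\eta_1$ from \eqref{EQUATION_e3.4}, Young's inequality bounds each of the four remaining bilinear terms by $C\eta_1a^4+C\eta_1\|H_y\epsilon\|_2^2$, while $-2(H_y\epsilon,N)_2\leq Ca^4+(\delta+C\eta_1^2)\|H_y\epsilon\|_2^2$ for any fixed $\delta>0$ (splitting $2\|H_y\epsilon\|_2\|N\|_2\leq \delta\|H_y\epsilon\|_2^2+\tfrac{1}{\delta}\|N\|_2^2$).

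Summing, $\frac{d}{ds}[-(H_y\epsilon,\epsilon)_2]\leq -2\|H_y\epsilon\|_2^2+(\delta+C\eta_1)\|H_y\epsilon\|_2^2+Ca^4$; taking $\delta=\tfrac{1}{2}$ and then $\eta_1$ so small that $C\eta_1\leq1$ gives \eqref{EQ_3.97}, and since every constant depends only on $n$ the proof is complete. The essential mechanism — and the step I expect to be the crux — is the cancellation in the $\Lambda_y\epsilon$ term: without the Pohozaev identity one is left with a bare $\|\nabla_y\epsilon\|_2^2$ on the right-hand side, which is not of lower order relative to $\|H_y\epsilon\|_2^2$; once it is replaced by $\|\nabla_y\epsilon\|_2\|\Delta_y\epsilon\|_2$ the extra smallness $\|\nabla_y\epsilon\|_2<\eta_1$ renders it harmless.
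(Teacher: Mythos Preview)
Your proof is correct and follows essentially the same route as the paper: differentiate, use self-adjointness of $H_y$ to kill the $\mathcal{Y}$, $\Lambda_y{\sf Q}$ and $\nabla_y{\sf Q}$ contributions, apply the Pohozaev identity (Lemma~\ref{LEMMA_3.11}) to the $\Lambda_y\epsilon$ term, and then close with the modulation bounds \eqref{EQ_3.17}--\eqref{EQ_3.18}, the nonlinear estimate \eqref{EQUATION_3.12}, and the smallness \eqref{EQUATION_e3.4}. The only cosmetic differences are that the paper also invokes the second Pohozaev identity \eqref{EQ_3.100} for the $\tfrac{1}{\lambda}\tfrac{dz}{ds}\cdot\nabla_y\epsilon$ term (whereas you handle it by plain Cauchy--Schwarz, which is equally valid since $\|\nabla_y\epsilon\|_2<\eta_1$), and the paper bounds the Pohozaev weight directly by $C\|\Delta_y\epsilon\|_2^2$ via Sobolev rather than your $C\|\nabla_y\epsilon\|_2\|\Delta_y\epsilon\|_2$ via the interpolated Hardy inequalities.
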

 \begin{proof}
 Multiplying \eqref{EQUATION_e3.9} by $H_y\epsilon$,
 we get
 \begin{align*}
 &
 (\epsilon_s,H_y\epsilon)_2
 =
 \|H_y\epsilon\|_2^2
 +
 \tfrac{\lambda_s}{\lambda}
 (\Lambda_y\epsilon,H_y\epsilon)_2
 +
 \tfrac{1}{\lambda}
 (
 \tfrac{dz}{ds}
 \cdot
 \nabla_y\epsilon,H_y\epsilon)_2
 \\
 & \quad
 +
 \tfrac{\lambda_s}{\lambda}
 \underbrace{
 (\Lambda_y{\sf Q},H_y\epsilon)_2
 }_{=0}
 -
 (a_s-e_0a)
 \underbrace{
 ({\mathcal Y},H_y\epsilon)_2
 }_{=0}
 +
 \tfrac{\lambda_s}{\lambda}
 a
 (\Lambda_y{\mathcal Y},H_y\epsilon)_2
 \\
 &\quad
 +
 \tfrac{1}{\lambda}
 \underbrace{
 (
 \tfrac{dz}{ds}
 \cdot
 \nabla_y{\sf Q},H_y\epsilon)_2
 }_{=0}
 +
 \tfrac{a}{\lambda}
 (\tfrac{dz}{ds}\cdot\nabla_y
 \mathcal{Y},
 H_y\epsilon)_2
 +
 (N,H_y\epsilon)_2.
 \end{align*}
 We apply Lemma \ref{LEMMA_3.11} to get
 \begin{align*}
 &
 (\epsilon_s,H_y\epsilon)_2
 =
 \|H_y\epsilon\|_2^2
 -
 \tfrac{\lambda_s}{\lambda}
 (\epsilon^2,V+\tfrac{y\cdot\nabla_yV}{2})_2
 -
 \tfrac{1}{\lambda}
 (
 \epsilon^2,
 \tfrac{dz}{ds}
 \cdot
 \nabla_y(\tfrac{V}{2}))_2
 \\
 & \quad
 +
 \tfrac{\lambda_s}{\lambda}
 a
 (\Lambda_y{\mathcal Y},H_y\epsilon)_2
 +
 \tfrac{a}{\lambda}
 (\tfrac{dz}{ds}\cdot\nabla_y
 \mathcal{Y},
 H_y\epsilon)_2
 +
 (N,H_y\epsilon)_2.
 \end{align*}
 From the fact that $|V(y)|<(1+|y|^2)^{-2}$
 and the Sobolev inequality,
 we have
 \begin{align*}
 (\epsilon^2,|V|)_2+(\epsilon^2,(1+|y|)|\nabla_yV|)_2
 <
 C\|\Delta_y\epsilon\|_2^2.
 \end{align*}
 Therefore
 we obtain
 \begin{align}
 \nonumber
 -&(\epsilon_s,H_y\epsilon)_2
 <
 -\|H_y\epsilon\|_2^2
 +
 C
 (
 |\tfrac{\lambda_s}{\lambda}|
 \cdot
 \|\Delta_y\epsilon\|_2^2
 +
 \tfrac{1}{\lambda}
 |\tfrac{dz}{ds}|
 \cdot
 \|\Delta_y\epsilon\|_2^2
 \\
 \label{EQ_3.98}
 & \quad
 +
 |\tfrac{\lambda_s}{\lambda}|
 |a|
 \|H_y\epsilon\|_2
 +
 |\tfrac{a}{\lambda}\tfrac{dz}{ds}|
 \cdot
 \|H_y\epsilon\|_2
 +
 \|N\|_2\|H_y\epsilon\|_2
 ).
 \end{align}
 Combining \eqref{EQ_3.98}, \eqref{EQ_3.17}, \eqref{EQ_3.18} and
 \eqref{EQUATION_3.12},
 we conclude \eqref{EQ_3.97}.
 \end{proof}
 \begin{lem}[(1.16)-(1.18) in \cite{Collot-Merle-Raphael} p.\,220]
 \label{LEMMA_3.11}
 Let $n\geq3$.
 Then it holds that
 \begin{align}
 \label{EQ_3.99}
 (\Lambda_yv,H_yv)_2
 &=
 -
 (v^2,V+\tfrac{y}{2}\cdot\nabla_yV)_2
 \quad
 \text{\rm for }
 v\in C_\text{\rm c}^\infty(\R^n),
 \\
 \label{EQ_3.100}
 (\nabla_yv,H_yv)_2
 &=
 -
 (v^2,\tfrac{1}{2}\nabla_yV)_2
 \quad
 \text{\rm for }
 v\in C_\text{\rm c}^\infty(\R^n).
 \end{align}
 \end{lem}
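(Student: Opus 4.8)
The plan is to prove both identities by direct integration by parts. This is legitimate because $v\in C_{\text{\rm c}}^\infty(\R^n)$ makes every boundary term vanish, and $V=p{\sf Q}^{p-1}$ together with $\nabla_yV$ are smooth and bounded, so that $H_yv=\Delta_yv+Vv$ is again a compactly supported smooth function and all the pairings in question are well defined. Writing $H_y=\Delta_y+V$, it suffices to compute the Laplacian part and the potential part of each pairing separately and add them.

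First I would dispose of the Laplacian contributions, showing $(\Lambda_yv,\Delta_yv)_2=0$ and, componentwise, $(\pa_{y_i}v,\Delta_yv)_2=0$. Conceptually, both vanish because in the critical case $p=\frac{n+2}{n-2}$ the Dirichlet energy $\|\nabla_yv\|_2^2$ is invariant under the dilations $v\mapsto\lambda^{(n-2)/2}v(\lambda\,\cdot)$ and the translations $v\mapsto v(\cdot+b)$; differentiating at the identity gives $(\nabla_y\Lambda_yv,\nabla_yv)_2=0$ and $(\nabla_y\pa_{y_i}v,\nabla_yv)_2=0$, i.e. the claimed relations. Concretely, one uses $(v,\Delta_yv)_2=-\|\nabla_yv\|_2^2$ and, integrating by parts twice together with $\sum_i y_i\,\pa_{y_i}\pa_{y_j}v\cdot\pa_{y_j}v=\tfrac12\,(y\cdot\nabla_y)(\pa_{y_j}v)^2$ (which integrates to $-\tfrac n2\|\pa_{y_j}v\|_2^2$), to obtain $(y\cdot\nabla_yv,\Delta_yv)_2=\tfrac{n-2}{2}\|\nabla_yv\|_2^2$; hence $(\Lambda_yv,\Delta_yv)_2=\tfrac{n-2}{2}(-\|\nabla_yv\|_2^2)+\tfrac{n-2}{2}\|\nabla_yv\|_2^2=0$, while $(\pa_{y_i}v,\Delta_yv)_2=-\tfrac12\sum_j\int\pa_{y_i}(\pa_{y_j}v)^2=0$.

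For the potential contributions I would use $v\,\pa_{y_i}v=\tfrac12\pa_{y_i}(v^2)$ and integrate by parts onto $V$. This gives $(y\cdot\nabla_yv,Vv)_2=\tfrac12\sum_i\int Vy_i\,\pa_{y_i}(v^2)=-\tfrac12\int(y\cdot\nabla_yV)\,v^2-\tfrac n2\int V\,v^2$, so that adding $(\tfrac{n-2}{2}v,Vv)_2=\tfrac{n-2}{2}\int V\,v^2$ yields $(\Lambda_yv,Vv)_2=-\int V\,v^2-\tfrac12\int(y\cdot\nabla_yV)\,v^2=-(v^2,V+\tfrac y2\cdot\nabla_yV)_2$; combined with $(\Lambda_yv,\Delta_yv)_2=0$ this is \eqref{EQ_3.99}. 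Similarly $(\pa_{y_i}v,Vv)_2=\tfrac12\int V\,\pa_{y_i}(v^2)=-\tfrac12\int\pa_{y_i}V\cdot v^2$, and adding the vanishing Laplacian term gives \eqref{EQ_3.100}. There is no real obstacle here: these are the classical Pohozaev/dilation identities and the whole argument is routine. The only point requiring care is keeping track of the factors $\tfrac{n-2}{2}$ and $\tfrac n2$ produced by the two integrations by parts of the dilation field $y\cdot\nabla_y$, and checking that the compact support of $v$ and the boundedness of $V$ and $\nabla_yV$ justify discarding every boundary integral.
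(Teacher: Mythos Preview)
Your proof is correct and follows essentially the same route as the paper: both arguments reduce to the integration-by-parts identity $(y\cdot\nabla_yv,\Delta_yv)_2=\tfrac{n-2}{2}\|\nabla_yv\|_2^2$ (equivalently $-\tfrac{n-2}{2}(v,\Delta_yv)_2$) and then handle the potential term via $v\,\pa_{y_i}v=\tfrac12\pa_{y_i}(v^2)$. The only cosmetic difference is that you split $H_y=\Delta_y+V$ from the outset and phrase the vanishing of $(\Lambda_yv,\Delta_yv)_2$ through scaling invariance of the Dirichlet energy, whereas the paper keeps $(\Lambda_yv,H_yv)_2$ together and derives $(y\cdot\nabla_yv,\Delta_yv)_2=-\tfrac{n-2}{2}(v,\Delta_yv)_2$ by moving $y\cdot\nabla_y$ across the pairing; these are the same computation. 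One small remark: the invariance of $\|\nabla_yv\|_2^2$ under $v\mapsto\lambda^{(n-2)/2}v(\lambda\cdot)$ holds independently of $p$, so the phrase ``in the critical case $p=\tfrac{n+2}{n-2}$'' is unnecessary there.
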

 \begin{proof}
 From the definition of $\Lambda_y$,
 we have
 \begin{align}
 \label{EQ_3.101}
 (\Lambda_yv,H_yv)_2
 =
 \tfrac{n-2}{2}
 (v,H_yv)_2
 +
 (y\cdot\nabla_yv,H_yv)_2.
 \end{align}
 To simplify the last term of \eqref{EQ_3.101},
 we first compute $(y\cdot\nabla_yv,\Delta_yv)_2$.
 \begin{align*}
 &
 (y\cdot\nabla_yv,\Delta_yv)_2
 =
 -
 n(v,\Delta_yv)_2
 - 
 (v,y\cdot\nabla_y\Delta_yv)_2
 \\
 &=
 -
 n(v,\Delta_yv)_2
 -
 (v,y\cdot\Delta_y(\nabla_yv))_2
 \\
 &=
 -
 n(v,\Delta_yv)_2
 -
 (v,\Delta_y(y\cdot\nabla_yv))_2
 +
 2(v,\Delta_yv)_2
 \\
 &=
 -
 (n-2)
 (v,\Delta_yv)_2
 -
 (\Delta_yv,y\cdot\nabla_yv)_2.
 \end{align*}
 This implies
 \begin{align}
 \label{EQ_3.102}
 (y\cdot\nabla_yv,\Delta_yv)_2
 =
 -
 \tfrac{n-2}{2}
 (v,\Delta_yv)_2.
 \end{align}
 Combining \eqref{EQ_3.101} - \eqref{EQ_3.102},
 we obtain
 \begin{align*}
 (\Lambda_yv,H_yv)_2
 &=
 \tfrac{n-2}{2}
 (v,H_yv)_2
 -
 \tfrac{n-2}{2}
 (v,\Delta_yv)_2
 +
 (y\cdot\nabla_yv,Vv)
 \\
 &=
 \tfrac{n-2}{2}
 (v,\Delta_yv+Vv)_2
 -
 \tfrac{n-2}{2}
 (v,\Delta_yv)_2
 +
 (y\cdot\nabla_y(\tfrac{v^2}{2}),V)_2
 \\
 &=
 \tfrac{n-2}{2}
 (v,Vv)_2
 -
 \tfrac{n}{2}
 (v^2,V)_2
 -
 (v^2,\tfrac{y\cdot\nabla_yV}{2})_2
 \\
 &=
 -
 (v^2,V)_2
 -
 (v^2,\tfrac{y\cdot\nabla_yV}{2})_2.
 \end{align*}
 This shows \eqref{EQ_3.99}.
 Since the proof of \eqref{EQ_3.100} is similar,
 we omit it.
 \end{proof}

 \section{Section}
 \label{section_4}
 Let $h_1^*$ be the smallest constant among the values of $h_1$
 given in Lemma \ref{LEMMA_3.1} - Lemma \ref{LEMMA_3.11}.
 We fix $\bar\alpha\in(0,h_1^*)$.
 From (iii) of Lemma \ref{LEMMA_2.7},
 there exist $\bar \eta>0$
 such that,
 if we define $v(y)$ by
 \begin{align}
 \label{equation_Q4.1}
 v(y)
 =
 \mathcal{K}_{\mathcal M}(u)^\frac{n-2}{2}u(\mathcal{K}_{\mathcal M}(u) y+\mathcal{Z}_{\mathcal M}(u))
 -
 {\sf Q}(y)
 -
 \mathcal{A}_{\mathcal M}(u)\mathcal{Y}(y),
 \end{align}
 then the following inequality holds.
 \begin{align*}
 \|v(y)\|_{\dot H_y^1(\R^n)}^2
 +
 |\mathcal{A}_{\mathcal M}(u)|^2\|\mathcal{Y}\|_{\dot H^1(\R^n)}^2
 &<
 (\bar\alpha)^2
 \quad\
 \text{for all }
 u\in \mathcal{U}_{\bar\eta}.
 \end{align*}
 Fix $\alpha$ such that
 \begin{align*}
 0
 <
 \alpha
 <
 \min\{\tfrac{\bar\eta}{2},\bar\alpha\}.
 \end{align*}
 Furthermore
 let $\delta_1\in(0,\alpha)$ be a sufficiently small constant,
 which will be chosen later.
 We again apply (iii) of Lemma \ref{LEMMA_2.7}.
 There exists $\eta_1\in(0,\bar\eta)$ such that
 for all $u\in \mathcal{U}_{\eta_1}$,
 it holds that
 \begin{align*}
 \|v(y)\|_{\dot H_y^1(\R^n)}^2
 +
 |\mathcal{A}_{\mathcal M}(u)|^2\|\mathcal{Y}\|_{\dot H^1(\R^n)}^2
 &<
 \delta_1^2,
 \end{align*}
 where $v(y)$ is a function as defined in \eqref{equation_Q4.1}.
 Throughout this section,
 we assume
 \begin{align*}
 \|\nabla_xu_0(x)-\nabla_x{\sf Q}(x)\|_{L_x^2(\R^n)}
 <
 \eta_1.
 \end{align*}
 This implies $u_0\in\mathcal{U}_{\eta_1}\subset\mathcal{U}_{\bar\eta}$.
 We denote by $T$ the maximal existence time of $u(x,t)$.
 Regarding the behavior of $u(x,t)$,
 there are four possibilities.
 \vspace{2mm}
 \begin{enumerate}[(c1)]
 \item
 \label{(c1)}
 $T<\infty$ and $\text{dist}_{\dot H^1(\R^n)}(u(t),\mathcal{M})<\bar\eta$ for $t\in(0,T)$,

 \item
 \label{(c2)}
 $T<\infty$ and there exists $T_1\in(0,T)$ such that $\text{dist}_{\dot H^1(\R^n)}(u(T_1),\mathcal{M})=\bar\eta$,
 
 \item
 \label{(c3)}
 $T=\infty$
 and $\text{dist}_{\dot H^1(\R^n)}(u(t),\mathcal{M})<\bar\eta$ for $t\in(0,\infty)$,

 \item
 \label{(c4)}
 $T=\infty$
 and there exists $T_1\in(0,T)$ such that $\text{dist}_{\dot H^1(\R^n)}(u(T_1),\mathcal{M})=\bar\eta$.
 \end{enumerate}
 \vspace{2mm}
 Throughout this section,
 we normalize $\mathcal Y(y)>0$ such that
 \[
 \|\mathcal Y\|_2=1
 \label{mathcal_Y}.
 \]
 While the argument in this section is essentially the same as that in \cite{Collot-Merle-Raphael} p.\,244 - p.\,267,
 we provide full proofs for the sake of completeness.
 The difference from \cite{Collot-Merle-Raphael} is that
 our discussion does not require the control of $\dot H^2(\R^n)$ norm of the solution.
 To derive the $L^\infty$ bound of the solution,
 we apply Proposition \ref{PROPOSITION_2.3} here,
 instead of using the boundedness of the $\dot H^2$ norm of the solution.

 \subsection{The proof for Theorem \ref{THEOREM_2}}
 From Lemma \ref{LEMMA_3.8} - Lemma \ref{LEMMA_3.9},
 we conclude that (c1) cannot occur.
 Consequently
 Theorem \ref{THEOREM_2} follows from Proposition \ref{PROPOSITION_4.1},
 Proposition \ref{PROPOSITION_4.6} and Proposition \ref{PROPOSITION_4.7}.
 In the rest of this section,
 we discuss the cases (c2) - (c4) separately.

 \subsection{Behavior of a solution for case (c3)}
 \label{section_4.2}
 We first investigate the behavior of the solution for case (c3).
 We again use the time variable $s$ defined by
 \begin{align}
 \label{EQ_4.2}
 s
 =
 s(t)
 =
 \int_0^{t}
 \tfrac{dt'}{\lambda(t')^2}.
 \end{align}
 From Lemma \ref{LEMMA_3.8},
 we recall that $\inf_{t\in(0,\infty)}\lambda(t)=\lambda_{\inf}>0$ .
 Hence
 under the transformation \eqref{EQ_4.2}, the time interval $0<t<\infty$ is mapped to $0<s<\infty$.
 \begin{pro}
 \label{PROPOSITION_4.1}
 Let $n=6$ and assume {\rm (c3)} on {\rm p.\,\pageref{(c3)}}.
 There exist $\lambda_\infty\in(0,\infty)$ and $z_\infty\in\R^n$
 such that
 \begin{align*}
 \lim_{t\to\infty}
 u(x,t)
 =
 \tfrac{1}{\lambda_\infty^\frac{n-2}{2}}{\sf Q}(\tfrac{x-z_\infty}{\lambda_\infty})
 \quad\
 \text{\rm in }
 \dot H^1(\R^n).
 \end{align*}
 \end{pro}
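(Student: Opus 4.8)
The plan is to run the modulation machinery of Section~\ref{section_3} on the whole half-line and to promote its $L^1$-in-$s$ bounds into convergence statements for the parameters. Since we are in case (c3), $u(x,t)$ admits the decomposition \eqref{EQUATION_e3.1} for all $t\in(0,\infty)$, and Lemma~\ref{LEMMA_3.8} gives $0<\lambda_{\inf}\le\lambda(t)\le\lambda_{\sup}<\infty$ and $\sup_{t}|z(t)|<\infty$; hence $s=s(t)=\int_0^t\lambda(t')^{-2}\,dt'$ maps $(0,\infty)$ bijectively onto $(0,\infty)$, and by Lemma~\ref{LEMMA_3.6} the quantity $\|\epsilon(s)\|_2$ stays bounded. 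Since $\|u(t)-\lambda(t)^{-\frac{n-2}{2}}{\sf Q}(\tfrac{\cdot-z(t)}{\lambda(t)})\|_{\dot H^1(\R^n)}=\|a(t)\mathcal Y+\epsilon(t)\|_{\dot H_y^1(\R^n)}\le|a(t)|\,\|\nabla_y\mathcal Y\|_2+\|\nabla_y\epsilon(t)\|_2$, and since $(\lambda,z)\mapsto\lambda^{-\frac{n-2}{2}}{\sf Q}(\tfrac{\cdot-z}{\lambda})$ is continuous into $\dot H^1(\R^n)$ (cf.\ Lemma~\ref{LEMMA_2.8} and scaling/translation invariance), it suffices to prove four convergences as $s\to\infty$: $\|\nabla_y\epsilon(s)\|_2\to0$, $a(s)\to0$, $\lambda(s)\to\lambda_\infty\in(0,\infty)$, and $z(s)\to z_\infty\in\R^n$.

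For the remainder I would first apply Lemma~\ref{LEMMA_3.1} on $(s_1,s_2)$ and let $s_1\to0$, $s_2\to\infty$ to obtain $\int_0^\infty(\|H_y\epsilon(s)\|_2^2+a(s)^2)\,ds<\infty$, hence, by \eqref{EQUATION_e3.3}, also $\int_0^\infty(a(s)^2+\|\Delta_y\epsilon(s)\|_2^2)\,ds<\infty$. From Lemma~\ref{LEMMA_3.10}, $\frac{d}{ds}[-(H_y\epsilon,\epsilon)_2]\le Ca^4$, so $s\mapsto-(H_y\epsilon(s),\epsilon(s))_2-C\int_0^s a^4$ is nonincreasing and bounded below (using $\int_0^\infty a^4<\infty$, as $a$ is bounded); therefore $-(H_y\epsilon(s),\epsilon(s))_2$ has a limit, which must be $0$ because $\int_0^\infty\|H_y\epsilon\|_2^2\,ds<\infty$ forces $\liminf_{s}\|H_y\epsilon(s)\|_2=0$ and $\|\epsilon(s)\|_2$ is bounded. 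By \eqref{EQUATION_e3.3} this yields $\|\nabla_y\epsilon(s)\|_2\to0$. For $a$, \eqref{EQ_3.16} reads $|a_s-e_0a|\le r(s)$ with $r(s)=C(a^2+\|\Delta_y\epsilon\|_2^2)\in L^1((0,\infty))$, so $s\mapsto a(s)e^{-e_0s}$ has bounded variation, hence a limit $L$; $L=0$ since $a$ is bounded (cf.\ \eqref{EQUATION_e3.4}), and integrating $(ae^{-e_0s})'$ from $s$ to $\infty$ gives $|a(s)|\le\int_s^\infty r(\sigma)\,d\sigma\to0$.

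The crux is the convergence of $\lambda$ and $z$, which reduces to $(\epsilon(s),\Lambda_y{\sf Q})_2\to0$ and $(\epsilon(s),\pa_{y_j}{\sf Q})_2\to0$ --- exactly the point where the borderline dimension $n=6$ is delicate, since $\Lambda_y{\sf Q}\notin L^{\frac{2n}{n+2}}(\R^6)$. I would treat $(\epsilon(s),\Lambda_y{\sf Q})_2$ by the splitting $\int\epsilon\,\Lambda_y{\sf Q}\,dy=\int_{|y|<R}+\int_{|y|>R}$: the inner integral is $\le\|\epsilon\|_{\frac{2n}{n-2}}\|\Lambda_y{\sf Q}\,{\bf 1}_{|y|<R}\|_{\frac{2n}{n+2}}\le C(\log R)^{2/3}\|\nabla_y\epsilon\|_2$ (the logarithmic growth being the computation already performed in Lemma~\ref{LEMMA_3.7}), while the outer integral is $\le\|\epsilon\|_2\,\|\Lambda_y{\sf Q}\,{\bf 1}_{|y|>R}\|_2\le CR^{-1}\|\epsilon\|_2$ because $\Lambda_y{\sf Q}\in L^2(\R^6)$; as $\|\epsilon(s)\|_2$ is bounded --- precisely where the extra hypothesis $u_0\in L^2$, through Lemma~\ref{LEMMA_3.6}, pays off --- and $\|\nabla_y\epsilon(s)\|_2\to0$, letting $R=R(s)\to\infty$ slowly gives $(\epsilon(s),\Lambda_y{\sf Q})_2\to0$; likewise $(\epsilon(s),\pa_{y_j}{\sf Q})_2=-(\pa_{y_j}\epsilon(s),{\sf Q})_2\to0$ since ${\sf Q}\in L^2(\R^6)$. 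I would then integrate \eqref{EQ_3.29}: its right-hand side $C(a^2+\|\Delta_y\epsilon\|_2^2)$ lies in $L^1((0,\infty))$, so $\log\lambda(s)+\frac{n(n-2)}{4}\sum_j\frac{(\epsilon(s),\pa_{y_j}{\sf Q})_2^2}{\|\nabla_y{\sf Q}\|_2^2\|\Lambda_y{\sf Q}\|_2^2}-\frac{(\epsilon(s),\Lambda_y{\sf Q})_2}{\|\Lambda_y{\sf Q}\|_2^2}$ converges, and since the last two terms tend to $0$, $\log\lambda(s)$ converges; with $\lambda_{\inf}\le\lambda(s)\le\lambda_{\sup}$ its limit is some $\lambda_\infty\in(0,\infty)$. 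For $z$, the integrated identity \eqref{EQ_3.89} from the proof of Lemma~\ref{LEMMA_3.8}, which expresses $z_j(s)-\frac{n\lambda(s)(\epsilon(s),\pa_{y_j}{\sf Q})_2}{\|\nabla_y{\sf Q}\|_2^2}$ as a fixed constant plus $\sum_i\int_{s_1}^s K_i\,\lambda\,ds'$ with $|K_i|\le C(a^2+\|\Delta_y\epsilon\|_2^2+(-H_y\epsilon,\epsilon)_2)\in L^1((0,\infty))$, shows $z_j(s)\to z_{j,\infty}$.

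Combining these, $\|u(t)-\lambda(t)^{-\frac{n-2}{2}}{\sf Q}(\tfrac{\cdot-z(t)}{\lambda(t)})\|_{\dot H^1(\R^n)}\to0$ and $\lambda(t)^{-\frac{n-2}{2}}{\sf Q}(\tfrac{\cdot-z(t)}{\lambda(t)})\to\lambda_\infty^{-\frac{n-2}{2}}{\sf Q}(\tfrac{\cdot-z_\infty}{\lambda_\infty})$ in $\dot H^1(\R^n)$, which is the assertion. The step I expect to be the main obstacle is the limit $(\epsilon(s),\Lambda_y{\sf Q})_2\to0$: for $n\ge7$ it is immediate from $\Lambda_y{\sf Q}\in L^{\frac{2n}{n+2}}$ together with $\|\nabla_y\epsilon\|_2\to0$, whereas at $n=6$ it genuinely requires the a priori $L^2$ control of $\epsilon(s)$ coming from the additional differential equation for $\|\epsilon\|_2$ introduced in Section~\ref{section_3}.
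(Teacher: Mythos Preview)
Your argument is correct and follows the same overall architecture as the paper, but you handle the pivotal step $(\epsilon(s),\Lambda_y{\sf Q})_2\to 0$ differently. The paper proves this by first establishing local uniform convergence $\sup_{|y|<R}|\epsilon(y,s)|\to 0$ via a local parabolic estimate (Lemma~\ref{LEMMA_4.2}), and then splits the inner product as $CR^2\sup_{|y|<R}|\epsilon|+CR^{-(n-4)/2}\|\epsilon\|_2$. You instead reverse the order: you first obtain $\|\nabla_y\epsilon(s)\|_2\to 0$ from the monotonicity inequality \eqref{EQ_3.97} and the integrability of $\|H_y\epsilon\|_2^2$ and $a^4$, and only then treat $(\epsilon,\Lambda_y{\sf Q})_2$ by the H\"older splitting $C(\log R)^{2/3}\|\nabla_y\epsilon\|_2+CR^{-1}\|\epsilon\|_2$, recycling the logarithmic bound from Lemma~\ref{LEMMA_3.7}. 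Your route is more economical---it uses only the energy/modulation estimates already assembled in Section~\ref{section_3} and avoids the parabolic regularity step entirely---while the paper's $L^\infty_{\mathrm{loc}}$ approach yields slightly stronger local information that you do not need here. In both cases the $n=6$ difficulty is resolved by the uniform bound on $\|\epsilon(s)\|_2$ coming from Lemma~\ref{LEMMA_3.6} and Lemma~\ref{LEMMA_3.8}, which is where the hypothesis $u_0\in L^2(\R^6)$ enters.
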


 \begin{lem}
 \label{LEMMA_4.2}
 Let $n=6$ and assume {\rm (c3)}.
 For any fixed $R>0$,
 we have
 \[
 \lim_{s\to\infty}
 \sup_{|y|<R}|\epsilon(y,s)|
 =0.
 \]
 \end{lem}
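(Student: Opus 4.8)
The plan is to first show that the natural energy-type quantities $\|\nabla_y\epsilon(s)\|_2$ and $a(s)$ tend to $0$ as $s\to\infty$ in case (c3), and then to upgrade this to local uniform convergence of $\epsilon$ by a compactness argument resting on uniform parabolic regularity, the latter being available only through the new $L^\infty$ bound of Lemma \ref{LEMMA_3.9}.

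\textbf{Step 1 (decay at the energy level).} In case (c3) all the lemmas of Section \ref{section_3} apply with $(s_1,s_2)=(0,\infty)$, so Lemma \ref{LEMMA_3.1} yields $\int_0^\infty\{\|H_y\epsilon(s)\|_2^2+a(s)^2\}\,ds<\infty$. Since $|a|$ is bounded by \eqref{EQUATION_e3.4} and, by Lemma \ref{LEMMA_3.2}, $|a_s|\le e_0|a|+Ca^2+C\|\nabla_y\epsilon\|_2^2\le C$, the function $a^2$ has bounded derivative and lies in $L^1(0,\infty)$, hence $a(s)\to0$. For $\epsilon$, put $g(s)=-(H_y\epsilon(s),\epsilon(s))_2\ge0$; by \eqref{EQUATION_e3.3} one has $g(s)\simeq\|\nabla_y\epsilon(s)\|_2^2$, and by Lemma \ref{LEMMA_3.10}, $g'(s)\le-\tfrac12\|H_y\epsilon\|_2^2+Ca^4$ with $a^4\le C a^2\in L^1$. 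Adding the tail $\int_s^\infty Ca(s')^4\,ds'$ to $g$ gives a nonincreasing nonnegative function, so $g(s)\to\ell$ for some $\ell\ge0$. Along a sequence $s_k\to\infty$ with $\|H_y\epsilon(s_k)\|_2\to0$ (which exists since $\|H_y\epsilon\|_2\in L^2$), the coercivity \eqref{EQUATION_e3.3} gives $\|\Delta_y\epsilon(s_k)\|_2\to0$, while $\|\epsilon(s_k)\|_2$ is bounded uniformly by Lemma \ref{LEMMA_3.6} together with $\lambda(s)\ge\lambda_{\inf}>0$ from Lemma \ref{LEMMA_3.8}; interpolation then yields $\|\nabla_y\epsilon(s_k)\|_2^2\lesssim\|\epsilon(s_k)\|_2\|\Delta_y\epsilon(s_k)\|_2\to0$, so $\ell=0$ and $\|\nabla_y\epsilon(s)\|_2\to0$. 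By the Sobolev embedding $\dot H^1(\R^6)\hookrightarrow L^3(\R^6)$, this gives $\|\epsilon(\cdot,s)\|_{L^3(\R^6)}\to0$.

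\textbf{Step 2 (compactness and conclusion).} From Lemma \ref{LEMMA_3.8}, $\lambda(s)\in[\lambda_{\inf},\lambda_{\sup}]$ and $|z(s)|$ remains bounded; combined with Lemma \ref{LEMMA_3.9} (in which ${\sf M}_1$ is eventually the constant ${\sf M}((\Delta{\sf t})_1)$) this shows $\|\epsilon(\cdot,s)\|_{L^\infty(\R^6)}\le C$ for all large $s$, hence also $\|u(t)\|_{L^\infty(\R^6)}\le C$ for all large $t$. Since $p=2$, the nonlinearity $|u|^{p-1}u$ is locally Lipschitz, so interior parabolic regularity for $u_t=\Delta u+|u|^{p-1}u$ gives a bound for $u$ in a parabolic H\"older space, uniform on $\R^6\times(t_*,\infty)$ for a suitable $t_*>0$; in particular $\nabla_x u$ is bounded there. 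Passing back through $\epsilon(y,s)=\lambda(t)^{2}u(\lambda(t)y+z(t),t)-{\sf Q}(y)-a(t)\mathcal Y(y)$, and using that $|\lambda(t)y+z(t)|$ stays in a fixed ball for $|y|\le R$, we obtain that $\{\epsilon(\cdot,s)\}_{s\ge s_*}$ is bounded in $C^1(\overline{B_R})$, hence precompact in $C^0(\overline{B_R})$ by Arzel\`a--Ascoli. If the claim failed there would be $\delta>0$ and $s_k\to\infty$ with $\sup_{|y|\le R}|\epsilon(y,s_k)|\ge\delta$; along a subsequence $\epsilon(\cdot,s_k)\to\psi$ in $C^0(\overline{B_R})$ with $\psi\not\equiv0$. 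But $\epsilon(\cdot,s_k)\to0$ in $L^3(B_R)$ by Step 1, and $C^0(\overline{B_R})$-convergence forces $L^3(B_R)$-convergence, so $\psi\equiv0$ --- a contradiction. Therefore $\sup_{|y|<R}|\epsilon(y,s)|\to0$.

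\textbf{Main obstacle.} The hard part will be making the parabolic regularity estimates of Step 2 \emph{uniform over the infinite time interval}; this is exactly where the a priori $L^\infty$ bound furnished by Lemma \ref{LEMMA_3.9} (hence Proposition \ref{PROPOSITION_2.3}) is indispensable, substituting for the $\dot H^2$ control used in \cite{Collot-Merle-Raphael}. A secondary, softer point is bridging the gap between the energy-level decay $\|\nabla_y\epsilon\|_2\to0$ (which by itself does not control pointwise values on $n=6$, where $\dot H^3\not\hookrightarrow L^\infty$) and genuine local uniform decay; the Arzel\`a--Ascoli argument settles this once the uniform local $C^1$ bound is in hand.
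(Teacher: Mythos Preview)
Your argument is correct, but it follows a genuinely different path from the paper's. The paper does not first establish $\|\nabla_y\epsilon(s)\|_2\to 0$ and $a(s)\to 0$ (that is deferred to the proof of Proposition~\ref{PROPOSITION_4.1}); instead it writes \eqref{EQUATION_e3.9} in the form $\epsilon_s=\Delta_y\epsilon+V_1\epsilon+{\bf b}_1\cdot\nabla_y\epsilon+g$, uses Lemmas~\ref{LEMMA_3.8}--\ref{LEMMA_3.9} to bound $V_1,{\bf b}_1$ on $\{|y|<R\}$, and applies a quantitative local $L^\infty$--$L^2$ parabolic estimate to get
\[
\sup_{|y|<R}|\epsilon(y,s)|\le C_R\left(\int_{s-1}^s(a^2+\|\Delta_y\epsilon\|_2^2)\,d\sigma\right)^{1/2},
\]
which tends to $0$ directly because $\int_0^\infty(a^2+\|\Delta_y\epsilon\|_2^2)\,ds<\infty$ by Lemma~\ref{LEMMA_3.1}. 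This is shorter and needs only De~Giorgi--Nash--Moser type bounds. Your route front-loads the energy-decay of Step~1 (which the paper proves anyway, just later) and then passes through $C^1$ Schauder-type regularity plus Arzel\`a--Ascoli; this is perfectly valid and conceptually clean, though it requires slightly heavier regularity machinery than the paper's direct estimate. Both approaches rest essentially on the same key input, namely the uniform $L^\infty$ control from Lemma~\ref{LEMMA_3.9}.
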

 \begin{proof}
 We first rewrite the nonlinear term $N$ defined in \eqref{EQUATION_e3.6} on p.\,\pageref{EQUATION_e3.6} as
 \begin{align*}
 N
 &=
 f({\sf Q}+a{\mathcal Y}+\epsilon)
 -
 f({\sf Q})
 -
 f'({\sf Q})
 (a{\mathcal Y}+\epsilon)
 \\
 &=
 \underbrace{
 f({\sf Q}+a{\mathcal Y}+\epsilon)
 -
 f({\sf Q}+a{\mathcal Y})
 }_{=N_1}
 -
 f'({\sf Q})
 \epsilon
 \\
 &\quad
 +
 \underbrace{
 f({\sf Q}+a{\mathcal Y})
 -
 f({\sf Q})
 -
 f'({\sf Q})
 (a{\mathcal Y})
 }_{N_2}.
 \end{align*}
 We express \eqref{EQUATION_e3.9} as
 \begin{align}
 \nonumber
 \epsilon_s
 &=
 \Delta_y\epsilon
 +
 \{
 \underbrace{
 V+\tfrac{N_1}{\epsilon}-f'({\sf Q})+\tfrac{\lambda_s}{\lambda}\tfrac{n-2}{2}
 }_{=V_1}
 \}
 \epsilon
 +
 \{
 \underbrace{
 \tfrac{\lambda_s}{\lambda}
 y
 +
 \tfrac{1}{\lambda}
 \tfrac{dz}{ds}
 }_{={\bf b}_1}
 \}
 \cdot
 \nabla_y\epsilon
 \\
 \nonumber
 &\quad
 +
 \underbrace{
 \tfrac{\lambda_s}{\lambda}
 (\Lambda_y{\sf Q})
 -
 (
 a_s
 -
 e_0a
 )
 {\mathcal Y}
 +
 \tfrac{\lambda_s}{\lambda}
 a
 (\Lambda_y{\mathcal Y})
 +
 \tfrac{1}{\lambda}
 \tfrac{dz}{ds}\cdot\nabla_y
 {\sf Q}
 }_{=g}
 \\
 \nonumber
 &\quad
 \underbrace{
 +
 \tfrac{a}{\lambda}
 \tfrac{dz}{ds}\cdot\nabla_y
 \mathcal{Y}
 +
 N_2
 }_{=g}
 \\
 \label{EQ_4.3}
 &=
 \Delta_y\epsilon
 +
 V_1\epsilon
 +
 {\bf b}_1
 \cdot
 \nabla_y\epsilon
 +
 g.
 \end{align}
 From Lemma \ref{LEMMA_3.8},
 there exist positive constants $\lambda_{\inf}$ and $\lambda_{\sup}$
 depending on $n$, $\lambda(s)|_{s=0}$ and $\|\epsilon(s)|_{s=0}\|_2$
 such that
 \begin{align}
 \label{EQ_4.4}
 \lambda_{\inf}
 <
 \lambda(s)
 <
 \lambda_{\sup}
 \quad\
 \text{for }
 s\in(0,\infty).
 \end{align}
 Let $(\Delta{\sf t})_1$ be a positive constant given in Lemma \ref{LEMMA_3.9},
 and define
 \begin{align*}
 (\Delta{\sf s})_1
 =
 \int_0^{(\Delta{\sf t})_1}
 \tfrac{dt}{\lambda(t)^2}.
 \end{align*}
 From Lemma \ref{LEMMA_3.9},
 there exists a positive constant $m_{\sup}$
 depending on $n$, $\lambda_{\inf}$ and $\lambda_{\sup}$
 such that
 \begin{align}
 \label{EQ_4.5}
 \|\epsilon(y,s)\|_{L_y^\infty(\R^n)}<m_{\sup}
 \quad\
 \text{for }
 s\in((\Delta{\sf s})_1,\infty).
 \end{align}
 Due to \eqref{EQ_4.4} - \eqref{EQ_4.5} and \eqref{EQ_3.17} - \eqref{EQ_3.18},
 there exists a constant $\bar C>0$ depending on $\lambda_{\inf}$, $\lambda_{\sup}$ and $m_{\sup}$
 such that 
 \begin{align*}
 |V_1(y,s)|
 <
 \bar C
 \quad\
 \text{for }
 (y,s)\in\R^n\times((\Delta{\sf s})_1,\infty),
 \\
 |{\bf b}_1(y,s)|
 <
 \bar CR
 \quad\
 \text{for }
 |y|<R,\ s\in((\Delta{\sf s})_1,\infty).
 \end{align*}
 Hence
 applying a standard local parabolic estimate to \eqref{EQ_4.3},
 we find that
 there exists a constant $\bar D_R>0$ such that
 \begin{align*}
 &
 \sup_{\sigma\in(s,s-\frac{1}{2})}
 \sup_{\xi\in B(y;1)}
 |\epsilon(\xi,\sigma)|
 <
 \bar D_R
 \left(
 \int_{s-1}^s
 \|\epsilon(\xi,\sigma)\|_{L_\xi^2(B(y;2))}^2
 d\sigma
 \right)^\frac{1}{2}
 \\
 &
 +
 \bar D_R
 \left(
 \int_{s-1}^s
 \|g(\xi,\sigma)\|_{L_\xi^\infty(B(y;2))}^2
 d\sigma
 \right)^\frac{1}{2}
 \quad\
 \text{for }
 |y|<R,\ s>(\Delta{\sf s})_1+1.
 \end{align*}
 Since $|N_2|<Ca^2\mathcal{Y}^2$ when $n=6$,
 combining Lemma \ref{LEMMA_3.2},
 we get
 \begin{align*}
 \|g(\xi,\sigma)\|_{L_\xi^\infty(\R^n)}
 &<
 C(
 |\tfrac{\lambda_s}{\lambda}|+|a_s-e_0a|+|\tfrac{1}{\lambda}\tfrac{d\lambda}{ds}|+\tfrac{a}{\lambda}|\tfrac{dz}{ds}|+a^2
 )
 \\
 &<
 C(|a|+\|\Delta_y\epsilon\|_2).
 \end{align*}
 Furthermore
 the Sobolev inequality implies
 \begin{align*}
 \|\epsilon(\xi,\sigma)\|_{L_\xi^2(B(y;2))}^2
 &<
 C
 \|\epsilon(\xi,\sigma)\|_{L_\xi^\frac{2n}{n-4}(B(y;2))}^2
 <
 C
 \|\epsilon(\xi,\sigma)\|_{L_\xi^\frac{2n}{n-4}(\R^n)}^2
 \\
 &<
 C
 \|\Delta_y\epsilon(y,\sigma)\|_{L_y^2(\R^n)}^2.
 \end{align*}
 Hence
 we conclude that
 \begin{align}
 \label{EQ_4.6}
 \sup_{\sigma\in(s,s-\frac{1}{2})}
 \sup_{\xi\in B(y;1)}
 |\epsilon(\xi,\sigma)|
 &<
 \bar D_R
 \sup_{|z|<2R}
 \left(
 \int_{s-1}^s
 (a^2+\|\Delta_y\epsilon\|_2^2)
 d\sigma
 \right)^\frac{1}{2}
 \\
 \nonumber
 &
 \text{for }
 |y|<R,\ s>(\Delta{\sf s})_1+1.
 \end{align}
 From Lemma \ref{LEMMA_3.1},
 we recall that $\int_0^\infty(a^2+\|\Delta_y\epsilon\|_2^2)ds<\infty$.
 Therefore
 the integral on the right hand side of \eqref{EQ_4.6} converges to zero as $s\to\infty$.
 The proof is completed.
 \end{proof}

 \begin{lem}
 \label{LEMMA_4.3}
 Let $n=6$ and assume {\rm (c3)}.
 There exist $\lambda_\infty\in(0,\infty)$ and $z_\infty\in\R^n$
 such that
 \begin{align}
 \label{EQ_4.7}
 \lim_{s\to\infty}
 \lambda(s)=\lambda_\infty,
 \\
 \label{EQ_4.8}
 \lim_{s\to\infty}
 z(s)=z_\infty.
 \end{align}
 \end{lem}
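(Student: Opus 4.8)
The plan is to show that both $\log\lambda(s)$ and each $z_j(s)$ coincide, up to terms that tend to $0$ as $s\to\infty$, with functions whose $s$-derivatives lie in $L^1(0,\infty)$; such functions converge, and hence so do $\lambda(s)$ and $z(s)$. We work on $s\in(0,\infty)$, which is legitimate because in case (c3) the hypotheses (a1)--(a3) hold on $(0,T)=(0,\infty)$ and, by Lemma \ref{LEMMA_3.8}, $0<\lambda_{\inf}\le\lambda(s)\le\lambda_{\sup}<\infty$ with $|z(s)|$ bounded. Two global inputs will be used repeatedly: Lemma \ref{LEMMA_3.1} gives $\int_0^\infty(\|H_y\epsilon\|_2^2+a^2)\,ds<\infty$, hence by \eqref{EQUATION_e3.3} also $\int_0^\infty(\|\Delta_y\epsilon\|_2^2+a^2)\,ds<\infty$; and Lemma \ref{LEMMA_3.6} gives $\sup_s\lambda(s)^2\|\epsilon(s)\|_2^2<\infty$ together with $\int_0^\infty\lambda^2(-H_y\epsilon,\epsilon)_2\,ds<\infty$, which, since $\lambda$ is bounded above and below, yield $\sup_s\|\epsilon(s)\|_2<\infty$ (this is where $u_0\in L^2(\R^n)$ enters, via $\epsilon(0)\in L^2$) and $\int_0^\infty(-H_y\epsilon,\epsilon)_2\,ds<\infty$.

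Next I would record two decay facts. Since $(-H_y\epsilon(s),\epsilon(s))_2\ge0$ has finite integral on $(0,\infty)$ and, by Lemma \ref{LEMMA_3.10}, $\frac{d}{ds}[-(H_y\epsilon,\epsilon)_2]<-\tfrac12\|H_y\epsilon\|_2^2+Ca^4\le C'$ (the last bound because $a$ is bounded by \eqref{EQUATION_e3.4}), a nonnegative function with finite integral and derivative bounded above must tend to $0$; hence $(-H_y\epsilon(s),\epsilon(s))_2\to0$, i.e. $\|\nabla_y\epsilon(s)\|_2\to0$ by \eqref{EQUATION_e3.3}. In particular $|(\epsilon(s),\pa_{y_j}{\sf Q})_2|\le\|\nabla_y\epsilon(s)\|_2\|{\sf Q}\|_2\to0$, using that ${\sf Q}\in L^2(\R^6)$. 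Second, by Lemma \ref{LEMMA_4.2}, $\sup_{|y|<R}|\epsilon(y,s)|\to0$ for every fixed $R$.

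For the scaling parameter, integrating \eqref{EQ_3.29} over $(0,s)$ and using $\int_0^\infty(a^2+\|\Delta_y\epsilon\|_2^2)\,ds<\infty$ shows that
\[
g(s)=\log\lambda(s)+\frac{n(n-2)}{4}\sum_{j=1}^n\frac{(\epsilon(s),\pa_{y_j}{\sf Q})_2^2}{\|\nabla_y{\sf Q}\|_2^2\|\Lambda_y{\sf Q}\|_2^2}-\frac{(\epsilon(s),\Lambda_y{\sf Q})_2}{\|\Lambda_y{\sf Q}\|_2^2}
\]
converges to a finite limit $\ell$ as $s\to\infty$. The middle sum tends to $0$ by the previous paragraph. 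The crux is the last term: because $\Lambda_y{\sf Q}\sim c|y|^{-(n-2)}=c|y|^{-4}$, it does \emph{not} lie in $L^{2n/(n+2)}(\R^6)$, so instead I would split $(\epsilon(s),\Lambda_y{\sf Q})_2=\int_{|y|<R}+\int_{|y|>R}$, bounding the first piece by $\big(\sup_{|y|<R}|\epsilon(y,s)|\big)\|\Lambda_y{\sf Q}\,{\bf 1}_{|y|<R}\|_1$ and the second by $\|\epsilon(s)\|_2\,\|\Lambda_y{\sf Q}\,{\bf 1}_{|y|>R}\|_2\le C\|\epsilon(s)\|_2/R$, the latter $L^2$-norm being finite precisely because $n=6$. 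Choosing $R$ large (using $\sup_s\|\epsilon(s)\|_2<\infty$) and then $s$ large (using Lemma \ref{LEMMA_4.2}) gives $(\epsilon(s),\Lambda_y{\sf Q})_2\to0$. Hence $\log\lambda(s)\to\ell$, and \eqref{EQ_4.7} follows with $\lambda_\infty=e^\ell\in[\lambda_{\inf},\lambda_{\sup}]\subset(0,\infty)$.

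For the translation parameter I would use identity \eqref{EQ_3.89} with $s_1=0$,
\[
z_j(s)-\frac{n\lambda(s)(\epsilon(s),\pa_{y_j}{\sf Q})_2}{\|\nabla_y{\sf Q}\|_2^2}=z_j(0)-\frac{n\lambda(0)(\epsilon(0),\pa_{y_j}{\sf Q})_2}{\|\nabla_y{\sf Q}\|_2^2}+\sum_{i=1}^3\int_0^s K_i(s')\lambda(s')\,ds',
\]
where, by \eqref{EQ_3.90}--\eqref{EQ_3.91} and \eqref{EQ_3.17}, $|K_i(s')|\le C(a^2+\|\Delta_y\epsilon\|_2^2+(-H_y\epsilon,\epsilon)_2)$, which is in $L^1(0,\infty)$ by Lemmas \ref{LEMMA_3.1} and \ref{LEMMA_3.6}; since $\lambda\le\lambda_{\sup}$, the integral term has a finite limit. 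As $\lambda(s)\to\lambda_\infty$ and $(\epsilon(s),\pa_{y_j}{\sf Q})_2\to0$, the left-hand correction vanishes in the limit, so $z_j(s)$ converges, proving \eqref{EQ_4.8}. The only genuinely delicate point in the whole argument is the tail estimate for $(\epsilon(s),\Lambda_y{\sf Q})_2$ in the scaling step — exactly the place where the dimension $n=6$ and the hypothesis $u_0\in L^2$ are needed; everything else is bookkeeping with the estimates already established in Section \ref{section_3}.
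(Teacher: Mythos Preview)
Your proof is correct and follows essentially the same route as the paper: integrate the modulation identity \eqref{EQ_3.29} so that $\log\lambda$ differs from a convergent function only by $(\epsilon,\Lambda_y{\sf Q})_2$ and the $(\epsilon,\pa_{y_j}{\sf Q})_2^2$ terms, show these vanish, and then handle $z$ via the integrated form of \eqref{EQ_3.30} (equivalently \eqref{EQ_3.89}). The crucial step---killing $(\epsilon(s),\Lambda_y{\sf Q})_2$ by splitting at $|y|=R$, using Lemma~\ref{LEMMA_4.2} on the compact part and the uniform $L^2$ bound on $\epsilon$ (from Lemmas~\ref{LEMMA_3.6}--\ref{LEMMA_3.8}) on the tail---is identical to the paper's argument \eqref{EQ_4.10}--\eqref{EQ_4.12}.

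One small methodological difference: to dispose of $(\epsilon(s),\pa_{y_j}{\sf Q})_2$ the paper repeats the same near/far splitting (its \eqref{EQ_4.13}), whereas you first prove $\|\nabla_y\epsilon(s)\|_2\to0$ directly, via the Barbalat-type observation that $(-H_y\epsilon,\epsilon)_2\ge0$ has finite integral and derivative bounded above (from Lemma~\ref{LEMMA_3.10}). This is a legitimate shortcut---it anticipates what the paper establishes only later in the proof of Proposition~\ref{PROPOSITION_4.1}---and it lets you bound $|(\epsilon,\pa_{y_j}{\sf Q})_2|\le\|\nabla_y\epsilon\|_2\|{\sf Q}\|_2$ in one line.
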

 \begin{proof}
 From \eqref{EQ_3.81} - \eqref{EQ_3.83} in Lemma \ref{LEMMA_3.8},
 there exists a sequence $\{s_k\}_{k=1}^\infty$ with $s_k\to\infty$ such that
 \begin{align*}
 \lim_{k\to\infty}
 \lambda(s_k)
 &=
 \lambda_\infty
 \quad\
 \text{for some } \lambda_\infty>0,
 \\
 \lim_{k\to\infty}
 z(s_k)
 &=
 z_\infty
 \quad\
 \text{for some } z_\infty\in\R^n.
 \end{align*}
 Integrating both sides of \eqref{EQ_3.29} from $s_k$ to $s$ ($s_k<s$),
 we get
 \begin{align}
 \nonumber
 &|\log\tfrac{\lambda(s)}{\lambda(s_k)}|
 <
 \tfrac{n(n-2)}{4}
 \sum_{j=1}^n
 \tfrac{(\epsilon(s),\pa_{y_j}{\sf Q})_2^2+(\epsilon(s_k),\pa_{y_j}{\sf Q})_2^2}
 {\|\nabla_y{\sf Q}\|_2^2\|\Lambda_y{\sf Q}\|_2^2}
 \\
 \nonumber
 &\quad
 +
 \tfrac{|(\epsilon(s),\Lambda_y{\sf Q})_2|+|(\epsilon(s_k),\Lambda_y{\sf Q})_2|}{\|\Lambda_y{\sf Q}\|_2^2}
 +
 C
 \int_{s_k}^s
 (a^2+\|\Delta_y\epsilon\|_2^2)
 ds'
 \\
 \label{EQ_4.9}
 &<
 C
 \sup_{s>s_k}
 \left\{
 \sum_{j=1}^n
 (\epsilon(s),\pa_{y_j}{\sf Q})_2^2
 +
 |(\epsilon(s),\Lambda_y{\sf Q})_2|
 \right\}
 \\
 \nonumber
 &\quad
 +
 C
 \int_{s_k}^s
 (a^2+\|\Delta_y\epsilon\|_2^2)
 ds'.
 \end{align}
 Since $|\Lambda_y{\sf Q}(y)|<(1+|y|^2)^{-\frac{n-2}{2}}$,
 we easily see that
 \begin{align}
 \nonumber
 &
 |(\epsilon(s),\Lambda_y{\sf Q})_2|
 <
 C
 \int_{|y|<R}
 |\epsilon(y,s)|
 |\Lambda_y{\sf Q}(y)|
 dy
 \\
 \nonumber
 &\quad
 +
 C
 \int_{|y|>R}
 |\epsilon(y,s)|
 |\Lambda_y{\sf Q}(y)|
 dy
 \\
 \label{EQ_4.10}
 &<
 CR^2
 \sup_{|y|<R}|\epsilon(y,s)|
 +
 C
 R^{-\frac{n-4}{2}}
 \|\epsilon(s)\|_2.
 \end{align}
 From Lemma \ref{LEMMA_3.6} and Lemma \ref{LEMMA_3.8},
 we have
 \begin{align}
 \label{EQ_4.11}
 \sup_{s\in(0,\infty)}\|\epsilon(s)\|_2
 <
 \infty.
 \end{align}
 Therefore
 from \eqref{EQ_4.10} - \eqref{EQ_4.11} and Lemma \ref{LEMMA_4.2},
 we deduce that
 \begin{align}
 \label{EQ_4.12}
 \lim_{s\to\infty}
 |(\epsilon(s),\Lambda_y{\sf Q})_2|
 =0.
 \end{align}
 In the same manner,
 we can verify that
 \begin{align}
 \label{EQ_4.13}
 \lim_{s\to\infty}
 \sum_{j=1}^n
 (\epsilon(s),\pa_{y_j}{\sf Q})_2^2
 =0.
 \end{align}
 Since $\int_0^\infty(a^2+\|\Delta_y\epsilon\|_2^2)ds<\infty$ (see Lemma \ref{LEMMA_3.1}),
 \eqref{EQ_4.12} - \eqref{EQ_4.13} imply that the right hand side of \eqref{EQ_4.9} converges to $0$
 as $k\to\infty$,
 which shows \eqref{EQ_4.7}. 
 We next prove \eqref{EQ_4.8}.
 We rewrite \eqref{EQ_3.30} as
 \begin{align*}
 &
 \left|
 \tfrac{dz_j}{ds}
 -
 \tfrac{n\lambda(\epsilon_s,\pa_{y_j}{\sf Q})_2}{\|\nabla_y{\sf Q}\|_2^2}
 +
 \tfrac{\lambda_s}{\lambda}
 \tfrac{
 n\lambda(\Lambda_y\epsilon,\pa_{y_j}{\sf Q})_2
 }{\|\nabla_y{\sf Q}\|_2^2}
 \right|
 \\
 \nonumber
 &=
 \left|
 \tfrac{dz_j}{ds}
 -
 \tfrac{d}{ds}
 \tfrac{n\lambda(\epsilon,\pa_{y_j}{\sf Q})_2}{\|\nabla_y{\sf Q}\|_2^2}
 +
 \tfrac{\lambda_s}{\lambda}
 \tfrac{n\lambda(\epsilon,\pa_{y_j}{\sf Q})_2}{\|\nabla_y{\sf Q}\|_2^2} 
 +
 \tfrac{\lambda_s}{\lambda}
 \tfrac{
 n\lambda(\Lambda_y\epsilon,\pa_{y_j}{\sf Q})_2
 }{\|\nabla_y{\sf Q}\|_2^2}
 \right|
 \\
 \nonumber
 &<
 C\lambda(a^2+\|\Delta_y\epsilon\|_2^2)
 \quad\
 \text{for }
 j\in\{1,\cdots,n\}.
 \end{align*}
 Integrating both sides from $s_k$ to $s$ ($s_k<s$),
 we get
 \begin{align*}
 &|
 z_j(s)-z_j(s_k)
 |
 <
 \tfrac{n\{\lambda(s)|(\epsilon(s),\pa_{y_j}{\sf Q})_2|+\lambda(s_k)|(\epsilon(s_k),\pa_{y_j}{\sf Q})_2|\}}
 {\|\nabla_y{\sf Q}\|_2^2}
 \\
 &\quad
 +
 \int_{s_k}^s
 |\tfrac{\lambda_s}{\lambda}|
 \{
 \tfrac{n\lambda|(\epsilon,\pa_{y_j}{\sf Q})_2|}{\|\nabla_y{\sf Q}\|_2^2}
 +
 \tfrac{
 n\lambda|(\Lambda_y\epsilon,\pa_{y_j}{\sf Q})_2|
 }{\|\nabla_y{\sf Q}\|_2^2}
 \}
 ds'
 \\
 &\quad
 +
 C
 \int_{s_k}^s
 \lambda(a^2+\|\Delta_y\epsilon\|_2^2)
 ds'
 \\
 &<
 C
 \left(
 \sup_{s\in(0,\infty)}
 \lambda(s)
 \right)
 \left\{
 \sup_{s>s_k}
 (|\epsilon(s)|,|\nabla_y{\sf Q}|)_2
 \right.
 \\
 &\quad
 \left.
 +
 \int_{s_k}^s
 (
 |\tfrac{\lambda_s}{\lambda}|^2
 +
 \|\nabla_y\epsilon\|_2^2
 +
 a^2+\|\Delta_y\epsilon\|_2^2
 )
 \right\}
 ds'.
 \end{align*}
 From Lemma \ref{LEMMA_3.6} and Lemma \ref{LEMMA_3.8},
 we find that
 \begin{align}
 \label{EQ_4.14}
 \int_0^\infty\|\nabla_y\epsilon\|_2^2ds
 <
 \infty.
 \end{align}
 Therefore
 in the same reason as above,
 we conclude \eqref{EQ_4.8}.
 \end{proof}

 \begin{proof}[Proof of Proposition {\rm\ref{PROPOSITION_4.1}}]
 From \eqref{EQ_4.14} and Lemma \ref{LEMMA_3.1},
 we have
 \begin{align}
 \label{equation_4.15}
 \int_0^\infty
 (\|\nabla_y\epsilon(s)\|_2^2+a(s)^2)ds
 <
 \infty.
 \end{align}
 Note from \eqref{EQUATION_e3.3} that
 $(-H_y\epsilon(t),\epsilon(t))_{L_y^2(\R^n)}<\bar C_2\|\nabla_y\epsilon(t)\|_2^2$.
 Therefore
 there exists a sequence $\{s_k\}_{k\in\N}$ satisfing $s_k\to\infty$
 such that
 \begin{align}
 \label{equation_4.16}
 \lim_{k\to\infty}(-H_y\epsilon(s_k),\epsilon(s_k))_2=0
 \quad
 \text{and}
 \quad
 \lim_{k\to\infty}a(s_k)=0.
 \end{align}
 Integrating both sides of \eqref{EQ_3.97} from $s_k$ to $s$ ($s_k<s$),
 we obtain
 \begin{align}
 \label{equation_4.17}
 (-H_y\epsilon(s),\epsilon(s))_2
 <
 (-H_y\epsilon(s_k),\epsilon(s_k))_2
 +
 C
 \int_{s_k}^s
 a^4
 ds'.
 \end{align}
 Therefore
 from \eqref{equation_4.15} - \eqref{equation_4.17},
 we deduce that
 $\lim_{s\to\infty}(-H_y\epsilon(s),\epsilon(s))_2=0$.
 As a consequence,
 it follows from \eqref{EQUATION_e3.3} that
 \begin{align}
 \label{equation_4.18}
 \lim_{s\to\infty}
 \|\nabla_y\epsilon(s)\|_2=0.
 \end{align}
 We recall from \eqref{EQ_3.16} that $|a_s-e_0a|<Ca^2+\|\Delta_y\epsilon\|_2^2$.
 In the same manner as above,
 we can shows that
 \begin{align}
 \label{equation_4.19}
 \lim_{s\to\infty}a(s)=0.
 \end{align}
 Since $u(x,t)$ is expressed as
 $u(x,t)
 =
 \lambda(t)^{-\frac{n-2}{2}}
 \{
 {\sf Q}(y)
 +
 a(t)\mathcal{Y}(y)
 +
 \epsilon(y,t)
 \}$
 with $y=\tfrac{x-z(t)}{\lambda(t)}$,
 from \eqref{EQ_4.7} - \eqref{EQ_4.8} and \eqref{equation_4.18} - \eqref{equation_4.19},
 we conclude
 \begin{align*}
 \lim_{t\to\infty}
 u(x,t)
 =
 \tfrac{1}{\lambda_\infty^\frac{n-2}{2}}{\sf Q}(\tfrac{x-z_\infty}{\lambda_\infty})
 \quad\
 \text{\rm in }
 \dot H^1(\R^n).
 \end{align*}
 This completes the proof of Proposition \ref{PROPOSITION_4.1}.
 \end{proof}

 \subsection{Behavior of a solution for case (c4)}
 \label{section_4.3}
 Let $\alpha$, $\bar\alpha$, $\delta_1$, $\bar\eta$ and $\eta_1$
 be small positive constants
 introduced at the beginning of Section \ref{section_4} (see p.\,\pageref{section_4}).
 We introduce another small positive constant $\delta$ defined by
 \begin{align*}
 \delta
 =
 \delta_1^2.
 \end{align*}
 From (iii) of Lemma \ref{LEMMA_2.7},
 there exists $\eta\in(0,\eta_1)$
 such that
 \begin{align*}
 \|v(y)\|_{\dot H_y^1(\R^n)}^2
 +
 |\mathcal{A}_{\mathcal M}(u)|^2\|\mathcal{Y}\|_{\dot H^1(\R^n)}^2
 &<
 \delta^2
 \quad\
 \text{for all }
 u\in \mathcal{U}_{\eta},
 \end{align*}
 where $v(y)$ is a function as defined in \eqref{equation_Q4.1}.
 Here we have
 \begin{align}
 \label{equation_4.20}
 \begin{cases}
 0<\delta<\delta_1<\alpha<\{\tfrac{\bar\eta}{2},\bar\alpha\}\leq\bar\alpha<h_1^* & \text{and}
 \\
 0<\eta<\eta_1<\bar\eta.
 \end{cases}
 \end{align}
 We now assume
 \begin{align}
 \label{equation_4.21}
 \|\nabla_xu_0(x)-\nabla_x{\sf Q}(x)\|_{L_x^2(\R^n)}
 <
 \eta.
 \end{align}
 This implies $u_0\in\mathcal{U}_{\eta}\subset\mathcal{U}_{\bar\eta}$.
 By the continuity of the solution,
 there exists $t_0>0$ such that
 \[
 u(t)\in\mathcal{U}_{\bar\eta}
 \quad
 \text{for }
 t\in[0,t_0].
 \]
 From the definition of $\bar\eta$ (see the beginning of Section \ref{section_4} p.\,\pageref{section_4}),
 $u(x,t)$ can be decomposed as
 \begin{align}
 \label{equation_4.22}
 u(x,t)
 =
 \lambda(t)^{-\frac{n-2}{2}}
 \{
 {\sf Q}(y)
 +
 a(t)\mathcal{Y}(y)
 +
 \epsilon(y,t)
 \}
 \end{align}
 with
 $y=\tfrac{x-z(t)}{\lambda(t)}$ for $t\in[0,t_1]$.
 Furthermore
 by the choice of $\eta$,
 we have
 \begin{align}
 \label{equation_4.23}
 a(t)^2\|\nabla_y\mathcal{Y}\|_2^2
 +
 \|\nabla_y\epsilon(t)\|_2^2
 <
 \delta^2
 =
 \delta_1^4
 \quad\
 \text{for }
 t=0.
 \end{align}
 For case (c4),
 there exists $T_1\in(0,\infty)$ such that
 \begin{align}
 \label{equation_4.24}
 \begin{cases}
 \text{dist}_{\dot H^1(\R^n)}(u(t),\mathcal{M})<\bar\eta$ \quad for $t\in[0,T_1),
 \\
 \text{dist}_{\dot H^1(\R^n)}(u(t),\mathcal{M})=\bar\eta$ \quad for $t=T_1.
 \end{cases}
 \end{align}
 Suppose that
 \begin{align}
 \label{equation_4.25}
 a(t)^2\|\nabla_y\mathcal{Y}\|_2^2
 +
 \|\nabla_y\epsilon(t)\|_2^2
 <
 \alpha^2
 \quad\
 \text{for }
 t\in[0,T_1].
 \end{align}
 From \eqref{equation_4.22} and \eqref{equation_4.25},
 it holds that
 \begin{align*}
 &\|\nabla_xu(x,t)-\nabla_x\lambda(t)^{-\frac{n-2}{2}}{\sf Q}(\tfrac{x-z(t)}{\lambda(t)})\|_{L_x^2(\R^n)}
 \\
 &\quad
 =
 \|
 \nabla_x\lambda(t)^{-\frac{n-2}{2}}
 \{
 a(t)\mathcal{Y}(\tfrac{x-z(t)}{\lambda(t)})
 +
 \epsilon(\tfrac{x-z(t)}{\lambda(t)},t)
 \}
 \|_{L_x^2(\R^n)}
 \\
 &\quad
 =
 \|
 a(t)\nabla_y\mathcal{Y}(y)
 +
 \nabla_y\epsilon(y,t)
 \|_{L_y^2(\R^n)}
 \\
 &\quad
 <
 |a(t)|
 \cdot
 \|
 \nabla_y\mathcal{Y}
 \|_2
 +
 \|
 \nabla_y\epsilon(t)
 \|_2
 \\
 &\quad
 <
 \sqrt{2}
 \sqrt{
 |a(t)|^2
 \|
 \nabla_y\mathcal{Y}
 \|_2^2
 +
 \|
 \nabla_y\epsilon(t)
 \|_2^2
 }
 \\
 &\quad
 <
 \sqrt{2}\alpha
 \quad\
 \text{for }
 t\in[0,T_1].
 \end{align*}
 Since $\alpha<\frac{\bar \eta}{2}$ (see \eqref{equation_4.20}),
 we obtain
 \begin{align*}
 \|\nabla_xu(x,t)-\nabla_x\lambda(t)^{-\frac{n-2}{2}}{\sf Q}(\tfrac{x-z(t)}{\lambda(t)})\|_{L_x^2(\R^n)}
 <
 \tfrac{\sqrt{2}}{2}
 \bar\eta
 \quad\
 \text{for }
 t\in[0,T_1].
 \end{align*}
 This contradicts \eqref{equation_4.24}.
 Hence
 \eqref{equation_4.25} does not hold.
 Therefore
 under the conditions \eqref{equation_4.21} and \eqref{equation_4.24},
 there exist $T_2$ and $T_3$ satisfying $0<T_2<T_3<T_1$ such that
 \begin{align}
 \label{equation_4.26}
 &
 \begin{cases}
 a(t)^2\|\nabla_y\mathcal{Y}\|_2^2
 +
 \|\nabla_y\epsilon(t)\|_2^2
 <
 \delta_1^2
 \quad\
 \text{for }
 t\in[0,T_2),
 \\
 a(t)^2\|\nabla_y\mathcal{Y}\|_2^2
 +
 \|\nabla_y\epsilon(t)\|_2^2
 =
 \delta_1^2
 \quad\
 \text{for }
 t=T_2,
 \end{cases}
 \\[2mm]
 \label{equation_4.27}
 &
 \begin{cases}
 a(t)^2\|\nabla_y\mathcal{Y}\|_2^2
 +
 \|\nabla_y\epsilon(t)\|_2^2
 <
 \alpha^2
 \quad\
 \text{for }
 t\in[0,T_3),
 \\
 a(t)^2\|\nabla_y\mathcal{Y}\|_2^2
 +
 \|\nabla_y\epsilon(t)\|_2^2
 =
 \alpha^2
 \quad\
 \text{for }
 t=T_3.
 \end{cases}
 \end{align}
 For convenience,
 we write
 \begin{align}
 \label{equation_4.28}
 S_1=s(T_1),
 \quad\
 S_2=s(T_1),
 \quad\
 S_3=s(T_3).
 \end{align}
 From Lemma \ref{LEMMA_3.1},
 we note that
 $\int_0^{S_2}(a^2+\|\Delta_y\epsilon\|_2^2)ds<C\delta_1^2$.
 Integrating both sides of \eqref{EQ_3.97} over $s\in(0,S_2)$,
 and applying \eqref{EQUATION_e3.3}, \eqref{equation_4.23} and \eqref{equation_4.26},
 we get
 \begin{align*}
 \nonumber
 (-H_y\epsilon(S_2),\epsilon(S_2))_2
 &<
 (-H_y\epsilon(s),\epsilon(s))_2|_{s=0}
 +
 C
 \int_0^{S_2}
 a^4ds'
 \\
 \nonumber
 &<
 \bar C_2
 \|\nabla_y\epsilon(s) |_{s=0}\|_2^2
 +
 \tfrac{C\delta_1^2}{\|\nabla_y\mathcal{Y}\|_2^2}
 \int_0^{S_2}
 a^2ds'
 \\
 &<
 \bar C_2
 \delta_1^4
 +
 C
 \delta_1^4.
 \end{align*}
 Hence
 from the choice of $T_2$ (see \eqref{equation_4.26}),
 there exists a constant $C>0$ depending only on $n$ such that
 \begin{align}
 \label{equation_4.29}
 \begin{cases}
 \|\nabla_y\epsilon(s)\|_2^2
 <
 C\delta_1^4
 &
 \text{for }
 s=S_2,
 \\
 \delta_1^2
 -
 C\delta_1^4
 <
 a(s)^2\|\nabla_y\mathcal{Y}\|_2^2
 <
 \delta_1^2
 &
 \text{for }
 s=S_2.
 \end{cases}
 \end{align}
 By using
 \eqref{EQUATION_3.14} - \eqref{EQUATION_3.15} and \eqref{equation_4.29},
 we can estimate the energy value $E[u(T_2)]$ as
 \begin{align*}
 &
 E[u(T_2)]
 \\
 &=
 E[{\sf Q}]
 -
 \tfrac{e_0}{2}
 a(T_2)^2
 \|\mathcal{Y}\|_2^2
 +
 \tfrac{1}{2}
 \underbrace{
 (-H_y\epsilon(T_2),\epsilon(T_2))_2
 }_{<\bar C_2\|\epsilon(T_2)\|_2^2 \text{ (see \eqref{EQUATION_e3.3})}}
 -
 R(T_2)
 \\
 &<
 E[{\sf Q}]
 -
 \tfrac{e_0}{2}
 a(T_2)^2
 \|\mathcal{Y}\|_2^2
 +
 \tfrac{\bar C_2}{2}
 \|\nabla_y\epsilon(T_2)\|_2^2
 +
 C(|a(T_2)|^3+\|\nabla_y\epsilon(T_2)\|_2^3)
 \\
 &<
 E[{\sf Q}]
 -
 \tfrac{e_0}{2}
 (\delta_1^2-C\delta_1^4)
 +
 C
 \delta_1^4
 +
 C\delta_1^3
 +
 C\delta_1^6
 )
 \quad\
 \text{when }
 n=6.
 \end{align*}
 Therefore
 there exists $\delta_1^*>0$ depending only on $n$ such that
 if $\delta_1\in(0,\delta_1^*)$,
 then
 \begin{align}
 \label{equation_4.30}
 E[u(T_2)]<E[{\sf Q}].
 \end{align}
 Furthermore
 we see from \eqref{equation_4.22} that
 \begin{align}
 \nonumber
 &
 I[u(T_2)]
 =
 -
 \int_{\R^n}
 |\nabla_xu(x,T_2)|^2
 dx
 +
 \int_{\R^n}
 |u(x,T_2)|^{p+1}
 dx
 \\
 \nonumber
 &=
 -
 \int_{\R^n}
 |\nabla_y\{{\sf Q}(y)+a(T_2)\mathcal{Y}(y)+\epsilon(y,T_2)\}|^2
 dy
 \\
 \nonumber
 &\quad
 +
 \int_{\R^n}
 |{\sf Q}(y)+a(T_2)\mathcal{Y}(y)+\epsilon(y,T_2)|^{p+1}
 dy
 \\
 \label{equation_4.31}
 &=
 (p-1)
 \left(
 a(T_2)
 \int_{\R^n}
 {\sf Q}^p
 \mathcal{Y}
 dy
 +
 \int_{\R^n}
 {\sf Q}^p
 \epsilon(T_2)
 dy
 \right)
 \\
 \nonumber
 &\quad
 -
 a(T_2)^2\|\nabla_y\mathcal{Y}\|_2^2
 -
 \|\nabla_y\epsilon(T_2)\|_2^2
 +
 N_I.
 \end{align}
 The nonlinear term $N_I$ is given by
 \begin{align}
 \label{equation_4.32}
 N_I
 &=
 \int_{\R^n}
 |{\sf Q}+a(T_2)\mathcal{Y}+\epsilon(T_2)|^{p+1}
 dy
 -
 \int_{\R^n}
 {\sf Q}^{p+1}
 dy
 \\
 \nonumber
 &\quad
 -
 (p+1)
 \int_{\R^n}
 {\sf Q}^p
 \{a(T_2)\mathcal{Y}+\epsilon(T_2)\}
 dy.
 \end{align}
 The second term on the right hand side of \eqref{equation_4.31}
 is bounded by
 \begin{align}
 \label{equation_4.33}
 \int_{\R^n}
 {\sf Q}^p
 \epsilon(T_2)
 dy
 &<
 \|{\sf Q}\|_{p+1}^p
 \|\epsilon(T_2)\|_{p+1}
 <
 C
 \|\nabla_y\epsilon(T_2)\|_2.
 \end{align}
 Furthermore
 $N_I$ given by \eqref{equation_4.32} can be estimated as
 \begin{align}
 \nonumber
 &
 |N_I|
 <
 C
 \int_{\R^n}
 \{
 {\sf Q}^{p-1}
 (a(T_2)\mathcal{Y}+\epsilon)^2
 +
 C|a(T_2)\mathcal{Y}+\epsilon|^{p+1}
 \}
 dy
 \\
 \nonumber
 &<
 C
 (
 a(T_2)^2
 +
 \|{\sf Q}\|_{p+1}^{p-1}
 \|\epsilon(T_2)\|_{p+1}^2
 +
 \underbrace{
 |a(T_2)|^{p+1}
 }_{<Ca(T_2)^2}
 +
 \underbrace{
 \|\epsilon(T_2)\|_{p+1}^{p+1}
 }_{<C\|\epsilon(T_2)\|_{p+1}^2}
 )
 \\
 \label{equation_4.34}
 &<
 C
 (
 a(T_2)^2
 +
 \|\nabla_y\epsilon(T_2)\|_2^2
 ).
 \end{align}
 Therefore
 substituting \eqref{equation_4.33} - \eqref{equation_4.34} into \eqref{equation_4.31},
 and combining \eqref{equation_4.29},
 we obtain
 \begin{align}
 I[u(T_2)]
 \label{equation_4.35}
 &=
 (p-1)
 \left(
 a(T_2)
 \int_{\R^n}
 {\sf Q}^p
 \mathcal{Y}
 dy
 +
 h_1(T_2)
 \right)
 +
 h(T_2)
 \end{align}
 with
 \begin{align*}
 |h_1(T_2)|
 &<
 C\|\nabla_y\epsilon(T_2)\|_2
 <
 C\delta_1^2,
 \\
 |h_2(T_2)|
 &<
 Ca(T_2)^2
 +
 \|\nabla_y\epsilon(T_2)\|_2^2
 +
 |N_I|
 \\
 &<
 Ca(T_2)^2
 +
 C\delta_1^4.
 \end{align*}
 From \eqref{equation_4.29},
 we note that $\frac{\delta_1}{2}<a(T_2)\|\nabla_y\mathcal Y\|_2<\delta_1$
 if $\delta_1$ is sufficiently small.
 Therefore
 it follows from \eqref{equation_4.35} that
 \begin{align}
 \label{equation_4.36}
 I[u(T_2)]<0
 \quad\
 \text{if } a(T_2)<0.
 \end{align}
 Since we have \eqref{equation_4.30} and \eqref{equation_4.36},
 Theorem 1.1 in \cite{Ishiwata} (p.\,352) implies that
 if $a(T_2)<0$,
 then $u(x,t)$ is globally defined and satisfies
 \begin{align*}
 \lim_{t\to\infty}
 \|\nabla_xu(x,t)\|_2=0.
 \end{align*}
 On the other hand,
 according to Theorem 1.2 in \cite{Ishiwata} (p.\,352),
 if $a(T_2)>0$,
 then $u(x,t)$ blows up in finite time.
 Consequently
 since we are now assuming (c4),
 $a(T_2)$ must be negative.
 To summarize this subsection,
 we obtain the following.
 \begin{pro}
 \label{PROPOSITION_4.6}
 Let $n=6$ and assume {\rm (c4)} on {\rm p.\,\pageref{(c4)}}.
 There exists a constant $\delta_1^*>0$ depending only on $n$ such that
 if $\delta_1\in(0,\delta_1^*)$,
 then $u(x,t)$ is globally defined and satisfies
 \begin{align*}
 \lim_{t\to\infty}
 u(x,t)
 =
 0
 \quad\
 \text{\rm in }
 \dot H^1(\R^n).
 \end{align*}
 \end{pro}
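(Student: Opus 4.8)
The plan is to apply a variational dichotomy below the ground-state energy level, but not at $t=0$: instead, at the intermediate time $T_2$ at which the solution has drifted a controlled $\dot H^1$-distance $\delta_1$ from $\mathcal{M}$ along the unstable mode $\mathcal{Y}$, I would fix the energy and a Nehari-type functional, and then use the hypothesis $T=\infty$ built into (c4) to pin down the sign of the drift and thereby land in the dissipative branch of Ishiwata's dichotomy.

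First I would fix the constants as in the preamble of Section~\ref{section_4.3}, so that $u_0\in\mathcal{U}_\eta$ yields the decomposition \eqref{equation_4.22} on a maximal interval and, under (c4), there is $T_1$ with $\mathrm{dist}_{\dot H^1}(u(T_1),\mathcal{M})=\bar\eta$. A triangle-inequality computation shows that if $a(t)^2\|\nabla_y\mathcal{Y}\|_2^2+\|\nabla_y\epsilon(t)\|_2^2$ remained $<\alpha^2$ on $[0,T_1]$ (with $\alpha<\bar\eta/2$), then $\mathrm{dist}_{\dot H^1}(u(T_1),\mathcal{M})<\bar\eta/\sqrt2$, a contradiction; hence this quantity attains $\alpha^2$ at some $T_3<T_1$, and since it is $<\delta_1^2$ at $t=0$ it first reaches the level $\delta_1^2$ at some $T_2\in(0,T_3)$. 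Putting $S_2=s(T_2)$, Lemma~\ref{LEMMA_3.1} gives $\int_0^{S_2}(a^2+\|\Delta_y\epsilon\|_2^2)\,ds<C\delta_1^2$; integrating the differential inequality of Lemma~\ref{LEMMA_3.10} from $0$ to $S_2$, and using the coercivity \eqref{EQUATION_e3.3} with the initial smallness \eqref{equation_4.23}, yields $\|\nabla_y\epsilon(S_2)\|_2^2<C\delta_1^4$. Combined with the constraint $a(T_2)^2\|\nabla_y\mathcal{Y}\|_2^2+\|\nabla_y\epsilon(T_2)\|_2^2=\delta_1^2$ this isolates the unstable mode: $\delta_1^2-C\delta_1^4<a(T_2)^2\|\nabla_y\mathcal{Y}\|_2^2<\delta_1^2$, so $a(T_2)\ne0$, $|a(T_2)|\sim\delta_1/\|\nabla_y\mathcal{Y}\|_2$, and the remainder $\epsilon(T_2)$ is an order smaller than $|a(T_2)|$.

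Next I would evaluate two functionals at $T_2$. Feeding the bounds just obtained into the energy expansion \eqref{EQUATION_3.14}--\eqref{EQUATION_3.15} gives $E[u(T_2)]=E[{\sf Q}]-\tfrac{e_0}{2}a(T_2)^2\|\mathcal{Y}\|_2^2$ up to an error $<C\delta_1^3$, so there is $\delta_1^*>0$, depending only on $n$, such that $\delta_1<\delta_1^*$ forces $E[u(T_2)]<E[{\sf Q}]$ (and, shrinking $\delta_1^*$ if necessary, also $\tfrac{\delta_1}{2}<|a(T_2)|\,\|\nabla_y\mathcal{Y}\|_2<\delta_1$). For the functional $I[u]=-\|\nabla_x u\|_2^2+\|u\|_{p+1}^{p+1}$, expanding around ${\sf Q}$ as in \eqref{equation_4.31}--\eqref{equation_4.34} gives $I[u(T_2)]=(p-1)\,a(T_2)\!\int_{\R^n}{\sf Q}^p\mathcal{Y}\,dy$ up to an error $<C\delta_1^2$; since $\int_{\R^n}{\sf Q}^p\mathcal{Y}\,dy>0$ and $|a(T_2)|\gtrsim\delta_1$, the sign of $I[u(T_2)]$ is that of $a(T_2)$.

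Finally I would invoke Ishiwata's dichotomy for data below $E[{\sf Q}]$: $I<0$ implies the solution is global with $\|\nabla_x u(t)\|_2\to0$ (Theorem~1.1 in \cite{Ishiwata}), while $I>0$ implies finite-time blowup (Theorem~1.2 in \cite{Ishiwata}). Since (c4) assumes $T=\infty$, the blowup alternative is impossible, so $I[u(T_2)]\le0$; because $a(T_2)\ne0$, this forces $a(T_2)<0$ and $I[u(T_2)]<0$, and applying Theorem~1.1 of \cite{Ishiwata} with initial time $T_2$ gives $\lim_{t\to\infty}\|\nabla_x u(t)\|_2=0$, i.e.\ $u(t)\to0$ in $\dot H^1(\R^n)$ as claimed. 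The one genuinely delicate ingredient — established in Section~\ref{section_3} — is the bound $\|\nabla_y\epsilon(T_2)\|_2^2\ll\delta_1^2$ with a constant uniform in all parameters, which is what makes $a(T_2)$ the dominant term; this rests on the closed system of estimates for $\lambda$, $z$, $a$, $\|\nabla_y\epsilon\|_2$ and $\|\epsilon\|_2$ (Lemmas~\ref{LEMMA_3.1}--\ref{LEMMA_3.10}) that, in dimension $6$, substitutes for the $\dot H^2$ control available when $n\ge7$.
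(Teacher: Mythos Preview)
Your proposal is correct and follows essentially the same approach as the paper: locate the intermediate time $T_2$ at which $a^2\|\nabla_y\mathcal Y\|_2^2+\|\nabla_y\epsilon\|_2^2$ first reaches $\delta_1^2$, use Lemmas~\ref{LEMMA_3.1} and~\ref{LEMMA_3.10} together with the initial bound \eqref{equation_4.23} to show $\|\nabla_y\epsilon(T_2)\|_2^2<C\delta_1^4$ so that $|a(T_2)|\sim\delta_1$, expand $E[u(T_2)]$ via \eqref{EQUATION_3.14}--\eqref{EQUATION_3.15} and $I[u(T_2)]$ via \eqref{equation_4.31}--\eqref{equation_4.35} to see that $E[u(T_2)]<E[{\sf Q}]$ and $\mathrm{sign}\,I[u(T_2)]=\mathrm{sign}\,a(T_2)$, and then invoke Ishiwata's dichotomy together with the assumption $T=\infty$ from (c4) to force $a(T_2)<0$ and conclude dissipation. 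The steps, the lemmas cited, and the logical flow match the paper's argument in Section~\ref{section_4.3}.
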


 \subsection{Behavior of a solution for case (c2)}
 \label{section_4.4}
 This subsection is a continuation of Section \ref{section_4.3}.
 Here
 we address the case
 \[
 a(T_2)>0
 \]
 in \eqref{equation_4.35}.
 In this case,
 as mentioned above,
 $u(x,t)$ blows up in finite time.
 A goal of this subsection is to provide a proof of Proposition \ref{PROPOSITION_4.7}.
 \begin{pro}
 \label{PROPOSITION_4.7}
 Let $n=6$ and assume {\rm (c2)} on {\rm p.\,\pageref{(c4)}}.
 There exists $\alpha^*>0$ depending only on $n$,
 such that
 for any $\alpha\in(0,\alpha^*)$,
 there exists $\delta_1^*=\delta_1^*(\alpha)>0$ such that
 if $\delta_1\in(0,\delta_1^*)$,
 then $u(x,t)$ blows up in finite time $T(u)\in(0,\infty)$,
 and satisfies
 \begin{align*}
 \sup_{t\in(0,T(u))}
 (T(u)-t)^\frac{1}{p-1}
 \|u(x,t)\|_{L_x^\infty(\R^n)}
 <
 \infty.
 \end{align*}
 \end{pro}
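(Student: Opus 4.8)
By the dichotomy of Ishiwata (Theorem~1.2 in \cite{Ishiwata}, already invoked in Section~\ref{section_4.3} in the regime $a(T_2)>0$), the solution blows up at some finite time $T(u)\in(0,\infty)$, so the only remaining point is the type~I upper bound. To obtain it, the plan is to follow the scheme of \cite{Collot-Merle-Raphael}: first show that at the escape time $T_3$ the rescaled solution is, in $\dot H^1(\R^n)$, a small perturbation of the ancient solution ${\sf Q}^+$ of Lemma~\ref{LEMMA_2.9} evaluated at a suitable negative time, and then transfer the nondegenerate ODE type~I blowup of ${\sf Q}^+$ to $u$ by a modulation/bootstrap argument in which the missing $\dot H^2$ control of \cite{Collot-Merle-Raphael} is replaced by the $L^2$ machinery of Section~\ref{section_3} and the $L^\infty$ bound of Proposition~\ref{PROPOSITION_2.3}.

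\medskip
\noindent\textbf{Step 1: the state at $T_3$.} Working on $s\in(0,S_3)$ in the regime $a(T_2)>0$ of \eqref{equation_4.35}, I would pin down the decomposition at the escape time. Since $a(s)^2\|\nabla_y\mathcal Y\|_2^2<\alpha^2$ on $[0,T_3]$ and $\int_0^{S_3}(a^2+\|\Delta_y\epsilon\|_2^2)\,ds<C\alpha^2$ by Lemma~\ref{LEMMA_3.1}, one has $\int_{S_2}^{S_3}a^4\,ds<C\alpha^4$; integrating \eqref{EQ_3.97} from $S_2$ to $S_3$ and using $\|\nabla_y\epsilon(S_2)\|_2^2<C\delta_1^4$ from \eqref{equation_4.29} gives $\|\nabla_y\epsilon(S_3)\|_2^2<C(\delta_1^4+\alpha^4)$, which is $\ll\alpha^2$ once $\alpha$ and $\delta_1$ are small with $\delta_1\ll\sqrt\alpha$. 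Hence by \eqref{equation_4.27} the unstable amplitude dominates: $a_3:=a(S_3)>0$ and $a_3\|\nabla_y\mathcal Y\|_2=\alpha(1+o(1))$. Lemma~\ref{LEMMA_3.8} provides $\lambda_3:=\lambda(T_3)\in(0,\infty)$ and $z_3:=z(T_3)\in\R^n$, Lemma~\ref{LEMMA_3.6} a bound for $\|\epsilon(S_3)\|_2$, and Lemma~\ref{LEMMA_3.9} (via Proposition~\ref{PROPOSITION_2.3}) a bound for $\|\epsilon(S_3)\|_\infty$, all depending only on $n$ once $\delta_1$ is fixed.

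\medskip
\noindent\textbf{Step 2: matching with ${\sf Q}^+$.} Put $\tilde u(x)=\lambda_3^{(n-2)/2}u(\lambda_3x+z_3,T_3)={\sf Q}(x)+a_3\mathcal Y(x)+\tilde\epsilon(x)$ with $\tilde\epsilon=\epsilon(\cdot,S_3)$. By Lemma~\ref{LEMMA_2.9}(i2), ${\sf Q}^+(x,t^-)={\sf Q}(x)+\epsilon_1e^{e_0t^-}\mathcal Y(x)+v(x,t^-)$ with $\|v(t^-)\|_2+\|v(t^-)\|_\infty+\|\nabla_xv(t^-)\|_2<C\epsilon_1^2e^{2e_0t^-}$ for $t^-<0$. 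Choosing $t^-$ by $\epsilon_1e^{e_0t^-}=a_3$, which is admissible for $\alpha$ small since $0<a_3\le C\alpha<\epsilon_1$, one gets
\begin{align*}
\|\tilde u-{\sf Q}^+(\cdot,t^-)\|_{\dot H^1(\R^n)}
\le
\|\nabla_y\tilde\epsilon\|_2+\|\nabla_xv(t^-)\|_2
<
C(\delta_1^2+\alpha^2),
\end{align*}
while $\|\tilde u-{\sf Q}^+(\cdot,t^-)\|_{L^2(\R^n)}$ and $\|\tilde u-{\sf Q}^+(\cdot,t^-)\|_{L^\infty(\R^n)}$ stay bounded by a constant depending only on $n$. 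Undoing the rescaling, $u(\cdot,T_3)$ is an $\dot H^1$-small (and $L^2,L^\infty$-bounded) perturbation of $\lambda_3^{-(n-2)/2}{\sf Q}^+\bigl(\tfrac{\cdot-z_3}{\lambda_3},t^-\bigr)$.

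\medskip
\noindent\textbf{Step 3: transfer of the rate, and the main obstacle.} For $t>T_3$ I would use the modulated ansatz $u(x,t)=\mu(t)^{-(n-2)/2}{\sf Q}^+\bigl(\tfrac{x-\zeta(t)}{\mu(t)},\tau(t)\bigr)+w(x,t)$, with $(\mu,\zeta,\tau)$ fixed by orthogonality conditions on $w$ of the type \eqref{EQUATION_e3.2}, and close a bootstrap: the modulation equations are controlled as in Lemmas~\ref{LEMMA_3.2}--\ref{LEMMA_3.4}, the size of $w$ is propagated in $\dot H^1$ by Lemma~\ref{LEMMA_3.10}, in $L^2$ by Lemmas~\ref{LEMMA_3.5}--\ref{LEMMA_3.6}, and in $L^\infty$ by applying Proposition~\ref{PROPOSITION_2.3} to the rescaled equation on short time windows. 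One shows $w$ stays small and $\mu(t)$ stays between two positive constants while $\tau(t)$ increases to the blowup time $T^+$ of ${\sf Q}^+$; since $d\tau/dt=\mu(t)^{-2}$ is comparable to $1$, $\tau(t)\to T^+$ as $t\to T(u)$ and $T^+-\tau(t)\asymp T(u)-t$. As ${\sf Q}^+$ is type~I, $\|{\sf Q}^+(\tau)\|_\infty\le C(T^+-\tau)^{-1/(p-1)}$, whence $\|u(t)\|_\infty\le C\mu(t)^{-(n-2)/2}(T^+-\tau(t))^{-1/(p-1)}+\|w(t)\|_\infty\le C(T(u)-t)^{-1/(p-1)}$, which is the claim. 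The hard part will be closing this last bootstrap without an $\dot H^2$ bound on $w$: this is exactly where the new $L^2$ identity of Lemma~\ref{LEMMA_3.5} and the $L^\infty$ estimate of Proposition~\ref{PROPOSITION_2.3} are indispensable, and one must also verify that the coercivity behind Lemma~\ref{LEMMA_2.5} persists uniformly for the linearization around ${\sf Q}^+(\tau)$ as $\tau\to T^+$ — which should follow since, in self-similar variables, the corresponding potential is a uniformly small perturbation of $p{\sf Q}^{p-1}$ at unit scale, together with the nondegeneracy recorded in Lemma~\ref{LEMMA_2.9}(i7) (Theorem~2 in \cite{Harada_ODE}).
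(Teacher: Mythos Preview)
Your Step~3 contains a genuine gap. The modulation you propose around ${\sf Q}^+(\cdot,\tau)$ for $t>T_3$ cannot be closed with the tools of Section~\ref{section_3}: those lemmas (coercivity in Lemma~\ref{LEMMA_2.5}, the $L^2$ identity in Lemma~\ref{LEMMA_3.5}, etc.) are written for the linearization $H_y=\Delta_y+p{\sf Q}^{p-1}$ around the \emph{static} ground state, not around the time-dependent ${\sf Q}^+(\tau)$. Your suggestion that in self-similar variables the potential $p|{\sf Q}^+(\tau)|^{p-1}$ remains a uniformly small perturbation of $p{\sf Q}^{p-1}$ is incorrect: as $\tau\to T^+$, ${\sf Q}^+$ concentrates towards the ODE profile and $p|{\sf Q}^+(\tau)|^{p-1}\sim p(T^+-\tau)^{-1}$ on a region whose self-similar radius is $O(1)$, so the spectral picture changes completely. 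What you would really need is a stability theorem for the nondegenerate ODE blowup, and that is precisely the content of Theorem~2 in \cite{Harada_ODE} --- which should be used as a black box, not reproved via Section~\ref{section_3}.

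Relatedly, your Step~2 matching is too weak for this purpose. Matching at the exit time $T_3$ leaves an $\dot H^1$ error of size $C\alpha^2$ (and only $L^\infty$ \emph{boundedness}, not smallness); this error does not shrink with $\delta_1$, so you cannot guarantee that it lies inside the (possibly $\alpha$-dependent) stability neighborhood of ${\sf Q}^+$. The paper avoids both problems by a different route: it rescales around $T_2$, defines $\hat u(\xi,\sigma)$ and $U(\xi,\sigma;\delta_1)={\sf Q}^+(\xi,t_1+\sigma)$ with $\epsilon_1e^{e_0t_1}=\delta_1/\|\nabla_y\mathcal Y\|_2$, and studies the difference $J=\hat u-U$ \emph{on the whole interval} $(0,\sigma_3)$ while both functions are still close to ${\sf Q}$. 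A bootstrap on the fixed (non-modulated) decomposition of $J$ into $(a^{(J)},b_j^{(J)},J^\perp)$ --- driven by the linearization around ${\sf Q}$ --- yields $|a^{(J)}|+\sum_j|b_j^{(J)}|+\|\nabla_\xi J^\perp\|_2<\delta_1^{3/2}e^{e_0\sigma}$, and a local parabolic estimate then upgrades this to $\|J(\hat\sigma_1)\|_{L^\infty}<C\sqrt{\delta_1}\,\alpha$ at the time $\hat\sigma_1=\tfrac{1}{e_0}\log\tfrac{\alpha}{2\delta_1}$. Since ${\sf Q}^+(\cdot,t_1+\hat\sigma_1)={\sf Q}^+(\cdot,\hat\sigma_0)$ with $\hat\sigma_0$ depending only on $\alpha$, the stability threshold $d_1(\alpha)$ from Theorem~2 of \cite{Harada_ODE} is fixed once $\alpha$ is, and one simply takes $\delta_1$ small enough so that $C\sqrt{\delta_1}\,\alpha<d_1(\alpha)$. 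That theorem then delivers the type~I rate directly.
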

 \noindent
 {\bf Notation.}
 Note that the constant $C$ represents a generic positive constant,
 which may vary from line to line, but is independent of both $\alpha$ and $\delta_1$.

 First we claim that (see \eqref{equation_4.26} - \eqref{equation_4.28} for the definition of $S_2,S_3$)
 \begin{align}
 \label{equation_4.37}
 (-H_y\epsilon(s),\epsilon(s))_2<\alpha a(s)^2
 \quad\
 \text{for } s\in(S_2,S_3).
 \end{align}
 From \eqref{equation_4.29},
 we observe that
 \[
 (-H_y\epsilon(s),\epsilon(s))_2
 <
 C\delta_1^4
 <
 C\delta_1^2a(s)^2
 \ll
 \alpha a(s)^2
 \quad\
 \text{at }
 s=S_2
 \]
 if $\delta_1$ is sufficiently small.
 By the continuity of the solution,
 there exists $\Delta S>0$ such that
 \begin{align*}
 (-H_y\epsilon(s),\epsilon(s))_2<\alpha a(s)^2
 \quad\
 \text{for } s\in(S_2,S_2+\Delta S).
 \end{align*}
 Suppose that there exists $S_4\in(S_2,S_3)$ such that
 \begin{align}
 \label{equation_4.38}
 \begin{cases}
 (-H_y\epsilon(s),\epsilon(s))_2<\alpha a(s)^2
 &
 \text{for } s\in(S_2,S_4),
 \\
 (-H_y\epsilon(s),\epsilon(s))_2=\alpha a(s)^2
 &
 \text{for } s=S_4.
 \end{cases}
 \end{align}
 From \eqref{EQ_3.16},
 it follows that
 \begin{align*}
 |a_s-e_0a|
 <
 Ca^2+C\|\nabla_y\epsilon\|_2^2
 <
 Ca^2
 \quad\
 \text{for }
 s\in(S_2,S_4).
 \end{align*}
 Hence
 from Lemma \ref{LEMMA_3.10},
 we have
 \begin{align*}
 \tfrac{d}{ds}
 &
 \{
 (-H_y\epsilon,\epsilon)_2
 -
 \alpha
 a^2
 \}
 =
 \tfrac{d}{ds}
 (-H_y\epsilon,\epsilon)_2
 -
 2\alpha
 a
 a_s
 \\
 &<
 -
 \tfrac{1}{2}
 \|H_y\epsilon\|_2^2
 +
 Ca^4
 -
 2e_0
 \alpha
 a^2
 +
 2
 \alpha
 |a|\cdot|a_s-e_0a|
 \\
 &<
 -
 \tfrac{1}{2}
 \|H_y\epsilon\|_2^2
 +
 \alpha
 a^2
 (-2e_0+\tfrac{Ca^2}{\alpha}+C|a|)
 \quad\
 \text{for }
 s\in(S_2,S_4).
 \end{align*}
 Since
 $|a(s)|\cdot\|\nabla_y\mathcal{Y}\|_2<\alpha$ for $s\in(S_2,S_3)$,
 we deduce that
 \begin{align*}
 \tfrac{d}{ds}
 \{
 (-H_y\epsilon,\epsilon)_2
 -
 \alpha
 a^2
 \}
 <
 \alpha
 a^2
 (
 -2e_0+C\alpha
 )
 <0
 \quad\
 \text{for }
 s\in(S_2,S_4)
 \end{align*}
 if $\alpha<\tfrac{e_0}{C}$.
 Integrating both sides,
 we obtain
 \begin{align*}
 (-H_y\epsilon(S_4)&,\epsilon(S_4))_2
 -
 \alpha
 a(S_4)^2
 \\
 &<
 (-H_y\epsilon(S_2),\epsilon(S_2))_2
 -
 \alpha
 a(S_2)^2
 <0.
 \end{align*}
 This contradicts \eqref{equation_4.38},
 which proves \eqref{equation_4.37}.
 From \eqref{equation_4.37}, \eqref{EQ_3.16} and \eqref{equation_4.29},
 there exist constants $C_1>0$ and $C_2>0$ depending only on $n$ such that
 \begin{align}
 \label{equation_4.39}
 \begin{cases}
 |a_s-e_0a|<C_1a^2
 \quad\
 \text{for } s\in(S_2,S_3),
 \\
 |a(S_2)-\frac{\delta_1}{\|\nabla_y\mathcal{Y}\|_2}|<\frac{C_2\delta_1^2}{\|\nabla_y\mathcal Y\|_2^2}.
 \end{cases}
 \end{align}
 Here we used the fact that $a(S_2)>0$
 (see the beginning of this subsection p.\,\pageref{section_4.4}).
 Using the constants $C_1$ and $C_2$ in \eqref{equation_4.39}, we define
 \[
 K_1=2(\tfrac{5}{4}\tfrac{C_1}{e_0}+C_2).
 \]
 We now prove the following inequalities.
 \begin{align}
 \label{equation_4.40}
 \begin{cases}
 |a(s)-\tfrac{\delta_1}{\|\nabla_y\mathcal Y\|_2}e^{e_0(s-S_2)}|
 <
 \tfrac{K_1\delta_1^2}{\|\nabla_y\mathcal Y\|_2^2}
 e^{2e_0(s-S_2)}
 \quad\
 \text{for }
 s\in(S_2,S_3),
 \\
 \tfrac{\delta_1}{2\|\nabla_y\mathcal Y\|_2}e^{e_0(s-S_2)}
 <
 a(s)
 <
 \tfrac{2\delta_1}{\|\nabla_y\mathcal Y\|_2}e^{e_0(s-S_2)}
 \quad\
 \text{for }
 s\in(S_2,S_3),
 \\
 \delta_1
 e^{e_0(s-S_2)}
 <
 2\alpha
 \quad\
 \text{for }
 s\in(S_2,S_3).
 \end{cases}
 \end{align}
 Assume that
 there exists $S_5\in(S_2,S_3)$ such that
 \begin{align}
 \label{equation_4.41}
 \begin{cases}
 |a(s)-\tfrac{\delta_1}{\|\nabla_y\mathcal Y\|_2}e^{e_0(s-S_2)}|
 <
 \tfrac{K_1\delta_1^2}{\|\nabla_y\mathcal Y\|_2^2}
 e^{2e_0(s-S_2)}
 \quad
 \text{for }
 s\in(S_2,S_5),
 \\
 \tfrac{\delta_1}{2\|\nabla_y\mathcal Y\|_2}e^{e_0(s-S_2)}
 <
 a(s)
 <
 \tfrac{2\delta_1}{\|\nabla_y\mathcal Y\|_2}
 e^{e_0(s-S_2)}
 \quad\
 \text{for }
 s\in(S_2,S_5),
 \\
 \delta_1
 e^{e_0(s-S_2)}
 <
 2\alpha
 \quad\
 \text{for }
 s\in(S_2,S_5).
 \end{cases}
 \end{align}
 Integrating both sides of the first inequality of \eqref{equation_4.39} over $s\in(S_2,s)$,
 we get
 \begin{align}
 \label{equation_4.42}
 |a(s)-a(S_2)e^{e_0(s-S_2)}|
 &<
 C_1
 e^{e_0s}
 \int_{S_2}^{s}
 e^{-e_0s'}
 a(s')^2
 ds'.
 \end{align}
 From the first and the third inequalities of \eqref{equation_4.41},
 it holds that for $s'\in(S_2,S_5)$
 \begin{align*}
 a(s')^2
 &<
 (
 \tfrac{\delta_1}{\|\nabla_y\mathcal Y\|_2}e^{e_0(s'-S_2)}+\tfrac{K_1\delta_1^2}{\|\nabla_y\mathcal Y\|_2^2}e^{2e_0(s'-S_2)}
 )^2
 \\
 &<
 \tfrac{\delta_1^2}{\|\nabla_y\mathcal Y\|_2^2}
 e^{2e_0(s'-S_2)}
 +
 \tfrac{K_1^2\delta_1^4}{\|\nabla_y\mathcal Y\|_2^4}
 e^{4e_0(s'-S_2)}
 +
 \tfrac{2K_1\delta_1^3}{\|\nabla_y\mathcal Y\|_2^3}
 e^{3e_0(s'-S_2)}
 \\
 &<
 \tfrac{\delta_1^2}{\|\nabla_y\mathcal Y\|_2^2}
 e^{2e_0(s'-S_2)}
 (1+\tfrac{4\alpha^2K_1^2}{\|\nabla_y\mathcal Y\|_2^2}+\tfrac{4\alpha K_1}{\|\nabla_y\mathcal Y\|_2}).
 \end{align*}
 If $\alpha$ is sufficiently small such that
 $\frac{4\alpha^2K_1^2}{\|\nabla_y\mathcal Y\|_2^2}+\tfrac{4\alpha K_1}{\|\nabla_y\mathcal Y\|_2}<\frac{1}{4}$,
 then we have
 \begin{align}
 \label{equation_4.43}
 a(s')^2
 <
 \tfrac{5}{4}
 \tfrac{\delta_1^2}{\|\nabla_y\mathcal Y\|_2^2}
 e^{2e_0(s'-S_2)}
 \quad\
 \text{for }
 s'\in(S_2,S_5).
 \end{align}
 Substituting \eqref{equation_4.43} into \eqref{equation_4.42},
 we get
 \begin{align}
 \label{equation_4.44}
 |a(s)-a(S_2)&e^{e_0(s-S_2)}|
 <
 \tfrac{5}{4}
 \tfrac{\delta_1^2}{\|\nabla_y\mathcal Y\|_2^2}
 \tfrac{C_1}{e_0}
 e^{2e_0(s-S_2)}
 \quad\
 \text{for }
 s\in(S_2,S_5).
 \end{align}
 Combining \eqref{equation_4.44} and the second line of \eqref{equation_4.39},
 we obtain
 \begin{align}
 \nonumber
 &
 |a(s)-\tfrac{\delta_1}{\|\nabla_y\mathcal Y\|_2}e^{e_0(s-S_2)}|
 \\
 \nonumber
 &<
 |a(s)-a(S_2)e^{e_0(s-S_2)}|
 +
 |a(S_2)e^{e_0(s-S_2)}-\tfrac{\delta_1}{\|\nabla_y\mathcal Y\|_2}e^{e_0(s-S_2)}|
 \\
 \nonumber
 &<
 \tfrac{5}{4}
 \tfrac{\delta_1^2}{\|\nabla_y\mathcal Y\|_2^2}
 \tfrac{C_1}{e_0}
 e^{2e_0(s-S_2)}
 +
 \tfrac{C_2\delta_1^2}{\|\nabla_y\mathcal Y\|_2^2}
 e^{e_0(s-S_2)}
 \\
 \nonumber
 &=
 (
 \tfrac{5}{4}
 \tfrac{C_1}{e_0}
 +
 C_2
 )
 \tfrac{\delta_1^2}{\|\nabla_y\mathcal Y\|_2^2}
 e^{2e_0(s-S_2)}
 \\
 \label{equation_4.45}
 &=
 \tfrac{1}{2}
 \tfrac{K_1\delta_1^2}{\|\nabla_y\mathcal Y\|_2^2}
 e^{2e_0(s-S_2)}
 \quad\
 \text{for }
 s\in(S_2,S_5).
 \end{align}
 From the third line of \eqref{equation_4.41},
 it can be seen that the right hand side of \eqref{equation_4.45} is bounded by
 \begin{align*}
 \tfrac{1}{2}
 \tfrac{K_1\delta_1^2}{\|\nabla_y\mathcal Y\|_2^2}
 e^{2e_0(s-S_2)}
 <
 \tfrac{\alpha K_1\delta_1}{\|\nabla_y\mathcal Y\|_2}
 e^{e_0(s-S_2)}
 \quad\
 \text{for }
 s\in(S_2,S_5).
 \end{align*}
 Hence
 if $\alpha<\frac{1}{16K_1}$,
 then
 it follows from \eqref{equation_4.45} that
 for $s\in(S_2,S_5)$
 \begin{align}
 \label{equation_4.46}
 \tfrac{15}{16}
 \tfrac{\delta_1}{\|\nabla_y\mathcal Y\|_2}
 e^{e_0(s-S_2)}
 <
 a(s)
 <
 \tfrac{17}{16}
 \tfrac{\delta_1}{\|\nabla_y\mathcal Y\|_2}
 e^{e_0(s-S_2)}.
 \end{align}
 From the definition of $T_3$ (see \eqref{equation_4.27}),
 we recall that $a(s)^2\|\nabla_y\mathcal{Y}\|_2^2<\alpha^2$ for $s\in(S_2,S_3)$.
 Therefore
 from \eqref{equation_4.46},
 we have
 \begin{align}
 \label{equation_4.47}
 \tfrac{15}{16}
 \tfrac{\delta_1}{\|\nabla_y\mathcal Y\|_2}
 e^{e_0(s-S_2)}
 <
 a(s)
 <
 \tfrac{\alpha}{\|\nabla_y\mathcal{Y}\|_2}
 \quad\
 \text{for }
 s\in(S_2,S_5).
 \end{align}
 Since the estimates \eqref{equation_4.45} - \eqref{equation_4.47} are independent of $S_5$,
 we conclude that all the inequalities in \eqref{equation_4.40} hold.
 Integrating both sides of \eqref{EQ_3.97},
 and applying the first estimate of \eqref{equation_4.29} and
 the second line of \eqref{equation_4.40},
 we obtain
 \begin{align}
 \nonumber
 (-H_y\epsilon(s)&,\epsilon(s))_2
 <
 (-H_y\epsilon(S_2),\epsilon(S_2))_2
 +
 C
 \int_{S_2}^s
 a^4ds'
 \\
 \nonumber
 &<
 C
 \delta_1^4
 +
 C
 \int_{S_2}^s
 \delta_1^4
 e^{4e_0(s'-S_2)}
 ds'
 \\
 \label{equation_4.48}
 &<
 C
 \delta_1^4
 +
 C
 \delta_1^4
 e^{4e_0(s-S_2)}
 \quad\
 \text{for }
 s\in(S_2,S_3).
 \end{align}
 From the second line of \eqref{equation_4.40},
 we note that $\frac{\delta_1}{2\|\nabla_y\mathcal Y\|_2}e^{e_0(s-S_2)}<a(s)$ for $s\in(S_2,S_3)$.
 Hence
 \eqref{equation_4.48} can be rewritten as
 \begin{align}
 \label{equation_4.49}
 (-H_y\epsilon(s),\epsilon(s))_2
 <
 Ca(s)^4
 \quad\
 \text{for }
 s\in(S_2,S_3).
 \end{align}
 Using \eqref{equation_4.49},
 we can rewrite \eqref{EQ_3.17} as
 \begin{align*}
 |
 \tfrac{d}{ds}\log\lambda
 |
 <
 Ca^2
 +
 C\|\nabla_y\epsilon\|_2
 <
 Ca^2
 \quad\
 \text{for }
 s\in(S_2,S_3).
 \end{align*}
 We again use the second inequality of \eqref{equation_4.40} to get
 \begin{align}
 \label{equation_4.50}
 |
 \tfrac{d}{ds}\log\lambda
 |
 <
 C\delta_1^2
 e^{2e_0(s-S_2)}
 \quad\
 \text{for }
 s\in(S_2,S_3).
 \end{align}
 Integrating both sides of \eqref{equation_4.50},
 we see that
 \begin{align*}
 |\log\tfrac{\lambda(s)}{\lambda(S_2)}|
 <
 C\delta_1^2
 e^{2e_0(s-S_2)}
 \quad\
 \text{for }
 s\in(S_2,S_3).
 \end{align*}
 This is equivalent to
 \begin{align}
 \label{equation_4.51}
 e^{-C\delta_1^2e^{2e_0(s-S_2)}}
 &<
 \tfrac{\lambda(s)}{\lambda(S_2)}
 <
 e^{C\delta_1^2e^{2e_0(s-S_2)}}
 \quad\
 \text{for }
 s\in(S_2,S_3).
 \end{align}
 Note that
 the following elementary relation holds.
 \begin{align}
 \label{equation_4.52}
 1-2A<e^{-A}<e^{A}<1+2A
 \quad\
 \text{if }
 A<\tfrac{1}{e}.
 \end{align}
 Since $\delta_1e^{e_0(s-S_2)}<2\alpha\ll 1$ for $s\in(S_2,S_3)$ (see the third line of \eqref{equation_4.40}),
 combining \eqref{equation_4.51} - \eqref{equation_4.52},
 we obtain
 \begin{align}
 \label{equation_4.53}
 |
 \tfrac{\lambda(S_2)}{\lambda(s)}-1
 |
 +
 |
 \tfrac{\lambda(s)}{\lambda(S_2)}-1
 |
 <
 C\delta_1^2
 e^{2e_0(s-S_2)}
 \quad\
 \text{for }
 s\in(S_2,S_3).
 \end{align}
 We here go back to the original time variable $t$.
 \begin{align*}
 t-T_2
 &=
 \int_{T_2}^t
 dt
 =
 \int_{S_2}^s
 \tfrac{dt}{ds}
 ds'
 =
 \int_{S_2}^s
 \lambda(s')^2
 ds'
 \\
 &=
 \lambda(S_2)^2
 \int_{S_2}^s
 (\tfrac{\lambda(s')}{\lambda(S_2)})^2
 ds'
 \\
 &=
 \lambda(S_2)^2
 (s-S_2)
 +
 \lambda(S_2)^2
 \int_{S_2}^s
 \{
 (\tfrac{\lambda(s')}{\lambda(S_2)})^2-1
 \}
 ds'
 \\
 &
 \text{for }
 t\in(T_2,T_3).
 \end{align*}
 From \eqref{equation_4.53},
 we observe that
 there exists $C>0$ depending only on $n$ such that for $s'\in(S_2,S_3)$
 \begin{align*}
 |
 (\tfrac{\lambda(s')}{\lambda(S_2)})^2-1
 |
 =
 |\tfrac{\lambda(s')}{\lambda(S_2)}+1|
 |\tfrac{\lambda(s')}{\lambda(S_2)}-1|
 <
 C\delta_1^2e^{2e_0(s'-S_2)}.
 \end{align*}
 Hence
 we obtain
 for $t\in(T_2,T_3)$
 \begin{align*}
 |
 (t-T_2)
 -
 \lambda(S_2)^2
 (s-S_2)
 |
 <
 C
 \lambda(S_2)^2
 \delta_1^2
 e^{2e_0(s-S_2)}.
 \end{align*}
 This implies
 for $t\in(T_2,T_3)$
 \begin{align*}
 \tfrac{t-T_2}{\lambda(S_2)^2}
 -
 C\delta_1^2e^{2e_0(s-S_2)}
 <
 s-S_2
 <
 \tfrac{t-T_2}{\lambda(S_2)^2}
 +
 C\delta_1^2e^{2e_0(s-S_2)}
 \end{align*}
 and
 for $t\in(T_2,T_3)$
 \begin{align*}
 e^{\frac{e_0(t-T_2)}{\lambda(S_2)^2}}
 e^{-e_0C\delta_1^2e^{2e_0(s-S_2)}}
 <
 e^{e_0(s-S_2)}
 <
 e^{\frac{e_0(t-T_2)}{\lambda(S_2)^2}}
 e^{e_0C\delta_1^2e^{2e_0(s-S_2)}}.
 \end{align*}
 Therefore
 due to \eqref{equation_4.52},
 we obtain
 \begin{align}
 \label{equation_4.54}
 &
 e^{\frac{e_0(t-T_2)}{\lambda(S_2)^2}}
 (1-C\delta_1^2e^{2e_0(s-S_2)})
 <
 e^{e_0(s-S_2)}
 \\
 \nonumber
 &<
 e^{\frac{e_0(t-T_2)}{\lambda(S_2)^2}}
 (1+C\delta_1^2e^{2e_0(s-S_2)})
 \quad\
 \text{for }
 t\in(T_2,T_3).
 \end{align}
 From \eqref{equation_4.54} and the third line of \eqref{equation_4.40},
 it holds that
 \begin{align}
 \label{equation_4.55}
 \tfrac{15}{16}
 e^{\frac{e_0(t-T_2)}{\lambda(S_2)^2}}
 <
 e^{e_0(s-S_2)}
 <
 \tfrac{17}{16}
 e^{\frac{e_0(t-T_2)}{\lambda(S_2)^2}}
 \quad\
 \text{for } t\in(T_2,T_3).
 \end{align}
 Combining \eqref{equation_4.54} - \eqref{equation_4.55} and
 the third line of \eqref{equation_4.40},
 we obtain
 \begin{align}
 \label{equation_4.56}
 \begin{cases}
 |e^{e_0(s-S_2)}-e^{\frac{e_0(t-T_2)}{\lambda(S_2)^2}}|
 <
 C
 \delta_1^2
 e^{\frac{3e_0(t-T_2)}{\lambda(S_2)^2}}
 \quad\
 \text{for }
 t\in(T_2,T_3),
 \\
 e^{e_0(s-S_2)}
 <
 \frac{17}{16}
 e^{\frac{e_0(t-T_2)}{\lambda(S_2)^2}}
 \quad\
 \text{for }
 t\in(T_2,T_3),
 \\[1mm]
 \delta_1
 e^{\frac{e_0(t-T_2)}{\lambda(S_2)^2}}
 <
 \frac{32}{15}
 \alpha
 \quad\
 \text{for }
 t\in(T_2,T_3).
 \end{cases}
 \end{align}
 We rewrite the first relation of \eqref{equation_4.40} in the original variable $t$
 by using \eqref{equation_4.56}.
 \begin{align}
 \nonumber
 &
 |a(t)-\tfrac{\delta_1}{\|\nabla_y\mathcal Y\|_2}e^{\frac{e_0(t-T_2)}{\lambda(S_2)^2}}|
 \\
 \nonumber
 &<
 |a(s)-\tfrac{\delta_1}{\|\nabla_y\mathcal Y\|_2}e^{e_0(s-S_2)}|
 +
 \tfrac{\delta_1}{\|\nabla_y\mathcal Y\|_2}
 |e^{e_0(s-S_2)}-e^{\frac{e_0(t-T_2)}{\lambda(S_2)^2}}|
 \\
 \nonumber
 &<
 \tfrac{K_1\delta_1^2}{\|\nabla_y\mathcal Y\|_2^2}
 e^{2e_0(s-S_2)}
 +
 \tfrac{C\delta_1^3}{\|\nabla_y\mathcal Y\|_2}
 e^{\frac{3e_0(t-T_2)}{\lambda(S_2)^2}}
 \\
 \nonumber
 &<
 (\tfrac{17}{16})^2
 \tfrac{K_1\delta_1^2}{\|\nabla_y\mathcal Y\|_2^2}
 e^{\frac{2e_0(t-T_2)}{\lambda(S_2)^2}}
 +
 \tfrac{C\delta_1^3}{\|\nabla_y\mathcal Y\|_2}
 e^{\frac{3e_0(t-T_2)}{\lambda(S_2)^2}}
 \\
 \label{equation_4.57}
 &<
 C
 \delta_1^2
 e^{\frac{2e_0(t-T_2)}{\lambda(S_2)^2}}
 \quad\
 \text{for }
 t\in(T_2,T_3).
 \end{align}
 Furthermore
 from \eqref{equation_4.57} and the third line of \eqref{equation_4.56},
 we deduce that
 \begin{align}
 \label{equation_4.58}
 \tfrac{15}{16}
 \tfrac{\delta_1}{\|\nabla_y\mathcal Y\|_2}e^{\frac{e_0(t-T_2)}{\lambda(S_2)^2}}
 <
 a(t)
 <
 \tfrac{17}{16}
 \tfrac{\delta_1}{\|\nabla_y\mathcal Y\|_2}e^{\frac{e_0(t-T_2)}{\lambda(S_2)^2}}
 \quad\
 \text{for }
 t\in(T_2,T_3).
 \end{align}
 Due to \eqref{equation_4.49} and \eqref{equation_4.58},
 we immediately see that
 \begin{align}
 \label{equation_4.59}
 (-H_y\epsilon(t),\epsilon(t))_2
 <
 Ca(t)^4
 <
 C\delta_1^4
 e^{\frac{4e_0(t-T_2)}{\lambda(S_2)^2}}
 \quad\
 \text{for }
 t\in(T_2,T_3).
 \end{align}
 We now estimate the value of $T_3-T_2$.
 From the fact that $\|\nabla_y\epsilon(t)\|_2^2<C\alpha^4$ for $t\in(T_2,T_3)$
 (see \eqref{equation_4.59} and the third line of \eqref{equation_4.56})
 and
 the definition of $T_3$ (see \eqref{equation_4.27}),
 we have
 \begin{align}
 \label{equation_4.60}
 |a(T_3)-\tfrac{\alpha}{\|\nabla_y\mathcal{Y}\|_2}|<C\alpha^3.
 \end{align}
 Due to \eqref{equation_4.57} and \eqref{equation_4.60},
 it follows that
 \begin{align*}
 &
 |\tfrac{\alpha}{\|\nabla_y\mathcal{Y}\|_2}
 -
 \tfrac{\delta_1}{\|\nabla_y\mathcal{Y}\|_2}
 e^{\frac{e_0(T_3-T_2)}{\lambda(S_2)^2}}|
 \\
 &<
 |\tfrac{\alpha}{\|\nabla_y\mathcal{Y}\|_2}-a(T_3)|
 +
 |a(T_3)-\tfrac{\delta_1}{\|\nabla_y\mathcal{Y}\|_2}e^{\frac{e_0(T_3-T_2)}{\lambda(S_2)^2}}|
 \\
 &<
 C
 \alpha^3
 +
 C\delta_1^2
 e^{\frac{2e_0(T_3-T_2)}{\lambda(S_2)^2}}.
 \end{align*}
 Therefore
 applying the third line of \eqref{equation_4.56},
 we obtain
 \begin{align*}
 |
 \tfrac{\alpha}{\|\nabla_y\mathcal{Y}\|_2}
 -
 \tfrac{\delta_1}{\|\nabla_y\mathcal{Y}\|_2}
 e^{\frac{e_0(T_3-T_2)}{\lambda(S_2)^2}}
 |
 <
 C
 \alpha^2.
 \end{align*}
 This implies that
 $\log\frac{\alpha-C\alpha^2}{\delta_1}
 <
 \frac{e_0(T_3-T_2)}{\lambda(T_2)^2}
 <
 \log\frac{\alpha+C\alpha^2}{\delta_1}$.
 Therefore
 we conclude that
 \begin{align}
 \label{equation_4.61}
 \log\tfrac{\alpha}{\delta_1}
 -
 \log2
 <
 \tfrac{e_0(T_3-T_2)}{\lambda(T_2)^2}
 <
 \log\tfrac{\alpha}{\delta_1}
 +
 \log2.
 \end{align}
 We next derive the bound of $z(t)$ for $t\in(T_2,T_3)$.
 From \eqref{EQ_3.18},
 we observe that
 $|\frac{dz}{ds}|<C\lambda(a^2+\|\nabla_y\epsilon\|_2)$ for $s\in(S_2,S_3)$.
 Therefore
 combining \eqref{equation_4.49}, \eqref{equation_4.53} and the second line of \eqref{equation_4.40},
 we get
 \begin{align}
 \nonumber
 &
 |z(s)-z(S_2)|
 <
 C
 \int_{S_2}^s
 \lambda(s')
 (a(s')^2+\underbrace{\|\nabla_y\epsilon(s')\|_2}_{<Ca(s')^2})
 ds'
 \\
 \nonumber
 &<
 C
 \int_{S_2}^s
 \lambda(s')
 a(s')^2
 ds'
 \\
 \nonumber
 &<
 C
 \lambda(S_2)
 \int_{S_2}^s
 \tfrac{\lambda(s')}{\lambda(S_2)}
 \delta_1^2
 e^{2e_0(s'-S_2)}
 ds'
 \\
 \label{equation_4.62}
 &<
 C
 \lambda(S_2)
 \delta_1^2
 e^{2e_0(s-S_2)}
 \quad\
 \text{for }
 s\in(S_2,S_3).
 \end{align}
 We now introduce a new rescaling function.
 \begin{align}
 \label{equation_4.63}
 \hat
 u(\xi,\sigma)
 =
 \lambda(T_2)^\frac{2}{p-1}
 u(z(T_2)+\lambda(T_2)\xi,T_2+\lambda(T_2)^2\sigma).
 \end{align}
 We define $\sigma_3>0$ by $T_2+\lambda(T_2)^2\sigma_3=T_3$.
 From \eqref{equation_4.61},
 $\sigma_3$ is bounded by
 \begin{align}
 \label{equation_4.64}
 \tfrac{\log\frac{\alpha}{\delta_1}-\log 2}{e_0}
 <
 \tfrac{T_3-T_2}{\lambda(T_2)^2}
 =
 \sigma_3
 <
 \tfrac{\log\frac{\alpha}{\delta_1}+\log 2}{e_0}.
 \end{align}
 Furthermore
 we put
 \begin{align}
 \label{equation_4.65}
 \begin{cases}
 \hat a(\sigma)
 =
 (\hat u(\xi,\sigma)-{\sf Q}(\xi),\mathcal Y(\xi))_{L_\xi^2(\R^n)},
 \\
 \hat b_0(\sigma)
 =
 (\hat u(\xi,\sigma)-{\sf Q}(\xi),
 \tfrac{\Psi_0(\xi)}{\sqrt{(\Lambda_\xi{\sf Q},\Psi_0)_2}})_{L_\xi^2(\R^n)},
 \\
 \hat b_j(\sigma)
 =
 (\hat u(\xi,\sigma)-{\sf Q}(\xi),
 \tfrac{\Psi_j(\xi)}{\sqrt{(\pa_{\xi_j}{\sf Q},\Psi_j)_2}})_{L_\xi^2(\R^n)}
 \quad
 \text{for } j\in\{1,\cdots,n\}.
 \end{cases}
 \end{align}
 By using $\hat a(\sigma)$ and $\hat b_j(\sigma)$ in \eqref{equation_4.65},
 we define $\hat v(\xi,\sigma)$ as
 \begin{align}
 \label{equation_4.66}
 \hat u(\xi,\sigma)
 &=
 {\sf Q}(\xi)
 +
 \hat a(\sigma)
 \mathcal{Y}(\xi)
 +
 \tfrac{\hat b_0(\sigma)\Lambda_\xi{\sf Q}(\xi)}{\sqrt{(\Lambda_\xi{\sf Q},\Psi_0)_2}}
 +
 \sum_{j=1}^n
 \tfrac{\hat b_j(\sigma)\pa_{\xi_j}{\sf Q}(\xi)}{\sqrt{(\pa_{\xi_j}{\sf Q},\Psi_j)_2}}
 +
 \hat v(\xi,\sigma).
 \end{align}
 From the definition of $\hat a(\sigma)$ and $\hat b_j(\sigma)$,
 it holds that
 \begin{align}
 \label{equation_4.67}
 (\hat v(\sigma),\mathcal{Y})_{L_\xi^2(\R^n)}
 &=
 (\hat v(\sigma),\Psi_j)_{L_\xi^2(\R~n)}
 =
 0
 \\
 \nonumber
 &
 \text{for }
 \sigma\in(0,\sigma_3),\
 j\in\{0,\cdots,n\}.
 \end{align}
 Next,
 we establish the bounds of
 $\hat a(\sigma),\hat b_j(\sigma)$ and $\hat v(\xi,\sigma)$.
 Since $u(x,t)$ is expressed as \eqref{EQUATION_e3.1},
 we can write $\hat u(\xi,\sigma)$ as
 (see \eqref{equation_4.63} for the definition of $\hat u(\xi,\sigma)$)
 \begin{align}
 \label{equation_4.68}
 \hat u(\xi,\sigma)
 &=
 (
 \tfrac{\lambda(T_2)}{\lambda(t)}
 )^{\frac{2}{p-1}}
 \{
 {\sf Q}(\tfrac{\lambda(T_2)\xi+z(T_2)-z(t)}{\lambda(t)})
 +
 a(t)\mathcal{Y}(\tfrac{\lambda(T_2)\xi+z(T_2)-z(t)}{\lambda(t)})
 \\
 \nonumber
 & \quad
 +\epsilon(\tfrac{\lambda(T_2)\xi+z(T_2)-z(t)}{\lambda(t)},t)
 \}
 \quad\
 \text{with }
 t=T_2+\lambda(T_2)^2\sigma.
 \end{align}
 For simplicity,
 we put
 \begin{align}
 \label{equation_4.69}
 \hat\lambda(t)
 =
 \tfrac{\lambda(t)}{\lambda(T_2)},
 \quad
 \hat z(t)
 =
 \tfrac{z(t)-z(T_2)}{\lambda(T_2)}
 \quad
 \text{and } \
 \hat X
 =
 \tfrac{\xi-\hat z(t)}{\hat\lambda(t)}.
 \end{align}
 Then
 \eqref{equation_4.68} can be written as
 \begin{align}
 \nonumber
 \hat u(\xi,\sigma)
 &=
 \hat\lambda(t)^{-\frac{2}{p-1}}
 \{
 {\sf Q}(\tfrac{\xi-\hat z(t)}{\hat\lambda(t)})
 +
 a(t)\mathcal{Y}(\tfrac{\xi-\hat z(t)}{\hat\lambda(t)})
 +\epsilon(\tfrac{\xi-\hat z(t)}{\hat\lambda(t)},t)
 \}
 \\
 \label{equation_4.70}
 &=
 \hat\lambda(t)^{-\frac{2}{p-1}}
 \{
 {\sf Q}(\hat X)
 +
 a(t)\mathcal{Y}(\hat X)
 +\epsilon(\hat X,t)
 \}.
 \end{align}
 We recall that $\|\mathcal Y\|_2=1$ (see p.\,\pageref{mathcal_Y}).
 From the definition of $\hat a(\sigma)$ (see \eqref{equation_4.65}),
 we see that
 \begin{align}
 \nonumber
 &\hat a(\sigma)
 =
 (\hat u(\xi,\sigma)-{\sf Q}(\xi),\mathcal{Y}(\xi))_{L_\xi^2(\R^n)}
 \\
 \nonumber
 &=
 (
 {\sf Q}(\hat X)
 +
 a(t)\mathcal{Y}(\hat X)
 +
 \epsilon(\hat X,t)
 -
 {\sf Q}(\xi),\mathcal{Y}(\xi))_{L_\xi^2(\R^n)}
 \\
 \nonumber
 &\quad
 +
 \underbrace{
 (
 \hat u(\xi,\sigma)
 -
 \{
 {\sf Q}(\hat X)
 +
 a(t)\mathcal{Y}(\hat X)
 +
 \epsilon(\hat X,t)
 \}
 ,\mathcal{Y}(\xi))_{L_\xi^2(\R^n)}
 }_{=D_0}
 \\
 \nonumber
 &=
 \underbrace{
 ({\sf Q}(\hat X)-{\sf Q}(\xi),\mathcal{Y}(\xi))_{L_\xi^2(\R^n)}
 }_{=D_1}
 +
 \underbrace{
 a(t)
 (\mathcal{Y}(\hat X)-\mathcal{Y}(\xi),\mathcal{Y}(\xi))_{L_\xi^2(\R^n)}
 }_{=D_2}
 \\
 \label{equation_4.71}
 &\quad
 +
 a(t)
 +
 \underbrace{
 (\epsilon(\hat X,t),\mathcal{Y}(\xi))_{L_\xi^2(\R^n)}
 }_{=D_3}
 +
 D_0.
 \end{align}
 From \eqref{equation_4.53} and \eqref{equation_4.62},
 we note that
 \begin{align*}
 |\hat\lambda(t)-1|+|\hat z(t)|
 &<
 C\delta_1^2
 e^{2e_0(s-S_2)}
 \quad\
 \text{for }
 t\in(T_2,T_3).
 \end{align*}
 Therefore
 from the second and the third line of \eqref{equation_4.56} and the definition of $\sigma$ (see \eqref{equation_4.63}),
 it follows that
 \begin{align}
 \label{equation_4.72}
 \begin{cases}
 |\hat\lambda(t)-1|+|\hat z(t)|
 <
 C\delta_1^2
 e^{\frac{2e_0(t-T_2)}{\lambda(S_2)^2}}
 =
 C\delta_1^2
 e^{2e_0\sigma}
 &
 \text{for }
 \sigma\in(0,\sigma_3),
 \\
 \delta_1
 e^{e_0\sigma}
 <
 \frac{32}{15}
 \alpha
 &
 \text{for }
 \sigma\in(0,\sigma_3).
 \end{cases}
 \end{align}
 By using \eqref{equation_4.72},
 we can verify that
 \begin{align}
 \nonumber
 |D_1|
 &<
 |({\sf Q}(\tfrac{\xi-\hat z(t)}{\hat\lambda(t)})-{\sf Q}(\tfrac{\xi}{\hat\lambda(t)}),
 \mathcal{Y}(\xi))_2|
 +
 |({\sf Q}(\tfrac{\xi}{\hat\lambda(t)})-{\sf Q}(\xi),
 \mathcal{Y}(\xi))_2|
 \\
 \nonumber
 &<
 C(
 |\tfrac{\hat z(t)}{\hat\lambda(t)}|
 +
 |\tfrac{1}{\hat\lambda(t)}-1|
 )
 \\
 \label{equation_4.73}
 &<
 C\delta_1^2
 e^{2e_0\sigma}
 \quad\
 \text{for }
 \sigma\in(0,\sigma_3).
 \end{align}
 Furthermore
 from \eqref{equation_4.72}, \eqref{equation_4.58} and \eqref{equation_4.49},
 we see that
 \begin{align}
 \nonumber
 |D_2|
 &<
 C|a(t)|
 (
 |\tfrac{\hat z(t)}{\hat\lambda(t)}|
 +
 |\tfrac{1}{\hat\lambda(t)}-1|
 )
 \\
 \label{equation_4.74}
 &<
 C
 \delta_1^3
 e^{3e_0\sigma}
 \quad\
 \text{for }
 \sigma\in(0,\sigma_3),
 \\
 \nonumber
 |D_3|
 &<
 C\|\nabla_y\epsilon(y,t)\|_{L_y^2(\R^n)}
 <
 Ca(t)^2
 \\
 \label{equation_4.75}
 &<
 C\delta_1^2
 e^{2e_0\sigma}
 \quad\
 \text{for }
 \sigma\in(0,\sigma_3).
 \end{align}
 To obtain the bound of $D_0$,
 we use the explicit form of $\hat u(\xi,\sigma)$ give in \eqref{equation_4.70}.
 \begin{align}
 \nonumber
 |D_0|
 &<
 |\hat\lambda(t)^{-\frac{2}{p-1}}-1|
 \cdot
 |(
 {\sf Q}(\hat X)
 +
 a(t)\mathcal{Y}(\hat X)
 +
 \epsilon(\hat X,t),\mathcal{Y}(\xi))_{L_\xi^2(\R^n)}
 |
 \\
 \nonumber
 &<
 C|\hat\lambda(t)^{-\frac{2}{p-1}}-1|
 <
 C|\hat\lambda(t)-1|
 \\
 \label{equation_4.76}
 &<
 C\delta_1^2
 e^{2e_0\sigma}
 \quad\
 \text{for }
 \sigma\in(0,\sigma_3).
 \end{align}
 Therefore
 it follows from \eqref{equation_4.71} and \eqref{equation_4.73} - \eqref{equation_4.76} that
 \begin{align}
 \label{equation_4.77}
 |\hat a(\sigma)-a(t)|
 <
 C\delta_1^2e^{2e_0\sigma}
 \quad\
 \text{for }
 \sigma\in(0,\sigma_3).
 \end{align}
 We rewrite \eqref{equation_4.57} as
 \begin{align}
 \label{equation_4.78}
 |a(t)-\tfrac{\delta_1}{\|\nabla_y\mathcal{Y}\|_2}e^{e_0\sigma}|
 &<
 C\delta_1^2
 e^{2e_0\sigma}
 \quad\
 \text{for }
 \sigma\in(0,\sigma_3).
 \end{align}
 From \eqref{equation_4.77} - \eqref{equation_4.78},
 we observe that
 \begin{align}
 \label{equation_4.79}
 |\hat a(\sigma)-\tfrac{\delta_1}{\|\nabla_y\mathcal Y\|_2}e^{e_0\sigma}|
 <
 C\delta_1^2e^{2e_0\sigma}
 \quad\
 \text{for }
 \sigma\in(0,\sigma_3).
 \end{align}
 Furthermore
 \eqref{equation_4.79} together with the second line of \eqref{equation_4.72} implies
 \begin{align}
 \nonumber
 |\hat a(\sigma)|
 &<
 (1+C\alpha)
 \tfrac{\delta_1}{\|\nabla_y\mathcal{Y}\|_2}
 e^{e_0\sigma} 
 \\
 \label{equation_4.80}
 &<
 \tfrac{5}{4}
 \tfrac{\delta_1}{\|\nabla_y\mathcal{Y}\|_2}
 e^{e_0\sigma} 
 <
 \tfrac{8}{3}
 \tfrac{\alpha}{\|\nabla_y\mathcal{Y}\|_2} 
 \quad\
 \text{for }
 \sigma\in(0,\sigma_3).
 \end{align}
 We repeat the same computation as above
 to derive the bound of $\hat b_0(\sigma)$ (see \eqref{equation_4.65}  for the definition of $\hat b_0(\sigma)$).
 \begin{align*}
 &
 \hat b_0(\sigma)
 =
 (\hat u(\xi,\sigma)-{\sf Q}(\xi),
 \tfrac{\Psi_0(\xi)}{\sqrt{(\Lambda_\xi{\sf Q},\Psi_0)_2}})_{L_\xi^2(\R^n)}
 \\
 &=
 (
 {\sf Q}(\hat X)
 +
 a(t)\mathcal{Y}(\hat X)
 +
 \epsilon(\hat X,t)-{\sf Q}(\xi),
 \tfrac{\Psi_0(\xi)}{\sqrt{(\Lambda_\xi{\sf Q},\Psi_0)_2}})_{L_\xi^2(\R^n)}
 \\
 &\quad
 +
 \underbrace{
 (
 \hat u(\xi,\sigma)
 -
 \{
 {\sf Q}(\hat X)
 +
 a(t)\mathcal{Y}(\hat X)
 +
 \epsilon(\hat X,t)
 \},
 \tfrac{\Psi_0(\xi)}{\sqrt{(\Lambda_\xi{\sf Q},\Psi_0)_2}})_{L_\xi^2(\R^n)}
 }_{=E_0}
 \\
 &=
 \underbrace{
 ({\sf Q}(\hat X)-{\sf Q}(\xi),
 \tfrac{\Psi_0(\xi)}{\sqrt{(\Lambda_\xi{\sf Q},\Psi_0)_2}})_{L_\xi^2(\R^n)}
 }_{=E_1}
 \\
 &\quad
 +
 a(t)
 \underbrace{
 (\mathcal{Y}(\hat X)-\mathcal{Y}(\xi),
 \tfrac{\Psi_0(\xi)}{\sqrt{(\Lambda_\xi{\sf Q},\Psi_0)_2}})_{L_\xi^2(\R^n)}
 }_{=E_2}
 \\
 &\quad
 +
 \underbrace{
 (\epsilon(\hat X,t),
 \tfrac{\Psi_0(\xi)}{\sqrt{(\Lambda_\xi{\sf Q},\Psi_0)_2}})_{L_\xi^2(\R^n)}
 }_{=E_3}
 +
 E_0.
 \end{align*}
 As in \eqref{equation_4.71} - \eqref{equation_4.80},
 we can verify that
 \begin{align}
 \label{equation_4.81}
 |\hat b_0(\sigma)|
 <
 C\delta_1^2
 e^{2e_0\sigma}
 \quad\
 \text{for }
 \sigma\in(0,\sigma_3).
 \end{align}
 In the same way,
 we obtain
 \begin{align}
 \label{equation_4.82}
 \sup_{j\in\{1\cdots,n\}}
 |\hat b_j(\sigma)|
 <
 C\delta_1^2
 e^{2e_0\sigma}
 \quad\
 \text{for }
 \sigma\in(0,\sigma_3).
 \end{align}
 Since
 both \eqref{equation_4.66} and \eqref{equation_4.70}
 express the same function $\hat u(\xi,\sigma)$,
 they must coincide.
 \begin{align*}
 &
 {\sf Q}(\xi)
 +
 \hat a(\sigma)\mathcal{Y}(\xi)
 +
 \tfrac{\hat b_0(\sigma)\Lambda_\xi{\sf Q}(\xi)}{\sqrt{(\Lambda_\xi{\sf Q},\Psi_0)_2}}
 +
 \sum_{j=1}^n
 \tfrac{\hat b_j(\sigma)\pa_{\xi_j}{\sf Q}(\xi)}{\sqrt{(\pa_{\xi_j}{\sf Q},\Psi_j)_2}}
 +
 \hat v(\xi,\sigma)
 \\
 &=
 \hat\lambda(t)^{-\frac{2}{p-1}}
 \{
 {\sf Q}(\tfrac{\xi-\hat z(t)}{\hat\lambda(t)})
 +
 a(t)\mathcal{Y}(\tfrac{\xi-\hat z(t)}{\hat\lambda(t)})
 +\epsilon(\tfrac{\xi-\hat z(t)}{\hat\lambda(t)},t)
 \}.
 \end{align*}
 From this expression,
 we can derive the bound of $\|\nabla_\xi\hat v(\xi,\sigma)\|_{L_\xi^2(\R^n)}$.
 \begin{align*}
 &
 \|\nabla_\xi\hat v(\xi,\sigma)\|_{L_\xi^2(\R^n)}
 \\
 &<
 \|
 \nabla_\xi
 \{
 \hat\lambda(t)^{-\frac{2}{p-1}}
 {\sf Q}
 (\hat X)
 -
 {\sf Q}(\xi)
 \}
 \|_{L_\xi^2(\R^n)}
 \\
 &\quad
 +
 \|
 \nabla_\xi
 \{
 \hat\lambda(t)^{-\frac{2}{p-1}}
 a(t)\mathcal{Y}(\hat X)
 - 
 \hat a(\sigma)
 \mathcal{Y}(\xi)
 \}
 \|_{L_\xi^2(\R^n)}
 \\
 &\quad
 +
 \|
 \nabla_\xi\{\lambda(t)^{-\frac{2}{p-1}}\epsilon(\hat X,t)\}
 \|_{L_\xi^2(\R^n)}
 \\
 &\quad
 +
 \tfrac{|\hat b_0(\sigma)|}{\sqrt{(\Lambda_\xi{\sf Q},\Psi_0)_2}}
 \|
 \nabla_\xi\Lambda_\xi{\sf Q}(\xi)
 \|_{L_\xi^2(\R^n)}
 \\
 &\quad
 +
 \sum_{j=1}^n
 \tfrac{|\hat b_j(\sigma)|}{\sqrt{(\pa_{\xi_j}{\sf Q},\Psi_j)_2}}
 \|
 \nabla_\xi\pa_{\xi_j}{\sf Q}(\xi)
 \|_{L_\xi^2(\R^n)}
 \\
 &<
 |\hat\lambda(t)^{-\frac{2}{p-1}}-1|
 \cdot
 \|
 \nabla_\xi
 {\sf Q}
 (\tfrac{\xi-\hat z(t)}{\hat\lambda(t)})
 \|_{L_\xi^2(\R^n)}
 \\
 &\quad
 +
 \|
 \nabla_\xi
 \{
 {\sf Q}
 (\hat X)
 -
 {\sf Q}(\xi)
 \}
 \|_{L_\xi^2(\R^n)}
 \\
 &\quad
 +
 |\hat\lambda(t)^{-\frac{2}{p-1}}-1|
 |a(t)|
 \cdot
 \|
 \nabla_\xi
 \mathcal{Y}(\hat X)
 \|_{L_\xi^2(\R^n)}
 \\
 &\quad
 +
 |a(t)-\hat a(\sigma)|
 \cdot
 \|
 \nabla_\xi
 \mathcal{Y}(\hat X)
 \|_{L_\xi^2(\R^n)}
 \\
 &\quad
 +
 |\hat a(\sigma)|
 \cdot
 \|
 \nabla_\xi
 \{
 \mathcal{Y}(\hat X)
 - 
 \mathcal{Y}(\xi)
 \}
 \|_{L_\xi^2(\R^n)}
 \\
 &\quad
 +
 \lambda(t)^{-\frac{2}{p-1}}
 \|
 \nabla_\xi\epsilon(\hat X,t)
 \|_{L_\xi^2(\R^n)}
 +
 \tfrac{|\hat b_0(\sigma)|\cdot\|\nabla_\xi\Lambda_\xi{\sf Q}\|_2}
 {\sqrt{(\Lambda_\xi{\sf Q},\Psi_0)_2}}
 \\
 &\quad
 +
 \sum_{j=1}^n
 \tfrac{|\hat b_j(\sigma)\cdot|\|\nabla_\xi\pa_{\xi_j}{\sf Q}\|_2}
 {\sqrt{(\pa_{\xi_j}{\sf Q},\Psi_j)_2}}.
 \end{align*}
 Therefore
 from \eqref{equation_4.72}, \eqref{equation_4.77} and \eqref{equation_4.80} - \eqref{equation_4.82},
 we deduce that
 \begin{align}
 \label{equation_4.83}
 \|\nabla_\xi\hat v(\xi,\sigma)\|_{L_\xi^2(\R^n)}
 <
 C\delta_1^2
 e^{2e_0\sigma}
 \quad\
 \text{for }
 \sigma\in(0,\sigma_3).
 \end{align}
 For a later argument,
 we collect
 \eqref{equation_4.79},
 \eqref{equation_4.81} - \eqref{equation_4.83}
 and the second line of \eqref{equation_4.72} here.
 \begin{align}
 \label{equation_4.84}
 \begin{cases}
 |\hat a(\sigma)-\tfrac{\delta_1}{\|\nabla_y\mathcal Y\|_2}e^{e_0\sigma}|
 <
 C\delta_1^2e^{2e_0\sigma}
 &
 \text{for }
 \sigma\in(0,\sigma_3),
 \\[2mm]
 \dis
 \sup_{j\in\{0,\cdots,n\}}
 |\hat b_j(\sigma)|
  <
 C\delta_1^2e^{2e_0\sigma}
 &
 \text{for }
 \sigma\in(0,\sigma_3),
 \\
 \|\nabla_\xi\hat v(\xi,\sigma)\|_{L_\xi^2(\R^n)}
 <
 C\delta_1^2
 e^{2e_0\sigma}
 &
 \text{for }
 \sigma\in(0,\sigma_3),
 \\
 \delta_1e^{e_0\sigma}
 <
 \frac{32}{15}
 \alpha
 &
 \text{for }
 \sigma\in(0,\sigma_3).
 \end{cases}
 \end{align}
 As a consequence of \eqref{equation_4.84},
 we observe from \eqref{equation_4.66} that
 \begin{align}
 \nonumber
 \|\nabla_\xi\hat u(\xi,\sigma)-\nabla_\xi{\sf Q}(\xi)\|_2
 &<
 C(|\hat a|+\sum_{j=0}^n|\hat b_j|+\|\nabla_\xi\hat v(\xi,\sigma)\|_{L_\xi^2(\R^n)})
 \\
 \label{equation_4.85}
 &<
 C\alpha
 \quad\
 \text{for }
 \sigma\in(0,\sigma_3).
 \end{align}
 Let $(\Delta{\sf t}_1)$, ${\sf h}_1$ and ${\sf M}(t)$
 be given in Proposition {\rm\ref{PROPOSITION_2.3}},
 and define
 \begin{align}
 \label{equation_4.86}
 {\sf M}_1(t)
 =
 \begin{cases}
 {\sf M}(t) & \text{\rm if } t\in(0,(\Delta{\sf t})_1),\\
 {\sf M}((\Delta{\sf t})_1) & \text{\rm if } t>(\Delta{\sf t})_1.
 \end{cases}
 \end{align}
 Proposition {\rm\ref{PROPOSITION_2.3}} implies that
 if $\alpha$ is sufficiently small such that $C\alpha<\frac{{\sf h}_1}{2}$ in \eqref{equation_4.85},
 then we have
 \begin{align*}
 \|\hat u(\xi,\sigma)\|_{L_\xi^\infty(\R^n)}
 <
 {\sf M}_1(\sigma)
 \quad\
 \text{for }
 \sigma\in(0,\sigma_3).
 \end{align*}
 From this relation, \eqref{equation_4.66} and
 the bounds for $\hat a(\sigma)$, $\hat b_j(\sigma)$ ($j=0,\cdots,n$) (see \eqref{equation_4.80} - \eqref{equation_4.82}),
 there exists $C_1>0$ depending only on $n$ such that
 \begin{align}
 \label{equation_4.87}
 \|\hat v(\xi,\sigma)\|_{L_\xi^\infty(\R^n)}
 <
 {\sf M}_1(\sigma)
 +
 C_1
 \quad\
 \text{for }
 \sigma\in(0,\sigma_3).
 \end{align}
 Let ${\sf Q}^+(\xi,\sigma)$ and $\epsilon_1>0$ be given in Lemma \ref{LEMMA_2.9}.
 Without loss of generality,
 we can assume that
 \begin{align}
 \label{equation_4.88}
 \alpha<\tfrac{\|\nabla_y\mathcal Y\|_2}{4}\epsilon_1.
 \end{align}
 For any $\delta_1$ such that $0<\delta_1<\alpha<\frac{\|\nabla_y\mathcal Y\|_2}{4}\epsilon_1$,
 we can choose $t_1<0$ such that
 \begin{align}
 \label{equation_4.89}
 \epsilon_1e^{e_0t_1}
 =
 \tfrac{\delta_1}{\|\nabla_y\mathcal{Y}\|_2},
 \end{align}
 and define
 \begin{align}
 \label{equation_4.90}
 U(\xi,\sigma;\delta_1)
 =
 {\sf Q}^+(\xi,t_1+\sigma).
 \end{align}
 From Lemma \ref{LEMMA_2.9},
 ${\sf Q}^+(x,t_1+\sigma)$ is expressed as
 \begin{align}
 \label{equation_4.91}
 U(\xi,\sigma;\delta_1)
 =
 {\sf Q}
 (x)
 +
 \tfrac{\delta_1e^{e_0\sigma}}{\|\nabla_y\mathcal Y\|_2}
 \mathcal{Y}(\xi)
 +
 \underbrace{
 v(\xi,t_1+\sigma)
 }_{=h(\xi,\sigma;\delta_1)},
 \end{align}
 where $h(\xi,\sigma,\delta_1)$ is a remainder term satisfying
 \begin{align}
 \nonumber
 &
 \|h(\sigma;\delta_1)\|_2
 +
 \|h(\sigma;\delta_1)\|_\infty
 +
 \|\nabla_\xi h(\sigma;\delta_1)\|_2
 <
 C\epsilon_1^2
 e^{2e_0(t_1+\sigma)}
 \\
 \label{equation_4.92}
 &<
 \tfrac{C\delta_1^2}{\|\nabla_y\mathcal Y\|_2^2}
 e^{2e_0\sigma}
 \quad\
 \text{for }
 \sigma\in(0,-t_1).
 \end{align}
 The constant $C>0$ in \eqref{equation_4.92} depends only on $n$.
 From the definition of $\sigma_3$ (see \eqref{equation_4.64}) and \eqref{equation_4.88} - \eqref{equation_4.89},
 we have
 \begin{align}
 \nonumber
 \sigma_3
 &<
 \tfrac{1}{e_0}
 (\log\tfrac{\alpha}{\delta_1}+\log 2)
 =
 \tfrac{1}{e_0}
 (\log\tfrac{\alpha}{\|\nabla_y\mathcal Y\|_2\epsilon_1e^{e_0t_1}}+\log 2)
 \\
 \nonumber
 &<
 \tfrac{1}{e_0}
 (\log\tfrac{\|\nabla_y\mathcal Y\|_2\epsilon_1}{4\|\nabla_y\mathcal Y\|_2\epsilon_1e^{e_0t_1}}+\log 2)
 \\
 \label{equation_4.93}
 &=
 \tfrac{1}{e_0}(-e_0t_1-\log 4+\log 2)
 <
 -t_1.
 \end{align}
 We decompose $h(\xi,\sigma;\delta_1)$ as
 \begin{align}
 \label{equation_4.94}
 h(\xi,\sigma;\delta_1)
 &=
 A(\sigma;\delta_1)
 \mathcal{Y}(\xi)
 +
 \tfrac{B_0(\sigma;\delta_1)\Lambda_\xi{\sf Q}(\xi)}{\sqrt{(\Lambda_\xi{\sf Q},\Psi_0)_2}}
 +
 \sum_{j=1}^n
 \tfrac{B_j(\sigma;\delta_1)\pa_{\xi_j}{\sf Q}(\xi)}{\sqrt{(\pa_{\xi_j}{\sf Q},\Psi_j)_2}}
 \\
 \nonumber
 &\quad
 +
 h^\perp(\xi,\sigma;\delta_1)
 \end{align}
 with
 \begin{align}
 \label{equation_4.95}
 \begin{cases}
 A(\sigma;\delta_1)
 =
 (h(\xi,\sigma;\delta_1),\mathcal Y(\xi)
 )_{L_\xi^2(\R^n)},
 \\
 B_0(\sigma;\delta_1)
 =
 (h(\xi,\sigma;\delta_1),
 \tfrac{\Psi_0(\xi)}{\sqrt{(\Lambda_\xi{\sf Q},\Psi_0)_2}}
 )_{L_\xi^2(\R^n)},
 \\
 B_j(\sigma;\delta_1)
 =
 (h(\xi,\sigma;\delta_1),
 \tfrac{\Psi_j(\xi)}{\sqrt{(\pa_{\xi_j}{\sf Q},\Psi_j)_2}}
 )_{L_\xi^2(\R^n)}
 \quad
 \text{for }
 j\in\{1,\cdots,n\}.
 \end{cases}
 \end{align}
 From \eqref{equation_4.92} - \eqref{equation_4.93},
 there exists a constant $C>0$ depending only on $n$ such that
 \begin{align}
 \label{equation_4.96}
 \begin{cases}
 |A(\sigma;\delta_1)|
 <
 C
 \delta_1^2
 e^{2e_0\sigma}
 &
 \text{\rm for }
 \sigma\in(0,\sigma_3),
 \\
 \dis
 \sup_{j\in\{0,\cdots,n\}}
 |B_j(\sigma;\delta_1)|
 <
 C
 \delta_1^2
 e^{2e_0\sigma}
 &
 \text{\rm for }
 \sigma\in(0,\sigma_3),
 \\
 \|h^\perp(\xi,\sigma;\delta_1)\|_2
 <
 C
 \delta_1^2
 e^{2e_0\sigma}
 &
 \text{\rm for }
 \sigma\in(0,\sigma_3),
 \\
 \|h^\perp(\xi,\sigma;\delta_1)\|_\infty
 <
 C
 \delta_1^2
 e^{2e_0\sigma}
 &
 \text{\rm for }
 \sigma\in(0,\sigma_3),
 \\
 \|\nabla_\xi h^\perp(\xi,\sigma;\delta_1)\|_2
 <
 C
 \delta_1^2
 e^{2e_0\sigma}
 &
 \text{\rm for }
 \sigma\in(0,\sigma_3).
 \end{cases}
 \end{align}
 We put
 \begin{align}
 \label{equation_4.97}
 J(\xi,\sigma;\delta_1)
 =
 \hat u(\xi,\sigma)-U(\xi,\sigma;\delta_1). 
 \end{align}
 It satisfies
 \begin{align}
 \label{equation_4.98}
 \pa_\sigma
 J
 &=
 H_\xi J
 +
 \mathcal{N}^{(J)},
 \end{align}
 where $\mathcal{N}^{(J)}$ is given by
 \begin{align}
 \label{equation_4.99}
 \mathcal{N}^{(J)}
 =
 f(\hat u)
 -
 f(U)
 -
 f'({\sf Q})
 (\hat u-U).
 \end{align}
 We decompose $J(\xi,\sigma;\delta_1)$ as
 \begin{align}
 \label{equation_4.100}
 J(\xi,\sigma;\delta_1)
 &=
 a^{(J)}(\sigma;\delta_1)
 \mathcal Y(\xi)
 +
 \tfrac{b_0^{(J)}(\sigma;\delta_1)\Lambda_\xi{\sf Q}(\xi)}{\sqrt{(\Lambda_\xi{\sf Q},\Psi_0)_2}}
 \\
 \nonumber
 &\quad
 +
 \sum_{j=1}^n
 \tfrac{b_j^{(J)}(\sigma;\delta_1)\pa_{\xi_j}{\sf Q}(\xi)}{\sqrt{(\pa_{\xi_j}{\sf Q},\Psi_j)_2}}
 +
 J^\perp(\xi,\sigma;\delta_1)
 \end{align}
 with
 \begin{align}
 \label{equation_4.101}
 \begin{cases}
 a^{(J)}(\sigma;\delta_1)
 =
 (
 J(\xi,\sigma;\delta_1),\mathcal Y(\xi)
 )_{L_\xi^2(\R^n)},
 \\
 b_0^{(J)}(\sigma;\delta_1)
 =
 (
 J(\xi,\sigma;\delta_1),\tfrac{\Psi_0(\xi)}{\sqrt{(\Lambda_\xi{\sf Q},\Psi_0)_2}}
 )_{L_\xi^2(\R^n)},
 \\
 b_j^{(J)}(\sigma;\delta_1)
 =
 (
 J(\xi,\sigma;\delta_1),\tfrac{\Psi_j(\xi)}{\sqrt{(\pa_{\xi_j}{\sf Q},\Psi_j)_2}}
 )_{L_\xi^2(\R^n)}
 \quad\
 (j=1,\cdots,n).
 \end{cases}
 \end{align}
 Substituting \eqref{equation_4.100} into \eqref{equation_4.98},
 we see that $J^\perp(\xi,\sigma)$ satisfies
 \begin{align}
 \label{equation_4.102}
 \pa_\sigma J^\perp
 &=
 H_\xi J^\perp
 -
 (\tfrac{da^{(J)}}{d\sigma}-e_0a^{(J)})
 \mathcal Y
 -
 \tfrac{db_0^{(J)}}{d\sigma}
  \tfrac{\Lambda_\xi{\sf Q}}{\sqrt{(\Lambda_\xi{\sf Q},\Psi_0)_2}}
 \\
 \nonumber
 &\quad
 -
 \sum_{j=1}^n
 \tfrac{db_j^{(J)}}{d\sigma}
 \tfrac{\pa_{\xi_j}{\sf Q}}
 {\sqrt{(\pa_{\xi_j}{\sf Q},\Psi_j)_2}}
 +
 \mathcal{N}^{(J)}.
 \end{align}
 Since $J(\xi,\sigma;\delta_1)$ is given by \eqref{equation_4.97},
 combining \eqref{equation_4.65}, \eqref{equation_4.91}, \eqref{equation_4.94} - \eqref{equation_4.95},
 we easily see that
 \begin{align}
 \nonumber
 a^{(J)}(\sigma;\delta_1)
 &=
 (
 J(\sigma;\delta_1),\mathcal Y
 )_2
 \\
 \nonumber
 &=
 (
 \hat u(\sigma)-{\sf Q},\mathcal Y
 )_2
 -
 (
 U(\sigma;\delta_1)-{\sf Q},\mathcal Y
 )_2
 \\
 \label{equation_4.103}
 &=
 \hat a(\sigma)
 -
 \tfrac{\delta_1}{\|\nabla_y\mathcal Y\|_2}
 e^{e_0\sigma}
 -
 A(\sigma;\delta_1).
 \end{align}
 In the same reason as in \eqref{equation_4.103},
 from \eqref{equation_4.65} - \eqref{equation_4.66}, \eqref{equation_4.91},
 \eqref{equation_4.94} - \eqref{equation_4.95},
 and \eqref{equation_4.100} - \eqref{equation_4.101},
 we can verify that
 \begin{align}
 \label{equation_4.104}
 \begin{cases}
 b_j^{(J)}(\sigma;\delta_1)
 =
 \hat b_j(\sigma)
 -
 B_j(\sigma;\delta_1)
 \quad\
 (j=0,\cdots,n),
 \\
 J^\perp(\xi,\sigma;\delta_1)
 =
 \hat v(\xi,\sigma)
 -
 h^\perp(\xi,\sigma;\delta_1).
 \end{cases}
 \end{align}
 From \eqref{equation_4.103} - \eqref{equation_4.104}, \eqref{equation_4.84} and \eqref{equation_4.96},
 there exists a constant $C>0$ depending only on $n$ such that
 \begin{align}
 \label{equation_4.105}
 |a^{(J)}(\sigma;\delta_1)|
 +
 \sum_{j=0}^n
 |b_j^{(J)}(\sigma;\delta_1)|
 +
 \|\nabla_\xi J^\perp(\sigma;\delta_1)\|_2
 <
 C\delta_1^2
 \quad\
 \text{for }
 \sigma=0.
 \end{align}
 For simplicity,
 we put
 \begin{align}
 \label{equation_4.106}
 \Theta^{(J)}(\xi,\sigma;\delta_1)
 =
 a^{(J)}(\sigma;\delta_1)
 \mathcal Y(\xi)
 +
 \tfrac{b_0^{(J)}(\sigma;\delta_1)\Lambda_\xi{\sf Q}(\xi)}{\sqrt{(\Lambda_\xi{\sf Q},\Psi_0)_2}}
 +
 \sum_{j=1}^n
 \tfrac{b_j^{(J)}(\sigma;\delta_1)\pa_{\xi_j}{\sf Q}(\xi)}{\sqrt{(\pa_{\xi_j}{\sf Q},\Psi_j)_2}}.
 \end{align}
 To establish new bounds for $a^{(J)}(\sigma;\delta_1)$, $b_j^{(J)}(\sigma;\delta_1)$ ($j=0,\cdots,n$)
 and $J^\perp(\xi,\sigma;\delta_1)$,
 we first arrange $\mathcal N^{(J)}$ defined in \eqref{equation_4.99} as
 \begin{align}
 \nonumber
 \mathcal{N}^{(J)}
 &=
 f(U+J)
 -
 f(U)
 -
 f'({\sf Q})J
 \\
 \nonumber
 &=
 \underbrace{
 f(U+\Theta^{(J)}+J^\perp)
 -
 f(U+\Theta^{(J)})
 -
 f'(U+\Theta^{(J)})
 J^\perp
 }_{=V_1J^\perp}
 \\
 \nonumber
 &\quad
 +
 \underbrace{
 f(U+\Theta^{(J)})
 -
 f(U)
 -
 f'(U)\Theta^{(J)}
 }_{=G_1}
 \\
 \nonumber
 &\quad
 +
 f'(U+\Theta^{(J)})
 J^\perp
 +
 f'(U)\Theta^{(J)}
 -
 f'({\sf Q})
 J
 \\
 \nonumber
 &=V_1J^\perp+G_1
 +
 \underbrace{
 \{
 f'(U+\Theta^{(J)})
 -
 f'({\sf Q})
 \}
 J^\perp
 }_{=V_2J^\perp}
 \\
 \nonumber
 &\quad
 +
 \underbrace{
 \{f'(U)-f'({\sf Q})\}\Theta^{(J)}
 }_{=G_2}
 \\
 \label{equation_4.107}
 &=
 (V_1+V_2)J^\perp
 +
 G_1+G_2.
 \end{align}
 We can verify that
 there exists a constant $C>0$ depending only on $n$ such that
 when $n=6$,
 we have
 \begin{align}
 \label{equation_4.108}
 \begin{cases}
 |V_1|
 <
 C|J^\perp|,
 \\
 |V_2|
 <
 C
 (|U-{\sf Q}|+|\Theta^{(J)}|),
 \\
 |G_1|
 <
 C(\Theta^{(J)})^2,
 \\
 |G_2|
 <
 C|U-{\sf Q}||\Theta^{(J)}|.
 \end{cases}
 \end{align}
 We rewrite \eqref{equation_4.102} as
 \begin{align}
 \label{equation_4.109}
 \pa_\sigma J^\perp
 &=
 H_\xi J^\perp
 +
 (V_1+V_2)
 J^\perp
 +
 G_1
 +
 G_2
 -
 (\tfrac{da^{(J)}}{d\sigma}-e_0a^{(J)})
 \mathcal Y
 \\
 \nonumber
 &\quad
 -
 \tfrac{db_0^{(J)}}{d\sigma}
  \tfrac{\Lambda_\xi{\sf Q}}{\sqrt{(\Lambda_\xi{\sf Q},\Psi_0)_2}}
 -
 \sum_{j=1}^n
 \tfrac{db_j^{(J)}}{d\sigma}
 \tfrac{\pa_{\xi_j}{\sf Q}}
 {\sqrt{(\pa_{\xi_j}{\sf Q},\Psi_j)_2}}.
 \end{align}
 We now claim that
 \begin{align}
 \nonumber
 |a^{(J)}(\sigma;\delta_1)|
 &+
 \sup_{j\in\{0,\cdots,n\}}|b_j^{(J)}(\sigma;\delta_1)|
 +
 \|\nabla_\xi J^\perp(\xi,\sigma;\delta_1)\|_{L_\xi^2(\R^n)}
 \\
 \label{equation_4.110}
 &<
 \delta_1^\frac{3}{2}
 e^{e_0\sigma}
 \quad\
 \text{for }
 \sigma\in(0,\sigma_3).
 \end{align}
 To show \eqref{equation_4.110},
 we suppose that
 there exists $\sigma_4\in(0,\sigma_3)$ such that
 \begin{align}
 \nonumber
 |a^{(J)}(\sigma;\delta_1)|
 &+
 \sup_{j\in\{0,\cdots,n\}}|b_j^{(J)}(\sigma;\delta_1)|
 +
 \|\nabla_\xi J^\perp(\xi,\sigma;\delta_1)\|_{L_\xi^2(\R^n)}
 \\
 \label{equation_4.111}
 &<
 \delta_1^\frac{3}{2}
 e^{e_0\sigma}
 \quad\
 \text{for }
 \sigma\in(0,\sigma_4).
 \end{align}
 From \eqref{equation_4.108} - \eqref{equation_4.109},
 we see that
 \begin{align}
 \nonumber
 &
 |\tfrac{da^{(J)}}{d\sigma}-e_0a^{(J)}|
 =
 |((V_1+V_2)J^\perp,\mathcal Y)_2|
 +
 |(G_1+G_2,\mathcal Y)_2|
 \\
 \nonumber
 &<
 C
 \{
 ((J^\perp)^2,\mathcal Y)_2
 +
 (|U-{\sf Q}||J^\perp|,\mathcal Y)_2
 +
 (|\Theta^{(J)}||J^\perp|,\mathcal Y)_2
 \}
 \\
 \nonumber
 &\quad
 +
 C
 \{
 \|\Theta^{(J)}\|_2^2
 +
 \|U-{\sf Q}\|_2
 \|\Theta^{(J)}\|_2
 \}
 \\
 \label{equation_4.112}
 &<
 C
 \{
 \|\nabla_\xi J^\perp\|_2^2
 +
 \|U-{\sf Q}\|_2
 \|\nabla_\xi J^\perp\|_2
 +
 \|\Theta^{(J)}\|_2
 \|\nabla_\xi J^\perp\|_2
 \}
 \\
 \nonumber
 &\quad
 +
 C
 \{
 \|\Theta^{(J)}\|_2^2
 +
 \|U-{\sf Q}\|_2
 \|\Theta^{(J)}\|_2
 \}.
 \end{align}
 Since $U(\xi,\sigma;\delta_1)$ is expressed as \eqref{equation_4.91},
 from \eqref{equation_4.92},
 there exists a constant $C>0$ depending only on $n$ such that
 \begin{align}
 \nonumber
 &
 \|U(\xi,\sigma;\delta_1)-{\sf Q}(\xi)\|_2
 +
 \|U(\xi,\sigma;\delta_1)-{\sf Q}(\xi)\|_\infty
 \\
 \nonumber
 &<
 \tfrac{(1+\|\mathcal Y\|_\infty)\delta_1}{\|\nabla_y\mathcal Y\|_2}
 e^{e_0\sigma}
 +
 \tfrac{C\delta_1^2}{\|\nabla_y\mathcal Y\|_2^2}
 e^{2e_0\sigma}
 \\
 \label{equation_4.113}
 &<
 \tfrac{1+\|\mathcal Y\|_\infty+C\alpha}{\|\nabla_y\mathcal Y\|_2}
 \delta_1
 e^{e_0\sigma}
 \quad\
 \text{for }
 \sigma\in(0,\sigma_3).
 \end{align}
 Furthermore
 from the definition of $\Theta^{(J)}(\xi,\sigma)$ (see \eqref{equation_4.106})
 and \eqref{equation_4.111},
 we observe that
 \begin{align}
 \label{equation_4.114}
 |\Theta^{(J)}(\xi,\sigma)|
 <
 C\delta_1^\frac{3}{2}
 e^{e_0\sigma}
 (1+|\xi|^2)^{-\frac{n-2}{2}}
 \quad\
 \text{for }
 \sigma\in(0,\sigma_4).
 \end{align}
 Therefore
 substituting \eqref{equation_4.111} and \eqref{equation_4.113} - \eqref{equation_4.114} into \eqref{equation_4.112},
 we get
 \begin{align}
 \nonumber
 |\tfrac{da^{(J)}}{d\sigma}-e_0a^{(J)}|
 &<
 C
 (
 \delta_1^3
 e^{2e_0\sigma}
 +
 \delta_1^\frac{5}{2}
 e^{2e_0\sigma}
 )
 \\
 \label{equation_4.115}
 &<
 C
 \delta_1^\frac{5}{2}
 e^{2e_0\sigma}
 \quad\
 \text{for }
 \sigma\in(0,\sigma_4).
 \end{align}
 Integrating \eqref{equation_4.115},
 and applying \eqref{equation_4.105}
 together with the last line of \eqref{equation_4.84},
 we obtain
 \begin{align}
 \nonumber
 |a^{(J)}(\sigma)|
 &<
 e^{e_0\sigma}
 |a^{(J)}(\sigma)|_{\sigma=0}|
 +
 C
 \delta_1^\frac{5}{2}
 e^{2e_0\sigma}
 \\
 \nonumber
 &<
 C\delta_1^2
 e^{e_0\sigma}
 +
 C
 \delta_1^\frac{5}{2}
 e^{2e_0\sigma}
 \\
 \label{equation_4.116}
 &<
 C
 (
 \delta_1^\frac{1}{2}
 +
 \alpha
 )
 \delta_1^\frac{3}{2}
 e^{e_0\sigma}
 \quad\
 \text{for }
 \sigma\in(0,\sigma_4).
 \end{align}
 We next derive the bound of $(J^\perp,-H_\xi J^\perp)$
 by using \eqref{equation_4.108} - \eqref{equation_4.109}.
 \begin{align*}
 \nonumber
 &
 \tfrac{1}{2}
 \tfrac{d}{d\sigma}
 (J^\perp,-H_\xi J^\perp)_2
 =
 -
 \|H_\xi J^\perp\|_2^2
 +
 ((V_1+V_2)J^\perp,H_\xi J^\perp)_2
 \\
 \nonumber
 &\quad
 +
 (G_1+G_2,H_\xi J^\perp)_2
 \\
 \nonumber
 &<
 -
 \tfrac{1}{2}
 \|H_\xi J^\perp\|_2^2
 +
 2
 \|(V_1+V_2)J^\perp\|_2^2
 +
 2
 \|G_1+G_2\|_2^2
 \\
 \nonumber
 &<
 -
 \tfrac{1}{2}
 \|H_\xi J^\perp\|_2^2
 +
 C
 \|J^\perp\|_4^4
 +
 C
 \|(U-{\sf Q})J^\perp\|_2^2
 +
 C
 \|\Theta^{(J)} J^\perp\|_2^2
 \\
 \nonumber
 &\quad
 +
 C\|\Theta^{(J)}\|_4^4
 +
 C\|(U-{\sf Q})\Theta^{(J)}\|_2^2
 \\
 \nonumber
 &<
 -
 \tfrac{1}{2}
 \|H_\xi J^\perp\|_2^2
 +
 C
 \underbrace{
 \|J^\perp\|_3^2
 \|J^\perp\|_6^2
 }_{=\|J^\perp\|_\frac{2n}{n-2}^2\|J^\perp\|_\frac{2n}{n-4}^2}
 +
 C
 \|U-{\sf Q}\|_\frac{n}{2}^2
 \|J^\perp\|_\frac{2n}{n-4}^2
 \\
 &\quad
 +
 C
 \|\Theta^{(J)}\|_\frac{2n}{n-2}^2
 \|J^\perp\|_\frac{2n}{n-4}^2
 +
 C\|\Theta^{(J)}\|_4^4
 +
 C\|U-{\sf Q}\|_\frac{n}{2}^2
 \|\Theta^{(J)}\|_\frac{2n}{n-4}^2.
 \end{align*}
 From the definition of $J^\perp$ (see \eqref{equation_4.100} - \eqref{equation_4.101}),
 we note that
 $(J^\perp,\mathcal Y)_2=0$
 and
 $(J^\perp,\Psi_j)_2=0$ for $j\in\{0,\cdots,n\}$. 
 Hence
 from Lemma \ref{LEMMA_2.5},
 there exists a constant $C>0$ depending only on $n$ such that
 $\|J^\perp\|_\frac{2n}{n-4}<C\|H_\xi J^\perp\|_2$.
 Therefore
 from \eqref{equation_4.113} - \eqref{equation_4.114},
 there exists $\alpha^*>0$ depending only on $n$ such that
 if $\alpha\in(0,\alpha^*)$,
 then we have
 \begin{align}
 \nonumber
 &
 \tfrac{1}{2}
 \tfrac{d}{d\sigma}
 (J^\perp,-H_\xi J^\perp)_2
 \\
 \nonumber
 &<
 -\tfrac{1}{4}
 \|H_\xi J^\perp\|_2^2
 +
 C\|\Theta^{(J)}\|_4^4
 +
 C\|U-{\sf Q}\|_\frac{n}{2}^2
 \|\Theta^{(J)}\|_\frac{2n}{n-4}^2.
 \\
 \nonumber
 &<
 -\tfrac{1}{4}
 \|H_\xi J^\perp\|_2^2
 +
 C\delta_1^6
 e^{4e_0\sigma}
 +
 C
 \delta_1^5
 e^{4e_0\sigma}
 \\
 \label{equation_4.117}
 &<
 -\tfrac{1}{4}
 \|H_\xi J^\perp\|_2^2
 +
 C\delta_1^5
 e^{4e_0\sigma}
 \quad\
 \text{for }
 \sigma\in(0,\sigma_4).
 \end{align}
 Integrating this estimate with \eqref{equation_4.105},
 we obtain
 \begin{align}
 \nonumber
 &
 (J^\perp(\sigma),-H_\xi J^\perp(\sigma))_2
 \\
 \nonumber
 &<
 \underbrace{
 (J^\perp(\sigma),-H_\xi J^\perp(\sigma))_2|_{\sigma=0}
 }_{<C\delta_1^4}
 +
 C\delta_1^5
 e^{4e_0\sigma}
 \\
 \nonumber
 &=
 C(\delta_1e^{-2e_0\sigma}+\delta_1^2e^{2e_0\sigma})
 \delta_1^3
 e^{2e_0\sigma}
 \\
 \nonumber
 &<
 C(\delta_1+\alpha^2)
 \delta_1^3
 e^{2e_0\sigma}
 \\
 \label{equation_4.118}
 &<
 C\alpha
 \delta_1^3
 e^{2e_0\sigma}
 \quad\
 \text{for }
 \sigma\in(0,\sigma_4).
 \end{align}
 We finally provide estimates for $b_j^{(J)}(\sigma)$.
 Repeating the same argument as in \eqref{equation_4.112} - \eqref{equation_4.115},
 we see that
 \begin{align*}
 |\tfrac{db_0^{(J)}}{d\sigma}|
 &<
 |(H_\xi J^\perp,\tfrac{\Psi_0}{\sqrt{(\Lambda_\xi{\sf Q},\Psi_0)_2}})_2|
 +
 |((V_1+V_2)J^\perp,\tfrac{\Psi_0}{\sqrt{(\Lambda_\xi{\sf Q},\Psi_0)_2}})_2|
 \\
 &\quad
 +
 |(G_1+G_2,\tfrac{\Psi_0}{\sqrt{(\Lambda_\xi{\sf Q},\Psi_0)_2}})_2|
 \\
 &<
 |(H_\xi J^\perp,\tfrac{\Psi_0}{\sqrt{(\Lambda_\xi{\sf Q},\Psi_0)_2}})_2|
 +
 C
 \delta_1^\frac{5}{2}
 e^{2e_0\sigma}
 \quad\
 \text{for }
 \sigma\in(0,\sigma_4).
 \end{align*}
 Here
 we apply \eqref{equation_4.118} to obtain
 \begin{align}
 \nonumber
 |\tfrac{db_0^{(J)}}{d\sigma}|
 &<
 C
 \sqrt{\alpha}
 \delta_1^\frac{3}{2}
 e^{e_0\sigma}
 +
 C
 \delta_1^\frac{5}{2}
 e^{2e_0\sigma}
 \\
 \nonumber
 &<
 C
 (
 \sqrt{\alpha}
 +
 \alpha
 )
 \delta_1^\frac{3}{2}
 e^{e_0\sigma}
 \\
 \label{equation_4.119}
 &<
 C
 \sqrt{\alpha}
 \delta_1^\frac{3}{2}
 e^{e_0\sigma}
 \quad\
 \text{for }
 \sigma\in(0,\sigma_4).
 \end{align}
 Integrating this relation with \eqref{equation_4.105},
 we conclude
 \begin{align}
 \nonumber
 |b_0^{(J)}(\sigma)|
 &<
 \underbrace{
 |b_0^{(J)}(\sigma)|_{\sigma=0}|
 }_{C\delta_1^2}
 +
 C
 \sqrt{\alpha}
 \delta_1^\frac{3}{2}
 e^{e_0\sigma}
 \\
 \nonumber
 &<
 C
 (
 \sqrt{\delta_1}
 +
 \sqrt{\alpha}
 )
 \delta_1^\frac{3}{2}
 e^{e_0\sigma}
 \\
 \label{equation_4.120}
 &<
 C
 \sqrt{\alpha}
 \delta_1^\frac{3}{2}
 e^{e_0\sigma}
 \quad\
 \text{for }
 \sigma\in(0,\sigma_4).
 \end{align}
 In the same manner,
 we can derive
 \begin{align}
 \label{equation_4.121}
 \begin{cases}
 \dis
 \sup_{j\in\{1,\cdots,n\}}
 |\tfrac{db_j^{(J)}}{d\sigma}|
 <
 C
 \sqrt{\alpha}
 \delta_1^\frac{3}{2}
 e^{e_0\sigma}
 &
 \text{for }
 \sigma\in(0,\sigma_4),
 \\
 \dis
 \sup_{j\in\{1,\cdots,n\}}
 |b_j^{(J)}(\sigma)|
 <
 C
 \sqrt{\alpha}
 \delta_1^\frac{3}{2}
 e^{e_0\sigma}
 &
 \text{for }
 \sigma\in(0,\sigma_4).
 \end{cases}
 \end{align}
 Since
 all constants $C$ in \eqref{equation_4.116}, \eqref{equation_4.118} and \eqref{equation_4.120} - \eqref{equation_4.121}
 are independent of $\alpha$ and $\delta_1$,
 there exists $\alpha^*>0$ depending only on $n$ such that
 if $\alpha\in(0,\alpha^*)$,
 then we have
 \begin{align}
 \label{equation_4.122}
 \begin{cases}
 |a^{(J)}(\sigma;\delta_1)|
 <
 \frac{1}{4}\delta_1^\frac{3}{2}
 e^{e_0\sigma}
 & \text{for } \sigma\in(0,\sigma_4),
 \\
 \dis
 \sup_{j\in\{0,\cdots,n\}}|b_j^{(J)}(\sigma;\delta_1)|
 <
 \tfrac{1}{4}\delta_1^\frac{3}{2}
 e^{e_0\sigma}
 & \text{for } \sigma\in(0,\sigma_4),
 \\
 \|\nabla_\xi J^\perp(\sigma;\delta_1)\|_2
 <
 \tfrac{1}{4}\delta_1^\frac{3}{2}
 e^{e_0\sigma}
 & \text{for } \sigma\in(0,\sigma_4).
 \end{cases}
 \end{align}
 As a consequence of \eqref{equation_4.122},
 \eqref{equation_4.110} follows.
 We finally establish the $L^\infty$ bound of $J^\perp(\xi,\sigma)$.
 Let $(\Delta{\sf t})_1$ and ${\sf M}_1(t)$ be the same as in \eqref{equation_4.86},
 and assume that
 \begin{align}
 \label{equation_4.123}
 \sigma_3>2(\Delta{\sf t}_1).
 \end{align}
 Since $J^\perp(\xi,\sigma;\delta_1)$ is given by \eqref{equation_4.104},
 from the $L^\infty$ bound of $\hat v(\xi,\sigma)$ and $h(\xi,\sigma;\delta_1)$
 (see \eqref{equation_4.87} and \eqref{equation_4.96}),
 we have
 \begin{align*}
 &
 \|J^\perp(\xi,\sigma;\delta_1)\|_{L_\xi^\infty(\R^n)}
 <
 \|\hat v(\xi,\sigma)\|_\infty
 +
 \|h^\perp(\xi,\sigma;\delta_1)\|_\infty
 \\
 &<
 {\sf M}_1(\sigma)
 +
 C_1
 +
 C\delta_1^2
 e^{2e_0\sigma}
 \quad\
 \text{for }
 \sigma\in(0,\sigma_3).
 \end{align*}
 From the definition of ${\sf M}_1(\sigma)$ and \eqref{equation_4.123},
 there exists $M>0$ depending only on $n$ such that
 \begin{align*}
 \|J^\perp(\xi,\sigma;\delta_1)\|_{L_\xi^\infty(\R^n)}
 <
 M
 \quad\
 \text{for }
 \sigma\in(\tfrac{(\Delta{\sf t})_1}{2},\sigma_3).
 \end{align*}
 Therefore
 from the definition of $V_1$ and $V_2$ (see \eqref{equation_4.108}),
 there exists a constant $\bar M>0$ depending only on $n$ such that
 $|V_1(\xi,\sigma)|+|V_2(\xi,\sigma)|<\bar M$ for $\sigma\in(\frac{(\Delta{\sf t})_1}{2},\sigma_3)$.
 We now apply a standard local parabolic estimate to \eqref{equation_4.109}.
 There exists a constant $C>0$ depending only on $n$, $\bar M$ and $(\Delta{\sf t})_1$ such that
 for $\sigma\in((\Delta{\sf t})_1,\sigma_3)$
 \begin{align*}
 &
 \|J^\perp(\xi,\sigma;\delta_1)\|_{L_\xi^\infty(\R^n)}
 <
 C
 \sup_{\zeta\in\R^n}
 \left(
 \int_{\sigma-\frac{(\Delta{\sf t})_1}{2}}^\sigma
 \int_{|\xi-\zeta|<\sqrt{\frac{(\Delta{\sf t})_1}{2}}}
 |J^\perp(\xi,\sigma')|^2
 d\xi
 d\sigma'
 \right)^\frac{1}{2}
 \\
 &\quad
 +
 C
 \sup_{\sigma\in(\sigma-\frac{(\Delta{\sf t})_1}{2},\sigma)}
 (
 \|G_1(\sigma')\|_{L_\xi^\infty(\R^n)}
 +
 \|G_2(\sigma')\|_{L_\xi^\infty(\R^n)}
 )
 \\
 \nonumber
 &\quad
 +
 C
 \sup_{\sigma\in(\sigma-\frac{(\Delta{\sf t})_1}{2},\sigma)}
 \left(
 |\tfrac{da^{(J)}}{d\sigma}(\sigma')-e_0a^{(J)}(\sigma')|
 +
 \sum_{j=0}^n
 |\tfrac{db_j^{(J)}}{d\sigma}(\sigma')|
 \right).
 \end{align*}
 From \eqref{equation_4.108}, \eqref{equation_4.110} and \eqref{equation_4.113},
 we see that
 \begin{align*}
 \|G_1(\sigma)\|_{L_\xi^\infty(\R^n)}
 &<
 C\|\Theta^{(J)}(\sigma)\|_{L_\xi^\infty(\R^n)}^2
 \\
 &<
 C\delta_1^3
 e^{2e_0\sigma}
 \quad
 \text{for }
 \sigma\in(0,\sigma_3),
 \\
 \|G_2(\sigma)\|_{L_\xi^\infty(\R^n)}
 &<
 C
 \|U-{\sf Q}\|_\infty
 \|\Theta^{(J)}(\sigma)\|_{L_\xi^\infty(\R^n)}
 \\
 &<
 C
 \delta_1^\frac{5}{2}
 e^{2e_0\sigma}
 \quad
 \text{for }
 \sigma\in(0,\sigma_3).
 \end{align*}
 Note from the last line of \eqref{equation_4.84}
 that $\delta_1e^{e_0\sigma}<\frac{32}{15}\alpha$.
 Therefore
 combining \eqref{equation_4.110}, \eqref{equation_4.115}, \eqref{equation_4.119} and \eqref{equation_4.121},
 we conclude
 \begin{align*}
 \|J^\perp(\xi,\sigma;\delta_1)\|_{L_\xi^\infty(\R^n)}
 &<
 C
 \delta_1^\frac{3}{2}
 e^{e_0\sigma}
 \quad\
 \text{for }
 \sigma\in((\Delta{\sf t})_1,\sigma_3).
 \end{align*}
 Since $J(\xi,\sigma;\delta_1)$ is expressed as \eqref{equation_4.100},
 it follows that
 \begin{align}
 \nonumber
 &
 \|J(\xi,\sigma;\delta_1)\|_{L_\xi^\infty(\R^n)}
 \\
 \nonumber
 &<
 C\left(
 |a^{(J)}(\sigma)|+\sum_{j=0}^n|b_j^{(J)}(\sigma)|
 \right)
 +
 \|J^\perp(\xi,\sigma;\delta_1)\|_{L_\xi^\infty(\R^n)}
 \\
 \label{equation_4.124}
 &<
 C
 \delta_1^\frac{3}{2}
 e^{e_0\sigma}
 \quad\
 \text{for }
 \sigma\in((\Delta{\sf t})_1,\sigma_3).
 \end{align}
 From \eqref{equation_4.64},
 we recall that $\sigma_3$ is bounded by
 $\sigma_3>\frac{1}{e_0}(\log\frac{\alpha}{\delta_1}-\log 2)=\frac{1}{e_0}\log\frac{\alpha}{2\delta_1}$.
 Let
 \begin{align*}
 \hat\sigma_1(\alpha,\delta_1)
 =
 \tfrac{1}{e_0}\log\tfrac{\alpha}{2\delta_1}.
 \end{align*}
 Then we have
 \begin{align}
 \label{equation_4.125}
 \sigma_3
 >
 \hat\sigma_1
 =
 \tfrac{1}{e_0}\log\tfrac{\alpha}{2\delta_1}
 >
 2(\Delta{\sf t})_1
 \quad\
 \text{if }
 \delta_1<\tfrac{\alpha}{2}e^{-2e_0(\Delta{\sf t})_1}.
 \end{align}
 This implies that the assumption \eqref{equation_4.123} is satisfied if $\delta_1<\tfrac{\alpha}{2}e^{-2e_0(\Delta{\sf t})_1}$.
 As a consequence of \eqref{equation_4.124},
 we conclude 
 \begin{align}
 \label{equation_4.126}
 \|J(\xi,\hat\sigma_1;\delta_1)\|_{L_\xi^\infty(\R^n)}
 <
 C\delta_1^\frac{3}{2}
 e^{e_0\hat\sigma_1}
 <
 C\sqrt{\delta_1}
 \alpha.
 \end{align}
 From the definition of $U(\xi,\sigma;\delta_1)$ (see \eqref{equation_4.90}),
 we observe that
 \begin{align*}
 U(\xi,\hat\sigma_1;\delta_1)
 &=
 {\sf Q}^+(\xi,t_1+\hat\sigma_1)
 \quad\
 \text{with }
 \epsilon_1e^{e_0t_1}=\tfrac{\delta_1}{\|\nabla_y{\mathcal Y}\|_2}
 \\
 &=
 {\sf Q}^+(\xi,\hat\sigma_0)
 \quad\
 \text{with }
 \hat\sigma_0=t_1+\hat\sigma_1=\tfrac{1}{e_0}\log(\tfrac{\alpha}{2\epsilon_1\|\nabla_Y\mathcal Y\|_2}).
 \end{align*}
 Therefore from the definition of $J(\xi,\sigma;\delta_1)$ (see \eqref{equation_4.97}),
 we have
 \begin{align*}
 \hat u(\xi,\hat\sigma_1)
 =
 {\sf Q}^+(\xi,\hat\sigma_0)
 +
 J(\xi,\hat\sigma_1;\delta_1)
 \quad\
 \text{for } \xi\in\R^n.
 \end{align*}
 We here apply the stability result obtained in Theorem 2 of \cite{Harada_ODE}
 (see also Theorem 1.1 in \cite{Collot-Merle-Raphael_ODE} p.\,66).
 First  we fix $\alpha>0$ sufficiently small so that \eqref{equation_4.126} holds.
 We emphasize that $\hat\sigma_0$ depends on $\alpha$ but not on $\delta_1$.
 As stated in Remark \ref{REMARK_2.10},
 ${\sf Q}^+(\xi,\sigma)$ satisfies all the assumptions (a1) - (a6) in Theorem 2 of \cite{Harada_ODE}.
 Therefore
 there exists a constant $d_1=d_1(\alpha)>0$ independent of $\delta_1$ such that if $\|J(\xi,\hat\sigma_1;\delta_1)\|_\infty<d_1$,
 then
 $\hat u(\xi,\sigma)$ exhibits a finite time blow up of type I.
 From \eqref{equation_4.126},
 if $\delta_1$ satisfies $C\sqrt{\delta_1}\alpha<\frac{d_1}{2}$,
 then we have $\|J(\xi,\hat\sigma_1;\delta_1)\|_\infty<d_1$.
 We conclude that
 $\hat u(\xi,\sigma)$ exhibits finite time blowup of type I
 if $C\sqrt{\delta_1}\alpha<\frac{d_1}{2}$.
 In this case,
 $u(x,t)$ also exhibits finite time blowup of type I,
 which completes the proof of Proposition \ref{PROPOSITION_4.7}.

\section*{Acknowledgement}
The author is partly supported by JSPS KAKENHI Grant Number 23K03161.

\section*{Data availability}
Data sharing is not applicable to this article
as no datasets were generated or analyzed during the current study.

\section*{Conflict of interest}
The author declares that he has no conflict of interest.

 
\end{document}